\tikzstyle{tikzfig}=[baseline=-0.25em,scale=0.5]
\tikzstyle{none}=[inner sep=0mm]
\newcommand{\tikzfig}[1]{%
{\tikzstyle{every picture}=[tikzfig]
\IfFileExists{#1.tikz}
  {\input{#1.tikz}}
  {%
    \IfFileExists{./figures/#1.tikz}
      {\input{./figures/#1.tikz}}
      {\tikz[baseline=-0.5em]{\node[draw=red,font=\color{red},fill=red!10!white] {\textit{#1}};}}%
  }}%
}
\tikzstyle{medium box}=[fill=white, draw=black, shape=rectangle, minimum width=0.75cm, minimum height=1cm]
\tikzstyle{short box}=[fill=white, draw=black, shape=rectangle, minimum width=0.75cm, minimum height=0.5cm]
\tikzstyle{blue edge}=[-, draw=blue]
\DeclareSymbolFont{symbolsC}{U}{txsyc}{m}{n}
\DeclareMathSymbol{\Nearrow}{\mathrel}{symbolsC}{116}
\DeclareMathSymbol{\Searrow}{\mathrel}{symbolsC}{117}
\DeclareMathSymbol{\Nwarrow}{\mathrel}{symbolsC}{118}
\DeclareMathSymbol{\Swarrow}{\mathrel}{symbolsC}{119}
\newtheorem{theorem}{Theorem}[section]
\newtheorem{thmx}{Theorem}
\newtheorem{proposition}[theorem]{Proposition}
\newtheorem{corollary}[theorem]{Corollary}
\newtheorem{lemma}[theorem]{Lemma}
\theoremstyle{definition}
\newtheorem{definition}[theorem]{Definition}
\newtheorem{example}[theorem]{Example}
\newtheorem{convention}[theorem]{Convention}
\newtheorem{exercise}[theorem]{Exercise}
\theoremstyle{remark}
\newtheorem{remark}[theorem]{Remark}
\DeclareFontFamily{U} {MnSymbolC}{}
\DeclareFontShape{U}{MnSymbolC}{m}{n}{
  <-6> MnSymbolC5
  <6-7> MnSymbolC6
  <7-8> MnSymbolC7
  <8-9> MnSymbolC8
  <9-10> MnSymbolC9
  <10-12> MnSymbolC10
  <12-> MnSymbolC12}{}
\DeclareFontShape{U}{MnSymbolC}{b}{n}{
  <-6> MnSymbolC-Bold5
  <6-7> MnSymbolC-Bold6
  <7-8> MnSymbolC-Bold7
  <8-9> MnSymbolC-Bold8
  <9-10> MnSymbolC-Bold9
  <10-12> MnSymbolC-Bold10
  <12-> MnSymbolC-Bold12}{}
\DeclareSymbolFont{MnSyC} {U} {MnSymbolC}{m}{n}
\DeclareMathSymbol{\medstar}{\mathbin}{MnSyC}{130}
\newcommand{\mydef}[1]{\emph{\color{DarkOrchid}#1}}
\newcommand{\op}{\textup{op}}
\DeclareMathOperator{\ob}{Ob}
\DeclareMathOperator{\mor}{Mor}
\newcommand{\set}{\mathsf{Set}}
\newcommand{\fun}{\mathsf{Fun}}
\DeclareMathOperator{\map}{map}
\newcommand{\freeiso}{\mathbb{J}}
\newcommand{\rigc}{\mathfrak{C}}
\newcommand{\hNerve}{\mathfrak{N}}
\newcommand{\hcn}{\hNerve}
\newcommand{\unda}{\underline{a}}
\newcommand{\Pbd}[1]{P_{\perim}(#1)}
\newcommand{\simpcat}{\mathsf{\Delta}}
\newcommand{\dendcat}{\mathsf{\Omega}}
\newcommand{\cdcat}{\mathsf{\Upsilon}}
\newcommand{\cdcatrm}{\Upsilon}
\newcommand{\cscat}{\raisebox{\depth}{\scalebox{-1}[-1]{$\mathsf{\Delta}$}}}
\newcommand{\repo}[1]{\Omega^{#1}}
\newcommand{\repu}[1]{\cdcatrm^{#1}}
\newcommand{\segcore}[1]{\mathrm{Sc}^{#1}}
\newcommand{\actrm}{\textup{act}}
\newcommand{\intrm}{\textup{int}}
\newcommand{\catM}{\mathcal{M}}
\newcommand{\catN}{\mathcal{N}}
\newcommand{\cC}{\mathcal{C}}
\newcommand{\cycop}{\mathsf{Cyc}}
\newcommand{\cyc}{\cycop}
\newcommand{\operad}{\mathsf{Op}}
\newcommand{\cat}{\mathsf{Cat}}
\newcommand{\scycop}{\mathsf{sCyc}}
\newcommand{\scyc}{\scycop}
\newcommand{\soperad}{\mathsf{sOp}}
\newcommand{\scat}{\mathsf{sCat}}
\newcommand{\plcyc}{\mathsf{plCyc}}
\newcommand{\plop}{\mathsf{plOp}}
\newcommand{\splop}{\mathsf{splOp}}
\newcommand{\splcyc}{\mathsf{splCyc}}
\newcommand{\plrtree}{\dendcat_\textup{p}}
\newcommand{\pltree}{\cdcat_\textup{p}}
\newcommand{\pdset}{\dset_\textup{p}}
\newcommand{\cpdset}{\cdset_\textup{p}}
\newcommand{\pdspace}{\dspace_\textup{p}}
\newcommand{\cpdspace}{\cdspace_\textup{p}}
\newcommand{\rooting}{\mathfrak{r}}
\newcommand{\planing}{\mathfrak{p}}
\newcommand{\ass}{\mathfrak{Ass}}
\newcommand{\comm}{\mathfrak{Com}}
\newcommand{\pre}[1]{\widehat{#1}}
\newcommand{\spshf}[1]{\textup{s}\widehat{#1}}
\newcommand{\dset}{\pre{\dendcat}}
\newcommand{\dspace}{\spshf{\dendcat}}
\newcommand{\cdset}{\pre{\cdcat}}
\newcommand{\cdspace}{\spshf{\cdcat}}
\newcommand{\sset}{\pre{\simpcat}}
\newcommand{\isset}{\pre{\cscat}}
\newcommand{\rezko}{\dspace_\textup{Rezk}}
\newcommand{\rezku}{\cdspace_\textup{Rezk}}
\newcommand{\prezko}{(\pdspace)_\textup{Rezk}}
\newcommand{\prezku}{(\cpdspace)_\textup{Rezk}}
\newcommand{\nbhd}{\mathtt{nb}}
\newcommand{\inp}{\mathtt{in}}
\newcommand{\out}{\mathtt{out}}
\newcommand{\perim}{\mathop{\mathtt{bd}}}
\newcommand{\subtree}{\mathtt{Sb}}
\DeclareMathOperator*{\colim}{colim}
\DeclareMathOperator{\id}{id}
\DeclareMathOperator{\dom}{dom}
\DeclareMathOperator{\cod}{cod}
\DeclareMathOperator{\iso}{Iso}
\newcommand{\reedyS}{\mathbb{S}}
\newcommand{\reedyR}{\mathbb{R}}
\newcommand{\reedyG}{\mathbb{G}}
\newcommand{\comp}[2]{\circ_{#1}^{#2}}
\newcommand{\ua}{\underline{a}}
\newcommand{\uc}{\underline{c}}
\newcommand{\ud}{\underline{d}}
\newcommand{\sym}{\operatorname{Sym}}
\newcommand{\linear}[1]{\langle{#1}\rangle}
\DeclareMathOperator{\colors}{Col}
\DeclareMathOperator{\aut}{Aut}
\subjclass[2020]
{18M85, 
18N40, 
55U35, 
18N70} 
\keywords{cyclic operad, $\infty$-operad, dendroidal set}
\begin{document}

\title{Models for cyclic infinity operads}
\author{Brandon Doherty}
\address{Department of Mathematics, Florida State University}
\email{bdoherty@fsu.edu}
\author{Philip Hackney}
\address{Department of Mathematics, University of Louisiana at Lafayette}
\email{philip@phck.net} 
\urladdr{http://phck.net}
\date{July 14, 2025}

\begin{abstract}
We construct model structures on cyclic dendroidal sets and cyclic dendroidal spaces for cyclic quasi-operads and complete cyclic dendroidal Segal spaces, respectively.
We show these models are Quillen equivalent to the model structure for simplicial cyclic operads.
This answers in the affirmative a question of the second author and Drummond-Cole concerning model structures for cyclic $\infty$-operads.
We infer similar statements for planar cyclic $\infty$-operads, providing the model-categorical foundation needed to complete Walde's program on the relationship between cyclic 2-Segal spaces and planar cyclic $\infty$-operads.
\end{abstract}

\thanks{
This material is partially based upon work supported by the National Science Foundation under Grant No. DMS-1928930 while the authors participated in a program supported by the Mathematical Sciences Research Institute; 
the program was held in the summer of 2022 in partnership with the Universidad Nacional Autónoma de México.
The first author was supported in part by a grant from the Knut and Alice Wallenberg Foundation, entitled ``Type Theory for Mathematics and Computer Science'' (principal investigator: Thierry Coquand).
The second author was partially supported by the Louisiana Board of Regents through the Board of Regents Support fund, contract number LEQSF(2024-27)-RD-A-31. 
This work was supported by a grant from the Simons Foundation (PH \#850849).
}

\maketitle

\section{Introduction}

Cyclic operads were introduced by Getzler and Kapranov in order to generalize Connes--Tsygan cyclic homology for associative algebras to more exotic algebraic structures \cite{GetzlerKapranov:COCH}.
Cyclic operads have since found applications ranging from Kontsevich's graph complexes to topological field theory.
Meanwhile, the dendroidal objects of Moerdijk and Weiss \cite{MoerdijkWeiss:DS,MoerdijkWeiss:OIKCDS} play a prominent role in the theory of $\infty$-operads. 
Several model category structures based on dendroidal objects were developed and compared by Cisinski and Moerdijk, paralleling earlier developments in the theory of $(\infty,1)$-categories by many others.
The dendroidal models of Cisinski and Moerdijk are equivalent to approaches to $\infty$-operads given by Barwick and Lurie \cite{Barwick:OCHO,ChuHaugsengHeuts:2MHTIO,HeutsHinichMoerdijk:OEBLDMIO}.
This paper is concerned with \emph{cyclic dendroidal objects}.

A \mydef{cyclic quasi-operad} is a cyclic dendroidal set $X$ which has fillers for all inner horns (where $e$ is an inner edge of an unrooted tree $T$):
\[ \begin{tikzcd}
\Lambda^T_e \rar \dar & X \\
\repu{T} \ar[ur, dashed,"\exists"']
\end{tikzcd} \]
This diagram is to be interpreted in the category of cyclic dendroidal sets, an analogue of the dendroidal sets of Moerdijk--Weiss \cite{MoerdijkWeiss:DS,MoerdijkWeiss:OIKCDS}, but based on the \mydef{cyclic dendroidal category $\cdcat$} introduced by the second author, Robertson, and Yau \cite{HackneyRobertsonYau:GCHMO}.

\begin{thmx}
There is a Quillen model structure on the category of cyclic dendroidal sets whose fibrant objects are the cyclic quasi-operads and whose cofibrations are the normal monomorphisms.
A morphism is a weak equivalence just when the underlying map of dendroidal sets is a weak operadic equivalence.
\end{thmx}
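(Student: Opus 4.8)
The plan is to construct this as a Cisinski-type model structure on the presheaf category $\cdset$. Since $\cdcat$ carries the structure of a generalized Reedy category, with the additional features needed for Cisinski's machinery recorded in \cite{HackneyRobertsonYau:GCHMO}, the normal monomorphisms are the cofibrations of a whole family of model structures on $\cdset$; they are cofibrantly generated by the boundary inclusions $\partial\repu{T}\hookrightarrow\repu{T}$ and enjoy the closure properties Cisinski's theory requires. The only freedom left is the choice of localizer. To exercise it, I would first transport a cylinder functor to $\cdset$ from dendroidal sets across the adjunctions $j_{!}\dashv j^{*}\dashv j_{*}$ attached to the inclusion $j\colon\dendcat\to\cdcat$ (for instance one built from $\freeiso$, the cyclic analogue of the dendroidal interval), and then take $W$ to be the smallest localizer containing the cyclic inner horn inclusions $\Lambda^{T}_{e}\hookrightarrow\repu{T}$. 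As this is a small set of maps, Cisinski's theorem produces a cofibrantly generated model structure on $\cdset$ whose cofibrations are the normal monomorphisms, whose weak equivalences are $W$, and whose fibrant objects are exactly those having the right lifting property against the saturated class of cyclic operadic anodyne extensions generated by the inner horn inclusions together with the chosen cylinder.

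The second step is to improve this description of the fibrant objects to: fibrant $\iff$ cyclic quasi-operad, i.e.\ right lifting property against the inner horn inclusions alone. I would do this by following Cisinski and Moerdijk's analysis of the operadic model structure on dendroidal sets: reduce the claim to a pushout-product (``box'') lemma asserting that the cylinder-pushout-products of boundary inclusions are cyclic operadic anodyne, together with a stability statement for inner-Kan objects under such lifts. Here unrooted trees systematically replace rooted trees. The one genuinely new phenomenon is that an unrooted tree can have a strictly larger automorphism group than any of its rootings (for example the dihedral symmetries of a corolla disappear once a root is chosen), which is exactly why the cofibrations must be the normal rather than all monomorphisms, and why these combinatorial lemmas must be re-proved rather than merely transcribed.

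The third step is to identify $W$ with the class of maps whose image under $j^{*}$ is a weak operadic equivalence. One inclusion is a Quillen-adjunction argument: $j^{*}\colon\cdset\to\dset$ preserves normal monomorphisms, since the stabilizer of a cell under $\aut_{\dendcat}(T)\subseteq\aut_{\cdcat}(jT)$ can only become smaller, and $j^{*}$ sends each cyclic inner horn inclusion to a weak operadic equivalence, via a combinatorial description of $j^{*}\repu{T}$ in terms of the rootings of $T$; hence $W$ is contained in the preimage under $j^{*}$ of the weak operadic equivalences. For the opposite inclusion it is enough to prove that $j^{*}$ detects weak equivalences between fibrant objects --- a map of cyclic quasi-operads is a weak equivalence precisely when its underlying map of quasi-operads is a weak operadic equivalence --- and then conclude by fibrant replacement and two-out-of-three. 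This detection statement is where one invokes the concrete description of operadic equivalences as the maps that are essentially surjective and fully faithful on mapping spaces, together with the fact that these invariants of a cyclic $\infty$-operad are already computed by its underlying $\infty$-operad, the cyclic structure only permuting operations already present.

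I expect the main obstacle to be the combination of the ``fibrant $\iff$ inner-Kan'' identification and the detection-on-fibrant-objects statement: the former because the automorphism phenomena absent from the rooted theory force a careful redevelopment of the tree combinatorics underlying the anodyne-extension machinery, and the latter because it demands a workable description of weak equivalences of cyclic $\infty$-operads together with a proof that passing to underlying $\infty$-operads neither creates nor destroys them.
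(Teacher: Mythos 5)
Your approach is genuinely different from the paper's. You propose building the model structure directly on $\cdset$ via Cisinski's machinery for presheaf categories: take normal monomorphisms as cofibrations, transport a cylinder from $\dset$, generate a localizer from the inner horn inclusions, establish ``fibrant $\iff$ inner-Kan'' by redeveloping the Cisinski--Moerdijk pushout-product lemmas in the unrooted setting, and finally identify the localizer with the preimage under $f^*$ of the weak operadic equivalences via a detection argument on fibrant objects. The paper instead \emph{right-induces} the model structure along $f^*\colon\cdset\to\dset$ using the adjoint string $f_!\dashv f^*\dashv f_*$ and \cref{triple-induced-model-structures}: it verifies that the composite endofunctor $f^*f_!$ on $\dset$ is left Quillen (\cref{f-shriek-star-composite-Quillen}), which rests on the explicit pushout decomposition of $f^*$ applied to boundary and horn inclusions (\cref{boundary horn pushout}). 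With the model structure produced this way, the identifications of cofibrations (via $f_!$ on boundaries and \cref{right-induced-generators}) and fibrant objects (via $f_!$ on inner horns) come essentially for free, and the weak equivalence characterization is built into the definition of right-induction rather than something to be verified afterward.

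Both routes should succeed, but yours buys different things at a different price. Your route yields a self-contained Cisinski presentation with explicit generating anodynes, which is useful if one wants to work with the model structure intrinsically. But the cost is steep: the box lemmas need to be re-proved for unrooted trees, and more seriously, your detection step --- that $f^*$ reflects weak equivalences between cyclic quasi-operads --- is not a formality. You would need a workable characterization of weak equivalences between cyclic quasi-operads (essential surjectivity plus full faithfulness on mapping spaces, say) and a proof that these invariants are computed by the underlying quasi-operad; neither is currently in the literature, and you correctly flag this as your main obstacle. The paper's transfer argument sidesteps all of it: because $f$ is a discrete fibration (\cref{disc fib}) whose restriction to active maps is a discrete opfibration (\cref{disc opfib}), the unit $\eta\colon X\to f^*f_!X$ decomposes so cleanly that $f^*$ of a boundary or inner horn inclusion is exactly a pushout of a sum of the rooted ones --- one summand per rooting --- and everything else follows from general lifted-model-structure machinery. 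One stylistic point: $f$ is not an inclusion (it is surjective but not injective on objects, and faithful but not full), so calling it that could lead you astray; it is precisely the discrete (op)fibration structure that makes the paper's bookkeeping close up.
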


This is analogous to Joyal's model structure for quasi-categories.
In fact, there is a subcategory whose fibrant objects are the \emph{anti-involutive} quasi-categories, previously studied in \cite{DrummondColeHackney:CERIMS}.

There is a homotopy coherent nerve functor $\hcn \colon \scyc \to \cdset$ from the category of simplicial cyclic operads to the category of cyclic dendroidal sets. 
There is a Dwyer--Kan style model structure on $\scyc$ \cite{DrummondColeHackney:DKHTCO} (analogous to Bergner's model structure for simplicial categories \cite{Bergner:MCSC} and Cisinski--Moerdijk model structure for simplicial operads \cite{CisinskiMoerdijk:DSSO}).
We establish the following rigidification result for cyclic quasi-operads.
\begin{thmx}\label{intro thm rigidification}
The homotopy coherent nerve/rigidification adjunction is a Quillen equivalence between the model structure for cyclic quasi-operads and the model structure on simplicial cyclic operads.
\end{thmx}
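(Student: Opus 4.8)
The plan is to adapt Cisinski--Moerdijk's rigidification theorem for rooted dendroidal sets and simplicial operads to the cyclic setting, leaning where possible on the already-established rooted theory via the restriction functor along $\dendcat \hookrightarrow \cdcat$.

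\emph{Step 1: the Quillen adjunction.} I would first show that $\rigc \dashv \hcn$ is a Quillen adjunction, which by a standard argument reduces to two preservation properties of $\rigc$: it sends normal monomorphisms to cofibrations of simplicial cyclic operads, and it sends inner anodyne maps to trivial cofibrations. Both are checked on generators, for which one needs an explicit description of the simplicial cyclic operad $\rigc[\repu{T}]$ --- the cyclic Boardman--Vogt $W$-construction on the tree $T$ --- whose colors are the edges of $T$ and whose space of operations of shape $\ua$ is the nerve of the poset $\Pbd{\ua}$ of subtrees of $T$ with boundary $\ua$. Granting this, $\rigc[\partial\repu{T}] \to \rigc[\repu{T}]$ is a bijection on colors and, on each operation space, a Leibniz gluing of simplex boundary inclusions, hence a cofibration; the map $\rigc[\Lambda^T_e] \to \rigc[\repu{T}]$ is likewise the identity on colors and a cofibration for the same reason, so the remaining --- and central --- point is that it is a Dwyer--Kan equivalence, i.e.\ a weak homotopy equivalence on each operation space.

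\emph{Step 2: the inner horn lemma, and the main obstacle.} This is the cyclic analogue of the key technical lemma of Cisinski--Moerdijk, and I expect essentially all of the genuinely new work of \cref{intro thm rigidification} to live here. I would filter $\repu{T}$ by the subpresheaves generated by non-degenerate faces not containing $e$, reduce across the filtration --- via the Leibniz description of $\rigc$ on boundaries --- to showing that certain maps of subtree-posets $\Pbd{\ua}$ are homotopy equivalences, and then exhibit on each such poset a deformation retraction onto a subposet having an initial or a terminal object (obtained by pushing a subtree toward a fixed side of $e$, or by contracting $e$). The honest difficulty is that the combinatorics of unrooted trees is considerably more involved than in the rooted case: there is no root along which to induct, the posets $\Pbd{\ua}$ carry a larger symmetry, and several degenerate configurations --- corollas, trees containing a stick, the exceptional objects of $\cdcat$ --- must be handled by hand, much as Cisinski--Moerdijk dispatch the small cases of their statement. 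One might also hope to import part of this from the rooted case via the facts that restriction along $\dendcat \hookrightarrow \cdcat$ carries cyclic inner horns to inner anodyne dendroidal maps and is compatible with $\rigc$, but establishing those compatibilities is itself where the combinatorial content reappears.

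\emph{Step 3: from the adjunction to the equivalence.} Granting Steps 1--2, I would conclude from two facts, each transported along the forgetful functors. First, $\hcn$ reflects weak equivalences between fibrant objects: if $f$ is a map of locally Kan simplicial cyclic operads with $\hcn(f)$ a weak operadic equivalence, then restricting along $\dendcat \hookrightarrow \cdcat$ --- which, as one checks on representables, intertwines $\hcn$ with the Cisinski--Moerdijk nerve of the underlying simplicial operad --- shows the underlying map of simplicial operads is a Dwyer--Kan equivalence by the rooted rigidification theorem, hence so is $f$, since a map of simplicial cyclic operads is a Dwyer--Kan equivalence exactly when its underlying map of simplicial operads is. Second, for every fibrant $P$ the derived counit $\rigc[\hcn(P)] \to P$ is a weak equivalence: since $\hcn(P)$ is fibrant (by Step 2), this reduces, after a fibrant replacement and via a triangle identity, to the derived unit being a weak equivalence on $\hcn(P)$, which again follows by transporting along the forgetful functors and invoking the rooted rigidification theorem, the only subtlety being compatibility of the two fibrant replacements. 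With $\hcn$ reflecting weak equivalences between fibrant objects and the derived counit a weak equivalence on all fibrant objects, the standard criterion yields the Quillen equivalence. The recurring theme is that both model structures detect weak equivalences through forgetful functors to the established rooted theory, so the irreducibly cyclic content lies in the inner horn lemma of Step 2 together with the compatibility of the cyclic and rooted nerve/rigidification constructions.
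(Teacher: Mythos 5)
Your proposal takes a genuinely different route from the paper, and the comparison is instructive. You frame the cyclic inner horn lemma (that $\rigc$ carries cyclic inner horn inclusions to Dwyer--Kan equivalences) as the irreducible new content and propose to re-prove it directly by the Cisinski--Moerdijk filtration-and-retraction method, with the known difficulties you list (no root to induct along, more symmetry on the subtree posets, exceptional cases). The paper never re-proves this lemma. Instead, it establishes a Beck--Chevalley isomorphism $\beta\colon \rigc f^* \cong F\rigc$ (the mate of the easier isomorphism $\alpha\colon f^*\hcn\cong\hcn F$ coming from Elliott's $L\rigc\cong\rigc f$), where $f^*\colon\cdset\to\dset$ is root-elision restriction and $F\colon\scyc\to\soperad$ is the forgetful functor. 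Once $\beta$ is shown to be a natural isomorphism, an abstract Quillen-equivalence lifting theorem for right-induced model structures (\cref{dch56}) lifts the rooted equivalence of Cisinski--Moerdijk to the cyclic one in a single step, giving the Quillen adjunction and the equivalence simultaneously. The only combinatorial input in the paper is a skeletal induction showing $\beta$ is an isomorphism on representables: one identifies $\rigc\partial\repu{T}$ explicitly and checks that $\sum_{t\in\perim T}\rigc\partial\repo{T,t}\to F\rigc\partial\repu{T}$ pushes out to $\sum_t\rigc(T,t)\to F\rigc T$ via a general pushout criterion for operad maps. This is considerably lighter than a cyclic inner horn lemma. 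Interestingly, you gesture at exactly this alternative at the end of Step 2 (``import part of this from the rooted case via \ldots compatibility with $\rigc$'') but dismiss it on the grounds that establishing the compatibility reintroduces the combinatorics --- the paper shows it does not, or at least does so in a much more manageable form. Your Step 3 is also largely subsumed by the abstract lifting theorem. Two small corrections to your description of the cyclic $W$-construction: the colors of $\rigc T$ for $T\in\cdcat$ are the \emph{arcs} of $T$ (not edges), since the color set must carry an involution; and the operation space at a profile bounding a subtree $S$ is the nerve of the poset of subsets of $E_S$ containing $\perim(S)$, not a poset of subtrees.
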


Cyclic dendroidal spaces are presheaves on $\cdcat$ valued in simplicial sets.
A (Reedy fibrant) cyclic dendroidal space $X$ is said to be \emph{Segal} if for each tree $T$, the map
\[
  X_T \to \lim_{U \subset T} X_U
\]
is a weak homotopy equivalence of simplicial sets; the limit is taken over the collection of edges and vertices $U$ of the tree $T$.
Such cyclic dendroidal spaces admit weakly defined $m$-ary cyclic-operad-style compositions (see Figure~\ref{fig cyc comp}), via the following span diagram (where $T$ is a tree with $m$ vertices and $n$ boundary edges):
\[ \begin{tikzcd}[sep=small]
\prod\limits_{v\in T} X_{\medstar_{|v|}} & 
\lim\limits_{U \subset T} X_U \lar[hook']
&
X_T \rar \lar["\simeq"'] &
X_{\medstar_n}. 
\end{tikzcd} \]
A cyclic dendroidal Segal space is said to be \emph{complete} if its underlying simplicial space is a complete Segal space in the sense of Rezk \cite{Rezk:MHTHT}.

\begin{figure}
  
\begin{tikzpicture}
    \node[inner sep=0pt] (bg) at (0,0) {\includegraphics[width=288pt]{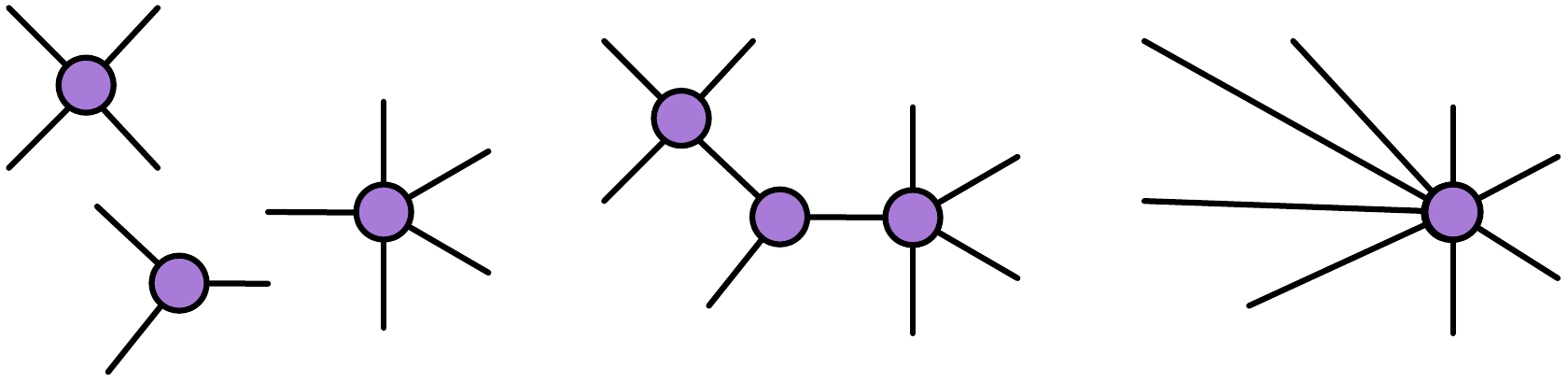}};
    
    \node at (-4.1, 0.7) {$f$};
    \node at (-4.3, -0.6) {$g$};
    \node at (-2.9, 0.2) {$h$};
    
    \node at (-0.65, 0.9) {$f$};
    \node at (0.2, 0.2) {$g$};
    \node at (1.3, -0.15) {$h$};
    
    \node at (5.4, -0.15) {$f \circ g \circ h$};
    
    \node at (-1.6, -0.15) {$\rightsquigarrow$};
    
    \node at (1.9, -0.15) {$\rightsquigarrow$};
    
\end{tikzpicture}
\caption{Ternary composition in a cyclic operad}\label{fig cyc comp}
\end{figure}

\begin{thmx}\label{intro thm Rezk model}
There is a Quillen model structure on the category of cyclic dendroidal spaces called the \mydef{cyclic dendroidal Rezk model structure} with fibrant objects the complete cyclic dendroidal Segal spaces and with cofibrations the Reedy cofibrations. 
\end{thmx}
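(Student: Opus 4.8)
The plan is to construct $\rezku$ as a left Bousfield localization of the Reedy model structure on cyclic dendroidal spaces, following the template used by Rezk for complete Segal spaces and by Cisinski and Moerdijk in the ordinary dendroidal setting. First I would set up the Reedy model structure on $\cdspace = \fun(\cdcat^{\op}, \sset)$. Here one uses that $\cdcat$ is a (dualizable) generalized Reedy category — its combinatorics are those of the unrooted trees of \cite{HackneyRobertsonYau:GCHMO}, with degree the number of vertices — together with the Kan--Quillen model structure on $\sset$; the resulting Reedy model structure has the pointwise weak homotopy equivalences as weak equivalences, the Reedy cofibrations (which, as $\cdcat$ has nontrivial automorphisms, coincide with the normal monomorphisms) as cofibrations, and the Reedy fibrant objects as fibrant objects. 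Crucially, it is left proper and combinatorial, both inherited from $\sset$, which are exactly the hypotheses guaranteeing that a left Bousfield localization at a set of maps exists and is again left proper and combinatorial with an unchanged class of cofibrations.

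I would then let $\rezku$ be the left Bousfield localization of this Reedy model structure at the set $S$ of morphisms consisting of: (i) the Segal core inclusions $\segcore{T} \hookrightarrow \repu{T}$, ranging over isomorphism classes of trees $T$ and viewed as maps between discrete cyclic dendroidal spaces; and (ii) the completeness morphism obtained by applying left Kan extension along $i \colon \simpcat \to \cdcat$, $[n] \mapsto L_n$, to Rezk's completeness map between the point $\Delta^0$ and the walking isomorphism $\freeiso$ (so that $i_!\Delta^0 = \repu{\eta}$); here $i^*$ denotes the corresponding underlying-simplicial-space functor. By the general theory this localization exists, keeps the Reedy cofibrations as its cofibrations, and is left proper and combinatorial; it then only remains to identify its fibrant objects.

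A fibrant object of $\rezku$ is a Reedy fibrant cyclic dendroidal space $X$ that is $S$-local. For every tree $T$, the Yoneda lemma gives $\map_{\cdspace}(\repu{T}, X) \cong X_T$, while $\segcore{T}$ being the colimit of the representables $\repu{U}$ over the category of edges and vertices $U$ of $T$ gives $\map_{\cdspace}(\segcore{T}, X) \cong \lim_{U \subset T} X_U$; moreover, since $\segcore{T} \hookrightarrow \repu{T}$ is a Reedy cofibration and $X$ is Reedy fibrant, the induced map $X_T \to \lim_{U \subset T} X_U$ is a Kan fibration. Hence $X$ is local with respect to (i) precisely when every such map $X_T \to \lim_{U \subset T} X_U$ is a weak equivalence — that is, precisely when $X$ is a cyclic dendroidal Segal space. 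Granting this, the underlying simplicial space $i^*X$ is a Segal space (the spines of the linear trees occurring among the Segal cores), and the adjunction $i_! \dashv i^*$ translates locality with respect to (ii) into the assertion that $i^*X$ is local for Rezk's completeness map, i.e.\ that $i^*X$ is a complete Segal space. So the fibrant objects of $\rezku$ are exactly the complete cyclic dendroidal Segal spaces, and by construction the cofibrations are the Reedy cofibrations.

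The step I expect to cost the most effort is the combinatorial infrastructure underpinning all of the above in the presence of automorphisms of unrooted trees — a complication with no counterpart in the rooted dendroidal case. This includes verifying that $\cdcat$ is a dualizable generalized Reedy category with a well-behaved theory of normal monomorphisms; giving the right colimit presentation of the cyclic Segal core $\segcore{T}$ and checking that $\segcore{T} \hookrightarrow \repu{T}$ is a Reedy (normal) cofibration; and, most delicately, dealing with the $\mathbb{Z}/2$-symmetry possessed by the linear trees $L_n$ (absent among the rooted linear trees), which forces care in showing that $i^*$ is suitably compatible with the model structures so that the adjunction argument for completeness goes through — plausibly by factoring the underlying-simplicial-space functor through the involutive simplex category $\cscat$, which does sit inside $\cdcat$ as a full generalized Reedy subcategory. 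Once this groundwork is laid, the identification of the fibrant objects is a formal consequence.
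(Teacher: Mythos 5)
Your proposal and the paper both ultimately realize $\rezku$ as a left Bousfield localization of the generalized Reedy model structure on $\cdspace$, but you arrive there by a genuinely different route. You localize directly at an explicit set $S$ (Segal cores plus a completeness map) and then identify the fibrant objects by unwinding what $S$-locality means. The paper instead \emph{right-induces} the model structure from the dendroidal Rezk model structure along $f^*\colon\cdspace\to\dspace$, using the appendix machinery (especially \cref{localization existence}) to show that this right-induced model structure is simultaneously a Bousfield localization of the Reedy model structure at $f_!S$; the Segal and completeness conditions for fibrant objects then come for free from Cisinski--Moerdijk's characterization of $\rezko$, and no direct locality analysis is performed. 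Your approach is more elementary and self-contained for this theorem alone, whereas the paper's yields the dual (right-induced + Bousfield) description, which is precisely what is needed later to prove \cref{intro thm cdset cdspace} by lifting the Quillen equivalence $\dset\rightleftarrows\rezko$ along root-elision. So your route proves Theorem~C but would still leave additional work to be done for Theorem~D.

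There is also a point in your fibrant-object identification that you flag but leave unresolved, and it is a real obligation rather than a formality. To translate locality at $i_!(\Delta^0\to\freeiso)$ into completeness of the underlying simplicial space, you need the isomorphism $\map(i_!A,X)\cong\map(A,i^*X)$ to identify \emph{derived} mapping spaces on both sides; you handle the left side (cofibrancy of $i_!\Delta^0=\repu{\eta}$ and $i_!\freeiso$ follows from normality), but the right side requires $i^*X$ to be Reedy fibrant whenever $X$ is, i.e.\ that $i^*\colon\cdspace\to\spshf{\simpcat}$ is right Quillen for the Reedy model structures. Because the linear trees $\linear{n}$ have nontrivial automorphisms in $\cdcat$, this is not immediate from the ordinary dendroidal case. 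The paper resolves exactly this kind of issue by factoring through root-elision: $f^*\colon\cdspace\to\dspace$ is shown to be both left and right Quillen for the Reedy structures in \cref{upsilon reedy v induced} (using that $f$ is a discrete fibration/opfibration on the positive/negative parts and \cref{nice reedy functor}), and then one uses the known fact that restriction $\dspace\to\spshf{\simpcat}$ is right Quillen. Your suggestion of factoring through $\cscat$ is plausible but would require an analogous analysis of $\cscat\hookrightarrow\cdcat$ as a Reedy functor; either way, this step needs to be carried out, not merely noted.
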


\begin{thmx}\label{intro thm cdset cdspace}
The inclusion of sets into simplicial sets underlies
a left Quillen equivalence from the model structure for cyclic quasi-operads to the cyclic dendroidal Rezk model structure.
\end{thmx}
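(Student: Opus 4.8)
The plan is to prove that the adjunction $i_{!} \colon \cdset \rightleftarrows \cdspace \colon i^{*}$ is a Quillen equivalence for the stated model structures, where $i_{!}$ applies the discrete embedding $\set \hookrightarrow \sset$ levelwise and its right adjoint $i^{*}$ sends a cyclic dendroidal space $Y$ to the cyclic dendroidal set of vertices $T \mapsto (Y_{T})_{0}$. Note that $i_{!}$ has a further left adjoint, levelwise $\pi_{0}$, so it preserves all limits and colimits, and that $i_{!}$ carries a representable cyclic dendroidal set to the corresponding discrete representable cyclic dendroidal space. Throughout we follow the template of Cisinski and Moerdijk's comparison of the operadic model structure on dendroidal sets with the Rezk model structure on dendroidal spaces; the new content is to carry the combinatorics through in the presence of the cyclic automorphisms of $\cdcat$. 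That $i_{!}$ preserves cofibrations is the easy part: the cofibrations for cyclic quasi-operads are the normal monomorphisms, i.e.\ the saturation of the boundary inclusions $\partial\repu{T} \hookrightarrow \repu{T}$, and since $i_{!}$ preserves colimits and monomorphisms, commutes with the latching-object colimits, and carries a free automorphism action on a complement to a levelwise free action, it sends each $\partial\repu{T} \hookrightarrow \repu{T}$ to a (generalized) Reedy cofibration, hence preserves all cofibrations.

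To see that $i_{!}$ is left Quillen it then suffices to check that it carries the generating trivial cofibrations of the cyclic quasi-operad model structure to Rezk trivial cofibrations. These generators may be taken, as in the non-cyclic case, to comprise the inner-horn inclusions $\Lambda^{T}_{e} \hookrightarrow \repu{T}$ together with a family of anodyne maps built from the inclusion $\eta \hookrightarrow \freeiso$ that witnesses invertibility of equivalences. The cyclic dendroidal Rezk model structure is, by construction, the left Bousfield localization of the (generalized) Reedy model structure on $\cdspace$ at the Segal-core inclusions $\segcore{T} \hookrightarrow \repu{T}$ together with a completeness map; the former are literally the images under $i_{!}$ of the Segal-core inclusions of cyclic dendroidal sets, and the completeness map may likewise be taken to be the image under $i_{!}$ of $\eta \hookrightarrow \freeiso$. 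Since the Segal-core inclusions of $\cdset$ generate the same saturated class of cofibrations as the inner horns (the Segal cores being inner anodyne, as in the dendroidal case), and the $\freeiso$-anodynes lie in the saturation of $\eta \hookrightarrow \freeiso$, applying the colimit-preserving functor $i_{!}$ shows that $i_{!}$ of each generating trivial cofibration lies in the saturated class generated by the Rezk-localizing maps, hence is a Rezk trivial cofibration. Thus $i_{!}$ is left Quillen.

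For the Quillen equivalence we verify the two conditions of the standard criterion: (a) $i^{*}$ reflects weak equivalences between fibrant objects, and (b) for every normal cyclic dendroidal set $X$ the derived unit $X \to i^{*}R(i_{!}X)$ is a weak operadic equivalence, where $R$ denotes Rezk-fibrant replacement (note that $i_{!}X$, being discrete, is already Reedy fibrant, so $R$ only performs the Rezk localization). For (a), a map $f \colon Y \to Y'$ of complete cyclic dendroidal Segal spaces is a Rezk weak equivalence exactly when it is a levelwise weak homotopy equivalence, and by the Segal condition it is enough to check this on the edge $\eta$ and the corollas $\medstar_{n}$; if $i^{*}f$ is a weak operadic equivalence, this follows from completeness, via the usual argument that in a complete Segal space the homotopy type in each degree is pinned down by the discrete data of vertices together with the invertibility of equivalences. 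For (b), one first reduces to the case that $X$ is a cyclic quasi-operad (replace $X$ by a fibrant replacement, which $i_{!}$ sends to a Rezk trivial cofibration, and use that $i^{*}$ preserves weak equivalences between fibrant objects), and then exhibits an explicit Rezk-fibrant model of $i_{!}X$: the cyclic dendroidal analogue $\underline{X}$ of Cisinski and Moerdijk's Rezk-fibrant dendroidal nerve, whose cyclic dendroidal set of vertices is $X$ itself and which receives a Rezk weak equivalence $i_{!}X \to \underline{X}$. Granting that $\underline{X}$ is a complete cyclic dendroidal Segal space and that $i_{!}X \to \underline{X}$ is a Rezk weak equivalence, the derived unit $X \to i^{*}R(i_{!}X)$ is identified, via the isomorphism $i^{*}\underline{X} \cong X$ and the comparison $\underline{X} \simeq R(i_{!}X)$, with a weak operadic equivalence.

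The step I expect to be the main obstacle is (b): producing $\underline{X}$ and proving it is simultaneously a complete cyclic dendroidal Segal space and Rezk-equivalent to $i_{!}X$ requires a concrete enough grip on the Rezk-fibrant replacement of a discrete cyclic dendroidal space, which amounts to the statement that ``completing a cyclic quasi-operad in the simplicial direction'' and ``imposing the Segal-core and completeness conditions in the tree direction on Reedy-fibrant cyclic dendroidal spaces'' yield the same local objects. One could instead organize the last two steps by recognizing the cyclic dendroidal Rezk model structure as the model structure of complete Segal objects in the model category for cyclic quasi-operads and invoking the general comparison theorem, but identifying the two model structures needs the same Fubini-type localization analysis. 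The omnipresent bookkeeping of $\aut(T)$-actions and normality conditions in the generalized Reedy setting is a secondary complication, handled exactly as in the construction of the cyclic quasi-operad model structure itself.
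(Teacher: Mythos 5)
Your outline follows the Cisinski--Moerdijk template for Corollary~6.7 of \cite{CisinskiMoerdijk:DSSIO} line-by-line in the cyclic setting, but the crux of your argument---producing the complete cyclic dendroidal Segal space $\underline{X}$ together with the Rezk weak equivalence $i_!X \to \underline{X}$---is left as an unverified assumption (``Granting that\ldots''), and you flag it yourself as the main obstacle. That gap is not a formality: filling it would mean re-establishing the local-to-global Segal analysis of Cisinski and Moerdijk in the presence of the nontrivial automorphism groups of $\cdcat$ (in particular the $C_2$-action on edges), and the ``Fubini-type localization analysis'' you gesture at is itself the substance of that comparison, not an available lemma. In short, you have correctly reduced the statement to the same hard content that the original non-cyclic theorem had, without discharging it.

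The paper avoids all of this. Both the model structure for cyclic quasi-operads (\cref{cdset-right-induced}) and the cyclic dendroidal Rezk model structure (\cref{cyclic dendroidal rezk}) are \emph{right-induced} along root-elision $f^*$ from their dendroidal counterparts, and the discrete-inclusion/zeroth-level adjunctions $\tilde A \colon \cdset \rightleftarrows \cdspace \colon \tilde B$ and $A \colon \dset \rightleftarrows \dspace \colon B$ form a strict map of adjunctions over $f^*$: one has $f^*\tilde A = A f^*$, $f^*\tilde B = B f^*$, and $f^*\tilde\varepsilon = \varepsilon f^*$. The Quillen-equivalence lifting theorem (\cref{dch56}, or \cite[Theorem~5.6]{DrummondColeHackney:CERIMS}) then transports the Cisinski--Moerdijk equivalence $\dset \simeq \dspace$ directly to the cyclic setting with no new local computations. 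This is not merely a shortcut: the whole point of setting up the cyclic model structures as right-induced is to make such transfers automatic, whereas your approach re-proves the hard part of Cisinski--Moerdijk rather than reusing it. If you wish to salvage your argument, you should replace part~(b) with an appeal to this lifting machinery; as written, the existence and properties of $\underline{X}$ remain an open hole.
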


These results resolve a conjecture of the second author and Drummond-Cole \cite[Remark 6.8]{DrummondColeHackney:DKHTCO} (see also \cite[Question 6.5]{Hackney:SCGO}) by leveraging the corresponding results for $\infty$-operads proved by Cisinski and Moerdijk \cite{CisinskiMoerdijk:DSMHO,CisinskiMoerdijk:DSSIO,CisinskiMoerdijk:DSSO}.

\subsection*{Planar cyclic \texorpdfstring{$\infty$}{infinity}-operads}
There are parallel results to those above in the planar/nonsymmetric case.
We prove the following omnibus theorem.

\begin{thmx}\label{intro thm planar}
The homotopy coherent nerve functor is a right Quillen equivalence from the model structure for planar (cyclic) operads to the model structure for planar (cyclic) quasi-operads, and the inclusion from the model structure for planar (cyclic) quasi-operads to the planar (cyclic) dendroidal Rezk model structure is a left Quillen equivalence.
\end{thmx}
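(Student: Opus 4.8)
The plan is to run the planar and planar-cyclic developments in parallel with Theorems~A--\ref{intro thm cdset cdspace}, using the \emph{planing} functors to organize the comparison and to import as much as possible from the symmetric results (and, in the planar rooted case, from the existing non-symmetric dendroidal theory). Write $\planing\colon\pltree\to\cdcat$ for the functor on unrooted trees forgetting the planar structure, and $\planing\colon\plrtree\to\dendcat$ in the rooted case; these induce adjoint triples $\planing_!\dashv\planing^*\dashv\planing_*$ between the associated presheaf categories, and likewise between $\cpdspace$ and $\cdspace$, and between $\pdspace$ and $\dspace$. On the operadic side there is a compatible pair ``free symmetrization $\dashv$ underlying-planar-structure'' relating $\splcyc$ to $\scyc$ and $\splop$ to $\soperad$; the homotopy coherent nerve $\hcn$ and rigidification $\rigc$ intertwine these with $\planing^*$ and $\planing_!$, and all the planing adjunctions commute with the inclusions of sets into simplicial sets. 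These compatibilities are what let us transport both the model structures and their comparisons.

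I would first construct the planar model structures. Each of them is a transfer along $\planing^*$ of its symmetric counterpart from Theorem~A or Theorem~\ref{intro thm Rezk model}; equivalently, and this is the route I would actually write out since it also produces explicit generators, one repeats the constructions verbatim. The categories $\pltree$ and $\plrtree$ are again (generalized) Reedy categories of the same flavor as $\cdcat$, with boundary and horn inclusions indexed by elementary subtrees, so the Cisinski-style localization behind Theorem~A applies with the planar normal monomorphisms as cofibrations and the planar inner horn inclusions generating the anodyne maps; if anything the bookkeeping is lighter, since automorphism groups in $\pltree$ are smaller and normality is a correspondingly weaker requirement. Likewise the planar (cyclic) dendroidal Rezk model structure is the left Bousfield localization of the Reedy model structure on $\cpdspace$ at the planar Segal-core inclusions together with the completeness map, and one identifies its fibrant objects as the complete planar (cyclic) dendroidal Segal spaces exactly as for Theorem~\ref{intro thm Rezk model}.

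With these in hand, the Quillen equivalences of the theorem are obtained by mirroring the proofs of Theorems~\ref{intro thm rigidification} and \ref{intro thm cdset cdspace}, with the planing square (and, for the passage from unrooted to rooted trees, the rooting functor) used to reuse the symmetric and non-symmetric-rooted arguments rather than to reduce outright. The homotopy coherent nerve $\hcn\colon\splcyc\to\cpdset$ (and $\splop\to\pdset$) is right Quillen for the same reason as symmetrically: it sends Dwyer--Kan fibrations to inner fibrations satisfying the appropriate isofibration condition against the free isomorphism $\freeiso$. That it is a Quillen equivalence is checked, as in the symmetric case, by verifying that the derived unit and counit are weak equivalences---planar weak equivalences being defined, just as symmetric ones are, via the underlying planar dendroidal sets, so that the relevant computations are the planar transcriptions of those used for Theorem~\ref{intro thm rigidification}. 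The second assertion, that $\cpdset\to\cpdspace$ is a left Quillen equivalence onto $\prezku$ and $\pdset\to\pdspace$ onto $\prezko$, is handled the same way, transcribing the proof of Theorem~\ref{intro thm cdset cdspace} along the space-level planing square.

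I expect the main obstacle to be the behavior of $\planing_!$ (and its simplicial-space analogue) on the pseudo-generating anodyne maps. While $\planing_!$ of a representable planar tree is just the representable on the underlying tree, $\planing_!$ of an inner horn inclusion or of a Segal-core inclusion is a union of elementary faces, and identifying it with a subobject assembled from the symmetric inner horns (respectively Segal cores) requires a careful cofinality and elementary-face analysis; this is the one step carrying genuinely new combinatorics. Equivalently, the crux is to show $\planing^*$ preserves the fibrant objects and the fibrations between them, after which the transfer exists and coincides with the localization, and the remainder of the argument is a faithful translation of the symmetric (and non-symmetric-rooted) proofs.
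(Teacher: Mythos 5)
Your plan is genuinely different from the paper's, and the difference is the heart of why the paper's \cref{sec planar} is so short: the paper never ``mirrors'' the proofs of \cref{intro thm rigidification} or \cref{intro thm cdset cdspace} in the planar world. Instead it observes that every planar category in sight is a slice category over the corresponding ``planing'' or associative object --- $\pdset\simeq\dset_{/\planing}$, $\cpdset\simeq\cdset_{/\planing}$, $\splop\simeq\soperad_{/\ass}$, $\splcyc\simeq\scyc_{/\ass}$, and similarly for the simplicial presheaf categories --- and that the planar model structures are all \emph{slice} model structures (\cref{lem planar models}). The Quillen equivalences then come for free from a general slicing lemma (Li's result, used in \cref{planar quasi rezk} and the final theorem) which says that a Quillen equivalence induces a Quillen equivalence of slice model structures once one knows the base object is fibrant; the only case-specific inputs are that $\planing$ is fibrant in $\rezko$/$\rezku$ and that $\hcn(\ass)=\planing$, both of which are a few lines. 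The agreement of the slice model structure on $\dset_{/\planing}$ with the known Moerdijk model structure on planar dendroidal sets is quoted from Gagna, and \cref{thm localization vs slice} reconciles slicing with Bousfield localization.

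By contrast, you propose to re-run the entire Beck--Chevalley analysis of \cref{sec QE simplicial cyclic} in the planar setting, which would require a planar rigidification functor on cube complexes, planar analogues of \cref{boundary horn pushout}, \cref{bdry-rigidification}, \cref{bdry-F-pushout}, and \cref{beta-pushout}, and a planar Beck--Chevalley check. That is a lot of unacknowledged combinatorics. The ``main obstacle'' you isolate is also not the right one, and is not hard: since $p\colon\pltree\to\cdcat$ (and $p\colon\plrtree\to\dendcat$) is a discrete fibration, the argument of \cref{rmk horns colimits} shows immediately that $p_!$ sends planar boundary and inner horn inclusions to the corresponding symmetric ones --- there is no cofinality or ``union of elementary faces'' subtlety. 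A smaller slip: ``transfer along $\planing^*$'' has a direction problem, since $\planing^*\colon\cdset\to\cpdset$ does not have a left adjoint to its left; the functor that actually matters is $p_!\colon\cpdset\to\cdset$, which under the equivalence $\cpdset\simeq\cdset_{/\planing}$ is the slice forgetful functor, and the slice model structure (not a transferred one) is what the paper uses. In short, your route would in the end prove the theorem, but at the cost of redoing in the planar world everything that \cref{sec QE simplicial cyclic} and \cref{sec rezk ms} already did symmetrically; the paper's slice formalism is designed precisely to avoid that, and you should recognize when the statement you are after is a slice of one you already have.
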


Apparently the non-cyclic versions do not yet appear in the literature. 
We prove this theorem using slicing techniques: planar (cyclic) operads are the same thing as (cyclic) operads sliced over the associative (cyclic) operad, and similarly for planar (cyclic) dendroidal sets/spaces.

Our interest in the planar case is primarily motivated by a connection to (cyclic) 2-Segal spaces investigated by Walde \cite{Walde:2SSIIO};
we focus here on the cyclic objects.
Walde constructs a functor $\pltree \to \mathsf{\Lambda}$ landing in Connes' cyclic category \cite{Connes:CCFE} sending a planar tree to the cyclically-ordered set of gaps between its external edges.
Any cyclic space which satisfies the 2-Segal condition \cite{DyckerhoffKapranov:TSTC,DyckerhoffKapranov:HSS} is sent by restriction along this functor to a planar complete Segal cyclic dendroidal space.
At the level of $\infty$-categories, this identifies cyclic 2-Segal spaces with the \emph{invertible} planar cyclic $\infty$-operads.
Remark 5.0.20 of \cite{Walde:2SSIIO} explains how a positive resolution of the planar version of the Drummond-Cole--Hackney conjecture would make the preceding sentence precisely true.
With the establishment of the above theorem, we consider this matter resolved.

\subsection*{Related work and future directions}

\begin{enumerate}[label=\arabic*., ref=\arabic*, left=0pt]
\item 
Barkan and Steinebrunner are currently developing $\infty$-categorical models for cyclic operads \cite{Steinebrunner:2DTFT,BarkanSteinebrunner:CIO},
which should compare favorably to the Segal-type model structures developed in this paper.
\item An important task is to construct suitable model structures for modular $\infty$-operads \cite{GetzlerKapranov:MO,HackneyRobertsonYau:GCHMO}.
We do not expect a rigidification result like \cref{intro thm rigidification} will hold, but there is hope for a modular version of \cref{intro thm cdset cdspace}.
\item Walker Stern is currently building a fibrational framework for planar cyclic $\infty$-operads, and exploring connections between these approaches would be valuable.
\item The covariant model structure on slices of dendroidal sets or spaces is used to model algebras over an $\infty$-operad \cite[\S9.5, Chapter 13]{HeutsMoerdijk:SDHT}.
Are there corresponding model structures for cyclic algebras for a cyclic $\infty$-operad?
\item Bonatto and Robertson \cite{BonattoRobertson:GAS} study Grothendieck--Teichmüller theory via (cyclic and modular) Segal presheaves in pro-groupoids/pro-spaces. 
This suggests developing profinite versions of model structures in this paper, or cyclic (and modular) analogues of the profinite $\infty$-operads of Blom and Moerdijk \cite{BlomMoerdijk:PIO,BlomMoerdijk:PCTO}.
\end{enumerate}

Finally, we note that the kind of colored cyclic operad we discuss in this paper is more general than what appears in \cite{HackneyRobertsonYau:HCO} as it allows for involutive sets of colors, and we work with a better category of trees (see \cref{rmk xi}).
The introductions to \cite{DrummondColeHackney:DKHTCO} and \cite{HackneyRobertsonYau:MONT} discuss advantages to working in the involutive setting. 

\subsection*{Guide to the paper}
The next two sections are strictly background on adjoint strings, lifted model structures, operads, and cyclic operads. 
\Cref{sec tree cats} collects important properties of the cyclic dendroidal category $\cdcat$ and its relationship with the dendroidal category $\dendcat$, and also provides foundational material concerning presheaves on these categories. 
Existing model structures, for quasi-operads and for anti-involutive quasi-categories, are recalled in \cref{section old models}, along with new information concerning the latter.

In \cref{sec cyclic quasi-operads} we establish the model structure for cyclic quasi-operads, which involves a careful analysis of the functors relating dendroidal and cyclic dendroidal sets.
The heart of the paper is \cref{sec QE simplicial cyclic}, where we study the homotopy coherent nerve/rigidification adjunction and prove \cref{intro thm rigidification}.
In \cref{sec rezk ms} we introduce the cyclic dendroidal Rezk model structure on cyclic dendroidal spaces, which admits two descriptions: it is both right-induced from the dendroidal Rezk model structure, and a Bousfield localization of the generalized Reedy model structure.

In \cref{sec planar} we turn to the planar case.
Each model structure from \cref{intro thm planar} can be considered as a slice model structure, allowing us to apply general results to lift Quillen equivalences.
We also give alternative descriptions for several of these model structures.

The appendix collects a number of general results about model categories which are used in the main body of the paper.
These especially involve the theory of lifted model structures and their interactions with other model categorical machinery, such as Quillen equivalences, left Bousfield localization, and generalized Reedy model structures.

\section{Adjoint strings and lifted model structures} \label{sec adj strings}
A main technical tool in this paper is the concept of an \emph{adjoint string}, by which we mean a functor $F \colon \catN \to \catM$ together with a specified left adjoint $L$ and a specified right adjoint $R$. 
We will write such adjoint strings as $L \dashv F \dashv R$.
As a primary example, if $f \colon C \to D$ is a functor between small categories, then we obtain an adjoint string $f_! \dashv f^* \dashv f_*$ between the associated categories of presheaves.
\[
\begin{tikzcd}[column sep=large]
    \pre{D}  \rar["f^*" description] 
    \rar[phantom, bend right=18, "\scriptscriptstyle\perp"]  
    \rar[phantom, bend left=18, "\scriptscriptstyle\perp"] 
    & \pre{C}
    \lar[bend right=30, "f_!" swap, start anchor = {[yshift=1.5ex]west}, end anchor = {[yshift=1.5ex]east}] 
    \lar[bend left=30, "f_*", start anchor = {[yshift=-1.5ex]west}, end anchor = {[yshift=-1.5ex]east}] 
\end{tikzcd}
\]
Here $\pre{C} = \fun(C^\op, \set)$ is the category of presheaves, $f^*$ is given by precomposition with $f^\op \colon C^\op \to D^\op$, and $f_!$ and $f_*$ are respectively given by left and right Kan extension along $f^\op$.

\begin{example}\label{ex color sets}
Consider the inclusion $\iota$ of the trivial group $\{e\}$ into the group with two elements $C_2$, both considered as categories.
A presheaf on $\{e\}$ is just a set, and a presheaf on $C_2$ is just a set $A$ equipped with an involution $a \mapsto a^\dagger$. 
The functor $\iota^*$ forgets involutions, and we can take $\iota_!(S) = S \amalg S$ together with the swapping involution and $\iota_*(S) = S \times S$ together with reflection.
Put another way, the left adjoint is defined by $\iota_!S = C_2 \times S$ and the right adjoint by $\iota_*S = S^{C_2},$ each with the evident action of $C_2$.
\end{example}

\begin{definition}
Suppose $\catM$ and $\catN$ are model categories and $F \colon \catN \to \catM$ is a functor.
We say that
\begin{itemize}[left=0pt]
\item The model structure on $\catN$ is \emph{right-induced} from that on $\catM$ if $F$ admits a left adjoint and creates fibrations and weak equivalences.
\item The model structure on $\catN$ is \emph{left-induced} from that on $\catM$ if $F$ admits a right adjoint and creates cofibrations and weak equivalences.
\end{itemize}
\end{definition}

Since a model structure is fully determined by the fibrations and weak equivalences, there is at most one right-induced model structure on the category $\catN$ relative to a functor $F \colon \catN \to \catM$, where $\catM$ is a model category.
Our usual starting point is a model structure on $\catM$ and a functor $F \colon \catN \to \catM$ from a category.
If the right-induced model structure on $\catM$ exists, we will denote it by $\catN_r$ (and likewise by $\catN_l$ for the left-induced model structure).

Our interest in adjoint strings stems primarily from the following basic lemma for lifted model structures \cite{DrummondColeHackney:CERIMS,HackneyRovelli:IMSHC, Shulman:UU}.
A proof of the left-induced statement is included in \cref{subsec left induced}.

\begin{lemma}\label{triple-induced-model-structures}
Consider adjoint functors $L\dashv F \dashv R$ 
\[
\begin{tikzcd}[column sep=large]
    \catN  \rar["F" description] 
    \rar[phantom, bend right=18, "\scriptscriptstyle\perp"]  
    \rar[phantom, bend left=18, "\scriptscriptstyle\perp"] 
    & \catM
    \lar[bend right=30, "L" swap, start anchor = {[yshift=1.5ex]west}, end anchor = {[yshift=1.5ex]east}] 
    \lar[bend left=30, "R", start anchor = {[yshift=-1.5ex]west}, end anchor = {[yshift=-1.5ex]east}] 
\end{tikzcd}
\]
where $\catM$ is a cofibrantly-generated model category and $\catN$ is bicomplete.
If $FL \dashv FR$ is a Quillen adjunction on $\catM$, then the right-induced model structure $\catN_r$ on $\catM$ exists.
If, additionally, $\catM$ and $\catN$ are locally presentable (i.e.\ $\catM$ is combinatorial), then the left-induced model structure $\catN_l$ on $\catM$ exists as well. \qed
\end{lemma}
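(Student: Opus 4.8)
The plan is to derive both statements from the classical transfer/lifting machinery for cofibrantly-generated model categories, using the specified adjoints $L$ and $R$ of $F$ to verify the relevant acyclicity hypotheses. Write $I$ and $J$ for a set of generating cofibrations and generating trivial cofibrations of $\catM$, and let $FI = \{Fi : i \in I\}$, $FJ = \{Fj : j \in J\}$; these will be the candidate generating (trivial) cofibrations for $\catN_r$. Since $\catN$ is bicomplete and $F$ admits the left adjoint $L$, the standard recognition theorem (Kan's transfer theorem, as in Hirschhorn or Crans) says that $\catN_r$ exists provided: (i) the sets $FI$ and $FJ$ permit the small object argument in $\catN$, and (ii) every relative $FJ$-cell complex in $\catN$ is sent by $F$ to a weak equivalence in $\catM$. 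Condition (i) is immediate because $L$ is a left adjoint and hence preserves the smallness needed (the domains of $I,J$ are small in $\catM$, so their images under $L$ — no, rather: here $F$ is the right adjoint, so we instead use that $L$ preserves colimits to transport the small object argument), and this is routine. The real content is (ii).

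The key step is to reinterpret condition (ii) through the second adjoint. A relative $FJ$-cell complex in $\catN$ is, by adjunction $L \dashv F$, precisely a map built by pushouts and transfinite composition from maps of the form $Fj$; applying $F$ and using that $F$ is a right adjoint one cannot directly commute $F$ past the colimits, so instead one argues as follows. To check that $F$ carries such a map to a weak equivalence in $\catM$, it suffices (since weak equivalences are closed under retracts and the map in question is a retract of an $FJ$-cell complex) to handle genuine $FJ$-cell complexes; and to check $F$ of an $FJ$-cell complex is a weak equivalence, one uses the hypothesis that $FL \dashv FR$ is a Quillen adjunction on $\catM$. Concretely: a pushout in $\catN$ along $Fj \colon FA \to FB$ of a map $FA \to X$ is, after applying $L$ and the counit, comparable to a pushout in $\catM$ along $FLj$; since $FL$ is left Quillen, $FLj$ is a trivial cofibration in $\catM$, pushouts of trivial cofibrations are trivial cofibrations, transfinite composites of trivial cofibrations are trivial cofibrations, and hence the image in $\catM$ is a weak equivalence. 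Filling in this comparison carefully — tracking the unit/counit of $L \dashv F$ so that the $\catN$-colimit maps to the correct $\catM$-colimit — is the main obstacle, and is exactly the argument given in \cite{DrummondColeHackney:CERIMS}. Once (ii) holds, Kan's theorem produces $\catN_r$ with $FI$, $FJ$ as generators, and $F$ creates fibrations and weak equivalences by construction.

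For the left-induced statement, add the hypothesis that $\catM$ and $\catN$ are locally presentable. Now $F$ admits the right adjoint $R$, so we are in the situation of left transfer along $R \dashv$(something) — more precisely, left-inducing along $F$ is the same as right-inducing along $R$ with the roles of the two model categories swapped, except that to run the right-transfer recognition theorem for $R$ one needs $\catN$ (playing the role of ``$\catM$'' in the lemma) to be cofibrantly generated, which we now know from the first part since $\catN_r$ exists and is cofibrantly generated. Thus one applies the first part again, this time to the adjoint string obtained by dualizing — using $FR \dashv FL$ (equivalently, the Quillen adjunction $FL \dashv FR$ read in the other direction, which is still a Quillen adjunction) to verify the acyclicity hypothesis for left transfer. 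The local presentability ensures the small object argument is available for left-induction (via Jeff Smith's theorem / the locally presentable recognition criterion), and combinatoriality of $\catM$ passes to the lifted structures. The subtle point here is verifying the ``acyclic cofibration'' half of the left-induced structure, i.e.\ that maps with the right lifting property against $R$-images of cofibrations that are also weak equivalences are detected correctly; this is handled by \cref{triple-transfer} in \cref{appendix lifted}, whose proof is a mirror of the argument sketched above for the right-induced case, with $FR \dashv FL$ in place of $FL \dashv FR$. I expect the bookkeeping in this mirrored argument — keeping straight which functor is left/right adjoint and which Quillen adjunction supplies acyclicity — to be the only genuine difficulty; everything else is an application of standard combinatorial model category technology.
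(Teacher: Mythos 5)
For the right-induced structure your sketch is broadly sound, but contains a step that contradicts the key point of the adjoint-string setup. You write that since ``$F$ is a right adjoint one cannot directly commute $F$ past the colimits,'' and then attempt a workaround by ``applying $L$ and the counit.'' But $F$ has a right adjoint, namely $R$, and so $F$ \emph{does} preserve all colimits; this is exactly why $R$ is in the hypotheses. The argument is then direct: $F$ of a relative $LJ$-cell complex in $\catN$ is literally a relative $FLJ$-cell complex in $\catM$, and $FL$ being left Quillen makes these weak equivalences. Your ``apply $L$'' detour does not type-check (the functor $L$ points from $\catM$ to $\catN$, so it cannot carry a pushout in $\catN$ anywhere) and is unnecessary. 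Also, your candidate generating sets should be $LI$ and $LJ$, not $FI$ and $FJ$, which are not even maps in $\catN$.

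For the left-induced structure the proposal has a genuine gap. You suggest that left-inducing along $F$ is ``the same as right-inducing along $R$ with the roles of the two model categories swapped,'' and propose to re-apply the first part after dualizing. This cannot work: the opposite of a cofibrantly-generated model category is not in general cofibrantly generated, so applying the right-induction transfer theorem to $\catM^{\op}$, $\catN^{\op}$, $F^{\op}$ would require $\catM$ to be fibrantly generated, which is neither assumed nor implied by combinatoriality. Moreover, you defer the ``subtle point'' to \cref{triple-transfer}, but that is precisely the result you are asked to prove, so the citation is circular. The paper's actual argument for left-induction does not dualize the right-induction argument at all: it invokes the Acyclicity Theorem of \cite{HKRS:NSCIMS,GKR:LAMS}, whose hypotheses require producing, for each $X \in \catN$, an object $QX$ with $FQX$ cofibrant, $F(QX \to X)$ a weak equivalence, and a good cylinder on $QX$. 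The trick is to take $Q$ to be a functorial cofibrant replacement \emph{in the right-induced structure $\catN_r$ just constructed}, and then use that $F \dashv R$ is Quillen between $\catN_r$ and $\catM$ to see that $FQX$ is cofibrant in $\catM$, with the cylinder axiom likewise verified by factoring the fold map in $\catN_r$. This use of $\catN_r$ as scaffolding for verifying the acyclicity conditions on the left-induced side is the essential idea your proposal does not supply.
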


In the context of this lemma, it is often possible to lift a Quillen equivalence $\catM \simeq \catM'$ to a Quillen equivalence $\catN \simeq \catN'$, and we will use this technique multiple times.
See \cite[Theorem 5.6]{DrummondColeHackney:CERIMS} and \cref{sec lifted QE} for precise statements.

\section{Operads and cyclic operads}\label{sec operads and cyclic}
We now give background on operads and on cyclic operads, following \cite{DrummondColeHackney:DKHTCO}.
Operads are the same thing as (small) symmetric multicategories, and cyclic operads in our sense are also known as cyclic symmetric multicategories \cite[Definition 7.4]{Shulman:2Chu}; see \cite{ChengGurskiRiehl:CMMAM} for the nonsymmetric case. 

The notation $[0,n] = \{ 0, 1, \dots, n \}$ and $[1,n] = \{1, 2, \dots, n\}$ will refer to intervals of integers, which could be empty.
We let $\Sigma_n = \aut([1,n])$ and $\Sigma_n^+ = \aut([0,n])$ be the symmetric groups (so $\Sigma_n^+ \cong \Sigma_{n+1}$).
Suppose $C$ is a set of \emph{colors}.
A \emph{profile} in $C$ is a (possibly empty) word in $C$, that is, a function $[1,n] \to C$ or $[0,n] \to C$. 
Notice that there are (right) $\Sigma_n$ or $\Sigma_n^+$ actions on the set of profiles of a fixed length, given by precomposition.
Given two profiles $\uc = c_1, \dots, c_n$ and $\ud = d_1, \dots, d_m$ in $C$, define (for $1\leq i \leq n$)
\[
    (c_1, \ldots, c_n) \comp{i}{} (d_1, \dots, d_m) = c_1, \dots, c_{i-1}, d_1, \dots, d_m, c_{i+1}, \dots, c_n.
\]

\begin{definition} An \mydef{operad} $P$ consists of the following data:
\begin{enumerate}[label=(O\arabic*), ref=O\arabic*]
\item a set of colors $\colors(P)$,
\item a collection of sets $P(c_1, \dots, c_n; c)$ (where $c_1, \dots, c_n, c\in \colors(P))$,
\item for each $\sigma \in \Sigma_n$, an operator \[ \sigma^* \colon P(c_1, \dots, c_n; c) = P(\uc; c) \to P(\uc \sigma; c) = P(c_{\sigma(1)}, \dots, c_{\sigma(n)}; c), \] assembling to a right action of $\Sigma_n$ on $\coprod_{\uc \in \colors(P)^n} P(\uc; c)$,
\item identity elements $\id_c \in P(c; c)$, and
\item composition operations, defined only when $c_i = d$, \[
        \comp{i}{} : P(\uc; c) \times P(\ud; d) \to P(\uc \comp{i}{} \ud; c).
    \]
\end{enumerate}
This data must satisfy associativity, unitality, and equivariance axioms \cite[\S2.2]{DrummondColeHackney:DKHTCO}.
A \mydef{map of operads} $f\colon P \to Q$ consists of a set map $\colors(P) \to \colors(Q)$ and functions 
\[ P(c_1, \dots, c_n; c) = P(\uc; c) \to Q(f\uc; fc) = Q(fc_1, \dots, fc_n; fc) \]
which are compatible with all of the basic data. 
We write $\operad$ for the category of operads.
\end{definition}

Notice that our operads are by default \emph{colored} operads.
The usual case \cite{May:GILS,Markl:OP} is recovered by considering operads with $\colors(P)$ to be a singleton set.
Likewise, our cyclic operads will also be colored.
Here we emphasize a crucial point -- the color set will be an \emph{involutive} set, that is it is equipped with an involution $c\mapsto c^\dagger$ (which is potentially trivial). 
Given two profiles $\uc = c_0, \ldots, c_n$ and $\ud = d_0, \dots, d_m$ and two integers $0\leq i \leq n$ and $0\leq j \leq m$, we define 
\[
(c_0, \ldots, c_n) \comp{i}{j} (d_0, \dots, d_m) = c_0, \dots, c_{i-1}, d_{j+1}, \dots, d_m, d_0, \dots, d_{j-1}, c_{i+1}, \dots, c_n.
\]

Operadic composition plugs an output of one operation into an input of another operation to produce a new operation. 
For cyclic operads, we do not distinguish between inputs and outputs of operations, so instead must specify which pair of `puts' to compose along.

\begin{definition}\label{def cyclic operad}
A \mydef{cyclic operad} $P$ consists the following data:
\begin{enumerate}[label=(C\arabic*), ref=C\arabic*]
\item an involutive set of colors $\colors(P)$,
\item a collection of sets $P(c_0, c_1, \dots, c_n)$, one for each profile in $\colors(P)$,
\item for each $\sigma \in \Sigma_n^+$, an operator \[ \sigma^* : P(\uc) = P(c_0, c_1, \dots, c_n) \to P(c_{\sigma(0)}, \dots, c_{\sigma(n)}) = P(\uc\sigma)\] assembling to a right action of $\Sigma_n^+$ on $\coprod_{\uc \in \colors(P)^{n+1}} P(\uc)$,
\item identity elements $\id_c \in P(c^\dagger, c)$, and
\item \label{def cyc comp}
composition operations:  
if $\uc = c_0, \dots, c_n$, $\ud = d_0, \dots, d_m$, $0\leq i \leq n$, $0\leq j \leq m$, and $c_i = d_j^\dagger$, a function
        \[
            \comp{i}{j} : P(\uc) \times P(\ud) \to P(\uc \comp{i}{j} \ud). 
        \]
\end{enumerate}
This data must satisfy commutativity, associativity, unitality, and equivariance axioms \cite[\S2.3]{DrummondColeHackney:DKHTCO}.
We additionally require that\footnote{Notice that \eqref{def co positivity} is not part of \cite[Definition 2.3]{DrummondColeHackney:DKHTCO}; this condition is called positivity there \cite[Definition 4.1]{DrummondColeHackney:DKHTCO}.
The category we call $\cycop$ is denoted by $\cycop^\uparrow$ in that work.
We could also avoid having a value of $P$ at the empty profile, so long as we also forget about the unique composition operation $\comp{0}{0}$ in \eqref{def cyc comp} when $m=n=0$.}
\begin{enumerate}[resume*]
\item the value of $P$ at the empty profile is a terminal object,  $P(\,\,) = \ast$.\label{def co positivity}
\end{enumerate}
A \mydef{map of cyclic operads} $f\colon P \to Q$ consists of an involutive function $\colors(P) \to \colors(Q)$ and functions $P(\uc) \to Q(f\uc)$
which are compatible with all of the basic data. 
We write $\cycop$ for the category of cyclic operads.
\end{definition}

In constructions we will not typically mention the empty profile $P(\,\,)$.

We can also define operads or cyclic operads enriched in some other symmetric monoidal category. 
We will only use one such alternate enriching category in this paper, namely simplicial sets.
We denote by $\soperad$ the category of \mydef{simplicial operads}. 
Objects in this category are operads enriched in simplicial sets, that is, if $P \in \soperad$ then each $P(\uc; c)$ is a simplicial set instead of a just a set, and all relevant maps in the definitions are simplicial set maps. 
We still have that $\colors(P)$ is just a set, not a simplicial set.
Likewise, $\scycop$ is the category of \mydef{simplicial cyclic operads}.

\begin{example}[Anti-involutive categories {\cite[Example 2.10]{DrummondColeHackney:DKHTCO}}]
Every small category $C$ equipped with an anti-involution $\iota \colon C^\op \to C$ satisfying $\iota \circ \iota^\op = \id_C$ gives rise to a cyclic operad $P$.
The color set of $P$ is the set of objects of $C$ (which becomes an involutive set via $\iota$), and $P(\uc)$ is empty unless the profile $\uc$ has length 0 or 2.
In the latter case, $P(c_0,c_1)$ is defined to be the set of morphisms $c_1 \to \iota c_0$.
\end{example}

There is a forgetful functor $F \colon \cycop \to \operad$ which on color sets forgets the involution and has $FP(c_1, \dots, c_n; c) \coloneqq P(c^\dagger, c_1, \dots, c_n)$.
This functor turns out to admit both a left adjoint $L$ and a right adjoint $R$, and the adjoint string $L \dashv F \dashv R$ lives over the adjoint string between sets and involutive sets from \cref{ex color sets}.
\begin{equation}\label{cd iset set adjunctions}
\begin{tikzcd}[column sep=large]
    \set^{C_2} \rar["\iota^*" description]  & \set^{\{e\}} 
    \lar[bend right=20, "\iota_!" swap] 
    \lar[bend left=20, "\iota_*"]  \end{tikzcd}
\end{equation}
If $X$ is a set, the underlying set of $\iota_!X$ will occasionally be written as 
\[
    \{ x^a \,|\, x \in X, a \in \{0,1\} \}
\]
with $(x^0)^\dagger = x^1$ and the underlying set of $\iota_*X$ as 
$\{ x_0 \times x_1 \, | \, x_i \in X \}$
with $(x_0 \times x_1)^\dagger = x_1 \times x_0$.
See \cite[\S3]{DrummondColeHackney:DKHTCO} for details on the following.\footnote{More precisely, our $LP$ coincides with $GLP$ there -- see \cite[Lemma 4.2]{DrummondColeHackney:DKHTCO}.}

\begin{definition}\label{def cycop adjoint string}
Let $F \colon \cycop \to \operad$ be the forgetful functor from cyclic operads to operads with $FP(c_1, \dots, c_n; c) \coloneqq P(c^\dagger, c_1, \dots, c_n)$.
We write $L,R \colon \operad \to \cycop$ for its left and right adjoint, which have the following partial descriptions.
\begin{itemize}[left=0pt]
\item 
Set $\colors(LP) = \iota_!\colors(P)$ and $LP(\,\,) = \ast$. 
If $c_0^{a_0}, c_1^{a_1}, \dots, c_n^{a_n} = \uc^{\underline a}$ is a profile such that there exists a \emph{unique} index $k$ with $a_k = 1$, then define 
\[
    LP(\uc^{\underline a}) \coloneqq P(c_{k+1}, \dots, c_n, c_0, \dots, c_{k-1} ; c_k).
\]
In all other cases, $LP(\uc^{\underline a})$ is defined to be the empty set.
\item
Set $\colors(RP) \coloneqq  \iota_*\colors(P)$, and let 
\[
    RP\Big(c_0^0 \times c_0^1 , \dots, c_n^0 \times c_n^1 \Big) \coloneqq \prod_{k=0}^n P(c_{k+1}^0, \dots, c_n^0, c_0^0, \dots, c_{k-1}^0; c_k^1).
\]
\end{itemize}
\end{definition}

The composition operations $\comp{i}{j}$ in $LP$ and $RP$ are induced from the composition operations $\comp{\ell}{}$ in $P$.
If $X$ is a category, then $LX$ is isomorphic to the category $X \amalg X^\op$ and $RX$ is isomorphic to $X\times X^\op$ \cite[2.7]{DrummondColeHackney:CERIMS}.

\Cref{def cycop adjoint string} applies equally well to the forgetful functor from simplicial cyclic operads to simplicial operads, $L$ and $R$ induce adjoints to $F \colon \scyc \to \soperad$; we return to this in \cref{sec QE simplicial cyclic}.

\section{Rooted and unrooted trees}\label{sec tree cats}
In this section we establish background we will need concerning tree categories.
We will use the following definition of graph with loose ends from \cite[\S3]{JoyalKock:FGNTCSM}, which is based on arcs, which are edges equipped with on of the two possible orientations.
\begin{definition}
An \mydef{undirected graph} $G$ is a diagram of finite sets
\[\begin{tikzcd}[column sep=small] A \arrow[loop left,"\dagger"] & \lar[hook'] D \rar{t} & V \end{tikzcd}\]
with $\dagger$ a fixpoint-free involution and $D \to A$ a monomorphism.
The set $A$ is the set of \mydef{arcs} and $V$ is the set of \mydef{vertices}. 
If $v\in V$ is a vertex, then $\nbhd(v) \coloneqq t^{-1}(v) \subseteq D$ is the \mydef{neighborhood} of $v$.
The \mydef{boundary} of $G$ is the set $\perim(G) = A \backslash D$, and the set of \mydef{edges}, $E$, is the set of $\dagger$-orbits $\{ [a,a^\dagger] \mid a \in A\}$.
\end{definition}

\begin{figure}
\begin{tikzpicture}
    \node[inner sep=0pt] (bg) at (0,0) {\includegraphics[width=288pt]{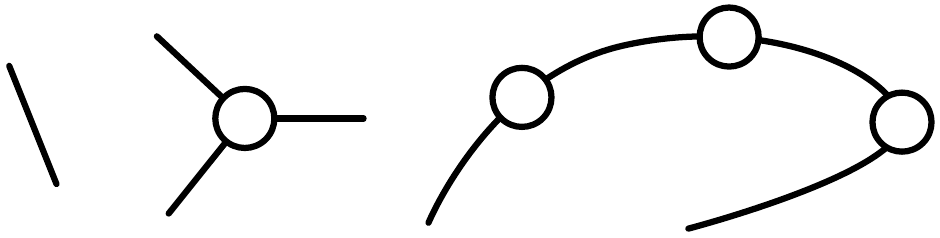}};

    \node at (-4.77, 0.7) {$a$};
    \node at (-4.2, -0.6) {$a^\dagger$};

    \node at (-3, 0.2) {$a$};
    \node at (-3.5, 0.6) {$a^\dagger$};
    \node at (-2.6, -0.55) {$b$};
    \node at (-3.5, -1) {$b^\dagger$};
    \node at (-1.1, -0.15) {$c^\dagger$};
    \node at (-1.9, 0.17) {$c$};
    \node at (-2.42, -0.02) {$v$};

    \node at (0, 0) {$0$};
    \node at (1, 0.8) {$1^\dagger$};
    \node at (-0.1, -1) {$0^\dagger$};
    \node at (2.23, 1.1) {$1$};
    \node at (4.6, 0.5) {$2$};
    \node at (3.45, 1.07) {$2^\dagger$};
    \node at (2.2, -1) {$3$};
    \node at (4.5, -0.6) {$3^\dagger$};

    \node at (0.57, 0.2) {$v_1$};
    \node at (2.8, 0.85) {$v_2$};
    \node at (4.67, -0.08) {$v_3$};

\end{tikzpicture}
\caption{An edge, a star, and a linear graph}\label{fig examples}
\end{figure}

\begin{example}\label{ex trees} We give several fundamental examples of graphs (\cref{fig examples}).
\begin{enumerate}[left=0pt]
\item If $A$ is a finite set equipped with a fixpoint-free involution, then there is an associated graph $G$ as follows.
\[\begin{tikzcd}[column sep=small] A \arrow[loop left,"\dagger"] & \lar[hook'] \varnothing \rar{t} & \varnothing \end{tikzcd}\]
This graph has no vertices, and boundary $\perim(G) = A$.
When $A$ has cardinality two, this graph is referred to as an \mydef{edge}.\label{itemex edge graph}
\item Suppose $S$ is a finite set, and let $S \amalg S^\dagger$ be the free involutive set on $S$ (\cref{ex color sets}).
There is an associated star-shaped graph $G$
\[\begin{tikzcd}[column sep=small] S \amalg S^\dagger \arrow[loop left, distance=1.3em, out=-170, in=170, "\dagger"] & \lar[hook'] S \rar{t} & \{v\} \end{tikzcd}\]
which has a single vertex $v$ with $\nbhd(v) = S$, and $\perim(G) = S^\dagger$.
This graph is referred to as a \mydef{star}.\label{itemex star graph}
\item For $n\geq 0$, let $S = \{0, 1, \dots, n\}$ and $A = S\amalg S^\dagger$.
Define $V=\{v_1, \dots, v_n\}$ and $D = A \backslash \{0^\dagger, n\}$, and let $t\colon D \to V$ be given by $t(k^\dagger) = v_k$ and $t(k) = v_{k+1}$.
This data defines the \mydef{linear graph} $\linear{n}$.
It has the property that each vertex neighborhood has two elements $\nbhd(v_k) = \{ k-1, k^\dagger \}$, as does the boundary $\perim(\linear{n}) = \{0^\dagger, n \}$. 
In the case $n=0$, the linear graph $\linear{0}$ is an edge.\label{itemex linear graph}
\end{enumerate}
\end{example}

\begin{definition}
Suppose $G$ is a graph, and let $E' \subseteq E$ and $V' \subseteq V$ be subsets of edges and vertices of $G$.
Let $A' \subseteq A$ denote the set of all arcs appearing in the edges in $E'$, and let $D' \coloneqq t^{-1}(V') \subseteq D$.
We say that $(E',V')$ is a \mydef{subgraph} if $D' \subseteq D \to A$ lands in the set $A'$.
\begin{equation*}\label{eq undirected subgraph} \begin{tikzcd}
A'\dar[hook]  \arrow[loop left,"\dagger"] & \lar[hook', dashed] D'\dar[hook]  \rar & V'\dar[hook]  \\
A \arrow[loop left,"\dagger"] & \lar[hook'] D \rar & V
\end{tikzcd} \end{equation*}
\end{definition}

\begin{definition}
A \mydef{tree} is a graph which is connected and does not contain any cycles.
\end{definition}

The edges, stars, and linear graphs from \cref{ex trees} are all trees.

\begin{convention}
Henceforth, we tacitly assume that all trees have inhabited boundary $\perim(T) \neq \varnothing$.
\end{convention}

In particular, we are excluding the star on the empty set, so $\nbhd(v)$ will always be inhabited.

\begin{definition}
If $T$ is a tree, write $\subtree(T)$ for the set of \mydef{subtrees} of $T$, that is, subgraphs of $T$ which are trees.
\end{definition}

\begin{example}\label{ex basic subtrees}
There are inclusions $V \hookrightarrow \subtree(T)$ and $E \hookrightarrow \subtree(T)$. 
A vertex $v$ maps to the star $\medstar_v$ which has a single vertex and arc set $\nbhd(v) \amalg \nbhd(v)^\dagger$.
An edge $e = [a,a^\dagger]$ maps to $\linear{0}_e$ which is the subgraph which has no vertices and exactly two arcs $a,a^\dagger$.
\end{example}

If $T$ is a tree, then there is an associated cyclic operad $\cdcatrm(T)$ which has involutive color set $\colors(\cdcatrm(T)) = A$ and is freely generated by the vertices of $T$.
The value of $\cdcatrm(T)$ at a profile $\underline{a} = a_0, \dots, a_n$ (with $n\geq 0$) is either empty or a point. 
It is inhabited just when there is a subtree $S\in \subtree(T)$ such that $\perim(S) = \{ a_0, \dots, a_n \}$ and the elements $a_0, \dots, a_n$ are distinct. 
We can think in this case $\cdcatrm(T)(\ua) = \{S\}$.
One can visualize the composition operation $\comp{i}{j}$ as a grafting of subtrees of $T$.

\begin{definition}
Suppose $T$ and $T'$ are trees. 
A \mydef{tree map} from $T$ to $T'$ is a map of cyclic operads $\cdcatrm(T) \to \cdcatrm(T')$.
The category $\cdcat$ has objects the trees and morphisms the tree maps, and is called the \mydef{cyclic dendroidal category}.
\end{definition}

There is thus a fully faithful inclusion $\cdcat \hookrightarrow \cycop$. 
The category $\cdcat$ is called $\mathsf{U}_\text{cyc}$ in other sources; it was originally defined in \cite{HackneyRobertsonYau:GCHMO} as a full subcategory of a bigger category $\mathsf{U}$ whose objects are arbitrary connected graphs.
The preceding definition is equivalent to the original one by \cite[Proposition 3.2]{Hackney:SCGO}.
There is also a different combinatorial description of tree maps in  \cite[Proposition B22]{Hackney:CGOS} or \cite[Appendix A.1]{Hackney:SCGO}.

\begin{definition}\label{def rooted tree}
A \mydef{rooted tree} is a pair $(T,r)$ where $T$ is a tree and $r\in \perim(T)$ is a chosen boundary element.
\end{definition}

We write $\out(T,r) = [r,r^\dagger] \in E$ for the output edge of a rooted tree $(T,r)$.
We also have the set of input edges $\inp(T,r) \subseteq E$, which is the image of $\perim(T) \setminus r$ under $A \twoheadrightarrow E$.
If $(T,r)$ is a rooted tree and $S$ is a subtree, we will always regard $S$ as a rooted tree in a canonical way: its root is the element of $\perim(S) \subseteq \perim(T)$ which is nearest to $r$.
In particular, each vertex $v$ in a rooted tree has an output edge $\out(v) \in E$ and a set of input edges $\inp(v) \subseteq E$.
Rooted trees admit simpler equivalent descriptions involving only vertices and edges, rather than arcs (see \cite{Weiss:FODS}, \cite{Kock:PFT}, etc.), but the above facilitates comparison with unrooted trees. 

If $(T,r)$ is a rooted tree, then there is an associated operad $\Omega(T,r)$ which has the edges of $T$ as its colors and is freely generated by the vertices.
More specifically, $\Omega(T,r)(e_1, \dots, e_n; e)$ is inhabited just when there is a subtree $S$ whose output edge is $e$, whose set of inputs is $\{e_1, \dots, e_n\} \subseteq E$ and all $e_i$ are distinct.

\begin{definition}\label{def rooted tree map}
Suppose $(T,r)$ and $(T',r')$ are rooted trees. 
A \mydef{rooted tree map} from $(T,r)$ to $(T',r')$ is a map of operads $\Omega(T,r) \to \Omega(T',r')$.
The category of rooted trees is denoted by $\dendcat$ and is called the \mydef{dendroidal category} \cite{MoerdijkWeiss:DS}.
\end{definition}

There is a fully faithful functor $\simpcat \to \dendcat$ sending $[n] = \{0, 1, \dots, n \}$ to the rooted linear tree $(\linear{n},n)$.
We will simply regard $\simpcat$ as a full subcategory of $\dendcat$, ignoring the distinction between the objects $[n]$ and $(\linear{n},n)$.
See Figure~\ref{fig delta v omega}.
\begin{figure}
\[ \begin{tikzcd}[sep=tiny]
{[3]:} & & 0 \ar[rr] & & 1 \ar[rr] & & 2 \ar[rr] & & 3 \\
{\linear{3}:} & |[circle]| \phantom{v_1} \ar[rr, no head, "0^\dagger" near start, "0" near end, very thick] & & |[draw,circle, very thick]| v_1 \ar[rr, no head, "1^\dagger" near start, "1" near end, very thick] & & |[draw,circle, very thick]| v_2 \ar[rr, no head, "2^\dagger" near start, "2" near end, very thick] && |[draw,circle, very thick]| v_3 \ar[rr, no head, "3^\dagger" near start, "3" near end, very thick] && |[circle]| \phantom{v_3}
\end{tikzcd} \]
\caption{Objects in $\simpcat$ versus their counterparts in $\dendcat$}\label{fig delta v omega}
\end{figure}

\begin{definition}\label{forgetful-functor-def}
\mydef{Root-elision} is the functor $f \colon \dendcat \to \cdcat$ which sends a rooted tree $(T,t)$ to the underlying tree $T$, and sends a rooted tree map $(S,s) \to (T,t)$ to the underlying tree map $S \to T$.
\end{definition}

If $(T,r)$ is a rooted tree, then $L\Omega(T,r)$ is isomorphic to $\cdcatrm(T)$, where $L \colon \operad \to \cyc$ is the left adjoint from \cref{def cycop adjoint string}.
Thus root-elision is more or less given by $L$.
The following appears in \cite[Lemma 6.15]{Hackney:CGOS}.
\begin{proposition}\label{disc fib}
Root-elision is a discrete fibration. \qed
\end{proposition}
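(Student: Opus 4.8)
The plan is to realize root-elision as the projection out of a category of elements, which is automatically a discrete fibration. Concretely, I would construct a presheaf $\mathcal{R} \colon \cdcat^{\op} \to \set$ with $\mathcal{R}(T) = \perim(T)$ — the set of ways of lifting $T$ to an object of $\dendcat$ over it — and then exhibit an isomorphism $\dendcat \cong \int_{\cdcat}\mathcal{R}$ commuting with the functors to $\cdcat$.

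To define $\mathcal{R}$ on morphisms, recall that a tree map $\phi \colon S \to T$ is a map of cyclic operads $\cdcatrm(S) \to \cdcatrm(T)$, so in particular it induces an involution-preserving map of arc sets $\phi_0 \colon A_S \to A_T$. A root $r \in \perim(T)$ determines the section $L_r \subseteq A_T$ of the two-to-one map $A_T \twoheadrightarrow E_T$ consisting of the arc of each edge that points away from $r$; these ``rootings'' are precisely the sections of $A_T \twoheadrightarrow E_T$ meeting every vertex neighborhood $\nbhd(v)$ in exactly one arc, and $r \mapsto L_r$ is a bijection from $\perim(T)$ onto the set of rootings, with inverse recovering $r$ as the unique boundary arc not in $L_r$. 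Since $\phi_0$ is involutive, $\phi_0^{-1}(L_r)$ is again a section of $A_S \twoheadrightarrow E_S$; the crucial point is that it is a rooting of $S$, and then $\phi^\ast(r)$ is declared to be the corresponding boundary arc of $S$. To see this I would use that $\phi$ carries the star $\medstar_v$ at each vertex $v$ of $S$ to a subtree $\phi(\medstar_v) \subseteq T$, restricting to a bijection $\nbhd(v)^\dagger = \perim(\medstar_v) \xrightarrow{\sim} \perim(\phi(\medstar_v))$; combined with the observation that $L_r$ restricts on any subtree of $(T,r)$ to the away-from-root section for that subtree's canonical root, this forces $\phi_0^{-1}(L_r)$ to meet each $\nbhd(v)$ in a single arc. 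Functoriality of $\mathcal{R}$ is then immediate from $(\psi\phi)_0 = \psi_0\phi_0$ together with uniqueness of the corresponding boundary arc.

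For the isomorphism $\dendcat \cong \int_{\cdcat}\mathcal{R}$, objects agree on the nose, and using $L\Omega(T,r) \cong \cdcatrm(T)$ the action of root-elision on hom-sets is the composite $\operad(\Omega(S,s),\Omega(T,r)) \xrightarrow{L} \cycop(\cdcatrm(S),\cdcatrm(T)) = \cdcat(S,T)$. So I must show this has image exactly $\{\phi \mid \phi^\ast(r) = s\}$ and is injective onto it. The first containment follows from naturality of the unit maps $\Omega(T,r) \to F\cdcatrm(T)$, which on colors identify the edges of a rooted tree with the away-from-root section of its arcs: a rooted tree map therefore sends $L_s$ into $L_r$, and since both are sections of their respective arc-to-edge maps this forces $\phi_0^{-1}(L_r) = L_s$, i.e.\ $\phi^\ast(r) = s$. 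For the reverse containment and for injectivity, given $\phi$ with $\phi^\ast(r) = s$ I would build the operad map $\tilde\phi \colon \Omega(S,s)\to\Omega(T,r)$ by hand: on colors via $E_S \cong L_s \xrightarrow{\phi_0} L_r \cong E_T$, and on generators by sending a vertex $v$ of $S$ to the subtree $\phi(\medstar_v)$ regarded as an operation of $\Omega(T,r)$, where the hypothesis $\phi^\ast(r) = s$ is exactly what makes the input/output data of $\phi(\medstar_v)$ compatible with the rooting $s$ of $\medstar_v$. One then checks that $\tilde\phi$ respects composition, units, and equivariance (inherited from $\phi$) and that $L\tilde\phi = \phi$, with uniqueness automatic since both the color map and the values on generators are forced. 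This yields the desired isomorphism over $\cdcat$, and hence the proposition.

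The real obstacle is the well-definedness of $\phi^\ast$: that pulling back an away-from-root section along a tree map is again a rooting, and dually that rooted tree maps preserve such sections. This is the one place that genuinely uses that tree maps are maps of (cyclic) operads, rather than arbitrary combinatorial data, and I would handle it either by reducing to a generating family of morphisms of $\cdcat$ (isomorphisms together with inner faces, outer faces, and degeneracies) or by appealing to an explicit combinatorial description of tree maps such as the one in \cite{Hackney:CGOS, Hackney:SCGO}; the remaining verifications are routine bookkeeping.
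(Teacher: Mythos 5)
Your approach is correct and, notably, dovetails with the paper's own later perspective: in \cref{sec planar} the paper introduces the presheaf of roots $\rooting$ with $\rooting_T = \perim(T)$ and asserts that root-elision is precisely the discrete fibration associated to $\rooting$. You are, in effect, constructing $\rooting$ by hand and verifying the identification $\dendcat \cong \int_\cdcat \rooting$ over $\cdcat$. The paper itself offers no argument at this point — it simply cites \cite[Lemma 6.15]{Hackney:CGOS}, which works inside the larger category $\mathsf{U}$ of connected graphs — so you necessarily take a different route by giving a self-contained proof. What your route buys is a conceptual characterization (rootings as sections of $A \twoheadrightarrow E$ hitting each $\nbhd(v)$ once) that also explains \emph{why} $f$ is a discrete fibration, and it localizes the one genuinely operad-theoretic input: that a tree map carries each vertex-star to a subtree with a bijection on boundaries.

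Two points are left lighter than they should be. First, the claim that away-from-root sections are \emph{precisely} the sections meeting each $\nbhd(v)$ once, and that the assignment $r \mapsto L_r$ is a bijection, does require a short argument — a counting argument using $|\perim(T)| - |E| + |V| = 1$ for trees shows exactly one boundary arc is missed, and a propagation/induction argument shows that boundary arc determines the section. Second, and more substantively, the "build $\tilde\phi$ by hand" step: you should make explicit that this works because both $\Omega(S,s)$ and $\Omega(T,r)$ have operation spaces that are empty or a point. Then $\tilde\phi$ is forced on every operation, so the only thing to verify is that for each subtree $S' \subseteq S$ with profile $(\underline{e}; e_0)$, the image profile $\tilde\phi_0(\underline{e}; e_0)$ is the profile of a subtree of $(T,r)$ — the compositionality, equivariance, and unit axioms are then automatic since there is never a choice. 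That nonemptiness assertion is where the hypothesis $\phi^*(r) = s$ is used, exactly as you indicate. With those details filled in, the argument is complete.
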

A discrete fibration $p\colon D \to C$ is a Grothendieck fibration whose fibers are discrete, and when $C,D$ are small categories this just means the following square is a pullback
\[ \begin{tikzcd}
\mor(D) \rar{\cod} \dar \drar[phantom, "\lrcorner" very near start] & \ob(D)\dar  \\
\mor(C) \rar[swap]{\cod} & \ob(C)
\end{tikzcd} \]
(and dually for discrete opfibration).
\Cref{disc fib} means that any map $T \to f(T',r')$ of $\cdcat$ has a unique lift $(T,r) \to (T',r')$ (with specified codomain) in $\dendcat$. 

\begin{remark}\label{rmk xi}
The category $\cdcat$ is different from the category of trees $\mathsf{\Xi}$ from \cite{HackneyRobertsonYau:HCO} (see also \cite{Strumila:QMO}), but is better behaved.
The key difference between the two is that in $\mathsf{\Xi}$ the automorphism group of the edge is trivial, rather than $C_2$.
The functor $\dendcat \to \mathsf{\Xi}$ is not a discrete fibration and the functor $\mathsf{\Xi} \to \cyc$ is faithful but not full.
\end{remark}

\subsection{Factorization of (rooted) tree maps}

We now define certain classes of maps between (rooted) trees which will play a key role in the theory of (cyclic) dendroidal sets.

\begin{definition}\label{active-inert-def}
    A map $S \to T$ in $\cdcat$ is
    \begin{itemize}[left=0pt]
        \item \mydef{active} if the map on color sets restricts to an isomorphism $\perim (S) \cong \perim (T)$, and
        \item \mydef{inert} if it sends each generator of $\cdcatrm(S)$ (i.e.\ a vertex) to a generator of $\cdcatrm(T)$, up to the symmetric group action. 
    \end{itemize}
    A map $(S,s_0) \to (T,t_0)$ in $\dendcat$ is active or inert just when its image in $\cdcat$ is so.
    We write $\dendcat_\actrm$, $\cdcat_\actrm$ for the subcategories of active maps, and $\dendcat_\intrm$, $\cdcat_\intrm$ for the subcategories of inert maps.
\end{definition}

For example, given any subtree inclusion $S \subseteq T$, there is a canonical inert map $S \hookrightarrow T$.
Furthermore, all maps out of the edge $\linear{0}$ are inert. 
Isomorphisms of trees are both active and inert.
Note that in \cite{HackneyRobertsonYau:GCHMO}, inert maps were referred to as \emph{embeddings}; further equivalent definitions, in the context of the larger category $\mathsf{U}$, can be found in that reference and \cite{Hackney:CGOS}.

\begin{proposition}[{\cite[1.3.13]{Kock:PFT}, \cite[Prop.~5.2]{HackneyRobertsonYau:GCHMO}}]\label{active-inert-factorization}
The active and inert maps constitute an orthogonal factorization system on $\dendcat$ (respectively on $\cdcat$).
\end{proposition}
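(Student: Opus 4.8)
The plan is to establish the factorization system on $\dendcat$ first and then transport it to $\cdcat$ via root-elision, or --- more symmetrically --- to prove both in parallel, since the active/inert classes on $\dendcat$ are defined by pullback along $f\colon\dendcat\to\cdcat$. The three things to check are: (1) active and inert maps are each closed under composition and contain the isomorphisms; (2) every map factors as active-then-inert; (3) the factorization is orthogonal, i.e.\ for a commuting square with the left leg active and the right leg inert there is a unique diagonal filler, and the active/inert factorization is essentially unique. Items (1) and (2) are the combinatorial heart; (3) is then largely formal once one knows that an active map is a surjection-type map on generators and an inert map is injective-on-generators.

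First I would recall the explicit picture of tree maps. For $\dendcat$ this is classical (Kock~\cite[1.3.13]{Kock:PFT}, or \cite[Prop.~5.2]{HackneyRobertsonYau:GCHMO}): given a rooted tree map $\varphi\colon(S,s_0)\to(T,t_0)$, one shows it factors uniquely through the subtree $T'\subseteq T$ that is the "convex hull" of the image of the boundary of $S$, where $S\to T'$ is active (it induces a bijection on boundaries by construction of $T'$ as the smallest subtree with the right boundary) and $T'\hookrightarrow T$ is the canonical inert subtree inclusion. The key lemma here is that the boundary $\perim(\varphi(S))$ determines a unique minimal subtree of $T$ containing it --- this uses that $T$ is a tree (connected, acyclic), so convex hulls exist and are unique. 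For closure under composition: active maps compose to active because a composite of bijections on boundaries is a bijection; inert maps compose to inert because the image of a generator under a generator-preserving map is again (up to symmetry) a composite of generators, and one checks using the subtree description of $\cdcatrm(T)$ that "inert" is equivalent to "the map $\cdcatrm(S)\to\cdcatrm(T)$ is induced by a subtree inclusion," a condition manifestly closed under composition.

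Then I would descend to $\cdcat$. By \cref{disc fib}, root-elision $f\colon\dendcat\to\cdcat$ is a discrete fibration, so for any $T\in\cdcat$ and any choice of root $r$ of $T$ we get an equivalence of slice-like data; more usefully, every tree map $S\to T$ in $\cdcat$ lifts (non-uniquely, by choosing roots on $S$ and $T$ compatibly, which is possible since $\perim$ is inhabited) to a rooted tree map, and the active/inert status is defined to be that of the lift. One must check this is well-defined --- independent of the choice of compatible rooting --- which follows because the active/inert conditions in \cref{active-inert-def} for maps of $\cdcat$ are stated intrinsically (isomorphism on $\perim$; generators to generators up to symmetry) and do not reference a root. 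Given well-definedness, the active-inert factorization of a map in $\cdcat$ is obtained by root-eliding the factorization upstairs, and its existence and uniqueness transfer directly because $f$ is faithful and reflects isomorphisms (an isomorphism of rooted trees is just an isomorphism of the underlying trees respecting roots, and root-elision is essentially surjective on isos in the relevant sense).

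The main obstacle I anticipate is the orthogonality/unique-lifting clause (3), specifically verifying that in a commuting square
\[ \begin{tikzcd}
A \rar \dar[swap]{\text{active}} & C \dar{\text{inert}} \\
B \rar & D
\end{tikzcd} \]
the diagonal $B\to C$ exists uniquely. Existence: the bottom composite $B\to D$ and the fact that $A\to B$ is a boundary-bijection force the image of $\perim(B)$ to lie in the boundary of the subtree $\cdcatrm(C)\subseteq\cdcatrm(D)$ picked out by the inert map; then one uses the minimal-subtree property again to factor $B\to D$ through $C$. Uniqueness: two fillers agree on the image of $\perim(B)$ (which generates enough), and since $C\hookrightarrow D$ is a monomorphism (subtree inclusion), they agree. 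The delicate point is making precise that "a map out of $B$ is determined by where it sends the boundary arcs," which is true because $\cdcatrm(B)$ is generated by its vertices and each vertex's star is pinned down once its neighborhood arcs are located --- this is exactly the combinatorial content of the subtree description of $\cdcatrm$. I would isolate this as a preliminary lemma ("a tree map is determined by its effect on the boundary, together with the induced map on arcs") and then items (1)--(3) all become short deductions. Finally I would remark that since everything in $\cdcat$ was reduced to the rooted case and the rooted case is cited, the only genuinely new verification is the well-definedness of active/inert under rooting, which is immediate from the root-independence of \cref{active-inert-def}.
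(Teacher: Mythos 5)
The paper gives no proof of this proposition; it simply cites \cite[1.3.13]{Kock:PFT} for the rooted case and \cite[Prop.~5.2]{HackneyRobertsonYau:GCHMO} for the unrooted case, so there is no "paper's own proof" to compare against. Your reconstruction is sound and a bit more ambitious than necessary: you propose to cite only the rooted case and then transfer to $\cdcat$ along root-elision $f$, using that $f$ is a discrete fibration (\cref{disc fib}) to lift maps, squares, and factorizations, and noting that active/inert is root-independent because \cref{active-inert-def} phrases those conditions for $\cdcat$ maps directly (with the $\dendcat$ versions defined by pullback). That transfer works: given a square in $\cdcat$ with active left leg and inert right leg, a choice of root for the bottom-right corner lifts the entire square uniquely to $\dendcat$ (the two lifts of $A$ agree because both compositions $A\to D$ lift uniquely with fixed target root), and the orthogonality and uniqueness-of-factorization upstairs descend along the faithful functor $f$. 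This is a legitimate alternative to directly invoking the unrooted reference, and it has the small virtue of showing the proposition for $\cdcat$ is a formal consequence of the proposition for $\dendcat$ plus the fibration lemmas already in the paper. One minor point: for the orthogonality uniqueness you offer two arguments; the cleaner one, which you mention parenthetically, is simply that inert maps are monomorphisms (an iso followed by a subtree inclusion), so any two diagonal fillers postcomposed with the inert leg agree and hence are equal. That bypasses any need to reason about what "generates enough."
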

In other words, in both $\dendcat$ and $\cdcat$, every morphism factors as an active map followed by an inert map, uniquely up to unique isomorphism.

In a similar manner to \cref{disc fib}, we have the following:
\begin{proposition}\label{disc opfib}
Root-elision restricts to a discrete opfibration $f\colon \dendcat_\actrm \to \cdcat_\actrm$.
\end{proposition}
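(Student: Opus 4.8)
The plan is to deduce this from \cref{disc fib} together with the factorization systems of \cref{active-inert-factorization}. Recall that by \cref{disc fib}, root-elision $f\colon \dendcat \to \cdcat$ is a discrete fibration, meaning every map $T \to f(T',r')$ in $\cdcat$ lifts uniquely to a map $(T,r) \to (T',r')$ in $\dendcat$ with specified \emph{codomain}. For the present statement I need the dual lifting property restricted to active maps: given a rooted tree $(S,s_0)$ and an active map $g\colon S = f(S,s_0) \to T$ in $\cdcat$, I must produce a unique root $t_0 \in \perim(T)$ and a unique map $(S,s_0)\to(T,t_0)$ in $\dendcat_\actrm$ lifting $g$. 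The key observation is that an active map restricts to a bijection $\perim(S)\cong\perim(T)$ on boundaries, so the image $t_0 \coloneqq g(s_0)\in\perim(T)$ is the only possible choice of root; one then checks that with this choice the underlying tree map $g$ is indeed a rooted tree map $(S,s_0)\to(T,t_0)$, i.e.\ that the induced map of operads $\Omega(S,s_0)\to\Omega(T,t_0)$ is well-defined.

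First I would make explicit what it means for the underlying tree map to respect roots. Using $L\Omega(T,r)\cong\cdcatrm(T)$ from the discussion after \cref{forgetful-functor-def}, a tree map $S\to T$ in $\cdcat$ is the same as a map $\cdcatrm(S)\to\cdcatrm(T)$ of cyclic operads, and it arises from a rooted tree map $(S,s_0)\to(T,t_0)$ precisely when it is compatible with the output-edge structure determined by the roots. For an active map, since the boundary bijection carries $s_0$ to $g(s_0)$, each vertex $v$ of $S$ is sent (up to symmetry) to some subtree of $T$, and the root $g(s_0)$ singles out which boundary arc of that subtree is the output; this is exactly the data needed to promote $g$ to a morphism of $\dendcat$. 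I would verify that the orientation assigned to each vertex of $S$ by its root-relative output edge is sent to the orientation assigned by $t_0$, using that $g$ restricts to the identity-up-to-relabeling on boundaries and that the ``nearest to the root'' convention for subtrees (\cref{def rooted tree}) is preserved.

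Then uniqueness: if $(S,s_0)\to(T,t_1)$ is another active lift of $g$ in $\dendcat$, its underlying boundary map must still send $s_0$ to $g(s_0)$, forcing $t_1 = g(s_0) = t_0$; and since root-elision is faithful (it is a discrete fibration, hence faithful), the two lifts coincide as morphisms. This establishes that $f$, restricted to active maps, has unique lifts with specified \emph{domain}, i.e.\ it is a discrete opfibration onto $\cdcat_\actrm$. It remains only to note that $f$ does carry $\dendcat_\actrm$ into $\cdcat_\actrm$, which is immediate from \cref{active-inert-def}, since activeness of a rooted tree map is defined via its image in $\cdcat$. Packaging this as the pullback-square characterization (with $\dom$ in place of $\cod$) gives the claim.

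The main obstacle I anticipate is the bookkeeping in the middle step — checking carefully that an active map of unrooted trees, once equipped with the forced root $g(s_0)$ on the target, genuinely respects the operadic structure of $\Omega(S,s_0)$ and $\Omega(T,t_0)$, in particular that it preserves output edges of all subtrees and not merely of the whole tree. This is where the ``nearest to the root'' convention and the combinatorial description of tree maps (e.g.\ \cite[Proposition B22]{Hackney:CGOS}) do the real work; everything else is formal manipulation of discrete (op)fibrations and faithfulness.
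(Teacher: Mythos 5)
Your plan correctly identifies the forced root $t_0 = g(s_0)$ and correctly observes that activeness of $g$ forces any lift's target root to be $t_0$, with faithfulness then finishing uniqueness. The gap is exactly where you flag it: showing that $g$, with target root $t_0$, genuinely gives a well-defined operad map $\Omega(S,s_0)\to\Omega(T,t_0)$, i.e.\ that it respects output edges of \emph{all} subtrees and not just of $S$ itself. You describe this step but do not carry it out, and it is not a routine bookkeeping exercise if done from scratch.

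The paper closes this gap by a short maneuver that you could have used yourself: instead of building the rooted tree map by hand, invoke \cref{disc fib} directly. Apply the discrete-fibration lifting property to the map $g\colon S\to T$ of $\cdcat$ with the \emph{codomain} fixed to be $(T, t_0)$ where $t_0 = g(s_0)$. This produces, for free, a genuine morphism $(S,s)\to(T,t_0)$ of $\dendcat$ for some $s\in\perim(S)$ — no verification of operadic compatibility needed, since the lift is already a morphism of $\dendcat$. This lift is active (its image under $f$ is $g$), active maps in $\dendcat$ carry root to root, so $g(s)=t_0=g(s_0)$; since $g$ is a bijection on boundaries, $s=s_0$. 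That is the existence you were trying to prove by hand. Your uniqueness argument is fine as stated. So: same forced root, same use of the boundary bijection — but the paper uses \cref{disc fib} to \emph{construct} the lift, whereas you used it only for faithfulness; routing the construction through \cref{disc fib} is what makes the existence step one line rather than a combinatorial check.
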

\begin{proof}
Suppose $\varphi\colon f(T,r) \to T'$ is an active map in $\cdcat$, and let $r' = \varphi(r) \in \perim(T')$.
By \cref{disc fib} there is a lift $(T,t) \to (T',r')$, but since $\varphi$ is a bijection on boundaries and $\varphi(t) = r = \varphi(r')$, we have $r' = t$, so this is a lift $(T,r) \to (T',r')$.
This lift is unique since active maps preserve roots.
\end{proof}

The category $\dendcat$ is the prototypical example of a \emph{dualizable generalized Reedy category} \cite{BergerMoerdijk:OENRC}; for brevity we say that $\dendcat$ is \emph{Reedy}.
Similarly, $\cdcat$ is Reedy \cite[Proposition 5.5]{HackneyRobertsonYau:GCHMO}.
A Reedy structure consists of an orthogonal factorization system (in our case, different from that of \cref{active-inert-factorization}) and the specification of an integral degree for each object (such that this data is suitably compatible).
A category can potentially be Reedy in multiple ways, and we now make this structure we consider more explicit.

\begin{definition}[Reedy structures]\label{def reedy structure}
Define the degree of a tree $T$ to be the cardinality of its set of vertices.
Let $\varphi \colon T \to T'$ be a tree map.
\begin{itemize}[left=0pt]
\item We say that $\varphi$ is in $\cdcat^+$ if $A \to A'$ is injective.
\item We say that $\varphi$ is in $\cdcat^-$ if $A \to A'$ is surjective and $\varphi$ is active.
\end{itemize}
Similarly, $\dendcat$ has wide subcategories $\dendcat^+ \coloneqq f^{-1}(\cdcat^+)$ and $\dendcat^- \coloneqq f^{-1}(\cdcat^-)$.
\end{definition}

In \cite{BergerMoerdijk:OENRC}, the Reedy structure on $\dendcat$ was defined in terms of injectivity/surjectivity on edges rather than arcs, but since the involutions are free this is no different.

\begin{proposition}\label{prop gen reedy}
With the above structure, the categories $\cdcat$ and $\dendcat$ are Reedy.
\end{proposition}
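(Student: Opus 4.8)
The plan is to verify, for the degree function and the wide subcategories singled out in \cref{def reedy structure}, the axioms of a dualizable generalized Reedy category in the sense of \cite{BergerMoerdijk:OENRC}: the two subcategories $\cdcat^+,\cdcat^-$ (and $\dendcat^+,\dendcat^-$) should be wide, should meet exactly in the isomorphisms, should witness every map as a composite of a negative map followed by a positive one, uniquely up to unique isomorphism, with positive non-isomorphisms strictly raising degree and negative non-isomorphisms strictly lowering degree; and they should satisfy the rigidity conditions ``$\theta\varphi=\varphi$ with $\varphi$ negative and $\theta$ invertible forces $\theta=\id$'' together with its dual (for dualizability). I would establish this for $\cdcat$ first and then bootstrap the result to $\dendcat$ along root-elision.

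For $\cdcat$ the essential content is \cite[Proposition 5.5]{HackneyRobertsonYau:GCHMO}, which shows that $\cdcat$ (there denoted $\mathsf{U}_\textup{cyc}$) is dualizable generalized Reedy; the remaining task is to recognize the wide subcategories used there as $\cdcat^+$ (injective on arcs) and $\cdcat^-$ (active and surjective on arcs). The only genuine point in this identification is that a tree map is injective, respectively surjective, on arcs precisely when it is so on edges, since $\dagger$ is fixpoint-free; the rest is translation. A direct verification is also feasible: both classes are evidently closed under composition (active maps compose by \cref{active-inert-factorization}), hence wide, and $\cdcat^+\cap\cdcat^-=\iso(\cdcat)$ because a map bijective on arcs is bijective on colors and is then forced to be invertible. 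For the factorization axiom one contracts exactly those unary vertices of the source whose two incident arcs are identified by the given map; this produces the negative half, with the positive half being what remains, and one checks the construction is appropriately functorial and unique up to unique isomorphism. Degree behaves correctly because a tree with $v$ vertices and boundary of cardinality $b$ has exactly $v+b-1$ edges, so along an active map the vertex count is a strictly monotone function of the arc count; and both rigidity conditions reduce to the observation that an automorphism of a tree which is the identity on arcs is the identity, combined with surjectivity (for the first) or injectivity (for its dual) on arcs.

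For $\dendcat$ I would use that root-elision $f\colon\dendcat\to\cdcat$ is a discrete fibration (\cref{disc fib}), that $\dendcat^+=f^{-1}(\cdcat^+)$ and $\dendcat^-=f^{-1}(\cdcat^-)$ by definition, and that $f$ preserves degree (the degree of a rooted tree being the number of vertices of its underlying tree). A discrete fibration reflects isomorphisms, so a non-invertible map of $\dendcat^{\pm}$ lies over a non-invertible map of $\cdcat^{\pm}$ and therefore strictly raises or lowers degree; the intersection $\dendcat^+\cap\dendcat^-$ lies over $\iso(\cdcat)$ and hence consists of isomorphisms; a factorization of a map $\varphi$ is produced by factoring $f(\varphi)$ in $\cdcat$ and lifting the positive half uniquely against its specified codomain, whereupon the negative half is forced; and uniqueness of the factorization up to unique isomorphism, together with both rigidity conditions, follow formally from the uniqueness of lifts along a discrete fibration. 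Alternatively, one may simply cite \cite{BergerMoerdijk:OENRC} for $\dendcat$, invoking the edges-versus-arcs remark already recorded — no new input is required either way.

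The step I expect to be the main obstacle — or at least the most laborious — is the $\cdcat$ case: pinning down the unique $(\cdcat^-,\cdcat^+)$-factorization (identifying precisely which unary vertices to contract, and proving uniqueness) and reconciling the present description of the wide subcategories with the one in \cite{HackneyRobertsonYau:GCHMO}. Once $\cdcat$ is in hand, the statement for $\dendcat$ is a formal consequence of the discrete fibration property recorded in \cref{disc fib}.
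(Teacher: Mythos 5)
Your overall strategy---cite \cite{HackneyRobertsonYau:GCHMO} for $\cdcat$ and then transport the structure along root-elision---would in principle carry the day, but your treatment of the $\cdcat$ step misses the point the paper has to address head-on: the \emph{degree function}. In \cref{def reedy structure} a tree $T$ has degree $|V_T|$, whereas the Reedy structure of \cite[Prop.~5.5]{HackneyRobertsonYau:GCHMO} uses $|V_T|$ \emph{plus} the number of internal edges. These two structures share the wide subcategories $\cdcat^\pm$, but the first axiom of \cite[Definition~1.1]{BergerMoerdijk:OENRC} refers explicitly to degree and therefore must be re-verified for the new choice. You frame the reconciliation entirely as a matter of matching up the $+/-$ subcategories (edges vs.\ arcs), which is the easy half; the nontrivial half---re-checking the degree axiom---is never addressed in the citation route. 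Your alternative ``direct verification'' does say something about degree, but the sentence ``along an active map the vertex count is a strictly monotone function of the arc count'' only treats negative maps (which are active by definition); positive maps need not be active, so the degree-raising property of non-invertible maps in $\cdcat^+$ is left without argument.

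The paper's fix is short and runs in the \emph{opposite} direction from your bootstrap: $\dendcat$ is already known to be Reedy with this precise degree function \cite[Example~2.8]{BergerMoerdijk:OENRC}, root-elision $f\colon\dendcat\to\cdcat$ preserves degree, creates the classes $\cdcat^\pm$, reflects isomorphisms (being a discrete fibration, \cref{disc fib}), and is surjective on objects, so every map of $\cdcat$ lifts to $\dendcat$ and the degree axiom transfers downward. The remaining Reedy axioms are degree-independent, hence covered by \cite{HackneyRobertsonYau:GCHMO}. Once this is understood, your closing observation that $\dendcat$ could be deduced from $\cdcat$ via the discrete fibration is correct but superfluous: in the paper's arrangement $\dendcat$ is the input, not the output, and deriving it from $\cdcat$ would be circular.
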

\begin{proof}
The statement for $\dendcat$ goes back to \cite[Example 2.8]{BergerMoerdijk:OENRC}.
The Reedy structure for $\cdcat$ appearing in \cite[Prop.\ 5.5]{HackneyRobertsonYau:GCHMO} utilizes a different degree function, given by the sum of the number of vertices and the number of internal edges, but has the same positive and negative morphisms.
The only axiom in \cite[Definition 1.1]{BergerMoerdijk:OENRC} involving the degree function is the first, so we need only verify that 
\begin{enumerate}[label=(\roman*), ref=\roman*]
\item 
non-invertible morphisms in $\cdcat^+$ (resp.\ $\cdcat^-$) raise (resp.\ lower) degree; isomorphisms in $\cdcat$ preserve the degree.
\label{Reedy def 1}
\end{enumerate}
But this follows from the same axiom for $\dendcat$.
Indeed, a map $\varphi$ in $\dendcat$ is in $\dendcat^+$ or $\dendcat^-$ if and only if $f(\varphi)$ is in $\cdcat^+$ or $\cdcat^-$.
Further, since $f$ is a discrete fibration, a map $\varphi$ is an isomorphism if and only if $f(\varphi)$ is so.
Since $f$ is a discrete fibration and surjective on objects, every map in $\cdcat$ admits some lift to $\dendcat$, and we conclude that \eqref{Reedy def 1} holds.
\end{proof}

As is usual, the opposite categories $\cdcat^\op$ and $\dendcat^\op$ are also Reedy, using the same degree function as well as the opposite factorization system $(\cdcat^\op)^+ = (\cdcat^-)^\op$ and $(\cdcat^\op)^- = (\cdcat^+)^\op$.
This allows us to consider the Reedy model structure on simplicial presheaves in \cref{sec rezk ms}.

\begin{exercise}\label{exercise EZ}
The category $\dendcat$ satisfies the following properties (see for instance \cite[6.8]{BergerMoerdijk:OENRC} and \cite[5.2]{MoerdijkNuiten:MFDS}): 
\begin{enumerate}
\item The positive maps are precisely the monomorphisms.
\item The negative maps are precisely the split epimorphisms. 
\item If two negative maps $\varphi, \varphi'$ have the same set of sections, then $\varphi = \varphi'$. 
\item Each pair of negative maps with a common domain has an absolute pushout.
\end{enumerate}
Use \cref{disc fib}, \cref{disc opfib}, and the fact that $f$ creates positive and negative maps to show that these four properties also hold for $\cdcat$.
In particular, both $\dendcat$ and $\cdcat$ are \emph{cat\'egories squelettiques} \cite[8.1.1]{Cisinski:PMTH} and \emph{Berger--Moerdijk EZ categories} \cite[Definition 6.7]{BergerMoerdijk:OENRC}.
\end{exercise}

With the Reedy structure at hand, every map $\varphi \colon T \to T'$ has a codimension given by $|V'| - |V|$. 
It is most meaningful when $\varphi$ is a positive or negative map.
\begin{definition}
A \mydef{coface map} is a positive map of codimension $1$. 
An \mydef{inner coface map} is an active coface map. 
An \mydef{outer coface map} is an inert coface map. 
A \mydef{codegeneracy map} is a negative map of codimension $-1$.      
\end{definition} 

Up to isomorphism, an inner coface $T \to T'$ is obtained by contracting an inner edge of $T'$, a codegeneracy $T \to T'$ is obtained by adding a new vertex onto the middle of some edge of $T'$, and an outer coface $T \to T'$ is obtained by grafting a star onto some boundary edge of $T$.

\begin{lemma}\label{active-outer-face}
    A map in $\cdcat$ or $\dendcat$ is active if and only if it does not factor through any outer coface on the left.
\end{lemma}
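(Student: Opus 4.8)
The plan is to deduce the statement from the active--inert orthogonal factorization system of \cref{active-inert-factorization}. Given a tree map $\varphi$, write its factorization as $\varphi = i \circ a$ with $a$ active and $i$ inert. The first thing I would record is the general fact that, in any orthogonal factorization system, a map lies in the left class exactly when the right-hand factor of its (essentially unique) factorization is invertible: if $\varphi$ is active then $\varphi = \id \circ \varphi$ is an active--inert factorization, so uniqueness forces $i$ to be an isomorphism; conversely, if $i$ is an isomorphism then $\varphi = i \circ a$ is active, since active maps form a subcategory containing the isomorphisms. Thus it suffices to prove that $i$ is invertible if and only if $\varphi$ does not factor through an outer coface on the left.

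For the forward implication, suppose $\varphi$ is active and $\varphi = \delta \circ \psi$ with $\delta$ an outer coface. Since $\delta$ is inert, the evident square with $\varphi$ on the left, $\delta$ on the right, $\psi$ along the top and an identity along the bottom commutes; as $\varphi$ is active and $\delta$ is inert, the lifting property of the factorization system yields a diagonal $d$ with $\delta \circ d = \id$. So $\delta$ is a split epimorphism; but outer cofaces are positive maps, hence monomorphisms (\cref{exercise EZ}), and a monic split epimorphism is an isomorphism. This contradicts the fact that an outer coface has codimension $1$, and hence is not an isomorphism since isomorphisms preserve degree.

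For the converse, suppose $\varphi$ is not active, so $i$ is a non-invertible inert map. The crucial input is that \emph{every non-invertible inert map $i$ admits a factorization $i = \delta \circ i'$ with $\delta$ an outer coface and $i'$ inert}; granting this, $\varphi = i \circ a = \delta \circ (i' \circ a)$ is the desired factorization of $\varphi$ through the outer coface $\delta$. The statement for $\dendcat$ then follows from the one for $\cdcat$, since root-elision is a discrete fibration preserving vertex counts and reflecting/creating positive and inert maps (\cref{disc fib,active-inert-def}), so an outer-coface factorization in $\cdcat$ of the image of a $\dendcat$-map lifts back to $\dendcat$; alternatively one runs the same argument directly in $\dendcat$.

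I expect the displayed claim about inert maps to be the main obstacle. Up to isomorphism a non-identity inert map is a proper subtree inclusion $S' \subsetneq T$, so one is reduced to a combinatorial statement: a proper subtree of a finite tree can be exhausted by repeatedly grafting on stars at boundary edges. Concretely: since a subgraph of $T$ containing a vertex $v$ must contain every edge at $v$, a proper subtree has $V(S') \subsetneq V(T)$; if $\lvert V(T) \rvert - \lvert V(S') \rvert = 1$ then $S' \hookrightarrow T$ is a positive codimension-$1$ inert map, i.e.\ an outer coface, and we may take $i' = \id$; otherwise pick a vertex $v \in V(T) \setminus V(S')$ at maximal distance from $S'$, observe that every edge at $v$ other than the one pointing toward $S'$ is a boundary edge of $T$ not lying in $S'$, and delete $v$ to obtain a subtree $T''$ with $S' \subseteq T'' \subsetneq T$ and $T'' \hookrightarrow T$ an outer coface, then recurse on $S' \hookrightarrow T''$. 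Alternatively, this structural fact about inert maps can be cited from the combinatorial descriptions of tree maps in \cite{Hackney:CGOS} or extracted from the generalized Reedy structure on $\cdcat$.
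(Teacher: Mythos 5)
Your proof is correct, but it takes a genuinely different route from the paper's. The paper disposes of this lemma in two sentences: it cites Heuts--Moerdijk's Propositions 3.9 and 3.10 for the case of $\dendcat$, and then transports the result from $\dendcat$ to $\cdcat$ using the discrete fibration of \cref{disc fib}. Your argument instead works intrinsically in $\cdcat$ and proposes to deduce the $\dendcat$ case in the opposite direction (or run the argument directly there), which is also sound since root-elision creates activeness, inertness, positivity, and codimension. The forward direction of your argument is an elegant use of the lifting property of the active--inert orthogonal factorization system together with the facts that positive maps are monomorphisms (\cref{exercise EZ}) and that a monic split epimorphism is an isomorphism, contradicting codimension~$1$; this is cleaner than anything one would extract from the Heuts--Moerdijk reference. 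The converse direction rests on the combinatorial claim that every non-invertible inert map factors through an outer coface, which is where you do real work that the paper outsources. Your sketch of that claim is essentially correct; one small thing worth tightening is the edge case where the subtree $S'$ is an edge that is incident to the chosen far vertex $v$ but does not contain it --- subgraphs in this formalism may contain an edge without containing an endpoint --- though the recursion still terminates. The trade-off: the paper's proof is shorter and defers to a standard reference, while yours is self-contained and makes the role of the orthogonal factorization system transparent; for a reader without \cite{HeutsMoerdijk:SDHT} at hand, your version is arguably more informative.
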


\begin{proof}
    In $\dendcat$, this is a consequence of \cite[Props.~3.9 \& 3.10]{HeutsMoerdijk:SDHT}. The statement for $\cdcat$ then follows by \cref{disc fib}.
\end{proof}

\begin{definition}\label{def boundary horn}
The \mydef{boundary} of a representable presheaf $\repu{T} = \hom(-,T) \in \cdset$, denoted $\partial \repu{T}$, is the subobject of $\repu{T}$ consisting of all maps $T' \to T$ factoring through a coface map. 
In particular, we have $\partial \linear{0} = \varnothing$.
For a coface map $\delta \colon S \to T$, the \mydef{$\delta$-horn} $\Lambda^T_\delta$ is the subobject of $\repu{T}$ consisting of all maps $T' \to T$ factoring through a coface map not isomorphic to $\delta$. 
If $\delta$ is an inner coface which contracts the inner edge $e$, will instead write  $\Lambda^T_e$ and call it an \mydef{inner horn}.
\end{definition}

Note that the definition of the boundary given above coincides with that given in \cite[\S7.1]{BergerMoerdijk:OENRC}; 
by \cite[Corollary 6.10]{BergerMoerdijk:OENRC} we have $\partial \repu{T} = \mathrm{sk}_{n-1}\repu{T}$ for $T \in \cdcat$ of degree $n$, and similarly for $(T,t) \in \dendcat$. 

\begin{remark}\label{rmk horns colimits}
Suppose $T$ is a tree and $\delta \colon S \to T$ is a face map.
Let $C_\delta \subseteq (\cdcat^+)_{/T}$ be the full subcategory on those objects $R \to T$ which are not isomorphic to $\delta$ or $\id_T$.
Then the canonical map 
$
  \colim_{(R\to T) \in C_\delta} \repu{R} \to \repu{T}
$
is isomorphic to the horn inclusion $\Lambda^T_\delta \subseteq \repu{T}$.
A similar colimit description holds for $\Lambda^{T,t}_\delta \subseteq \repo{T,t}$, as well as for boundary inclusions in either presheaf category.
Since root-elision is a fibration, $(\dendcat^+)_{/(T,t)} \to (\cdcat^+)_{/T}$ is an isomorphism of categories, and restricts to isomorphisms of the categories indexing the colimits for horns/boundaries.
Since $f_!$ preserves colimits and sends representables to representables, it sends horn inclusions to horn inclusions and boundary inclusions to boundary inclusions.
\end{remark}

We now define certain distinguished classes of monomorphisms which constitute the cofibrations in model structures on $\dset$ and $\cdset$.

\begin{definition}
    A monomorphism $X \to Y$ in $\cdset$ is \mydef{normal} if, for each $T \in \cdset$, the automorphism group of $T$ in $\cdcat$ acts freely on the non-degenerate elements of $Y_T \setminus X_T$ (in other words, if all such elements have trivial stabilizers). A cyclic dendroidal set $X \in \cdset$ is normal if the unique map $\varnothing \to X$ is a normal monomorphism (or equivalently, if for all $T \in \cdcat$, the automorphism group of $T$ acts freely on the non-degenerate elements of $X_T$). Normal monomorphisms in $\dset$ and normal dendroidal sets are defined similarly.
\end{definition}

We may note that any monomorphism having a normal codomain is normal. 
In particular, this includes the inclusion of a boundary or horn into a representable presheaf.
In fact, the boundary inclusions generate all normal monomorphisms, by \cite[8.1.35]{Cisinski:PMTH}:

\begin{proposition} \label{normal-boundary}
    In both $\cdset$ and $\dset$, the class of normal monomorphisms is the saturation of the set of boundary inclusions. \qed
\end{proposition}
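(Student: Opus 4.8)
The plan is to deduce this from the general theory of \emph{cat\'egories squelettiques} (equivalently, Berger--Moerdijk EZ categories) set out in \cite[Ch.~8]{Cisinski:PMTH}. By \cref{exercise EZ} both $\dendcat$ and $\cdcat$ are such categories, and the definition of a normal monomorphism given just above is exactly the condition that the relevant automorphism groups act freely on the fresh nondegenerate cells; \cite[8.1.35]{Cisinski:PMTH} states precisely that, in presheaves over a cat\'egorie squelettique, this class of monomorphisms coincides with the weakly saturated class generated by the boundary inclusions $\partial\repu{T}\hookrightarrow\repu{T}$. So once the hypotheses are checked --- which \cref{exercise EZ} does --- the result follows at once. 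For concreteness I will spell out the argument behind the citation.

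One inclusion is easy. Each boundary inclusion $\partial\repu{T}\hookrightarrow\repu{T}$ is a monomorphism into a normal presheaf, hence normal, and the class of normal monomorphisms is weakly saturated: closure under coproduct, pushout, transfinite composition, and retract all follow directly from the characterization in terms of free automorphism actions on nondegenerate cells (a fresh nondegenerate cell of a colimit of such maps is, up to the automorphism action, a fresh nondegenerate cell of one of the inputs). Hence the saturation of the boundary inclusions consists of normal monomorphisms.

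For the reverse inclusion I would run the standard skeletal filtration. Given a normal monomorphism $X\hookrightarrow Y$, set $X_n = X\cup\mathrm{sk}_n Y$, so that $X = X_{-1}\subseteq X_0\subseteq X_1\subseteq\cdots$ and $Y = \colim_n X_n$. Using the Eilenberg--Zilber-type decomposition available in a cat\'egorie squelettique --- every cell factors uniquely as a degeneracy followed by a nondegenerate cell, and the negative maps are split epimorphisms determined by their sections, cf.\ \cref{exercise EZ} --- one identifies $X_{n-1}\hookrightarrow X_n$ with a pushout of
\[
  \coprod_{[y]} \bigl(\partial\repu{T_{[y]}}\hookrightarrow\repu{T_{[y]}}\bigr)
\]
along the attaching maps, where $[y]$ ranges over the orbits of the automorphism-group action on the set of nondegenerate $n$-cells of $Y$ not lying in $X$. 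Normality (as opposed to mere injectivity) enters exactly here: it says these orbits are free, so that each contributes a full copy of a boundary inclusion rather than a quotient of one by a nontrivial stabilizer. Composing these pushouts transfinitely places $X\hookrightarrow Y$ in the saturation of the boundary inclusions, and the two inclusions together give the claim.

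The main obstacle is that the pushout-of-coproduct-of-boundaries description of $X_{n-1}\hookrightarrow X_n$ requires the ambient category to be skeletal in a fairly strong sense, and for $\cdcat$ this is not automatic, since the edge has automorphism group $C_2$ rather than the trivial group (contrast \cref{rmk xi}). This difficulty has, however, already been discharged in \cref{exercise EZ}, which transports the needed EZ-category properties from $\dendcat$ to $\cdcat$ using that root-elision is a discrete (op)fibration creating positive and negative maps. With those properties in hand the remaining argument is formal, which is why appealing directly to \cite[8.1.35]{Cisinski:PMTH} is the cleanest route.
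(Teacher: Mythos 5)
Your proposal follows exactly the same route as the paper: the result is obtained by citing \cite[8.1.35]{Cisinski:PMTH}, with \cref{exercise EZ} supplying the verification that $\dendcat$ and $\cdcat$ are \emph{cat\'egories squelettiques} so that the citation applies. The extra exposition you give (the easy saturation direction and the skeletal-filtration argument using freeness of the automorphism action) is a correct unwinding of Cisinski's proof, but it is not a different approach.
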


\begin{proposition}
Suppose $X \to Y$ is a normal monomorphism in $\cdcat$.
Then for each $T$, the group $\aut(T)$ acts freely on $Y_T \setminus X_T$.
\end{proposition}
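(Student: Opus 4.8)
I would deduce the statement for arbitrary (possibly degenerate) elements from the defining property of normality --- freeness on the \emph{non-degenerate} part of $Y_T \setminus X_T$ --- by invoking the Eilenberg--Zilber lemma, which is available because $\cdcat$ is a cat\'egorie squelettique / Berger--Moerdijk EZ category (\cref{exercise EZ}). So fix a tree $T$, an element $y \in Y_T \setminus X_T$, and an automorphism $\varphi \in \aut(T)$ with $Y(\varphi)(y) = y$; the goal is $\varphi = \id_T$. Regarding $X$ as a subobject of $Y$, write $y = Y(\sigma)(z)$ with $\sigma \colon T \to T'$ a negative map (equivalently a split epimorphism, by \cref{exercise EZ}) and $z \in Y_{T'}$ non-degenerate. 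If $z$ lay in $X_{T'}$, then $y = Y(\sigma)(z)$ would lie in $X_T$ since $X \hookrightarrow Y$ is closed under the presheaf structure, contradicting $y \in Y_T \setminus X_T$; hence $z \in Y_{T'} \setminus X_{T'}$.

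Next I would use the uniqueness clause of the Eilenberg--Zilber lemma. Since isomorphisms are negative and the negative maps form a subcategory, $\sigma\varphi \colon T \to T'$ is again negative, and $Y(\sigma\varphi)(z) = Y(\varphi)(y) = y$, so $(\sigma\varphi, z)$ is a second Eilenberg--Zilber factorization of $y$; the comparison isomorphism is thus an automorphism $\gamma$ of $T'$ satisfying $Y(\gamma)(z) = z$ and relating $\sigma$ to $\sigma\varphi$. As $z$ is a non-degenerate element of $Y_{T'} \setminus X_{T'}$ fixed by $\gamma$, normality of $X \hookrightarrow Y$ forces $\gamma = \id_{T'}$, and therefore $\sigma\varphi = \sigma$. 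So everything reduces to the following purely combinatorial claim about the category $\cdcat$: \emph{if $\sigma \colon T \to T'$ is a negative map and $\varphi \in \aut(T)$ with $\sigma\varphi = \sigma$, then $\varphi = \id_T$}. (Equivalently, $\aut(T)$ acts freely by precomposition on the set of split epimorphisms out of $T$.)

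To prove the claim I would unwind the structure of negative maps, transferring from $\dendcat$ along root-elision, which creates positive and negative maps and is a discrete fibration (\cref{disc fib}, \cref{disc opfib}): every negative map is a composite of codegeneracies, so up to isomorphism $T$ is obtained from $T'$ by inserting bivalent vertices into edges, with $\sigma$ collapsing these inserted vertices. An automorphism of a tree acts faithfully on arcs, so it suffices to check that $\varphi$ fixes every arc. Since $\sigma$ is active it restricts to a bijection $\perim(T) \cong \perim(T')$, so $\sigma\varphi = \sigma$ makes $\varphi$ fix $\perim(T)$ pointwise; since $\sigma$ sends the generator at a non-collapsed vertex $v$ to the generator of the corresponding vertex of $T'$ of equal arity, $\sigma$ is injective on $\nbhd(v)$, so $\varphi$ fixes each such $v$ and then fixes $\nbhd(v)$ pointwise. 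Every edge of $T$ lies on a unique chain that $\sigma$ contracts onto an edge of $T'$, and each end of such a chain abuts either a boundary arc or a non-collapsed vertex; in either case its outermost arcs are already known to be fixed by $\varphi$. Propagating inward along the chain --- using that the inserted vertices are bivalent, so fixing one of the two arcs at such a vertex forces the other to be fixed --- shows $\varphi$ fixes every arc of $T$, hence $\varphi = \id_T$.

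The one genuine obstacle is this last step, i.e.\ the structure theory of negative maps in $\cdcat$; the reduction to it is a formal consequence of the Eilenberg--Zilber lemma and the definition of normality. It is worth noting that an entirely parallel route is available: by \cref{normal-boundary} the normal monomorphisms are the saturation of the boundary inclusions, and one checks directly that boundary inclusions $\partial \repu{S} \hookrightarrow \repu{S}$ have the asserted property --- there $Y_T \setminus X_T$ is exactly the set of split epimorphisms $T \to S$ --- and that the property is stable under pushout, transfinite composition, and retracts; this again puts the weight on the same combinatorial claim about split epimorphisms, which I expect to be the main point to get right.
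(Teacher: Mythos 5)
Your proof is correct and follows essentially the same route as the paper's, which simply defers to Cisinski--Moerdijk \cite[Proposition 1.5]{CisinskiMoerdijk:DSMHO}: use the Eilenberg--Zilber decomposition to reduce the stabilization of a possibly-degenerate element to that of its non-degenerate part, conclude from the definition of normality that $\sigma\varphi = \sigma$, and then invoke the combinatorial fact that precomposition by $\aut(T)$ acts freely on negative maps (split epimorphisms) out of $T$. You correctly isolate this last fact as the crux of the argument, and your subdivision/chain-propagation proof of it --- transferred from $\dendcat$ to $\cdcat$ via root-elision --- is sound.
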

\begin{proof}
Similar to \cite[Proposition 1.5]{CisinskiMoerdijk:DSMHO}.
\end{proof}

\begin{definition} \label{inner-anodyne-def}
    In $\cdset$ or $\dset$, the class of \mydef{inner anodyne} morphisms is the saturation of the set of inner horn inclusions.
\end{definition}

\subsection{The Segal condition}\label{ss segal condition}
We conclude this section with the nerve theorem, which makes precise the relationship between (cyclic) operads and (cyclic) dendroidal sets.

\begin{definition}\label{def Segal core}
The \mydef{Segal core} of a tree $T \in \cdcat$ with at least one vertex is 
\[
\segcore{T} = \bigcup_{v\in T} \repu{\medstar_v} \subseteq \repu{T}
\]
running over all vertices $v$ of $T$.
Alternatively, $\segcore{T} = \colim_{U \to T} \repu{U}$ where $U$ is indexed by the subtrees coming from vertices and edges of $T$, as in \cref{ex basic subtrees}.
Similarly, we define $\segcore{T,t} \subseteq \repo{T,t}$ for $(T,t) \in \dendcat$.
\end{definition}

A cyclic dendroidal set $X \in \cdset$ is said to be \emph{Segal} if the map
\[
  X_T \cong \hom(\repu{T}, X) \to \hom(\segcore{T}, X) \cong \displaystyle \lim_{U \to T} X_U
\]
induced by the Segal core inclusion is a bijection for all $T$ having at least one vertex.
A similar definition applies for dendroidal sets.
(These are instances of more general frameworks for Segal condition: hypermoment categories \cite{Berger:MCO} and algebraic patterns \cite{ChuHaugseng:HCASC}; see \cite[\S6]{Hackney:CGOS} for details).

The inclusion $\cdcat \to \cyc$ extends to a colimit preserving functor $\cdset \to \cyc$, and its right adjoint $N \colon \cyc \to \cdset$, given by $N(P)_T = \hom(\cdcatrm(T), P)$ for $P\in \cyc$, is called the \emph{nerve functor}.
Likewise, there is a nerve functor $N \colon \operad \to \dset$.
The \emph{nerve theorem} says that $N$ is fully faithful, with essential image spanned by the Segal (cyclic) dendroidal sets \cite{Weber:F2FPRA,Elliott:Thesis}.
We consider the homotopical version of the Segal condition in \cref{sec rezk ms}.

\section{Quasi-operads and anti-involutive quasi-categories}\label{section old models}
In this section we recall two model structures which will appear in our analysis of the model structure for cyclic quasi-operads.

\begin{definition}\label{pseudo-generators-def}
A set of \mydef{pseudo-generating acyclic cofibrations} for a model category $\catM$ is a set of cofibrations $J$ such that a morphism in $\catM$ with fibrant codomain is a fibration if and only if it has the right lifting property against all maps in $J$.
\end{definition}

In particular, if $J$ is a set of pseudo-generating acyclic cofibrations for $\catM$, then the fibrant objects of $\catM$ are precisely those having the left lifting property against all maps in $J$.

\begin{example}\label{Joyal-pseudo-gen}
    A set of pseudo-generating acyclic cofibrations for the Joyal model structure on $\sset$ is given by the inner horn inclusions $\Lambda^n_i \hookrightarrow \Delta^n, 1 < i < n$ together with the endpoint inclusion $\{0\} \hookrightarrow \freeiso$, where $\freeiso$ denotes the nerve of the walking isomorphism category $\{0 \cong 1\}$.
\end{example}

Pseudo-generating acyclic cofibrations often allow for a convenient proof that an adjunction is Quillen.

\begin{lemma}\label{peusdo-gen-Quillen}
    Let $L : \catM \rightleftarrows \catN : R$ be an adjunction of model categories, and let $J$ be a set of pseudo-generating acyclic cofibrations for $\catM$. If $L$ preserves cofibrations and sends all maps of $J$ to acyclic cofibrations, then the adjunction $L \dashv R$ is Quillen.
\end{lemma}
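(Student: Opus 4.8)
The plan is to show that $L$ is a left Quillen functor. It preserves cofibrations by hypothesis, so by a standard criterion for Quillen adjunctions — an adjunction $L \dashv R$ is Quillen as soon as $L$ preserves cofibrations and $R$ preserves fibrations between fibrant objects — the only remaining task is to check that $R$ carries each fibration of $\catN$ with fibrant codomain to a fibration of $\catM$. The two tools for this are transposition of lifting problems across $L \dashv R$ and the defining property of a set of pseudo-generating acyclic cofibrations (\cref{pseudo-generators-def}): a map of $\catM$ with fibrant codomain is a fibration if and only if it has the right lifting property against every map in $J$.

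First I would verify that $R$ preserves fibrant objects. Let $Y \in \catN$ be fibrant. Since $R$ is a right adjoint it preserves terminal objects, so $R\ast = \ast$ and the map $RY \to \ast$ has (automatically) fibrant codomain; by the property of $J$ it is therefore a fibration precisely when, for each $j\colon A \to B$ in $J$, every map $A \to RY$ extends along $j$. Transposing across the adjunction, such an extension is the same datum as an extension of the adjunct map $LA \to Y$ along $Lj\colon LA \to LB$, and this exists because $Lj$ is a trivial cofibration by hypothesis while $Y$ is fibrant. Hence $RY$ is fibrant. Next I would run essentially the same argument for a fibration $p\colon X \to Y$ in $\catN$ with $Y$ fibrant: by the previous step the codomain $RY$ of $Rp$ is fibrant, so the property of $J$ reduces ``$Rp$ is a fibration'' to ``$Rp$ has the right lifting property against every $j \in J$'', which transposes to ``$p$ has the right lifting property against $Lj$ for every $j \in J$'', and this holds since each $Lj$ is a trivial cofibration and $p$ is a fibration.

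The one step that is not a purely formal transposition is the opening reduction, namely the general fact that an adjunction whose left adjoint preserves cofibrations and whose right adjoint preserves fibrations between fibrant objects is automatically a Quillen adjunction; this is where I expect the only real subtlety to lie. It is well known (a consequence of Ken Brown's lemma together with a short lifting argument), and before invoking it here I would make sure it is recorded cleanly as a separate lemma among the model-categorical preliminaries in the appendix. Everything else in the proof is the definition of a pseudo-generating set and moving lifting problems back and forth across $L \dashv R$.
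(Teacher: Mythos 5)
Your proof is correct and follows essentially the same route as the paper: the paper's one-line proof cites \cite[Proposition 7.15]{JoyalTierney:QCSS}, which is precisely the recognition criterion you invoke as your ``opening reduction'' ($L$ preserves cofibrations and $R$ preserves fibrations between fibrant objects $\Rightarrow$ Quillen pair), and the transposition argument you spell out is exactly what the paper considers ``immediate.''
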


\begin{proof}
    This is an immediate consequence of \cite[Proposition 7.15]{JoyalTierney:QCSS}.
\end{proof}

The following well-known result allows for straightforward construction of generating cofibrations and (pseudo)-generating acyclic cofibrations for right-induced model structures.

\begin{lemma}\label{right-induced-generators}
    Let $L : \catM \rightleftarrows \catN : R$ be an adjunction between model categories, such that the model structure on $\catN$ is right-induced by $R$ from that of $\catM$. If $I$ is a set of generating cofibrations (resp.~generating acyclic cofibrations, pseudo-generating acyclic cofibrations) for $\catM$, then $LI$ is a set of generating cofibrations (resp.~generating acyclic cofibrations, pseudo-generating acyclic cofibrations) of $\catN$.
\end{lemma}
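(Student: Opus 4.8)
The plan is to reduce every assertion to the adjunction bijection for lifting problems together with the defining properties of the right-induced model structure on $\catN$. The one observation I would use throughout is that, for a morphism $f$ of $\catN$ and a morphism $g$ of $\catM$, transposition under $L\dashv R$ turns a lifting problem of $Lg$ against $f$ into a lifting problem of $g$ against $Rf$; hence $f$ has the right lifting property with respect to $LI$ (resp.\ $LJ$) precisely when $Rf$ has the right lifting property with respect to $I$ (resp.\ $J$). I would also record at the outset that, since the model structure on $\catN$ is right-induced, $R$ creates fibrations and weak equivalences, hence acyclic fibrations; consequently $L$ is left Quillen and $R$ is right Quillen, so $L$ preserves cofibrations and acyclic cofibrations while $R$ preserves fibrant objects.

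For the statement about generating cofibrations I would argue as follows. If $I$ generates the cofibrations of $\catM$, then the maps with the right lifting property against $I$ are exactly the acyclic fibrations of $\catM$; by the transposition observation together with the fact that $R$ creates acyclic fibrations, the maps with the right lifting property against $LI$ are then exactly the acyclic fibrations of $\catN$. Since in any model category the cofibrations are precisely the maps with the left lifting property against the acyclic fibrations, the saturation of $LI$ coincides with the class of cofibrations of $\catN$, provided the small object argument applies to $LI$. The statement about generating acyclic cofibrations is identical after replacing ``acyclic fibration'' by ``fibration'' and ``cofibration'' by ``acyclic cofibration'': the maps with the right lifting property against $LJ$ are the fibrations of $\catN$, whence the saturation of $LJ$ is the class of acyclic cofibrations, and $LJ$ itself consists of acyclic cofibrations because $L$ is left Quillen.

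For the pseudo-generating case I would take a morphism $f$ of $\catN$ with fibrant codomain. Since $R$ preserves fibrant objects, $Rf$ has fibrant codomain in $\catM$, so by \cref{pseudo-generators-def} it has the right lifting property against $J$ if and only if it is a fibration; and by right-induction $Rf$ is a fibration of $\catM$ if and only if $f$ is a fibration of $\catN$. Chaining these equivalences with the transposition observation gives that $f$ has the right lifting property against $LJ$ if and only if $f$ is a fibration, and $LJ$ consists of cofibrations because $L$ is left Quillen; hence $LJ$ is a set of pseudo-generating acyclic cofibrations for $\catN$.

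The step I expect to be the only genuine obstacle is the smallness verification needed for the first two parts: that the domains of the maps in $LI$ and $LJ$ are small enough for the small object argument to run in $\catN$, so that ``maps with the left lifting property against the $(LI)$-injectives'' really does coincide with the saturation of $LI$ (and similarly for $LJ$). I would handle this by noting that $L$ preserves the transfinite compositions and pushouts occurring in the small object argument, and that in every situation where we invoke the lemma $\catN$ is locally presentable, so that every object --- in particular every relevant domain --- is small; in general this smallness is part of the standard transfer theorem underlying \cref{triple-induced-model-structures}. Everything else is a formal manipulation of adjoint transposes and the characterization of the classes of (acyclic) (co)fibrations by lifting properties.
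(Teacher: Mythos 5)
Your proof is correct and follows essentially the same route as the paper's: transpose lifting problems across $L \dashv R$ and use that $R$ creates (acyclic) fibrations and preserves fibrant objects. The smallness concern you raise at the end is a genuine subtlety that the paper's one-paragraph proof silently elides; as you note, it is harmless in every application here because $\catN$ is locally presentable, so all objects are small and the small object argument applies to any set of maps.
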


\begin{proof}
    A map $f$ in $\catN$ is an acyclic fibration (resp. fibration, fibration with fibrant codomain) if and only if $Rf$ belongs to the corresponding class of $\catM$. This holds if and only if $Rf$ has the right lifting property against all maps of $I$. This, in turn, holds if and only if $f$ has the right lifting property against all maps of $LI$.
\end{proof}

\begin{definition}\label{def quasi-operad}
A dendroidal set $X \in \dset$ is a \mydef{quasi-operad} if it has the right lifting property with respect to all inner horn inclusions $\Lambda^{T,t}_e \hookrightarrow \repo{T,t}$.
\end{definition}

These were called $\infty$-operads by Cisinski and Moerdijk, following Lurie's nomenclature for quasi-categories.
They established the following model structure on dendroidal sets in \cite{CisinskiMoerdijk:DSMHO}, which they called the \emph{operadic model structure}.
See \cite[\S9.2]{HeutsMoerdijk:SDHT} for additional details and information, including a characterization of weak equivalences between cofibrant objects.

\begin{definition}[Model structure for quasi-operads]\label{omega-model-structure}

The category $\dset$ admits the \mydef{model structure for quasi-operads}, with cofibrations the normal monomorphisms and fibrant objects the quasi-operads.
The inner horn inclusions $\Lambda^{T,t}_e \hookrightarrow \repo{T,t}$ 
and endpoint inclusion $\{0\} \hookrightarrow \freeiso$ form a set of pseudo-generating acyclic cofibrations.
The boundary inclusions $\partial \repo{T,t} \to \repo{T,t}$ form a set of generating cofibrations.
\end{definition}

We end this section with a discussion of anti-involutive simplicial sets, and a Joyal-style model structure for them.
Let $\cscat \subset \cdcat$ denote the full subcategory on the linear trees $\linear{n}$ from \cref{ex trees}\eqref{itemex linear graph}. 
This category appears with a different description in \cite[\S4]{DrummondColeHackney:CERIMS} as having objects the sets $[n] = \{0, \dots, n\}$ and morphisms the disjoint union of the order preserving maps $[n] \to [m]$ and the order reversing maps $[n] \to [m]$ (so each constant function appears twice). 
It is also the category associated to the reflexive crossed simplicial group \cite[Example 2]{FiedorowiczLoday:CSGAH}. 
As with all crossed simplicial groups, $\cscat$ contains $\simpcat$ as a wide subcategory; in our setup, this inclusion is the restriction of $f\colon \dendcat \to \cdcat$.

By \cite[Proposition 4.14]{DrummondColeHackney:CERIMS}, the presheaf category $\isset$ is equivalent to the category of \emph{anti-involutive simplicial sets}, whose objects are simplicial sets $X$ equipped with anti-involutions $X^\op \xrightarrow{\cong} X$ and whose morphisms are simplicial set maps respecting the chosen anti-involutions. 
For a presheaf $X \colon \cscat^\op \to \set$, the corresponding simplicial set is given by the restriction of $X$ to $\simpcat \subseteq \cscat$, with the anti-involution acting on $X_n$ by pre-composition with the nontrivial automorphism of $\linear{n}$ (or the order-reversing isomorphism $[n] \to [n]$ in the above description).

The left adjoint $f_!$ to the restriction functor $f^* \colon \isset \to \sset$ takes the following concrete form: a simplicial set $X$ is sent to $X \amalg X^\op$ equipped with the involution coming from the swap map $[X \amalg X^\op]^\op = X^\op \amalg X \to X \amalg X^\op$.

\begin{example}\label{example cs horn boundary}
The map $f_!$ takes a horn inclusion $\Lambda^n_i \subset \Delta^n$ to the $\cscat$-horn inclusion $\Lambda^n_i \subset \nabla^n$.
Under the description above, the underlying simplicial set map is \[ \Lambda^n_i \amalg (\Lambda^n_i)^\op  \to \Delta^n \amalg (\Delta^n)^\op, \]
which is isomorphic to $\Lambda^n_i \amalg \Lambda^n_{n-i} \to \Delta^n \amalg \Delta^n$.
A similar consideration applies to boundary inclusions.
\end{example}

\begin{definition}\label{def ai quasi-cat}
An object $X \in \isset$ is a \mydef{anti-involutive quasi-category} if it has the right lifting property with respect to all inner horn inclusions.
\end{definition}

\begin{example}\label{def double J}
The anti-involutive simplicial set $f_!(\freeiso)$ is the nerve of the anti-involutive category with four objects $0,1,0^\dagger,1^\dagger$ and unique maps $a \to b$ and $a^\dagger \to b^\dagger$ for $a,b \in \{0,1\}$. 
We call the image of the endpoint inclusion $\{0 \} \to \freeiso$ under $f_! \colon \isset \to \sset$, namely $\{ 0, 0^\dagger \} \to f_!(\freeiso)$, the \mydef{double $\freeiso$-inclusion}.
\end{example}

The following model structure appears in \cite[Corollary 4.9]{DrummondColeHackney:CERIMS}, and is right-induced from the Joyal model structure on $\sset$. 

\begin{definition}\label{nabla-model-structure}
The category $\isset$ admits the \mydef{model structure for anti-involutive quasi-categories}, with cofibrations the normal monomorphisms and fibrant objects the anti-involutive quasi-categories.
The weak equivalences are those maps whose underlying map of simplicial sets is a weak categorical equivalence.
The boundary inclusions form a set of generating cofibrations.
The inner horn inclusions and the double $\freeiso$-inclusion $\{0,0^\dagger\} \to f_!(\freeiso)$ form a set of pseudo-generating acyclic cofibrations.
\end{definition}

The last statement follows from \cref{Joyal-pseudo-gen} and \cref{right-induced-generators}.

\section{Cyclic quasi-operads} \label{sec cyclic quasi-operads}
Our goal in this section is to construct a model structure on $\cdset$ analogous to the Cisinski--Moerdijk model structure for quasi-operads (\cref{omega-model-structure}). 
Our approach is to use \cref{triple-induced-model-structures} for the following adjoint string associated to root-elision $f\colon \dendcat \to \cdcat$.
\[
\begin{tikzcd}[column sep=large]
    \cdset \rar["f^*" description] 
    \rar[phantom, bend right=18, "\scriptscriptstyle\perp"]  
    \rar[phantom, bend left=18, "\scriptscriptstyle\perp"] 
    & \dset
    \lar[bend right=30, "f_!" swap, start anchor = {[yshift=1.5ex]west}, end anchor = {[yshift=1.5ex]east}] 
    \lar[bend left=30, "f_*", start anchor = {[yshift=-1.5ex]west}, end anchor = {[yshift=-1.5ex]east}] 
\end{tikzcd}
\]
We carefully study this adjoint string, with a particular focus on the adjunction $f_! \dashv f^*$.
The following appears as \cite[Remark 6.21]{Hackney:CGOS}.

\begin{proposition}\label{f-shriek-formula}
For $X \in \dset$, the cyclic dendroidal set $f_! X$ is defined on objects by the following formula:
\[
    (f_!X)_T = \sum_{r \in \perim T} X_{T,r}
\]
Given a morphism $S \to T$ in $\cdcat$, the induced map $(f_! X)_T \to (f_! X)_S$ acts on the coproduct component corresponding to $t \in \perim T$ as the composite 
\[
X_{T,t} \to X_{S,s} \hookrightarrow \sum_{r \in \perim S} X_{S,r}
\]
where $s$ is the unique root of $S$ compatible with $t$ (i.e., $(S,s) \to (T,t)$ is the unique lift of $S \to T$ in $\dendcat$ with codomain $(T,t)$), and the right-hand map above is the coproduct inclusion corresponding to $s$. \qed
\end{proposition}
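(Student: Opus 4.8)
The plan is to compute the value of $f_!X$ directly from the general formula for a left Kan extension along $f^{\op}$, and then simplify the resulting coend using \Cref{disc fib}. Recall that for any presheaf $X \in \dset = \pre{\dendcat}$, the left Kan extension $f_! X$ is given pointwise by the coend
\[
  (f_! X)_T = \int^{(S,s) \in \dendcat} \cdcat(T, f(S,s)) \cdot X_{S,s},
\]
where $\cdot$ denotes copower by a set (here, coproduct of copies of the set $X_{S,s}$). So the main task is to show this coend reduces to the stated coproduct $\sum_{r \in \perim T} X_{T,r}$.

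First I would use \Cref{disc fib}: since root-elision $f \colon \dendcat \to \cdcat$ is a discrete fibration, for every object $T \in \cdcat$ and every arrow $\varphi \colon T \to f(S,s)$ in $\cdcat$ there is a unique lift $(T,r) \to (S,s)$ in $\dendcat$ with specified codomain $(S,s)$; moreover $r = \varphi^{-1}(\text{image of } s)$ is determined, but more precisely the discrete fibration property says $\dendcat((T,r),(S,s)) \to \cdcat(T, f(S,s))$ is a bijection onto the set of those $\varphi$ whose lift has root $r$, and these subsets partition $\cdcat(T,f(S,s))$ as $r$ ranges over $\perim(T)$. Hence
\[
  \cdcat(T, f(S,s)) \;\cong\; \coprod_{r \in \perim(T)} \dendcat((T,r), (S,s)),
\]
naturally in $(S,s)$. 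Substituting this into the coend and commuting the coproduct over $r$ (which is indexed by a set not depending on $(S,s)$) outside the coend gives
\[
  (f_! X)_T \;\cong\; \coprod_{r \in \perim(T)} \int^{(S,s)} \dendcat((T,r),(S,s)) \cdot X_{S,s}.
\]
By the co-Yoneda lemma, each inner coend is just $X_{T,r}$, yielding the formula on objects.

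For the description of the functoriality: given $\alpha \colon S \to T$ in $\cdcat$, I would trace through the co-Yoneda identification to see that the component of $(f_!X)_T \to (f_!X)_S$ corresponding to $t \in \perim(T)$ lands in the component corresponding to whichever $s \in \perim(S)$ makes the unique $\dendcat$-lift $(S,s) \to (T,t)$ of $\alpha$ exist — this $s$ is exactly the "root compatible with $t$" — and on that summand it is the restriction map $X_{T,t} \to X_{S,s}$ for the presheaf $X$ along that lift, followed by the coproduct inclusion. This is really just unwinding the naturality of the co-Yoneda isomorphism together with the definition of the bijection furnished by the discrete fibration, so I would present it as a short verification rather than a computation. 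The main obstacle — though a mild one — is bookkeeping the naturality carefully enough to see that the "destination summand" $s$ is forced and that it coincides with the root of the $\dendcat$-lift; everything else is formal coend manipulation.
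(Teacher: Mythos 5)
Your proof is correct. The paper itself does not give an argument for this proposition: it is stated with a terminating \qed and attributed to an external reference (\cite[Remark 6.21]{Hackney:CGOS}), so there is no internal proof to compare against. Your coend computation is the standard way to derive this, and the key reduction is exactly as you say: the discrete-fibration property (\Cref{disc fib}) gives the natural-in-$(S,s)$ decomposition
\[
\cdcat\bigl(T, f(S,s)\bigr)\;\cong\;\coprod_{r\in\perim(T)}\dendcat\bigl((T,r),(S,s)\bigr),
\]
after which the coproduct (indexed by the fixed set $\perim(T)$) commutes past the coend and co-Yoneda collapses each summand to $X_{T,r}$. Your description of the functoriality is likewise right: tracing $[\,\id_{(T,t)},x\,]$ through precomposition by $\alpha\colon S\to T$ and then through the coend relation lands in the summand indexed by the root $s$ of the unique $\dendcat$-lift $\tilde\alpha\colon(S,s)\to(T,t)$, with value $X(\tilde\alpha)(x)$. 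You flag this as bookkeeping rather than spelling it out, which is a fair assessment of where the (modest) content lies; a fully written-out version would simply carry the representing pair through the two natural isomorphisms. One small point worth making explicit if you write this up: the naturality in $(S,s)$ of the decomposition is what licenses commuting the coproduct out of the coend, and it follows immediately from functoriality of $f$ (postcomposing a lift by $\psi$ yields the lift of $f(\psi)\circ f(\tilde\varphi)$).
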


The preceding formula immediately implies that $f_!$ preserves monomorphisms, and that each leg of the unit $\eta \colon \id_{\dset} \Rightarrow f^* f_!$ is a monomorphism.
This latter fact implies in particular that $f^*f_!$ and $f_!$ are faithful functors.
Recall from \cref{rmk horns colimits} that $f_!$ preserves horn and boundary inclusions.
The following, which explains the behavior of $f^*$ on boundary inclusions and inner horn inclusions, is the key technical lemma of this section.

\begin{lemma} \label{boundary horn pushout}
The diagram below left is a pushout in $\dset$ for each tree $T \in \cdcat$, and the diagram below right is a pushout for each inner coface map $\delta$ with codomain $T$.
\[ \begin{tikzcd}
  \sum\limits_{t\in \perim T} \partial \repo{T,t} \rar[hook]  \dar \ar[dr, phantom, "\ulcorner" very near end] & 
  \sum\limits_{t\in \perim T} \repo{T,t}  \dar 
&
\sum\limits_{t\in \perim T} \Lambda^{T,t}_\delta \rar[hook]  \dar \ar[dr, phantom, "\ulcorner" very near end] & 
  \sum\limits_{t\in \perim T} \repo{T,t}  \dar \\
  f^* \partial \repu{T} \rar[hook] & f^*\repu{T}
& 
  f^* \Lambda^T_\delta \rar[hook] & f^*\repu{T}
\end{tikzcd} 
\]
In both cases, the vertical maps are sums of the unit of the $f_! \dashv f^*$ adjunction.
\end{lemma}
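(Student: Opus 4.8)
The plan is to analyze both pushout squares simultaneously by exploiting the colimit descriptions of horns and boundaries recalled in \cref{rmk horns colimits}, together with the formula for $f_!$ in \cref{f-shriek-formula}. First I would observe that, since root-elision $f \colon \dendcat \to \cdcat$ is a discrete fibration (\cref{disc fib}), for a fixed tree $T \in \cdcat$ the overcategory $(\cdcat^+)_{/T}$ and each of its relevant full subcategories (the category $C_\delta$ indexing an inner horn, and the analogous category indexing the boundary) lifts along $f$ to a \emph{disjoint union}, indexed by $t \in \perim T$, of the corresponding overcategories $(\dendcat^+)_{/(T,t)}$, $C_\delta^{(T,t)}$, etc. This is exactly the statement that $(\dendcat^+)_{/(T,t)} \to (\cdcat^+)_{/T}$ is an isomorphism for each choice of root $t$, together with the fact that $f$ is surjective on objects and every map in $\cdcat$ lifts uniquely for a given codomain.

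Next I would compute $f^*$ of each representable, horn, and boundary appearing in the square. Applying $f^*$ to $\repu{T} = \hom(-, T)$ and using \cref{f-shriek-formula}'s description (or directly, that $(f^* Y)_{(S,s)} = Y_{f(S,s)} = Y_S$), one gets $(f^*\repu{T})_{(S,s)} = \hom(S, T)$, and the discrete fibration property partitions this hom-set according to which root of $S$ is compatible with each $S \to T$; summing over the fibers realizes $f^* \repu{T}$ as the colimit over $\coprod_{t} (\dendcat^+)_{/(T,t)}$ of representables, which is precisely $\sum_{t \in \perim T} \repo{T,t}$ modulo the identification of the indexing categories — wait, more carefully: $f^*\repu{T}$ is \emph{not} literally $\sum_t \repo{T,t}$ (a map $S \to T$ in $\cdcat$ need not be surjective on arcs), but the bottom-left corner $f^*\partial\repu{T}$ (resp. $f^*\Lambda^T_\delta$) \emph{is} built from the sub-overcategory of face maps, and the claim is the square comparing it to $\sum_t \repo{T,t}$. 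So I would instead argue: by \cref{rmk horns colimits} applied in $\cdcat$, $\partial\repu{T} = \colim_{R \to T} \repu{R}$ over the full subcategory of $(\cdcat^+)_{/T}$ on non-iso coface-factoring maps; applying the colimit-preserving functor $f^*$... but $f^*$ is a \emph{right} adjoint, so it need not preserve this colimit. The correct move is to work from the bottom: $f^*\partial\repu{T} \hookrightarrow f^*\repu{T}$, restricted to the sub-presheaf generated by faces, is computed objectwise, and objectwise over $(S,s) \in \dendcat$ it is the set of maps $S \to T$ factoring through an $\cdcat$-coface; by the discrete-fibration lift this set is in bijection with $\coprod_{t \in \perim T} \{\, (S,s) \to (T,t) \text{ factoring through a } \dendcat\text{-coface} \,\}$, i.e. $\big(\sum_t \partial\repo{T,t}\big)_{(S,s)}$ — this uses that $f$ creates positive maps and cofaces (\cref{def reedy structure}, \cref{prop gen reedy}), and that codimension is preserved since $f$ is a discrete fibration preserving degree.

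Granting these objectwise identifications, the square on the left becomes, objectwise at each $(S,s)$, the evident pushout of sets
\[
\begin{tikzcd}[sep=small]
\coprod\limits_{t} \big(\partial\repo{T,t}\big)_{(S,s)} \rar \dar & \coprod\limits_{t}\big(\repo{T,t}\big)_{(S,s)} \dar \\
\big(f^*\partial\repu{T}\big)_{(S,s)} \rar & \big(f^*\repu{T}\big)_{(S,s)}
\end{tikzcd}
\]
where the top row is a disjoint union of boundary inclusions, the left map is a bijection, hence the square is a pushout of sets; since colimits of presheaves are computed objectwise, the original square is a pushout in $\dset$. The horn square is identical with "coface" replaced by "coface not isomorphic to $\delta$" throughout, and the extra point is that inner coface maps in $\cdcat$ lift to inner coface maps in $\dendcat$ for every root (an inner edge of $T$ is an inner edge of $(T,t)$ regardless of $t$, by \cref{disc fib} and \cref{active-inert-factorization}), so the disjoint union over $t$ is uniform — this is the content of the last sentence of \cref{rmk horns colimits}. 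Finally I would check the assertion that the vertical maps are sums of the unit $\eta \colon \id_{\dset} \Rightarrow f^*f_!$: since $(\eta_X)$ is a monomorphism and $f^*f_!\repo{T,t}$ contains $\repo{T,t}$ as the summand indexed by its own root, unwinding \cref{f-shriek-formula} shows the coproduct inclusion $\repo{T,t} \hookrightarrow f^*f_!\repo{T,t} = f^*\big(\sum_r (\text{stuff})\big)$ followed by the map induced on $f^*$ by $f_!\repo{T,t} \to \repu{T}$ (adjoint to the identity) is exactly the right-hand vertical map, and summing over $t$ gives the claim; the left square is the restriction of this to boundaries.

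The main obstacle I anticipate is the bookkeeping in the objectwise bijection $\big(f^*\partial\repu{T}\big)_{(S,s)} \cong \coprod_{t}\big(\partial\repo{T,t}\big)_{(S,s)}$: one must verify that a map $S \to T$ in $\cdcat$ factors through an $\cdcat$-coface \emph{if and only if} its unique $\dendcat$-lift (with codomain $(T,t)$, $t$ the compatible root) factors through a $\dendcat$-coface, and that the codimension is preserved. Both follow from the discrete-fibration property plus the facts that $f$ creates positive and negative maps and preserves degrees (\cref{prop gen reedy}, \cref{def reedy structure}), but assembling this cleanly — especially ensuring the root compatibility is tracked correctly through the active/inert factorization for the horn case — is the step requiring care.
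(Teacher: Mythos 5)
There is a genuine gap, and it is exactly in the step you flagged as ``requiring care.'' The claimed objectwise bijection
\[
\bigl(f^*\partial\repu{T}\bigr)_{(S,s)} \;\cong\; \coprod_{t \in \perim T}\bigl(\partial\repo{T,t}\bigr)_{(S,s)}
\]
is false in general. The discrete fibration property (\cref{disc fib}) gives a unique lift of a $\cdcat$-map $\varphi\colon S \to T$ once the \emph{codomain} root $t$ is fixed, and the domain root of that lift is then a function of $\varphi$ and $t$; it need not equal $s$, and the function $t \mapsto (\text{induced domain root})$ need not be injective. So the fibers of the vertical map $\coprod_t \repo{T,t}_{(S,s)} \to (f^*\repu{T})_{(S,s)}$ can have cardinality different from $1$. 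Concretely, take $T = \medstar_3$ the star with one vertex $v$ and $\nbhd(v) = \{a,b,c\}$, so $\perim T = \{a^\dagger,b^\dagger,c^\dagger\}$, and take $S = \linear{0}$ with $s = 0^\dagger$. Every $\cdcat$-map $\linear{0} \to \medstar_3$ factors through an edge inclusion, so $(\partial\repu{\medstar_3})_{\linear{0}}$ has $6$ elements. But each $(\partial\repo{\medstar_3,t})_{(\linear{0},0^\dagger)}$ has $3$ elements, so the coproduct over $t$ has $9$. Tracing through root-elision, the $\cdcat$-map sending $0 \mapsto a^\dagger$ is hit twice (once from $t = b^\dagger$, once from $t = c^\dagger$), whereas $0 \mapsto a$ is hit only once (from $t = a^\dagger$). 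So the left vertical map is not injective. Independently, even if it were a bijection, having the top map a monomorphism and the left map a bijection would force the \emph{right} vertical map to also be a bijection for the square to be a pushout; but $\sum_t \repo{T,t}_{(S,s)} \to (f^*\repu{T})_{(S,s)}$ is $9\to 6$ in the same example, so this too fails.

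The property you are reaching for — that a map $\varphi\colon S \to T$ has a \emph{unique} codomain root $t$ compatible with a given domain root $s$ — holds precisely when $\varphi$ is active, since active maps restrict to bijections on boundaries and the restriction $f\colon \dendcat_\actrm \to \cdcat_\actrm$ is a discrete \emph{op}fibration (\cref{disc opfib}). The paper's proof uses exactly this, but applies it to the \emph{complements} of the horn (resp.\ boundary) inclusions rather than to the horns or boundaries themselves. Since the horizontal maps in the square are monomorphisms (the bottom ones because $f^*$ is a right adjoint), the square is a pushout of presheaves if and only if, for each $(S,s)$, the right vertical map restricts to a bijection
\[
\sum_{t}\Bigl(\repo{T,t}_{(S,s)} \setminus (\Lambda^{T,t}_\delta)_{(S,s)}\Bigr) \;\longrightarrow\; (f^*\repu{T})_{(S,s)} \setminus (f^*\Lambda^T_\delta)_{(S,s)},
\]
and every $\varphi$ in the right-hand complement fails to factor through any outer coface, hence is active by \cref{active-outer-face}, hence admits a unique compatible codomain root. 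Your plan correctly assembles all the relevant ingredients — \cref{disc fib}, \cref{disc opfib}, \cref{rmk horns colimits}, creation of positive maps — but applies the uniqueness of lifts on the wrong side of the inclusion; passing to complements is what makes the active/inert dichotomy do its job.
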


\begin{proof}
The bottom maps in these diagrams are monomorphisms, since $f^*$ is a right adjoint and hence preserves monomorphisms.
We focus on the inner horn diagram to the right, as the case for boundaries is a minor modification.
It suffices to show that 
\begin{equation}\label{eq complements bij}
\begin{tikzcd}[sep=small]
\sum_t \left( \repo{T,t}_{S,s} \setminus (\Lambda^{T,t}_\delta)_{S,s} \right) \rar & (f^*\repu{T})_{S,s} \setminus (f^* \Lambda^T_\delta)_{S,s}
\end{tikzcd}
\end{equation}
is a bijection for each $(S,s) \in \dendcat$.
Suppose $\varphi \colon S \to T$ is in $(f^*\repu{T})_{S,s} \setminus (f^* \Lambda^T_\delta)_{S,s}.$
Then $\varphi$ does not factor through an outer face map.
Since $\varphi$ is active by \cref{active-outer-face} and $f\colon \dendcat_\actrm \to \cdcat_\actrm$ is a discrete opfibration (\cref{disc opfib}), there is a unique $t$ with $\varphi \colon (S,s) \to (T,t)$ in $\dendcat$.
Thus \eqref{eq complements bij} is a bijection, as desired.
\end{proof}

As sums and pushouts preserve cofibrations and inner anodyne maps, we deduce:

\begin{corollary}\label{f-star-boundary-horn}
The functor $f^* \colon \cdset \to \dset$ sends boundary inclusions to cofibrations, and inner horn inclusions to inner anodyne maps. \qed
\end{corollary}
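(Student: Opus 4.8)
The plan is to read this off directly from \cref{boundary horn pushout} together with the closure properties of the two relevant classes of maps. For the boundary statement, that lemma exhibits $f^*\partial\repu{T} \hookrightarrow f^*\repu{T}$ as a pushout of the coproduct $\coprod_{t \in \perim T} \bigl( \partial\repo{T,t} \hookrightarrow \repo{T,t} \bigr)$ of boundary inclusions in $\dset$, the vertical legs being coproducts of units. Each such boundary inclusion is a normal monomorphism, and by \cref{normal-boundary} the normal monomorphisms form a saturated class, hence are closed under (small) coproducts and pushouts. Therefore $f^*\partial\repu{T} \hookrightarrow f^*\repu{T}$ is a normal monomorphism, i.e.\ a cofibration in the model structure for quasi-operads (\cref{omega-model-structure}).

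For the inner horn statement I would argue in exactly the same way. Given an inner coface map $\delta$ with codomain $T$, the right-hand square of \cref{boundary horn pushout} realizes $f^*\Lambda^T_\delta \hookrightarrow f^*\repu{T}$ as a pushout of the coproduct $\coprod_{t\in \perim T}\bigl( \Lambda^{T,t}_\delta \hookrightarrow \repo{T,t}\bigr)$ of inner horn inclusions in $\dset$. Since by \cref{inner-anodyne-def} the inner anodyne maps are the saturation of the inner horn inclusions, this class too is closed under coproducts and pushouts, so $f^*\Lambda^T_\delta \hookrightarrow f^*\repu{T}$ is inner anodyne.

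I do not expect any genuine obstacle here: essentially all of the work has already gone into \cref{boundary horn pushout}, whose proof in turn rests on the active/inert factorization (\cref{active-outer-face}) and the fact (\cref{disc opfib}) that root-elision restricts to a discrete opfibration on active maps. The only points to keep straight are that the coproducts occurring in \cref{boundary horn pushout} are indexed by the finite set $\perim T$, so that only the elementary closure properties of saturated classes are needed, and that ``cofibration'' is to be understood in the model structure for quasi-operads, where by \cref{omega-model-structure} it coincides with ``normal monomorphism''.
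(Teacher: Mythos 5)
Your proposal is correct and matches the paper's reasoning exactly: the paper's one-line justification ("As sums and pushouts preserve cofibrations and inner anodyne maps") is precisely the saturation argument you spell out, applied to the pushout squares of \cref{boundary horn pushout}. Nothing further is needed.
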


\begin{proposition}\label{f-shriek-star-composite-Quillen}
    The adjunction $f^* f_! : \dset \rightleftarrows \dset : f^* f_*$ is Quillen.
\end{proposition}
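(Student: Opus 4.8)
The plan is to apply \cref{peusdo-gen-Quillen} to the adjunction $f^*f_! \dashv f^*f_*$, using the set $J$ of pseudo-generating acyclic cofibrations for the model structure for quasi-operads recalled in \cref{omega-model-structure}: the inner horn inclusions $\Lambda^{S,s}_e \hookrightarrow \repo{S,s}$ together with the endpoint inclusion $\{0\} \hookrightarrow \freeiso$. Two things must be checked: that $f^*f_!$ preserves cofibrations, and that it sends each element of $J$ to an acyclic cofibration.

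For the first point, recall that $f_!$ preserves monomorphisms (immediate from the formula in \cref{f-shriek-formula}, as noted after it) and that $f^*$, being a right adjoint, preserves monomorphisms; hence $f^*f_!$ preserves monomorphisms. Since the cofibrations in the model structure for quasi-operads are exactly the normal monomorphisms and every dendroidal set in the image of $f_!$ is normal — this follows because $f_!$ sends boundary inclusions to boundary inclusions (\cref{rmk horns colimits}) and hence sends normal objects to normal objects, or alternatively by inspecting \cref{f-shriek-formula} — the image $f^*f_!X$ is itself normal, so any monomorphism with such a codomain is automatically normal. Therefore $f^*f_!$ preserves cofibrations.

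For the second point, $f_!$ sends the inner horn inclusion $\Lambda^{S,s}_e \hookrightarrow \repo{S,s}$ to an inner horn inclusion in $\cdset$ by \cref{rmk horns colimits}, and then \cref{f-star-boundary-horn} (equivalently \cref{boundary horn pushout}) says $f^*$ carries this to an inner anodyne map of dendroidal sets, which is in particular an acyclic cofibration for the model structure for quasi-operads. For the endpoint inclusion $\{0\} \hookrightarrow \freeiso$, I would argue as follows: $f_!$ sends it to a map of cyclic dendroidal sets which, after applying $f^*$, can be identified with the coproduct $\{0,0^\dagger\} \hookrightarrow f^*f_!\freeiso$ where $f^*f_!\freeiso$ is a disjoint union of two copies of $\freeiso$ (an instance of the general identification $f^*f_!X = \sum_{r} X$ over the relevant boundary indexing, paralleling \cref{example cs horn boundary} and \cref{def double J} in the linear case). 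This is a coproduct of two copies of the endpoint inclusion $\{0\} \hookrightarrow \freeiso$, each of which is an acyclic cofibration in the model structure for quasi-operads — it is a normal monomorphism, and it is a weak equivalence because it is so in the Joyal model structure on simplicial sets and the inclusion $\sset \hookrightarrow \dset$ sends $\freeiso$ to a contractible Kan-like object with the same homotopy type as the point (this is standard; cf.\ \cite[\S9.2]{HeutsMoerdijk:SDHT}). A coproduct of acyclic cofibrations is an acyclic cofibration, so $f^*f_!$ sends the endpoint inclusion to an acyclic cofibration as required.

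The main obstacle I anticipate is the endpoint inclusion: unlike the inner horns, it is not handled by the clean pushout machinery of \cref{boundary horn pushout}, so one must explicitly compute $f^*f_!\freeiso$ and verify it splits as two copies of $\freeiso$, and then invoke the fact (not proved in the excerpt but well known) that $\{0\}\hookrightarrow\freeiso$ is a weak equivalence in the model structure for quasi-operads. Everything else — preservation of monomorphisms, normality of the codomain, preservation of inner anodynes — is either already established in the excerpt or follows formally, so \cref{peusdo-gen-Quillen} then delivers the conclusion immediately.
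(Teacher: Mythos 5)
Your overall plan is exactly the paper's: apply \cref{peusdo-gen-Quillen} to $f^*f_!\dashv f^*f_*$ using the pseudo-generating acyclic cofibrations of \cref{omega-model-structure}, handle the inner horns via \cref{rmk horns colimits} and \cref{boundary horn pushout}/\cref{f-star-boundary-horn}, and compute $f^*f_!(\{0\}\to\freeiso)$ to be a coproduct of two copies of $\{0\}\to\freeiso$. Those parts are fine (modulo some mild over-explanation of why $\{0\}\to\freeiso$ is an acyclic cofibration, which is standard). However, your argument that $f^*f_!$ preserves cofibrations contains a genuine error.

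You assert that ``the image $f^*f_!X$ is itself normal'' for an arbitrary $X\in\dset$, so that any monomorphism into it is automatically a normal monomorphism. This claim is false. Take $X=\ast$ the terminal dendroidal set. Then $(f^*f_!\ast)_{(C_n,t)} = \perim(C_n)$, an $(n{+}1)$-element set, all of whose elements are non-degenerate when $n\geq 2$ (there are no codegeneracies out of $C_n$ for $n\geq 2$, since a codegeneracy would land in the edge, contradicting activeness). The group $\aut(C_n,t)\cong\Sigma_n$ permutes the $n$ leaf arcs and fixes the root arc $t\in\perim(C_n)$, so the action is not free. Hence $f^*f_!\ast$ is not normal. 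Your stated justifications also do not support the claim: that $f_!$ sends boundary inclusions to boundary inclusions shows only that $f_!$ preserves normal objects, not that every object in its image is normal, and direct inspection of \cref{f-shriek-formula} reveals the counterexample above rather than the claim.

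The conclusion you want is nevertheless true, but you need a different argument. The paper uses \cref{normal-boundary}: normal monomorphisms are the saturation of boundary inclusions, and $f^*f_!$ is cocontinuous, so it suffices that $f^*f_!$ take boundary inclusions $\partial\repo{T,t}\to\repo{T,t}$ to cofibrations, which is exactly what \cref{rmk horns colimits} (for $f_!$) and \cref{f-star-boundary-horn} (for $f^*$) give. Alternatively, one can argue directly from \cref{f-shriek-formula}: for a normal monomorphism $X\to Y$ and $\sigma\in\aut(T,t)$ with $\sigma\neq\id$, if $\sigma^*$ fixes a non-degenerate $y\in Y_{T,r}\setminus X_{T,r}$ viewed inside $(f^*f_!Y)_{(T,t)}$, then $\sigma$ must fix the index $r$, so $\sigma\in\aut(T,r)$ fixes a non-degenerate element of $Y_{T,r}\setminus X_{T,r}$, contradicting normality of $X\to Y$ unless $\sigma=\id$. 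Either repair closes the gap; as written, the step fails.
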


\begin{proof}
This is an application of \cref{peusdo-gen-Quillen}, using \cref{rmk horns colimits}, \cref{normal-boundary}, \cref{boundary horn pushout}, \cref{f-star-boundary-horn}, and the fact that $f^*f_!(\{0\}\to \freeiso)$ is isomorphic to a coproduct of two copies of $\{0\}\to \freeiso$, hence is an acyclic cofibration.
\end{proof}

\begin{theorem}\label{cdset-right-induced}
    The category of cyclic dendroidal sets $\cdset$ admits a model structure $\cdset_r$ right-induced along $f^* \colon \cdset \to \dset$ from the model structure for quasi-operads. This model structure may be characterized as follows:
    \begin{itemize}[left=0pt]
        \item Cofibrations are normal monomorphisms;
        \item Fibrant objects are cyclic dendroidal sets having the right lifting property against all inner horn inclusions $\Lambda^T_e \hookrightarrow \repu{T}$.
        \item A map with fibrant codomain is a fibration if and only if it has the right lifting property with respect to all inner horn inclusions $\Lambda^T_e \hookrightarrow \repu{T}$ and the double $\freeiso$-inclusion $\{0,0^\dagger\} \to f_!(\freeiso)$ from \cref{def double J}.
    \end{itemize}
\end{theorem}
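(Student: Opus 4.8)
The plan is to obtain existence from \cref{triple-induced-model-structures} and then read off the three characterizations by transporting (pseudo-)generators along the left adjoint $f_!$. For existence: $\dset$ is cofibrantly generated by \cref{omega-model-structure}, $\cdset$ is bicomplete as a presheaf category, and the composite adjunction $f^* f_! \dashv f^* f_*$ on $\dset$ is Quillen by \cref{f-shriek-star-composite-Quillen}, so the first assertion of \cref{triple-induced-model-structures} supplies the right-induced model structure $\cdset_r$ along $f^*$.

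For the cofibrations, I would apply \cref{right-induced-generators}: the image under $f_!$ of a set of generating cofibrations of $\dset$ is a set of generating cofibrations of $\cdset_r$. Taking the boundary inclusions $\partial\repo{T,t} \hookrightarrow \repo{T,t}$ from \cref{omega-model-structure}, \cref{rmk horns colimits} identifies each image $f_!(\partial\repo{T,t} \hookrightarrow \repo{T,t})$ with the boundary inclusion $\partial\repu{T} \hookrightarrow \repu{T}$ in $\cdset$, and — since every tree has a root, its boundary being inhabited — root-elision is surjective on objects, so every boundary inclusion of $\cdset$ arises in this way. Thus the boundary inclusions of $\cdset$ generate the cofibrations of $\cdset_r$, and by \cref{normal-boundary} these are exactly the normal monomorphisms.

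For the fibrant objects and fibrations I would again transport along $f_!$. By definition of right-induction, $X \in \cdset$ is fibrant precisely when $f^* X$ is fibrant in $\dset$, i.e.\ a quasi-operad, i.e.\ (by \cref{def quasi-operad}) when $f^* X$ has the right lifting property against every inner horn inclusion $\Lambda^{T,t}_e \hookrightarrow \repo{T,t}$; by adjunction this is equivalent to $X$ lifting against each $f_!(\Lambda^{T,t}_e \hookrightarrow \repo{T,t})$, which \cref{rmk horns colimits} identifies with the inner horn inclusion $\Lambda^T_e \hookrightarrow \repu{T}$ of $\cdset$, and surjectivity of root-elision on objects (via the slice-category isomorphism of \cref{rmk horns colimits}) shows these account for all such inclusions. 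For the fibration statement, \cref{right-induced-generators} sends a set of pseudo-generating acyclic cofibrations of $\dset$ to one for $\cdset_r$; applying it to the set of \cref{omega-model-structure} — the inner horn inclusions together with $\{0\} \hookrightarrow \freeiso$ — yields the inner horn inclusions $\Lambda^T_e \hookrightarrow \repu{T}$ together with $f_!(\{0\} \hookrightarrow \freeiso)$, which is the double $\freeiso$-inclusion $\{0,0^\dagger\} \to f_!(\freeiso)$ of \cref{def double J}. Since a set of pseudo-generating acyclic cofibrations detects fibrations with fibrant codomain, the final bullet follows.

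I do not expect a serious obstacle here, since the analytic heart of the matter has already been placed in \cref{f-shriek-star-composite-Quillen} and \cref{rmk horns colimits}. The one point needing a little care is the bookkeeping that $f_!$ surjects onto the full classes of boundary and inner horn inclusions of $\cdset$, which is where the standing convention that all trees have inhabited boundary — equivalently, that root-elision is surjective on objects — is used.
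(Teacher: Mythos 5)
Your proposal is correct and follows essentially the same approach as the paper: existence via \cref{triple-induced-model-structures} together with \cref{f-shriek-star-composite-Quillen}, transport of generating and pseudo-generating sets along $f_!$ via \cref{right-induced-generators}, and identification of the images with the boundary and inner horn inclusions of $\cdset$ via \cref{rmk horns colimits}. The one point you flag explicitly --- that every tree has a root (by the standing convention on inhabited boundaries) so that $f_!$ surjects onto the relevant classes of inclusions --- is used but left tacit in the paper, and spelling it out is a small improvement rather than a divergence.
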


\begin{proof}
    The existence of the model structure follows from  \cref{triple-induced-model-structures} and \cref{f-shriek-star-composite-Quillen}, while the characterization of cofibrations and a set of pseudo-generating acyclic cofibrations follows from \cref{right-induced-generators} and the results listed in the proof of \cref{f-shriek-star-composite-Quillen}.
    For the characterization of fibrant objects, an object $X \in \cdset$ is fibrant if and only if $f^* X \in \dset$ has the right lifting property against all inner horn inclusions $\Lambda^{T,t}_e \hookrightarrow \repo{T,t}$. 
    This, in turn, holds if and only if $X$ has the right lifting property against all maps $f_! \Lambda^{T,t}_e \hookrightarrow f_! \repo{T,t}$, and these are precisely the inner horn inclusions in $\cdset$.
\end{proof}

\begin{theorem}\label{cdset-left-induced}
    The category of cyclic dendroidal sets $\cdset$ admits a model structure $\cdset_l$ left-induced along $f^* \colon \cdset \to \dset$ from the model structure for quasi-operads (\cref{omega-model-structure}).
    The cofibrations are the monomorphisms $X \to Y$ such that every non-identity automorphism of a tree $T$ which fixes at least one boundary element of $T$ does not fix any nondegenerate element of $Y_T \setminus X_T$. 
\end{theorem}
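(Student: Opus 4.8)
The plan is to obtain the existence of $\cdset_l$ as the left-induced counterpart of \cref{cdset-right-induced}, and then to read off the cofibrations from the general description of a left-induced model structure. For existence I apply the left-induced half of \cref{triple-induced-model-structures} to the same adjoint string $f_! \dashv f^* \dashv f_*$ used in \cref{cdset-right-induced}: the model structure for quasi-operads on $\dset$ (\cref{omega-model-structure}) is combinatorial (cofibrant generation is already invoked in \cref{cdset-right-induced}, and $\dset$ is a presheaf category hence locally presentable), while $\cdset$ is likewise a presheaf category and so is locally presentable and bicomplete; the Quillen adjunction $f^* f_! \dashv f^* f_*$ on $\dset$ required by the lemma is exactly \cref{f-shriek-star-composite-Quillen}. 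This produces $\cdset_l$.

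It then remains to identify its cofibrations. By construction, a map $g \colon X \to Y$ is a cofibration of $\cdset_l$ precisely when $f^* g$ is a normal monomorphism in $\dset$. Since root-elision $f$ is surjective on objects (every tree has inhabited boundary), $f^*$ is faithful; as it also preserves monomorphisms, $f^* g$ is a monomorphism if and only if $g$ is, so I may assume $g$ is a monomorphism and unwind the normality of $f^* g$. For a rooted tree $(T,t)$ one has $(f^* Y)_{(T,t)} = Y_T$ and $(f^* X)_{(T,t)} = X_T$, so the complement to inspect is $Y_T \setminus X_T$, independent of $t$. The two facts to establish are: (i) an element of $(f^* Y)_{(T,t)} = Y_T$ is degenerate in the dendroidal set $f^* Y$ if and only if it is degenerate in the cyclic dendroidal set $Y$; and (ii) $\aut_\dendcat(T,t)$ coincides with the stabilizer in $\aut_\cdcat(T)$ of the boundary element $t$. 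Both rest on the observation that codegeneracies, and isomorphisms, are active maps (\cref{def reedy structure}): given such a map $\bar\sigma \colon T \to T'$ in $\cdcat$ and any $t \in \perim(T)$, \cref{disc opfib} supplies a unique active lift $(T,t) \to (T', \bar\sigma(t))$ in $\dendcat$, which again lies in $\dendcat^-$, resp.\ is an isomorphism; conversely $f$ carries codegeneracies (isomorphisms) of $(T,t)$ to codegeneracies (isomorphisms) of $T$, using \cref{disc fib}. Since $(f^* Y)(\sigma) = Y(f\sigma)$, this matches images of codegeneracies, yielding (i), and identifies the two automorphism groups, yielding (ii).

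Granting (i) and (ii), $f^* g$ is normal if and only if for every rooted tree $(T,t)$ the group $\aut_\dendcat(T,t) = \mathrm{Stab}_{\aut_\cdcat(T)}(t)$ acts freely on the nondegenerate elements of $Y_T \setminus X_T$. Interchanging the quantifier over rootings $t$ with the quantifier over group elements converts this into: for every tree $T$, no non-identity automorphism of $T$ fixing at least one boundary element fixes a nondegenerate element of $Y_T \setminus X_T$ — which is the asserted characterization.

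I expect the only step requiring genuine care to be (i): one must check that every codegeneracy of $T$ in $\cdcat$ is the root-elision of an honest codegeneracy out of $(T,t)$ (and vice versa), so that ``degenerate in $f^* Y$'' and ``degenerate in $Y$'' really coincide as subsets of $Y_T$. Faithfulness of $f^*$, the automorphism identification (ii), and the quantifier bookkeeping are routine, and the existence half is formally parallel to \cref{cdset-right-induced}.
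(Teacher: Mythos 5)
Your proposal is correct and takes essentially the same approach as the paper: existence via the left-induced half of \cref{triple-induced-model-structures} using \cref{f-shriek-star-composite-Quillen}, and the cofibration characterization by unwinding normality of $f^*g$ levelwise and identifying $\aut_\dendcat(T,t)$ with the stabilizer of $t$ in $\aut_\cdcat(T)$. You are more explicit than the paper about the codegeneracy compatibility (your point (i)), which the paper leaves implicit; that extra care is appropriate and your argument for it via \cref{disc opfib} and the activity of negative maps is sound.
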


\begin{proof}
    The existence of the model structure follows from \cref{triple-induced-model-structures} and  \cref{f-shriek-star-composite-Quillen}. 
    The characterization of the cofibrations follows from the characterization of the cofibrations of $\dset$ as normal monomorphisms, together with the fact that the automorphisms of a rooted tree $(T,t)$ consist precisely of the automorphisms of $T$ which fix the root $t$.
\end{proof}

Notice that the identity map constitutes a Quillen equivalence $\cdset_r \rightleftarrows \cdset_l$ by \cite[Proposition 1.16]{HackneyRovelli:IMSHC}.
The following example shows that the model structures $\cdset_r$ and $\cdset_l$ are distinct, with $\cdset_l$ having a larger class of cofibrations.

\begin{example}
Let $X\in \isset$ be any anti-involutive simplicial set which is not normal (e.g.\ $X = \Delta^0$ equipped with the unique anti-involution).
Of course $X$, when viewed as a cyclic dendroidal set, is not cofibrant in $\cdset_r$. 
However, it is cofibrant in $\cdset_l$ because $f^*X \in \dset$ is in the image of $\sset \to \dset$, hence is normal.
\end{example}

\section{Cyclic quasi-operads vs. simplicial cyclic operads}\label{sec QE simplicial cyclic}
Cisinski and Moerdijk showed that there is a homotopy coherent nerve/rigidification adjunction
\[
\rigc : \dset \rightleftarrows \soperad : \hNerve
\]
(induced from the Boardman--Vogt W-construction for operads) which is a Quillen equivalence between the model structure for quasi-operads and the model structure for simplicial operads  \cite[Theorem 8.15]{CisinskiMoerdijk:DSSO}, an enhancement of the usual equivalence between simplicial sets and simplicial operads.
Recall from \cref{sec operads and cyclic} that there is an adjoint string 
\[
\begin{tikzcd}[column sep=large]
    \scycop  \rar["F" description] 
    \rar[phantom, bend right=18, "\scriptscriptstyle\perp"]  
    \rar[phantom, bend left=18, "\scriptscriptstyle\perp"] 
    & \soperad.
    \lar[bend right=30, "L" swap, start anchor = {[yshift=1.5ex]west}, end anchor = {[yshift=1.5ex]east}] 
    \lar[bend left=30, "R", start anchor = {[yshift=-1.5ex]west}, end anchor = {[yshift=-1.5ex]east}] 
\end{tikzcd}
\]
where $\soperad$ is the category of simplicial operads and $\scycop$ is the category of simplicial cyclic operads.
Our aim is to lift the Quillen equivalence $\rigc \dashv \hcn$ along $F$. 

\begin{definition}\label{def dk and fibrations}
Suppose that $g \colon P \to Q$ is a map in $\scycop$ or $\soperad$.
We say that $g$ is a \mydef{Dwyer--Kan equivalence} (resp.\ \mydef{isofibration}) if
\begin{enumerate}
    \item $g$ is locally a weak homotopy equivalence (resp.\ Kan fibration) of simplicial sets, and\label{item local}
    \item the induced functor on underlying categories is an equivalence of categories (resp.\ isofibration of categories).\label{item underlying cat}
\end{enumerate}
\end{definition}

`Locally' in \eqref{item local} means that for each list of colors $c_0, c_1, \dots, c_n \in \colors(P)$, the induced map of simplicial sets
\[
    P(c_0, \dots, c_n) \to Q(g c_0, \dots, g c_n) \,\, \Big( \text{resp.\ } P(c_1, \dots, c_n; c_0) \to Q(g c_1, \dots, g c_n; g c_0) \Big)
\]
is a weak homotopy equivalence or Kan fibration.
The underlying category mentioned in \eqref{item underlying cat} is obtained by first forgetting down from $\scycop$ or $\soperad$ to simplicial categories, then taking path components of each hom object.
\[ \begin{tikzcd}[sep=small]
\scycop \rar{F} & \soperad \rar & \scat \rar{\pi_0} & \cat
\end{tikzcd} \]

\begin{definition}[Model structure on simplicial (cyclic) operads] \label{sop-scycop-model-str-def}
The categories $\scycop$ and $\soperad$ admit model structures whose weak equivalences are the Dwyer--Kan equivalences and fibrations are the isofibrations.
\end{definition}

The model structure on $\soperad$ is due to Cisinski and Moerdijk \cite[Theorem 1.14]{CisinskiMoerdijk:DSSO}, and is right proper and combinatorial.
The model structure on $\scycop$ is constructed in \cite[Theorem 6.3]{DrummondColeHackney:DKHTCO}, where it is right-induced from the one on simplicial operads via the functor $F$, using \cref{triple-induced-model-structures}. 
As we wish to show that $\scycop$ is Quillen equivalent to the model structures on $\cdset$ of \cref{sec cyclic quasi-operads}, we next introduct an analogue of the homotopy coherent rigidification/nerve adjunction which relates simplicial sets and simplicial categories. 
Athough various descriptions of these functors exist in the literature, our presentation will most closely follow that given by Lurie \cite[Definition 1.1.5.3]{Lurie:HTT} in the categorical case.

We begin by defining the functor $\rigc \colon \cdcat \to \scycop$. 
Given a tree $T \in \cdcat$ and a color profile $\ua = a_1, \dots, a_n$ (i.e.\ a list of arcs of $T$), we say that $\ua$ \emph{bounds a subtree $S$ of $T$} if the $a_i$ are distinct and $\perim(S) = \{ a_1, \dots, a_n \}$.
This occurs if and only if $\cdcatrm(T)(\ua)$ is nonempty.
We define $\rigc T \in \scycop$ as follows.

\begin{itemize}
    \item Colors of $\rigc T$ are given by arcs of $T$, with the inherited involution.
    \item Given a color profile $\underline{a}$, the simplicial set $\rigc T(\underline{a})$ is empty if $\ua$ does not bound a subtree of $T$.
    If $\ua$ bounds a subtree $S$ of $T$, then we may consider the power set of the set of edges $E_S$ as a poset under inclusion.
        Let $\Pbd{S}$ denote the subposet of $P(E_S)$ consisting of those subsets which contain all boundary edges. 
        Then $\rigc T(\underline{a})$ is the nerve of $\Pbd{S}$.
\end{itemize}

To describe the composition maps in $\rigc T$, consider a pair of color profiles $\underline{a} = a_0,\ldots,a_n$ and $\underline{b} = b_0,\ldots,b_m$ with $a_i = b_j^\dagger$ for some $1 \leq i \leq n$ and $1 \leq j \leq m$. 
\begin{itemize}
    \item If either $\underline{a}$ or $\underline{b}$ does not bound a subtree of $T$, then $\rigc T(\underline{a}) \times \rigc T(\underline{b}) = \varnothing$, so the composition map is the unique map $\varnothing \to \rigc T(\underline{a} \comp{i}{j} \underline{b})$.
    \item If $\unda$ and $\underline{b}$ bound subtrees $R$ and $S$, then $R$ and $S$ overlap in a single edge $e = [a_i, b_j]$. 
    The color profile $\underline{a} \comp{i}{j} \underline{b}$ bounds the subtree $R \cup S$. 
    In this case, we take the composition map $\comp{i}{j}$ to be induced by the morphism $\Pbd{R} \times \Pbd{S} \to \Pbd{R \cup S}$ which takes a pair of subsets to their union.
\end{itemize}

Geometrically, $\Pbd{S}$ is a cube whose dimension is equal to the number of internal edges of $S$. We further define the \mydef{boundary} $\partial N\Pbd{S}$ to be the subcomplex of $\rigc T(\underline{a})$ consisting of all simplices which do not include both the initial and terminal elements of $\Pbd{S}$. 

In particular, for an arc $a$ of $T$, the profile $(a^\dagger,a)$ bounds the edge $[a^\dagger,a]$. 
As all arcs of this subtree are part of its boundary, we have $\rigc T(a^\dagger,a) \cong \Delta^0$; the identity at $a$ is the unique morphism from $\Delta^0$ to this operation space.

Given a tree map $\varphi \colon T \to T'$, the induced map of simplicial operads $\rigc \varphi \colon \rigc T \to \rigc T'$ acts as $\varphi$ does on color sets. 
To define the induced map on spaces of operations, we note that if $\underline{a}$ bounds a subtree $S$ of $T$, then $\varphi \underline{a}$ bounds a subtree $S'$ of $T'$ and induces a map $\Pbd{S} \to \Pbd{S'}$. 
Then $\varphi$ acts on $\rigc T(\underline{a})$ as the induced map between nerves.

The functor $\rigc \colon \dendcat \to \soperad$ admits a similar description, with the colors of $\rigc(T,t)$ given by edges of $T$ and the operation space $\rigc (T,t)(\underline{e};e_0)$ given by the nerve of the poset of subsets of internal edges of the subtree bounded by $(\underline{e};e_0)$ (or empty if no such tree exists).
See \cite[\S2.7.7]{HeutsMoerdijk:SDHT}.

\begin{remark}
The functors denoted by $\rigc$ above may be seen as instances of the W-construction, a functor $W \colon \soperad \to \soperad$, and its cyclic analogue $W \colon \scycop \to \scycop$. Specifically, the rigidification functors are the composites $\dendcat \hookrightarrow \operad \hookrightarrow \soperad \xrightarrow{W} \soperad$ and $\cdcat \hookrightarrow \cycop \hookrightarrow \scycop \xrightarrow{W} \scycop$. 
For a detailed discussion of the W-construction, see \cite[\S1.7]{HeutsMoerdijk:SDHT} for the non-cyclic case and \cite[Ch.\ 8]{Elliott:Thesis} for the cyclic case. 
(See also \cite{Lukacs:CODSHC,YauJohnson:BVRGP}.)
\end{remark}

\begin{proposition}[{\cite[Lemma 8.2.7]{Elliott:Thesis}}]
\label{elliott 827}
There is a natural isomorphism $L \rigc \cong \rigc f$ of functors from $\dendcat$ to $\scycop$.
\end{proposition}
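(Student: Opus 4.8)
The plan is to construct the natural isomorphism objectwise and then check naturality in the tree map. Fix a rooted tree $(T,t) \in \dendcat$; write $T = f(T,t)$ for the underlying unrooted tree. Both $L\rigc(T,t)$ and $\rigc T$ are simplicial cyclic operads, so I would first match their color sets: the colors of $\rigc(T,t)$ are the edges $E_T$, so by \cref{def cycop adjoint string} the colors of $L\rigc(T,t)$ are $\iota_!(E_T)$, which I identify with the arc set $A_T$ via $e \mapsto e^0, e^\dagger \mapsto e^1$ for a chosen arc in each edge; this is exactly the involutive color set of $\rigc T$. Second, I would match operation spaces. By \cref{def cycop adjoint string}, $L\rigc(T,t)(\ua^{\underline a})$ is nonempty only when the profile $\ua^{\underline a} = a_0^{a_0}, \ldots, a_n^{a_n}$ has a \emph{unique} index $k$ with $a_k = 1$, in which case it equals $\rigc(T,t)(e_{k+1}, \ldots, e_{k-1}; e_k)$ where $e_i = [a_i, a_i^\dagger]$; this is the nerve of the poset of subsets of internal edges of the subtree of $(T,t)$ bounded by $(e_{k+1},\ldots,e_{k-1};e_k)$. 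On the other side, $\rigc T(a_0, \ldots, a_n)$ is the nerve of $\Pbd{S}$ where $S$ is the subtree of $T$ with $\perim(S) = \{a_0,\ldots,a_n\}$, when the $a_i$ are distinct (and empty otherwise). The key combinatorial observation I would establish is: a profile of arcs $a_0,\ldots,a_n$ bounds a subtree of $T$ and the corresponding profile of edges $e_0,\ldots,e_n$ (each with an orientation) arises from a rooted subtree of $(T,t)$ with root $e_k$ exactly when, among the orientations induced by the rooting, precisely one $a_k$ points ``outward'' — i.e., the unique-index-$k$ condition in the description of $L$ is precisely the condition that picks out which boundary arc is the root of the rooted subtree. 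When these conditions hold, $\Pbd{S}$ (subsets of $E_S$ containing all boundary edges) is canonically isomorphic as a poset to the power set of the \emph{internal} edges of $S$ (adjoin the boundary edges), which is exactly the poset appearing in $\rigc(T,t)(\underline e; e_0)$; taking nerves gives the desired isomorphism of simplicial sets.

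Next I would check that this bijection on colors and isomorphism on operation spaces is compatible with the structure maps: the involution, the $\Sigma_n^+$-actions, the identities, and the compositions. The involution on $A_T$ matches the $\dagger$ from $\iota_!$ by construction. For composition, recall from the text that $\comp{i}{j}$ in $L\rigc(T,t)$ is induced from $\comp{\ell}{}$ in $\rigc(T,t)$, which in turn comes from the union map $\Pbd{R} \times \Pbd{S} \to \Pbd{R \cup S}$; on the $\rigc T$ side, $\comp{i}{j}$ is \emph{also} the union map $\Pbd{R} \times \Pbd{S} \to \Pbd{R \cup S}$ of \cref{def cyclic operad}\eqref{def cyc comp}, so these agree once the poset identifications above are in place. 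The $\Sigma_n^+$-equivariance is automatic since both sides only depend on the subtree bounded by the profile, not the ordering. This verifies we have an isomorphism of simplicial cyclic operads $\rigc T \cong L\rigc(T,t)$ for each object.

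Finally I would verify naturality: given a rooted tree map $\psi \colon (S,s) \to (T,t)$ with underlying tree map $\varphi = f(\psi) \colon S \to T$, I need the square relating $\rigc\varphi$ and $L\rigc\psi$ to commute. On colors this is the statement that $\iota_!$ applied to the edge map of $\psi$ agrees with the arc map of $\varphi$, which holds because root-elision is defined so that $\psi$ and $\varphi$ act compatibly (\cref{forgetful-functor-def}) and because $L$ over-lies $\iota_!$ (the sentence after \cref{def cycop adjoint string}). On operation spaces both induced maps are, under the poset identification, the map $\Pbd{S'} \to \Pbd{S''}$ sending a subset to the subtree it determines pushed forward along $\varphi$, so they agree. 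I expect the main obstacle to be the bookkeeping in the first step — carefully matching the ``unique index $k$ with $a_k = 1$'' condition governing nonemptiness of $L\rigc(T,t)$ with the geometric condition that a profile of arcs bounds a subtree, and making the identification of $\Pbd{S}$ with the internal-edge power set of the associated rooted subtree genuinely canonical (so that it is compatible with both composition and tree maps simultaneously). Once that dictionary is fixed, everything else is formal. Indeed, one can shortcut much of the structural verification by invoking the description of $\rigc$ as the $W$-construction precomposed with $\cdcat \hookrightarrow \cycop$ (resp.\ $\dendcat \hookrightarrow \operad$): since $L$ is (on the nose) compatible with the inclusions $\dendcat \hookrightarrow \operad \hookrightarrow \cycop$ via root-elision (as $L\Omega(T,r) \cong \cdcatrm(T)$, noted after \cref{disc opfib}), it suffices to know that $L$ commutes with the $W$-construction up to natural isomorphism, which is a general fact about the Boardman--Vogt construction and the free/forgetful adjunction $L \dashv F$.
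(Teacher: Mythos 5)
Your proposal follows the same basic route as the paper's proof: match color sets via the identification $\iota_!E \cong A$, then match operation spaces profile by profile using the ``unique index $k$ with $a_k = 1$'' condition to single out the boundary arc that becomes the root, then note that both sides compose by grafting. The paper's argument is tighter precisely where you flag the main obstacle --- it resolves the canonicity question by a specific, non-arbitrary choice: $e^0$ is sent to the arc of $e$ pointing \emph{away} from the root $t$ and $e^1$ to the arc pointing \emph{towards} the root. This is the choice that makes the ``unique index'' condition on the $L$ side align with ``bounds a rooted subtree with root $e_k$'' on the $\rigc T$ side, and it is also what makes the identification compatible with grafting and with tree maps (hence natural); without fixing it, the latching of $\Pbd{S}$ to the internal-edge power set and the matching of compositions cannot simultaneously be made to hold. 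You are more explicit than the paper on the structural verification (equivariance, composition, naturality in $\psi$), which the paper leaves terse with ``the cyclic operad structure on both sides is just given by grafting of subtrees.'' Finally, your proposed shortcut --- that $L$ commutes with the $W$-construction ``as a general fact about the Boardman--Vogt construction and the free/forgetful adjunction'' --- is the point I would not accept as stated. Commuting a left adjoint past a Boardman--Vogt-type cofibrant replacement is not automatic; it needs an argument comparing the two constructions of labelled trees (essentially what the direct proof does) or a citation establishing it for this particular $L \dashv F$. If you want to keep the shortcut, you need to either prove $LW \cong WL$ explicitly or find a reference for it in the $\operad$-to-$\cycop$ setting.
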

\begin{proof}
Given a tree $(T,t)$, we define a particular isomorphism 
\[
  \colors(L\rigc (T,t)) = \iota_! E \to A = \colors(\rigc  T) = \colors(\rigc f(T,t)).
\]
The color set of $L\rigc (T,t)$ is $\{e^u \mid e\in E, u \in \{0,1\} \}$ equipped with the involution $e^0 \overset\dagger\leftrightarrow e^1$ (using the notation above \cref{def cycop adjoint string}).
At an edge $e \in E$, the isomorphism sends $e^0$ to the arc of $e$ pointing away from the root and $e^1$ to the arc of $e$ pointing towards the root.
In particular, if $e = [t, t^\dagger]$ is the root edge, then $e^1 \mapsto t$.

Fix a profile ${\underline e}^{\underline u} = (e_0^{u_0}, \dots, e_n^{u_n})$ in $\colors(L\rigc (T,t))$ as well as its image $\underline a = (a_0, \dots, a_n)$ in $A_T$.
Suppose $L\rigc (T,t)({\underline e}^{\underline u})$ is nonempty.
Then 
\begin{enumerate}[label=(\Roman*), ref=\Roman*]
\item there is exactly one $u_k$ which is equal to $1$, and
\item since \[ L\rigc (T,t)({\underline e}^{\underline u}) = 
\rigc(T,t)(e_{k+1}, \dots, e_n, e_0, \dots, e_{k-1} ; e_k)\]
is nonempty, there is a subtree $S$ with $n$ input edges $e_0, \dots, \hat e_k, \dots, e_n$ and output edge $e_k$. 
Furthermore, this simplicial set is $N\Pbd{S}$.\label{item L description}
\end{enumerate}
In this case, $\perim(S) = \{a_0, \dots, a_n\}$ and $N\Pbd{S} = \rigc T(\ua)$.

On the other hand, if $\rigc T(\ua)$ is nonempty, then there is a subtree $S$ with $\perim(S) = \{a_0, \dots, a_n\}$ an $n+1$-element set.
Exactly one $a_k$ will point towards the root $t$, which implies that exactly one $u_k$ is equal to 1.
Thus the equation in \eqref{item L description} holds, and we again have $L\rigc (T,t)({\underline e}^{\underline u}) = N\Pbd{S} = \rigc T(\ua)$.
Moreover, the cyclic operad structure on both sides is just given by grafting of subtrees, so this establishes the isomorphism $L\rigc (T,t) \cong \rigc f(T,t)$.
\end{proof}

By left Kan extension, we obtain  functors $\rigc \colon \cdset \to \scycop$ and $\rigc \colon \dset \to \soperad$; we will refer to these as the \mydef{rigidification} functors. 
By standard arguments, each has a right adjoint $\hNerve$, which we will refer to as the \mydef{homotopy coherent nerve}. 
For $\mathcal{O} \in \scycop$ and $T \in \cdcat$, we have
\[
\hNerve(\mathcal{O})_T = \scycop(\rigc T, \mathcal{O})
\]
with structure maps induced by pre-composition, and similarly for $\hNerve \colon \soperad \to \dset$.

\begin{convention}
Below we will perform a number of 2-categorical manipulations, especially involving of mates of natural transformations \cite[\S2]{KellyStreet:RE2C}.
For concision and legibility we use string diagrams for this purpose, roughly following \cite{JoyalStreet:GTC1,HinzeMarsden:ISD,Lauda:FAAA}.
Our string diagrams flow down on the page, and we write an unadorned cap $\cap$ for the unit of an adjunction and an unadorned cup $\cup$ for the counit of an adjunction.
\end{convention}

By passing to free cocompletions, \cref{elliott 827} implies the following square (whose 1-cells are all left adjoints) commutes up to natural isomorphism.
\[
\begin{tikzcd}
    \dset \ar[r,"\rigc"] \ar[d,swap,"f_!"] & \soperad \ar[d,"L"] \ar[dl, phantom, "{\scriptstyle\Swarrow} \kappa"] \\
    \cdset \ar[r,"\rigc"] &  \scycop
\end{tikzcd}
\]
Of course $L \rigc \dashv \hcn F$ and $\rigc f_! \dashv f^* \hcn$ are adjunctions, so $\kappa \colon L \rigc \cong \rigc f_!$ induces an isomorphism $\alpha \colon f^* \hNerve \cong \hNerve F$. 
The usual construction is depicted in \cref{fig alpha} (see also \cite[Lem.~8.3.5]{Elliott:Thesis}), where 
$\alpha$ is obtained by composing $\kappa$ with the unit of the $L \rigc \dashv \hcn F$ adjunction and the counit of the $\rigc f_! \dashv f^* \hcn$ adjunction, and these units and counits are themselves composites of units and counits from the original four adjunctions.
\begin{figure}
\tikzfig{definition_alpha}
\caption{The isomorphism $f^* \hNerve \cong \hNerve F$}\label{fig alpha}
\end{figure}

Our approach to proving the desired Quillen equivalence will involve a mate of $\kappa$, namely the natural transformation $\beta \colon \rigc f^* \Rightarrow F \rigc$ depicted in Figure~\ref{fig beta}.
\begin{figure}
\begin{tikzcd}
\cdset \ar[r,"f^*"] \ar[dr, bend right=15,swap,"\id"] \ar[dr, phantom, bend left=15, "{\scriptstyle\Swarrow} \varepsilon"] &   \dset \ar[r,"\rigc"] \ar[d,swap,"f_!"]  & \soperad \ar[d,"L"] \ar[dl, phantom, "{\scriptstyle\Swarrow} \kappa"] \ar[dr,bend left=15,"\id"] \ar[dr,phantom, bend right = 15, "{\scriptstyle\Swarrow} \eta"] \\
 {} &  \cdset \ar[r,swap,"\rigc"] &  \scycop \ar[r,swap,"F"] & \soperad
\end{tikzcd}
\qquad 
\tikzfig{definition_beta}
\caption{The natural transformation $\beta \colon \rigc f^* \Rightarrow F \rigc$}\label{fig beta}
\end{figure}

\begin{lemma}\label{mates beta}
The following mate of $\alpha$ is equal to $\beta$.
\[ \begin{tikzcd}
\cdset \rar["\rigc"] \ar[dr, bend right=15, "\id"'] \ar[dr, phantom, bend left=15, "{\scriptstyle\Nearrow} \eta"] & 
\scycop \rar["F"]  \dar["\hNerve"] \ar[dr, phantom, "{\scriptstyle\Nearrow} \alpha"] & 
\soperad \dar["\hNerve"']  \ar[dr, bend left=15, "\id"] \ar[dr, phantom, bend right=15, "{\scriptstyle\Nearrow} \varepsilon"] 
\\
& \cdset \rar["f^*"'] & \dset \rar["\rigc"'] & \soperad
\end{tikzcd} \]
\end{lemma}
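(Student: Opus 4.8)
The plan is to verify the identity by a string-diagram computation, exploiting that $\alpha$ and $\beta$ are both mates of the isomorphism $\kappa \colon L\rigc \cong \rigc f_!$. First I would record the definition of $\alpha$ from \cref{fig alpha}: it is the pasting of $\kappa$ with the unit of the composite adjunction $L\rigc \dashv \hcn F$ and the counit of the composite adjunction $\rigc f_! \dashv f^* \hcn$, and these units and counits themselves decompose into the units and counits of the four basic adjunctions $f_! \dashv f^*$, $L \dashv F$, and the two copies of $\rigc \dashv \hNerve$ (on $\cdset \rightleftarrows \scycop$ and on $\dset \rightleftarrows \soperad$). Substituting this expression for $\alpha$ into the pasting diagram displayed in the statement yields a string diagram built entirely from these four adjunctions and $\kappa$, which I would then simplify.

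The simplification is driven by the triangle identities for the two $\rigc \dashv \hNerve$ adjunctions. The pasting in the statement contributes a unit $\eta$ of the cyclic adjunction at the left-hand boundary and a counit $\varepsilon$ of the dendroidal adjunction at the right-hand boundary; after $\alpha$ is expanded, each of these sits in a zig-zag against a matching counit (respectively unit) of the same adjunction, and so cancels. Once both cancellations are performed, the only units and counits remaining are the counit of $f_! \dashv f^*$ and the unit of $L \dashv F$, together with $\kappa$, and the resulting diagram is precisely the pasting that defines $\beta$ in \cref{fig beta}. The conceptual content is the standard compatibility of mates with composition of adjunctions \cite{KellyStreet:RE2C}: in the square
\[
\begin{tikzcd}
\dset \ar[r,"\rigc"] \ar[d,"f_!"'] & \soperad \ar[d,"L"] \\
\cdset \ar[r,"\rigc"'] & \scycop
\end{tikzcd}
\]
all four functors admit right adjoints, the total mate of $\kappa$ over all four edges is $\alpha$, and it may be computed by first taking the mate over the two vertical edges---which is $\beta$---and then the mate over the two horizontal $\rigc \dashv \hNerve$ edges; since the horizontal mate correspondence is a bijection and the pasting in the statement is its inverse applied to $\alpha$, it returns $\beta$.

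No properties of the functors $f$, $\rigc$, $L$ beyond the four adjunctions and the isomorphism $\kappa$ enter, so the argument is purely formal; the only real hazard is bookkeeping. One must keep the two copies of $\rigc \dashv \hNerve$ distinct and check that in the expansion of $\alpha$ it is the cyclic unit and the dendroidal counit---and not the reverse---that become available for cancellation against the corresponding cells of the displayed pasting. I would present the argument with a single intermediate string diagram exhibiting the state after the two triangle-identity cancellations, rather than chasing components.
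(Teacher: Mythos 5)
Your proposal is correct and matches the paper's own proof: both verify the identity by a string-diagram computation that expands $\alpha$ in terms of $\kappa$ via the decomposition in \cref{fig alpha}, substitutes this into the displayed pasting, and cancels the two resulting zigzags by the triangle identities for the two $\rigc \dashv \hNerve$ adjunctions, leaving precisely the pasting that defines $\beta$ in \cref{fig beta}. Your concluding conceptual framing via composability of the Kelly--Street mate correspondence is likewise what underlies the paper's closing remark that the result follows because $\alpha$ is the mate of $\beta$.
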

\begin{proof}
Figure~\cref{fig mates beta} is a proof using string diagrams.
The first equality uses the definition of $\beta$ in terms of $\kappa$, while the second is the definition of the natural isomorphism $\alpha$.
Since $\alpha$ is the mate of $\beta$, the result follows.
\end{proof}

\begin{figure}
\tikzfig{other_mate_beta}
\caption{Proof of \cref{mates beta}}\label{fig mates beta}
\end{figure}

In order to apply \cref{dch56} to establish that $\cdset$ and $\scyc$ are Quillen equivalent, we will show in \cref{beck-chevalley} that the natural isomorphism $\alpha$ satisfies the Beck--Chevalley condition, i.e.~that its mate $\beta$ is also an isomorphism. 
As part of this, we will make use of a natural transformation $\gamma \colon \rigc \Rightarrow F \rigc f_!$, which we define to be the pasting composite of $\kappa \colon L \rigc \Rightarrow \rigc f_!$ and $\eta \colon \id_{\soperad} \Rightarrow FL$, as depicted below.

\[
\begin{tikzcd}
    \dset \ar[r,"\rigc"] \ar[d,"f_!"] & \soperad \ar[dl, phantom, "{\scriptstyle\Swarrow} \kappa" ] \ar[d,swap,"L"]  \ar[dr,bend left=15,"\id"] \ar[dr, phantom, bend right=15, "{\scriptstyle\Swarrow} \eta"] & {} \\
    \cdset \ar[r,"\rigc"] &  \scycop \ar[r,"F"] & \soperad
\end{tikzcd}
\qquad
\tikzfig{definition_gamma}
\]

To better understand $\gamma$, we explicitly describe its action on a representable dendroidal set $\repo{T,t}$.  
We may characterize $\gamma_{T,t} \colon \rigc (T,t) = \rigc \repo{T,t} \to F \rigc f_!\repo{T,t} = F \rigc T$ as follows:
\begin{itemize}[left=0pt]
    \item On colors, $\gamma_{T,t}$ sends an edge $e$ to the arc of $e$ pointing away from $t$.
    \item For operation spaces, note that a color profile $(\underline{e};e_0)$ bounds a rooted subtree $(S,s) \subseteq (T,t)$ if and only if the associated color profile $(a_0^\dagger,\ua)$ bounds $S$ as an (unrooted) subtree of $T$. 
    Thus $\rigc (T,t)(\underline{e};e_0) = \rigc T(a_0^\dagger, \unda) = F \rigc T(\unda;a_0)$ for all color profiles $(\underline{e};e_0)$. 
    The map $\gamma_{T,t}$ acts as the identity on these operation spaces.
\end{itemize}

As our proof of the Beck--Chevalley condition will involve induction on skeleta, we must also consider the images of boundaries of representable cyclic dendroidal sets under $\rigc$.

\begin{lemma}\label{bdry-rigidification}
    For $T \in \cdcat$ having at least one vertex, the simplicial cyclic operad $\rigc \partial \repu{T} \in \scycop$ is the subobject of $\rigc T$ described as follows:
    \begin{itemize}[left=0pt]
        \item colors of $\rigc \partial \repu{T}$ are the same as those of $\rigc T$, i.e.~arcs of $T$;
        \item if a color profile $\underline{a}$ does not bound $T$, then $\rigc \partial \repu{T}(\underline{a}) = \rigc T(\underline{a})$;
        \item if a color profile $\underline{a}$ bounds $T$, then $\rigc \partial \repu{T}(\underline{a}) = \partial N\Pbd{T} \subset N\Pbd{T} = \rigc T(\underline{a})$.
    \end{itemize}
    For $(T,t) \in \dendcat$ having at least one vertex, the description of $\rigc \partial \repo{T,t} \in \soperad$ is similar to the above, save that color profiles are composed of edges rather than arcs, and the references to bounding of subtrees are interpreted in the rooted sense.
\end{lemma}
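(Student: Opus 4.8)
The plan is to use that $\rigc$, being a left adjoint, preserves colimits, together with the colimit presentation of boundaries recalled in \cref{rmk horns colimits}. Writing $C$ for the full subcategory of $(\cdcat^+)_{/T}$ on the objects not isomorphic to $\id_T$, one has $\partial\repu{T}\cong\colim_{(R\to T)\in C}\repu{R}$, whence
\[
  \rigc\partial\repu{T}\;\cong\;\colim_{(R\to T)\in C}\rigc R .
\]
The content of the lemma is that this colimit, with its canonical comparison map to $\rigc T=\rigc\repu{T}$, is the subobject $Y\subseteq\rigc T$ described in the statement. This parallels the classical computation of $\rigc(\partial\Delta^{n})\subseteq\rigc(\Delta^{n})$, and I would follow the same template: produce a cocone from the diagram onto $Y$, and then verify it is a colimit cocone.

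I would first check that $Y$ is a sub-cyclic-operad of $\rigc T$ through which every leg $\rigc R\to\rigc T$ factors. Since $T$ has a vertex, no length-two profile bounds $T$, so $Y$ contains every identity; and if a composite $\ua\comp{i}{j}\underline{b}$ bounds $T$, the subtrees $R$ and $S$ bounded by $\ua$ and $\underline{b}$ cover $T$, so the composition map $\Pbd{R}\times\Pbd{S}\to\Pbd{T}$ (union of subsets) always yields a subset containing the internal gluing edge (and is the identity up to relabeling when one factor is an edge); its image is thus disjoint from the minimal element of $\Pbd{T}$ and lies in $\partial N\Pbd{T}$, so $Y$ is closed under composition. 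For the legs, only profiles bounding $T$ are relevant. An inner coface $R\to T$ contracting an internal edge $e$ induces the inclusion $\Pbd{R}\hookrightarrow\Pbd{T}$ (since $E_R=E_T\setminus\{e\}$), whose image is the set of subsets avoiding $e$ and so misses the maximal element of $\Pbd{T}$; an outer coface has a proper subtree as its source, so a profile bounding $T$ either is not a profile of colors of that source — forcing an empty operation space — or bounds a proper subtree with strictly fewer edges, whose image misses the maximal element. In all cases the image lies in $\partial N\Pbd{T}$, so the comparison map factors as $\colim_R\rigc R\to Y\hookrightarrow\rigc T$.

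It then remains to prove that $\colim_R\rigc R\to Y$ is an isomorphism. On colors this is the fact that $\partial\repu{T}$ contains every arc of $T$ as a $\linear{0}$-simplex, which holds because $\partial\repu{T}=\mathrm{sk}_{\deg T-1}\repu{T}$ contains $\mathrm{sk}_0\repu{T}$. For surjectivity on operation spaces: at a profile not bounding $T$, the operation space of $\rigc T$ is already the image of a single outer-coface leg, since such a profile bounds a proper subtree, hence is bounded by a codimension-one subtree. At a profile bounding $T$, the operation space of $Y$ is $\partial N\Pbd{T}$, and one writes it as the union of its simplices missing the maximal element of $\Pbd{T}$ — each lying in the image of the inner coface contracting an internal edge not in the top vertex of the chain — and its simplices missing the minimal element — each of which, after splitting $T$ along an internal edge in the bottom vertex of the chain, is a composite (formed inside the colimit cyclic operad) of two operations carried by proper subtrees, hence by two outer-coface legs. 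The remaining, and principal, difficulty is to verify that the colimit introduces no further identifications or new operations: since $\scycop$ is not a presheaf category, the colimit is not computed pointwise, so one must either check the universal property of the cocone onto $Y$ by hand — tracking composites of operations drawn from distinct cofaces — or carry out the skeletal induction on $\deg T$ foreshadowed in the surrounding text, presenting $\partial\repu{T}$ as the $(\deg T-2)$-skeleton with the cells $\partial\repu{R}\hookrightarrow\repu{R}$ attached over the codimension-one cofaces $R$ and applying $\rigc$ (which preserves these pushouts) together with the inductive description of the $\rigc\partial\repu{R}$.

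The rooted statement is proved by the same argument, with ``arc'' replaced by ``edge'' and all references to bounding of subtrees read in the rooted sense. Alternatively it may be deduced from the cyclic case: $f_!$ preserves boundary inclusions and $\rigc f_!\cong L\rigc$, so $L\rigc\partial\repo{T,t}\cong\rigc\partial\repu{T}$ sits inside $\rigc T\cong L\rigc(T,t)$, and since $L$ preserves monomorphisms and leaves operation spaces unchanged — merely re-indexing the color profiles — it transports the description of $\rigc\partial\repu{T}\subseteq\rigc T$ to the desired description of $\rigc\partial\repo{T,t}\subseteq\rigc(T,t)$.
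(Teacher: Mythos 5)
Your approach is genuinely different from the paper's. The paper treats the rooted (non-cyclic) case as known, citing Bonventre--Pereira for the description of $\rigc \partial \repo{T,t}$, and then deduces the cyclic case in one line: since $f_!$ preserves boundary inclusions and $\kappa \colon L\rigc \cong \rigc f_!$, one has $\rigc\partial\repu{T} \cong L\rigc\partial\repo{T,t}$, and the explicit formula for $L$ (as in the proof of \cref{elliott 827}) transports the rooted description to the cyclic one. You instead attempt to prove both cases directly from the colimit presentation $\partial\repu{T} \cong \colim_{(R\to T)\in C}\repu{R}$, which amounts to re-deriving the non-cyclic input rather than citing it.

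The difficulty you flag in your third paragraph is the real gap, and it is not a minor one. You correctly observe that $\scycop$ is not a presheaf category, so the colimit $\colim_R \rigc R$ is not computed pointwise, and you must show that the canonical map onto your proposed subobject $Y$ is an isomorphism; in particular that the free composition in the colimit does not produce operations at profiles bounding $T$ that land outside $\partial N\Pbd{T}$, and that no unexpected identifications arise. You name two ways to close this (checking the universal property by hand, or skeletal induction applying $\rigc$ to a cell attachment), but you carry out neither. Since this is precisely the point where a naive colimit computation in a non-presheaf target can fail, the argument as written is incomplete. This is exactly the work that citing Bonventre--Pereira (and then using the adjoint string) lets the paper avoid.

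Two smaller points. First, in the closure-under-composition check, the parenthetical about one factor being an edge doesn't actually support the stated conclusion that the image is disjoint from the minimal element of $\Pbd{T}$; when one factor is an edge the composition is an identity and the image simply coincides with the other factor, which is already assumed to lie in $\partial N\Pbd{T}$. The conclusion is correct but the justification as phrased is off in that sub-case. Second, your alternative deduction of the rooted statement from the cyclic one runs in the opposite direction to the paper's logic (the paper goes rooted $\Rightarrow$ cyclic, which is the direction $L$ naturally supplies); the reverse transport requires an argument that $L$ reflects the relevant sub-object structure — roughly that the rooted operation spaces and profiles can be read off from their images under $L$ — which you assert ("merely re-indexing") but do not verify.
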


\begin{proof}
    The description for the non-cyclic case is well-known; for a proof, see \cite[Example 4.26]{BonventrePereira:RDIO}. 
    For the cyclic case, let $T \in \cdcat$ and choose an arbitrary root $t$ of $T$. 
    Applying \cref{rmk horns colimits} and the natural isomorphism $\kappa$, we have $\rigc \partial \repu{T} \cong \rigc f_! \partial \repo{T,t} \cong L \rigc \partial \repo{T,t}$. 
    To see that $L \rigc \partial \repo{T,t}$ satisfies the given description, we may argue similarly to the proof of \cref{elliott 827}.
\end{proof}

\begin{corollary} \label{gamma-fully-faithful}
   For each rooted tree $(T,t) \in \dendcat$, the maps of simplicial operads $\gamma_{T,t}$ and $\gamma_{\partial \repo{T,t}}$ are fully faithful. \qed
\end{corollary}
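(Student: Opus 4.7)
The plan is to read off both assertions directly from the explicit description of $\gamma$ given in the paragraph preceding the statement, together with the identification of $\rigc \partial \repo{T,t}$ and $\rigc \partial \repu{T}$ in \cref{bdry-rigidification}. Recall that a map of simplicial operads is \emph{fully faithful} when it is an isomorphism on each space of operations; the map on colors need only be injective, as it is for $\gamma_{T,t}$.

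For the first assertion, I would fix a color profile $(e_1, \dots, e_n; e_0)$ in $\rigc(T,t)$ with image $(\underline{a}; a_0)$ under $\gamma_{T,t}$. Either this profile bounds a rooted subtree $(S,s) \subseteq (T,t)$ or it does not. In the first case both operation spaces equal $N\Pbd{S}$ and the map between them is the identity, by the explicit description of $\gamma_{T,t}$; in the second case the cyclic profile $(a_0^\dagger, \underline{a})$ fails to bound any subtree of $T$, so both operation spaces are empty. Hence $\gamma_{T,t}$ is fully faithful.

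For the second assertion, I would invoke naturality of $\gamma$ along the boundary inclusion $\partial \repo{T,t} \hookrightarrow \repo{T,t}$ to produce a commutative square
\[
\begin{tikzcd}
\rigc \partial \repo{T,t} \ar[r,"\gamma_{\partial \repo{T,t}}"] \ar[d,hook] & F \rigc \partial \repu{T} \ar[d,hook] \\
\rigc \repo{T,t} \ar[r,"\gamma_{T,t}"] & F \rigc \repu{T}
\end{tikzcd}
\]
whose vertical maps are the subobject inclusions described in \cref{bdry-rigidification}. Combining with the preceding case, on every color profile the map $\gamma_{\partial \repo{T,t}}$ restricts either to an identity between equal operation spaces or (on the unique profile bounding $(T,t)$) to the identity $\partial N\Pbd{T} \to \partial N\Pbd{T}$, giving fully faithfulness. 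No step should pose a serious obstacle; the only mild care needed is to check that the isomorphism $\rigc f_! \partial \repo{T,t} \cong \rigc \partial \repu{T}$ induced by $\kappa$ matches the description of the codomain used in \cref{bdry-rigidification}, which is already contained in the proof of that lemma.
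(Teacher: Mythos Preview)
Your proposal is correct and takes essentially the same approach as the paper: the paper's proof simply cites the explicit characterization of $\gamma_{T,t}$, \cref{bdry-rigidification}, and the naturality of $\gamma$, which are exactly the three ingredients you unpack in detail. The only minor imprecision is the phrase ``the unique profile bounding $(T,t)$'', since several orderings of the boundary edges give such profiles, but this does not affect the argument.
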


\begin{proof}
    This follows from the characterization of $\gamma_{T,t}$ and \cref{bdry-rigidification}, together with the naturality of $\gamma$.
\end{proof}

Having characterized the image of a boundary under rigidification, we next seek to prove an analogue of \cref{boundary horn pushout} in the setting of simplicial operads (\cref{bdry-F-pushout}). In order to do so, we first prove the following general lemma.

\begin{lemma}\label{lem operad pushout}
Consider the following commutative square in $\operad$:
\[ \begin{tikzcd}[column sep=large]
\sum_i O_i \rar{f = \sum_i f_i} \dar["g"'] & \sum_i P_i \dar{g'} \\
Q \rar["f'"'] & R
\end{tikzcd} \]
Suppose the following conditions hold:
\begin{enumerate}[label=\roman*., ref=\roman*]
  \item The morphism $f' \colon Q \to R$ and each component $f_i \colon O_i \to P_i$ is identity-on-colors and faithful.\label{oppushout idcf}
  \item Each component $g_i \colon O_i \to Q$ and each component $g_i' \colon P_i \to R$ is fully-faithful.\label{oppushout incff}
  \item For any profile $(\underline{x}; y)$, if the map $f' \colon Q(\underline{x}; y) \to R(\underline{x}; y)$ is not bijective, then there exists a unique index $i$ and a unique profile $(\underline{a}; b)$ of $P_i$ such that $g_i'(\underline{a}; b) = (\underline{x}; y)$. \label{oppushout unique lifting}
  \item The operations in $R(\underline{x}; y) \setminus Q(\underline{x}; y)$ do not compose with any non-identity operations of $R$.\label{oppushout no compose}
\end{enumerate}
Then the given square is a pushout.
\end{lemma}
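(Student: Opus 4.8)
The goal is to verify that the square is a pushout in $\operad$, i.e.\ that $R$ together with $f'$ and $g'$ has the correct universal property, or equivalently (since pushouts in $\operad$ are computed at the level of colors and then fiberwise on operation sets) that the square is a pushout on color sets and, for each profile, a pushout of the corresponding sets of operations. First I would check the color-set level: by hypothesis \eqref{oppushout idcf}, $f'$ and each $f_i$ are identity-on-colors, so $\colors(\sum_i O_i) \to \colors(\sum_i P_i)$ and $\colors(Q) \to \colors(R)$ are both identities (after identifying the coproduct colors appropriately), and the square on colors is trivially a pushout. So the content is entirely about operation sets.

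Next I would fix a profile $(\underline{x}; y)$ in $R$ and analyze the square of sets
\[ \begin{tikzcd}[column sep=large]
\sum_i O_i(\underline{x};y) \rar \dar & \sum_i P_i(\underline{x};y) \dar \\
Q(\underline{x}; y) \rar & R(\underline{x}; y)
\end{tikzcd} \]
where the sums range over those components and those lifted profiles that actually contribute (using \eqref{oppushout idcf} and \eqref{oppushout incff} to make sense of which profiles of $O_i$, $P_i$ map to $(\underline{x};y)$). The claim reduces to: this is a pushout of sets, i.e.\ $R(\underline{x};y)$ is the pushout $Q(\underline{x};y) \amalg_{\sum_i O_i(\underline{x};y)} \sum_i P_i(\underline{x};y)$. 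I would split into two cases according to hypothesis \eqref{oppushout unique lifting}. If $f' \colon Q(\underline{x};y) \to R(\underline{x};y)$ is bijective, then since $f_i$ is faithful and the square commutes, one checks the pushout of sets is just $Q(\underline{x};y) \cong R(\underline{x};y)$ — the contribution of the $P_i$'s is already accounted for. If $f'$ is not bijective on $(\underline{x};y)$, then \eqref{oppushout unique lifting} provides a unique index $i$ and unique profile $(\underline{a};b)$ of $P_i$ with $g_i'(\underline{a};b) = (\underline{x};y)$; using that $g_i'$ is fully faithful \eqref{oppushout incff}, $P_i(\underline{a};b) \cong R(\underline{x};y)$, and using faithfulness of $f_i$ together with $g_i$ fully faithful, $O_i(\underline{a};b) \hookrightarrow Q(\underline{x};y)$ identifies exactly with the part of $R(\underline{x};y)$ in the image of $f'$; the pushout of sets then glues these to give $R(\underline{x};y)$. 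One must also check no other component $j \neq i$ contributes a profile mapping to $(\underline{x};y)$ — this is again the uniqueness in \eqref{oppushout unique lifting}.

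Having established the pushout property profile-by-profile on operation sets, I would then argue that this suffices to conclude the square is a pushout in $\operad$: one constructs the comparison map from the set-theoretic pushout $P$ (formed fiberwise) to $R$, notes it is a bijection on colors and on every operation set by the above, and checks it is a map of operads — which is where hypothesis \eqref{oppushout no compose} enters. The subtlety is that $P$, assembled fiberwise, a priori only has an operad structure if the composition operations are well-defined; the potential obstruction is composing an operation of $R$ that lies outside $Q$ with another operation. Condition \eqref{oppushout no compose} says such operations only compose with identities, so the operad structure on $R$ restricted to these new operations is forced, matching the pushout.

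\textbf{Main obstacle.} The hard part will be the bookkeeping in the case where $f'$ fails to be bijective on $(\underline{x};y)$: one must carefully use faithfulness of the $f_i$ and full faithfulness of the $g_i, g_i'$ to identify the relevant operation sets and show the gluing produces exactly $R(\underline{x};y)$ with no overcounting, and then verify compatibility with the operad (composition, unit, equivariance) structure — this last verification being the place where \eqref{oppushout no compose} is essential and where a naive fiberwise pushout would otherwise fail to inherit an operad structure.
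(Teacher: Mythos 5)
Your plan is essentially the same as the paper's: reduce to the color level (trivial since the horizontal maps are identity-on-colors), show fiberwise on operation sets that $R(\underline{x};y)$ has the universal property (splitting into the two cases of (iii) and using (ii) to identify the maps as bijections in the non-bijective case), and then invoke (iv) to handle the operad structure. The paper phrases this as directly constructing, for an arbitrary cocone $(Z,\ell,h)$, the unique map $k\colon R\to Z$ fiber by fiber and then verifying $k$ is an operad map; you phrase it as forming a fiberwise set-level pushout $P$ and comparing it to $R$.

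One small clarification on your last paragraph: the framing ``check the comparison $P\to R$ is a map of operads'' doesn't quite parse, since the fiberwise pushout $P$ of collections carries no a priori operad structure, and you need not give it one. What actually needs checking — and what (iv) is used for — is that the unique map of \emph{collections} $k\colon R\to Z$ into any operad cocone respects composition: given a nontrivial composite $\alpha\circ_j\beta$ in $R$, condition (iv) forces both $\alpha$ and $\beta$ to lie in the image of $f'$, so $k(\alpha\circ_j\beta) = kf'(\alpha'\circ_j\beta') = \ell(\alpha'\circ_j\beta') = k(\alpha)\circ_j k(\beta)$. This is the content of your ``new operations only compose with identities'' remark, just aimed at the right target. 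You should also record (as the paper does) that $k$ preserves identities because $f'$ is identity-on-colors and hence hits every identity of $R$, and that $k$ respects the $\Sigma_n$-actions; both are routine but belong in the verification.
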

\begin{proof}
The square is a pushout on underlying color sets since the horizontal maps are identity-on-colors by \eqref{oppushout idcf}.
Suppose $\ell \colon Q \to Z$ and $h \colon \sum P_i \to Z$ are maps with $hf = \ell g$; we wish to exhibit a unique operad map $k\colon R \to Z$ with $kf' = \ell$ and $kg' =h$.
(Of course $k$ and $\ell$ must agree on colors, since $f'$ is identity-on-colors.)

Let $(\underline{x}; y)$ be a profile on the bottom; we wish to define $k \colon R(\underline{x}; y) \to Z(\ell\underline{x}; \ell y)$.
We consider two cases. In the first case, if $f' \colon Q(\underline{x}; y) \to R(\underline{x}; y)$ is a bijection then the only possibility is $k = \ell (f')^{-1} \colon R(\underline{x}; y) \to Z(\ell\underline{x}; \ell y)$, which is the unique map with $k f' = \ell$.
If $(\underline{a}; b)$ is a profile of $O_i$ with $g_i(\underline{a}; b) = (\underline{x}; y)$, then by \eqref{oppushout incff} all maps in the square
\begin{equation}\label{eq OPQR} \begin{tikzcd}
O_i(\underline{a}, b) \rar{f_i} \dar["g_i"', "\cong"] & P_i(\underline{a}, b) \dar["\cong"']{g'_i} \\
Q(\underline{x}; y) \rar["f'"'] & R(\underline{x}; y)
\end{tikzcd} \end{equation}
are isomorphisms.
At the profile $(\underline{a};b)$ we thus have \[ k g_i' = \ell (f')^{-1} g_i' = \ell (g_i f_i^{-1} (g_i')^{-1}) g_i' = \ell g_i f_i^{-1} = h_i f_i f_i^{-1} = h_i. \]

In the second case, suppose that $f' \colon Q(\underline{x}; y) \to R(\underline{x}; y)$ is \emph{not} a bijection; by \eqref{oppushout unique lifting} there is a unique index $i$ and a unique profile $(\underline{a}, b)$ of $P_i$ so that we have the square \eqref{eq OPQR}.
Since the vertical maps are bijections, this is a pushout square, hence there is a unique map $k \colon R(\underline{x}; y) \to Z(\ell\underline{x}; \ell y)$ such that $k g_i' = h_i$ and $k f' = \ell$.

We now verify that the construction of $k$ above defines a valid operad map; as we have made no choices in our construction, with the definition of $k$ on both colors and operation sets being forced at each step, this will suffice to prove the universal property of the pushout.
Since $f'$ is identity-on-objects, it is immediate that $k$ preserves identities.
Suppose we are given a nontrivial composite $\alpha \circ_j \beta$ in $R$.
By \eqref{oppushout no compose}, both $\alpha$ and $\beta$ are in the image of $f'$, say $\alpha = f'(\alpha')$ and $\beta = f'(\beta')$. 
As 
\[ (k\alpha) \circ_j (k\beta) = (\ell \alpha') \circ_j (\ell \beta') = \ell(\alpha' \circ_j \beta') = kf'(\alpha' \circ_j \beta') =k(\alpha \circ_j \beta),\]
$k$ preserves nontrivial composites, hence all composites.
Finally, one checks that $k$ respects symmetric group actions.
\end{proof}

\begin{lemma} \label{bdry-F-pushout}
    For any $T \in \cdcat$, the following diagram is a pushout:
    \[ 
    \begin{tikzcd}
  \sum\limits_{t\in \perim T} \rigc \partial \repo{T,t} \rar[hook]  \dar & 
  \sum\limits_{t\in \perim T} \rigc (T,t)  \dar \\
  F \rigc \partial \repu{T}  \rar & F \rigc T 
    \end{tikzcd} 
  \]
Here each vertical map acts on the component corresponding to $t \in \perim T$ as the relevant component of $\gamma$.
\end{lemma}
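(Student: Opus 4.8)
The plan is to apply \cref{lem operad pushout}, working in $\soperad$ in place of $\operad$ (see the last paragraph), with index set $\perim T$, and with $O_t = \rigc\partial\repo{T,t}$, $P_t = \rigc(T,t)$, $Q = F\rigc\partial\repu{T}$, $R = F\rigc T$. First I would observe that the square in the statement is precisely the coproduct over $t \in \perim T$ of the naturality squares of $\gamma \colon \rigc \Rightarrow F\rigc f_!$ applied to the boundary inclusions $\partial\repo{T,t}\hookrightarrow\repo{T,t}$: since root-elision sends $(T,t)$ to $T$ (\cref{forgetful-functor-def}) and $f_!$ preserves representables and sends boundary inclusions to boundary inclusions (\cref{rmk horns colimits}), we have $f_!\repo{T,t} \cong \repu{T}$ and $f_!\partial\repo{T,t} \cong \partial\repu{T}$, so each such naturality square has bottom edge $F\rigc\partial\repu{T} \to F\rigc T$ and vertical maps $\gamma_{\partial\repo{T,t}}$ and $\gamma_{T,t}$. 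The vertexless case $T = \linear{0}$ is immediate and treated separately (there $\partial\repu{T} = \varnothing$ and $\sum_t\gamma_{T,t}$ is directly checked to be an isomorphism); so assume $T$ has at least one vertex.

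Hypotheses \eqref{oppushout idcf} and \eqref{oppushout incff} of \cref{lem operad pushout} are quick: by \cref{bdry-rigidification} each $f_t \colon \rigc\partial\repo{T,t}\to\rigc(T,t)$ and $f' \colon F\rigc\partial\repu{T} \to F\rigc T$ is identity-on-colors and, on each operation space, is either an identity or a boundary inclusion $\partial N\Pbd{S} \hookrightarrow N\Pbd{S}$, hence a levelwise monomorphism; and $g_t = \gamma_{\partial\repo{T,t}}$ and $g_t' = \gamma_{T,t}$ are fully faithful by \cref{gamma-fully-faithful}. For \eqref{oppushout unique lifting}, \cref{bdry-rigidification} shows that $f'$ fails to be an isomorphism on a profile $(\underline x; y)$ of $F\rigc T$ exactly when the profile $(y^\dagger, \underline x)$ bounds $T$ itself, so that $\perim T = \{y^\dagger, x_1, \dots, x_n\}$ with distinct entries. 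Using the explicit description of $\gamma_{T,t}$ on colors --- an edge $e$ goes to the arc of $e$ pointing away from $t$ --- one sees that a profile $(\underline e; e_0)$ of $\rigc(T,t)$ with $\gamma_{T,t}(\underline e; e_0) = (\underline x; y)$ can exist only when $t = y^\dagger$: that is the unique root for which the boundary edge $[y, y^\dagger]$ of $T$ is the root edge, making the arc of that edge pointing away from $t$ equal to $y$ (for any other root it would be $y^\dagger$). Once $t = y^\dagger$, the edges $e_0 = [y, y^\dagger]$ and $e_i = [x_i, x_i^\dagger]$ are forced, this profile does bound $(T, y^\dagger)$ (giving existence), and uniqueness is clear. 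Finally, \eqref{oppushout no compose} holds because composition in $\rigc T$, hence in $F\rigc T$, is grafting of subtrees: an operation lying in $R(\underline x; y)\setminus Q(\underline x; y)$ sits at the full-tree profile and corresponds to the subtree $T$, and grafting $T$ with a subtree $S$ of $T$ along a shared edge lands inside $T$ only when $S$ is a single edge, i.e.\ an identity; since edge-subtrees carry only degenerate operations, no nontrivial composite is produced.

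The main obstacle is that \cref{lem operad pushout} is stated for $\operad$, whereas we need it for $\soperad$. Its proof, however, adapts verbatim with $\set$ replaced by $\sset$ once ``faithful'' is read as ``a levelwise monomorphism'' and ``bijective'' as ``an isomorphism of simplicial sets'': the only structural inputs are that the relevant squares of operation objects are pushouts --- automatic here since two parallel edges of each such square are isomorphisms, by \eqref{oppushout incff} --- together with hypothesis \eqref{oppushout no compose}, which forces the comparison map to respect composition. Alternatively, one may invoke \cref{lem operad pushout} in each simplicial degree (the color sets in play are constant in the simplicial direction) and check that the resulting levelwise pushouts of operads assemble to the claimed pushout of simplicial operads. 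The other delicate point is the verification of \eqref{oppushout unique lifting} sketched above, where one must track carefully how rootings of $T$ interact with the ``away from the root'' convention defining $\gamma$ on colors; everything else is bookkeeping with \cref{bdry-rigidification} and the description of $\gamma$.
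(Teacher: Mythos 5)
Your proof is correct and follows essentially the same route as the paper: handle the edge case $T\cong\linear{0}$ directly, then for $T$ with a vertex apply \cref{lem operad pushout} degreewise (i.e.\ to $n$-simplices of operation spaces), with conditions \eqref{oppushout idcf} and \eqref{oppushout incff} coming from \cref{bdry-rigidification} and \cref{gamma-fully-faithful}, and conditions \eqref{oppushout unique lifting} and \eqref{oppushout no compose} coming from the observation that $F\rigc\partial\repu{T}(\unda;a_0)$ differs from $F\rigc T(\unda;a_0)$ only when $(a_0^\dagger,\unda)$ bounds all of $T$. Your more explicit analysis of why $t=y^\dagger$ is forced in \eqref{oppushout unique lifting}, and why the partner subtree in \eqref{oppushout no compose} must be a single edge, correctly unpacks what the paper leaves implicit.
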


\begin{proof}
If $T \cong \linear{0}$ is an edge, then $\partial \repu{T}$ is $\varnothing$. 
As all functors in the diagram are left adjoints, the above square is a pushout since the left vertical map is $\id_{\varnothing}$ and the right vertical map is the isomorphism $\rigc (\linear{0}, 0) + \rigc (\linear{0}, 0^\dagger) \to F \rigc \linear{0}$.

Suppose $T$ has at least one vertex.
It is enough to show that the square is a pushout after passage to the $n$-simplices in each operation space, and for this we verify the conditions of \cref{lem operad pushout}.
Mostly these are immediate from \cref{bdry-rigidification,gamma-fully-faithful}.
The most important to notice is that if $F \rigc \partial \repu{T} (\underline{a}; a_0)_n$ is not equal to $F \rigc T (\underline{a}; a_0)_n$, then $a_0^\dagger, \underline{a}$ bounds $T$.
This is why \eqref{oppushout unique lifting} and \eqref{oppushout no compose} hold.
\end{proof} 

\begin{lemma}\label{gamma factorization}
The composite
$
  (\beta f_!) \circ (\rigc \eta) \colon \rigc \Rightarrow \rigc f^* f_! \Rightarrow F \rigc f_!
$
is equal to $\gamma$.
\end{lemma}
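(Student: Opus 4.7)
The plan is to prove this identity by a short string-diagram manipulation that unfolds the definition of $\beta$ from \cref{fig beta} and then reduces the result using naturality together with a triangle identity for $f_! \dashv f^*$.

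First I would substitute in the definition of $\beta$. Writing $\eta^L$ and $\varepsilon^L$ for the unit and counit of $L \dashv F$ and $\eta^f$, $\varepsilon^f$ for those of $f_! \dashv f^*$, we have $\beta = (F\rigc \varepsilon^f) \circ (F\kappa f^*) \circ (\eta^L \rigc f^*)$. Thus at an object $X$ of $\dset$ the composite $(\beta f_!) \circ (\rigc \eta^f)$ is the five-fold composite
\[ \rigc X \xrightarrow{\rigc \eta^f_X} \rigc f^* f_! X \xrightarrow{\eta^L_{\rigc f^* f_! X}} FL \rigc f^* f_! X \xrightarrow{F\kappa_{f^* f_! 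X}} F\rigc f_! f^* f_! X \xrightarrow{F\rigc \varepsilon^f_{f_! X}} F\rigc f_! X. \]
By naturality of $\eta^L$ applied to the morphism $\rigc \eta^f_X$, the first two maps can be replaced by $\eta^L_{\rigc X}$ followed by $FL\rigc \eta^f_X$.

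Next I would invoke naturality of $\kappa \colon L\rigc \Rightarrow \rigc f_!$ on the morphism $\eta^f_X \colon X \to f^* f_! X$, which gives $\kappa_{f^* f_! X} \circ L\rigc \eta^f_X = \rigc f_! \eta^f_X \circ \kappa_X$. Substituting into the previous display commutes $F\kappa$ to the left past the $\eta^f$-term. The tail of the composite then becomes $F\rigc \varepsilon^f_{f_! X} \circ F\rigc f_! \eta^f_X$, which by the triangle identity for $f_! \dashv f^*$ equals $F\rigc(\id_{f_! X}) = \id$. What remains is $(F\kappa_X) \circ (\eta^L_{\rigc X})$, which is exactly the definition of $\gamma_X$ as the pasting of $\eta^L \rigc$ with $F\kappa$.

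I do not expect a significant obstacle: this is pure 2-categorical bookkeeping of the type already used in the proof of \cref{mates beta}. The cleanest presentation is probably a single string diagram in the style of \cref{fig mates beta}, in which $\eta^L$ slides up past the $\rigc\eta^f$ cap, $\kappa$ slides past the remaining $\eta^f$ cap, and the resulting $\eta^f$–$\varepsilon^f$ zigzag is straightened by the triangle identity, leaving the string diagram for $\gamma$.
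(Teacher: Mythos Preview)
Your proof is correct and is essentially the same argument as the paper's: both unfold the definition of $\beta$, slide $\eta^L$ and $\kappa$ past the $\eta^f$ cap by naturality, and cancel the resulting $\eta^f$--$\varepsilon^f$ zigzag with a triangle identity. The paper presents this as a single string diagram in \cref{fig gamma factorization}, while you have written it out componentwise; the content is identical. (Minor quibble: your displayed composite has four morphisms, not five.)
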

\begin{proof}
Figure~\ref{fig gamma factorization} is a proof using string diagrams.
The indicated composite is on the left, while the natural transformation on the right is $\gamma$, by definition.
\end{proof}
\begin{figure}
\tikzfig{alternative_factorization_gamma}
\caption{Proof of \cref{gamma factorization}}\label{fig gamma factorization}
\end{figure}

\begin{lemma}\label{beta-pushout}
    For any $T \in \cdcat$, the diagram below is a pushout:
    \[
    \begin{tikzcd}
        \rigc f^* \partial \repu{T} \ar[r] \ar[d,swap,"\beta"] & \rigc f^* \repu{T} \ar[d,"\beta"] \\
        F \rigc \partial \repu{T} \ar[r] & F \rigc T 
    \end{tikzcd}
    \]
\end{lemma}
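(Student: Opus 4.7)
The plan is to realize the desired square as the bottom half of a vertical pasting of two squares, both of which are already known to be pushouts.

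First, I would apply the left adjoint $\rigc \colon \cdset \to \scycop$ to the first pushout square from \cref{boundary horn pushout}. Since $\rigc$ preserves colimits, and using $\rigc\repo{T,t} = \rigc(T,t)$ together with the canonical isomorphisms $f_!\partial\repo{T,t} \cong \partial\repu{T}$ and $f_!\repo{T,t} \cong \repu{T}$ (which follow from \cref{rmk horns colimits} and the fact that root-elision is a discrete fibration), this produces a pushout
\[
\begin{tikzcd}
  \sum\limits_{t\in \perim T} \rigc \partial \repo{T,t} \rar[hook]  \dar  &
  \sum\limits_{t\in \perim T} \rigc (T,t)  \dar \\
  \rigc f^* \partial \repu{T} \rar[hook] & \rigc f^*\repu{T}
\end{tikzcd}
\]
whose vertical maps are coproducts of components of $\rigc\eta$, where $\eta$ denotes the unit of $f_! \dashv f^*$.

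Next, I stack this square vertically atop the desired square, whose vertical maps are the components of $\beta$. By \cref{gamma factorization}, the composite $(\beta f_!) \circ (\rigc\eta)$ equals $\gamma$, so after invoking naturality of $\beta$ along the canonical identifications $f_!\partial\repo{T,t} \cong \partial\repu{T}$ and $f_!\repo{T,t} \cong \repu{T}$, each vertical composite in the resulting rectangle is (a coproduct of) a component of $\gamma$. The outer rectangle therefore coincides with the pushout square furnished by \cref{bdry-F-pushout}.

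Since both the top square and the outer rectangle are pushouts, the pasting lemma for pushouts implies the bottom square is also a pushout, which is the desired statement. The only nontrivial bookkeeping is the identification in the second paragraph, where the combination of \cref{gamma factorization} and naturality of $\beta$ does the essential work; the rest of the argument is purely formal.
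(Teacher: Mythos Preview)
Your proposal is correct and follows essentially the same approach as the paper: apply $\rigc$ to the pushout from \cref{boundary horn pushout}, stack it atop the desired square, identify the outer rectangle with the pushout of \cref{bdry-F-pushout} via \cref{gamma factorization}, and conclude by the pasting law. One small slip: the functor you apply in the first step is the dendroidal rigidification $\rigc \colon \dset \to \soperad$, not $\rigc \colon \cdset \to \scycop$ (your displayed diagram makes clear you understand this correctly).
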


\begin{proof}
    Consider the following commuting rectangle:
    \[
    \begin{tikzcd}
        \sum\limits_{t \in \perim T} \rigc \partial \repo{T,t} \ar[r] \ar[d,swap,"\rigc \eta"] \ar[dr, phantom, "\ulcorner" very near end] & \sum\limits_{t \in \perim T} \rigc (T,t) \ar[d,"\rigc \eta"] \\
        \rigc f^* \partial \repu{T} \ar[r] \ar[d,swap,"\beta"] & \rigc f^* \repu{T} \ar[d,swap,"\beta"] \\
        F \rigc \partial \repu{T} \ar[r] & F \rigc T 
    \end{tikzcd}
    \]
    The top square is a pushout, as it is obtained by applying the left adjoint $\rigc$ to one of the pushout squares of \cref{boundary horn pushout}. The composite square is also a pushout by \cref{bdry-F-pushout}, as the vertical composites act as $\gamma$ on each component by \cref{gamma factorization}. 
    Thus the bottom square is a pushout by the pasting law for pushouts.
\end{proof}

We now establish the Beck–Chevalley condition.

\begin{proposition}\label{beck-chevalley}
    The natural transformation $\beta \colon \rigc f^* \Rightarrow F \rigc$ is an isomorphism.
\end{proposition}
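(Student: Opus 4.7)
The plan is to prove that $\beta_{\repu{T}}$ is an isomorphism for every tree $T \in \cdcat$ by induction on the number of vertices $|T|$, and then extend to all of $\cdset$ by a cocontinuity argument. Since $\rigc$, $f^*$, and $F$ each admit right adjoints (namely $\hNerve$, $f_*$, and $R$), both $\rigc f^*$ and $F \rigc$ are cocontinuous functors $\cdset \to \scycop$. Consequently, the natural transformation $\beta$ is a natural isomorphism if and only if its restriction to representables is, so it suffices to verify that $\beta_{\repu{T}}$ is an isomorphism for each $T \in \cdcat$.

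I proceed by strong induction on $n = |T|$, simultaneously establishing that $\beta_{\partial \repu{T}}$ is an isomorphism. For the base case $n = 0$, we have $T \cong \linear{0}$ and $\partial \repu{\linear{0}} = \varnothing$, so the pushout square of \cref{beta-pushout} collapses to exhibit $\beta_{\repu{\linear{0}}}$ as an isomorphism, while the statement about $\beta_{\partial \repu{\linear{0}}}$ is vacuous.

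For the inductive step, suppose $\beta$ is an isomorphism on $\repu{T'}$ and on $\partial \repu{T'}$ for all $T'$ with $|T'| < n$, and let $|T| = n$. The Reedy/EZ structure on $\cdcat$ (\cref{prop gen reedy}, \cref{exercise EZ}) lets us present $\partial \repu{T} = \mathrm{sk}_{n-1} \repu{T}$ as an iterated pushout along boundary inclusions $\partial \repu{T'} \hookrightarrow \repu{T'}$ for trees $T'$ with $|T'| < n$. Applying the inductive hypothesis at each stage of this filtration, together with cocontinuity of $\rigc f^*$ and $F \rigc$, we conclude that $\beta_{\partial \repu{T}}$ is an isomorphism. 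Finally, \cref{beta-pushout} exhibits $\beta_{\repu{T}}$ as the pushout of this isomorphism, so it too is an isomorphism.

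The substantive content of the proof has already been packaged into \cref{beta-pushout}, whose proof invoked \cref{boundary horn pushout} and \cref{bdry-F-pushout} together with the compatibility expressed by \cref{gamma factorization}. The induction above is essentially a skeletal bookkeeping argument, and the main potential pitfall is only to confirm that $\partial \repu{T}$ can be assembled from boundaries and representables of strictly smaller trees via colimits that both $\rigc f^*$ and $F \rigc$ preserve; this is standard for Berger--Moerdijk EZ categories.
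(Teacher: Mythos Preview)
Your proof is correct and follows essentially the same approach as the paper's: reduce to representables via cocontinuity of $\rigc f^*$ and $F\rigc$, induct on the number of vertices, deduce that $\beta_{\partial\repu{T}}$ is an isomorphism from the inductive hypothesis (the paper phrases this as ``$\partial\repu{T}$ is $(n{-}1)$-skeletal'' rather than spelling out the skeletal filtration by pushouts, but the content is the same), and then invoke \cref{beta-pushout} to conclude for $\repu{T}$.
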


\begin{proof}
    Since the domain of the functors under consideration is a presheaf category, and $\beta$ respects colimits as a natural transformation between left adjoints, it suffices to show that for any $T \in \cdcat$, the component of $\beta$ at the representable presheaf $\repu{T}$ is an isomorphism. 
    
    Let $n$ denote the number of vertices of $T$; we will proceed by induction on $n$. Suppose, as our induction hypothesis, that for every tree $S$ with fewer than $n$ vertices, the component of $\beta$ at the representable presheaf $\repu{S}$ is an isomorphism. 
    (In the base case $n = 0$ this condition is vacuous.) 
    Then the component of $\beta$ at $\partial \repu{T}$ is an isomorphism; in the case $n = 0$ this holds because $\partial \repu{T} = \varnothing$ and both the domain and codomain of $\beta$ are left adjoints, while in the case $n \geq 1$ it holds by the induction hypothesis and the fact that $\partial \repu{T}$ is $(n{-}1)$-skeletal. 
    The component of $\beta$ at $\repu{T}$ is a pushout of its component at $\partial \repu{T}$ by \cref{beta-pushout}, thus the component at $\repu{T}$ is an isomorphism as well.

Thus we see that the components of $\beta$ at all representable cyclic dendroidal sets are isomorphisms; it follows that all components of $\beta$ are isomorphisms.
\end{proof}

We conclude this section with its main result: the model structures for cyclic quasi-operads and for simplicial cyclic operads are Quillen equivalent.

\begin{theorem}\label{C-nerve-Quillen-equiv}
    The adjunction $\rigc : \cdset \rightleftarrows \scycop : \hNerve$ is a Quillen equivalence. 
\end{theorem}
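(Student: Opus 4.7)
The plan is to apply the general lifting theorem for Quillen equivalences along adjoint strings (\cref{dch56} in \cref{sec lifted QE}, based on \cite[Theorem 5.6]{DrummondColeHackney:CERIMS}) to the commuting square of adjunctions consisting of the Cisinski--Moerdijk adjunction $\rigc : \dset \rightleftarrows \soperad : \hNerve$, the cyclic adjunction $\rigc : \cdset \rightleftarrows \scycop : \hNerve$ we wish to show is a Quillen equivalence, and the vertical adjunctions having $f^* \colon \cdset \to \dset$ and $F \colon \scycop \to \soperad$ as right adjoints.

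Such a lifting result requires three hypotheses: (i) each of the vertical right adjoints $f^*$ and $F$ right-induces the model structure on its source from that on its target; (ii) the horizontal adjunction between $\dset$ and $\soperad$ is a Quillen equivalence; and (iii) the Beck--Chevalley mate $\beta \colon \rigc f^* \Rightarrow F \rigc$ is a natural isomorphism. I would verify (i) using \cref{cdset-right-induced} together with \cref{sop-scycop-model-str-def}, (ii) by invoking the Cisinski--Moerdijk theorem \cite[Theorem 8.15]{CisinskiMoerdijk:DSSO}, and (iii) by appealing to \cref{beck-chevalley}.

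The conclusion is thus essentially formal, given the tools already assembled in the excerpt. The real work—and the main obstacle for the overall argument—was the Beck--Chevalley condition, established in \cref{beck-chevalley} via the pushout analyses of \cref{boundary horn pushout}, \cref{bdry-F-pushout}, and \cref{beta-pushout}, the factorization of $\gamma$ as $(\beta f_!) \circ (\rigc \eta)$ from \cref{gamma factorization}, and an induction on skeleta. Once those tools are in hand, the present theorem reduces to a direct application of the lifting machinery from the appendix and involves no further model-categorical subtleties.
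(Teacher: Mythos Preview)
Your proposal is correct and follows essentially the same approach as the paper's proof: both apply \cref{dch56} with inputs the right-induced model structures (\cref{cdset-right-induced}, \cref{sop-scycop-model-str-def}), the Cisinski--Moerdijk Quillen equivalence, and the Beck--Chevalley isomorphism from \cref{beck-chevalley}. The only point the paper makes explicit that you leave implicit is the invocation of \cref{mates beta} to verify that the pair $(\alpha,\beta)$ are indeed the mates required in the hypothesis of \cref{dch56}; since $\alpha$ and $\beta$ are each obtained from $\kappa$ by different mate constructions, this compatibility is not entirely automatic, though it is routine.
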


\begin{proof}
By \cref{mates beta}, the natural transformations $\alpha$ and $\beta$ are mates; they are also natural isomorphisms (using \cref{beck-chevalley} for $\beta$).
\[
  \begin{tikzcd}
    \cdset \rar["\rigc"] \dar["f^*"']  & 
    \scycop \dar["F"] 
    &
    \cdset  \dar["f^*"'] \ar[dr, phantom, "{\alpha \scriptstyle\Searrow} "] & 
    \scycop \dar["F"] \lar["\hcn"']
    \\
    \dset \rar["\rigc"'] \ar[ur, phantom, "{\scriptstyle\Nearrow} \beta"] & 
    \soperad
    &
    \dset  & 
    \soperad \lar["\hcn"]
\end{tikzcd}
\]
Since $\rigc \colon \dset \to \soperad$ is a left Quillen equivalence, we may apply \cref{dch56} to deduce that $\rigc \colon \cdset \to \scycop$ is as well.
\end{proof}

\section{Complete Segal cyclic dendroidal spaces}\label{sec rezk ms}

Recall simplicial presheaves on any dualizable generalized Reedy category (such as $\dendcat$ and $\cdcat$ by \cref{prop gen reedy}) admits the Reedy model structure from \cite{BergerMoerdijk:OENRC}. 

\begin{proposition}\label{upsilon reedy v induced}
The model structure on $\cdspace$ which is right-induced from the Reedy model structure on $\dspace$ along $f^*\colon \cdspace \to \dspace$ is equal to the Reedy model structure.
\end{proposition}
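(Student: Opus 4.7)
The plan is to show both model structures on $\cdspace$ have the same fibrations and the same weak equivalences, so they must agree.

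For weak equivalences, I would first observe that the Reedy weak equivalences in both $\dspace$ and $\cdspace$ are the levelwise weak equivalences of simplicial sets, and $(f^*X)_{(T,t)} = X_T$. Since every tree $T \in \cdcat$ arises as $f(T,t)$ for any choice of root $t \in \perim(T)$, a map $g$ in $\cdspace$ is levelwise a weak equivalence if and only if $f^*g$ is so. Hence the two classes of weak equivalences agree.

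For fibrations, the key point will be that matching objects are preserved by $f^*$. Here I would invoke \cref{rmk horns colimits}: since root-elision $f$ is a discrete fibration and creates positive morphisms (\cref{def reedy structure}), the induced functor $(\dendcat^+)_{/(T,t)} \to (\cdcat^+)_{/T}$ is an isomorphism of categories for each root $t$, and restricts to an isomorphism of the subcategories indexing the colimit descriptions of $\partial \repo{T,t}$ and $\partial \repu{T}$. Combined with the fact that $f_!$ is cocontinuous and sends representables to representables, this yields $f_! \partial \repo{T,t} \cong \partial \repu{T}$. By adjunction,
\[
    M_{(T,t)}(f^*X) \cong \hom_{\cdspace}(\partial \repu{T}, X) = M_T X,
\]
and this identification sends the matching map at $(T,t)$ for $f^*g$ to the matching map at $T$ for $g$. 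Since every $T \in \cdcat$ lifts to some $(T,t) \in \dendcat$, checking the fibration condition on matching maps over all objects of $\dendcat$ is equivalent to checking it over all objects of $\cdcat$, so $f^*g$ is a Reedy fibration in $\dspace$ if and only if $g$ is a Reedy fibration in $\cdspace$.

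The main obstacle I anticipate is the equivariance present in the generalized Reedy framework: the automorphism groups $\aut_{\cdcat}(T)$ of unrooted trees can be strictly larger than the automorphism groups $\aut_{\dendcat}(T,t)$ of their rooted lifts, so one might worry whether matching maps carry different equivariant structure that would affect the fibration condition. However, since the projective equivariant model structure on $G$-simplicial sets has fibrations detected by the underlying Kan fibration condition, the identification of matching maps as plain simplicial set maps is enough to transfer the fibration condition. The more delicate equivariance issues on the cofibration side are irrelevant here because a model structure is determined by its fibrations and weak equivalences, so once these agree the two model structures must coincide.
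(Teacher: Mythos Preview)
Your argument is correct. The paper takes a slightly different route: it packages the verification into a general appendix result (\cref{nice reedy functor}), which applies to any Reedy functor $\phi \colon \reedyS \to \reedyR$ such that $\reedyS^+ \to \reedyR^+$ is a discrete fibration and $\reedyS^- \to \reedyR^-$ is a discrete opfibration, and then checks these hypotheses for $f^\op$ using \cref{disc fib} and \cref{disc opfib}. That proof computes the identification of matching maps abstractly via the Beck--Chevalley isomorphism for the pullback square of Reedy index categories (\cref{lem latching iso,lem lmatching phi}), rather than via your boundary-presheaf adjunction $M_{(T,t)}(f^*X) \cong \hom(f_!\partial\repo{T,t},X) \cong \hom(\partial\repu{T},X) = M_T X$. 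Your approach is more concrete and self-contained, and leans only on material already established for \cref{sec cyclic quasi-operads}; the paper's approach is more general (it works for any $\catM$, not just $\sset$) and also yields that $f^*$ is left Quillen, which is not needed here but is of independent interest. Your observation about equivariance---that the larger automorphism group $\aut_{\cdcat}(T)$ is harmless because projective fibrations in $\sset^G$ are underlying Kan fibrations---is exactly the content of the paper's \cref{lem proj esssurj} specialized to the groupoids $\reedyG_n(\dendcat) \to \reedyG_n(\cdcat)$.
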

\begin{proof}
Recall that $f\colon \dendcat \to \cdcat$ creates positive, negative, and active maps (\cref{active-inert-def}, \cref{def reedy structure}).
Since $f$ is a discrete fibration by \cref{disc fib}, so too is $\dendcat^+ \to \cdcat^+$.
Similarly, since $\dendcat_\actrm \to \cdcat_\actrm$ is a discrete opfibration by \cref{disc opfib}, so too is $\dendcat^- \to \cdcat^-$.
Then $f^\op \colon \dendcat^\op \to \cdcat^\op$ satisfies the conditions of \cref{nice reedy functor}, and the result follows.
\end{proof}

Without any decoration, the default model structure on $\dspace$ or $\cdspace$ will be the Reedy model structure.
We next turn to another model structure on $\dspace$, called the \emph{dendroidal Rezk model structure}. 

Recall from \cref{ss segal condition} the Segal core inclusion associated to a (rooted) tree.
Consider the following set $S$ of maps of $\dset \subseteq \dspace$:
\begin{itemize}
\item The Segal core inclusions $\segcore{T,t} \to \repo{T,t}$ .
\item The endpoint inclusion $\{0\} \to \freeiso$.
\end{itemize}
As the inclusion $\dset \subseteq \dspace$ sends normal monomorphisms to (Reedy) cofibrations (see \cite[Corollary 12.2]{HeutsMoerdijk:SDHT}), maps in $S$ are cofibrations with cofibrant domains and codomains.
The left Bousfield localization of the Reedy model structure at the set $S$ is called the dendroidal Rezk model structure and is denoted henceforth by $\rezko$.

\begin{theorem}\label{cyclic dendroidal rezk}
The following model structures on the category of cyclic dendroidal spaces exist and are equal:
\begin{itemize}
\item The right-induced model structure from the dendroidal Rezk model structure. 
\item The left Bousfield localization of the Reedy model structure at the set $f_!S$.
\end{itemize}
\end{theorem}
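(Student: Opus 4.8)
The plan is to realize both model structures by combining the adjoint string $f_! \dashv f^* \dashv f_*$ between $\dspace$ and $\cdspace$ with the identification (\cref{upsilon reedy v induced}) of the Reedy model structure on $\cdspace$ as right-induced from that on $\dspace$ along $f^*$. First I would record that $f_!$ commutes with the inclusions $\dset \hookrightarrow \dspace$ and $\cdset \hookrightarrow \cdspace$, preserves colimits and representables, and hence carries the dendroidal Segal core inclusion $\segcore{T,t} \to \repo{T,t}$ to the cyclic Segal core inclusion $\segcore{T} \to \repu{T}$ (both are the colimit of representables indexed by the vertex-stars and edges, and root-elision matches these indexing categories up) and the endpoint inclusion $\{0\} \to \freeiso$ to the double $\freeiso$-inclusion $\{0,0^\dagger\} \to f_!(\freeiso)$ of \cref{def double J}. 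Thus $f_! S$ is precisely the set of the cyclic Segal core inclusions together with the double $\freeiso$-inclusion; and since $f_!$ preserves monomorphisms, hence normal monomorphisms, hence Reedy cofibrations, and also preserves cofibrant objects, $f_! S$ is a set of cofibrations between cofibrant objects in $\cdspace$. The Reedy model structure on $\cdspace$ is combinatorial and left proper, so the left Bousfield localization $L_{f_! S}(\cdspace)$ exists; this disposes of the second of the two model structures.

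For the first, I would invoke \cref{triple-induced-model-structures} applied to the adjoint string $f_! \dashv f^* \dashv f_*$, taking $\catM = \rezko$ (which is combinatorial) and $\catN = \cdspace$; the hypothesis to check is that $f^* f_! \dashv f^* f_*$ is a Quillen adjunction on $\rezko$. The functor $f^* f_! \colon \dspace \to \dspace$ is the composite of $f_! \colon \dspace \to \cdspace$ and $f^* \colon \cdspace \to \dspace$, both of which are left Quillen for the Reedy model structures (using \cref{upsilon reedy v induced} and the fact that $f$ is a discrete (op)fibration, so that $f^*$ preserves normal monomorphisms and creates Reedy weak equivalences), so $f^* f_!$ is left Quillen for the Reedy structure; by the standard descent criterion for left Bousfield localizations, it remains left Quillen for $\rezko$ once one knows that $f^* f_!$ sends every map of $S$ to an $S$-local equivalence. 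For $\{0\} \to \freeiso$ this is immediate: the image $f^* f_!(\{0\} \to \freeiso)$ is a coproduct of two copies of $\{0\} \to \freeiso$ (as recorded in the proof of \cref{f-shriek-star-composite-Quillen}), hence a trivial cofibration in $\rezko$. For a Segal core inclusion, $f^* f_!(\segcore{T,t} \to \repo{T,t})$ is by the above nothing but $f^*(\segcore{T} \to \repu{T})$, and I would show this is an $S$-local equivalence by transporting the claim across the adjunction $f^* \dashv f_*$: for a complete dendroidal Segal space $Z$ the map $\map_{\dspace}(f^* \repu{T}, Z) \to \map_{\dspace}(f^* \segcore{T}, Z)$ is identified with the (homotopy) cyclic Segal map of $f_* Z$ at $T$, so it suffices to see that $f_* Z$ satisfies the cyclic Segal condition. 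Since $f_*$ is the simplicial-presheaf incarnation of the right adjoint $R \colon \operad \to \cycop$ of \cref{def cycop adjoint string}, whose value on operations is a product indexed by the possible rootings, this product interchanges with the limit over subtrees of $T$ — each root of $T$ inducing a root on every subtree — which gives the cyclic Segal property (and completeness is inherited from that of $Z$).

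Granting this, \cref{triple-induced-model-structures} yields the right-induced model structure $\cdspace_r$ from $\rezko$ along $f^*$. Its cofibrations are the Reedy cofibrations: by \cref{right-induced-generators} they are generated by $f_!$ of a set of generating cofibrations of $\rezko$, i.e.\ of the generating Reedy cofibrations of $\dspace$, and these generate the Reedy cofibrations of $\cdspace$ by \cref{upsilon reedy v induced}. Its fibrant objects are the cyclic dendroidal spaces $X$ with $f^* X$ fibrant in $\rezko$, that is, $X$ Reedy fibrant (equivalently $f^* X$ Reedy fibrant, since $f^*$ creates Reedy fibrations) and $f^* X$ local with respect to $S$; by the derived adjunction $f_! \dashv f^*$ this last condition says exactly that $X$ is local with respect to $f_! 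S$. Thus $\cdspace_r$ has the same cofibrations and the same fibrant objects as $L_{f_! S}(\cdspace)$, and since a left Bousfield localization of the Reedy model structure is determined by its class of fibrant objects, the two agree — where I first note that $\cdspace_r$ is indeed such a localization, having the same cofibrations as the Reedy structure and, because $f^*$ creates weak equivalences and $\rezko$ has more weak equivalences than the Reedy structure on $\dspace$, more weak equivalences. I expect the main obstacle to be precisely the Segal-core half of the Quillen verification above, namely that $f^*$ carries cyclic Segal core inclusions to $S$-local equivalences: the clean pushout description used for boundaries and inner horns in \cref{boundary horn pushout} is not available here, because a map into a tree that fails to factor through a vertex-star or an edge need not be active, so one is forced to argue instead through the homotopical Segal and completeness properties of $f_* Z$.
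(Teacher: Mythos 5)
Your overall framework — realize both model structures via \cref{triple-induced-model-structures} applied to $f_! \dashv f^* \dashv f_*$, with \cref{upsilon reedy v induced} supplying the Reedy starting point, and match up cofibrations and fibrant objects at the end — is correct and parallels the paper in spirit. Your treatment of $\{0\}\to\freeiso$ and your identification of $f_!S$ are fine, and you correctly flag the Segal-core half as the crux. But the argument you offer for that half has a real gap.

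You want to show that $f^*(\segcore{T}\to\repu{T})$ is an $S$-local equivalence in $\rezko$ by transporting across $f^*\dashv f_*$ and showing $f_*Z$ is a (complete) cyclic dendroidal Segal space. The mapping-space adjunction itself is legitimate (both $f^*$ and $f_*$ are Quillen between Reedy structures by \cref{nice reedy functor}). The problem is the description of $f_*Z$ you rely on. You assert that $f_*$ has "value on operations a product indexed by the possible rootings" and then interchange that product with the limit over vertex-stars and edges. In fact the formula $(f_*Z)_T = \prod_{t\in\perim T} Z_{T,t}$ is the formula for the \emph{left} adjoint $f_!$ (\cref{f-shriek-formula}), not for $f_*$. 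For $f_*$ one has $(f_*Z)_T = \hom_\dspace(f^*\repu{T},Z)$, and by \cref{boundary horn pushout} the object $f^*\repu{T}$ is a pushout of $\sum_t\repo{T,t}$ along $\sum_t\partial\repo{T,t}\to f^*\partial\repu{T}$, so $(f_*Z)_T$ is a \emph{pullback} of $\prod_t Z_{T,t}$ over a correction term coming from the boundary. That correction is genuinely nontrivial: the map $\sum_t\partial\repo{T,t}\to f^*\partial\repu{T}$ is surjective but not injective (already visible for $T$ a star with three legs and $S$ an edge, where the left side has $9$ elements and the right $6$), so the pullback does not collapse to a product. Consequently the "product interchanges with limit" step is not available, and the Segal property of $f_*Z$ remains unverified. (The analogy with $R\colon\operad\to\cyc$ is also misleading at this level: the product formula for $R$ governs single operations, whereas the value of $f_*Z$ at a tree with several vertices involves a limit of those products with identifications, and it is exactly those identifications that the naive interchange ignores.)

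The paper gets around this precisely because, as you yourself note, the pushout description of \cref{boundary horn pushout} is available for boundaries and \emph{inner horns} but not for Segal cores. So it first localizes at the set $S'$ of inner horn inclusions together with $\{0\}\to\freeiso$ (which also presents $\rezko$, by Cisinski--Moerdijk), uses \cref{boundary horn pushout} to verify the hypotheses of \cref{localization existence} for $S'$, and concludes that the right-induced structure equals $\mathscr{L}_{f_!S'}(\cdspace)$. With that in hand, $f^*f_!$ is a left Quillen endofunctor of $\rezko$, so it automatically sends the Segal core inclusions (which are acyclic cofibrations in $\rezko$) to $S$-local equivalences; a second application of \cref{localization existence}, now with $S$, finishes the proof. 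If you want to keep your $f_*$-based route, you would need a genuine argument that $f_*Z$ satisfies the homotopy cyclic Segal condition that does not assume the product formula; absent that, the bootstrap via inner horns is the cleaner path.
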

We call this model structure the \mydef{cyclic dendroidal Rezk model structure} and denote it by $\rezku$.
\begin{proof}
Let $S'$ denote the set of maps consisting of $\{0\} \to \freeiso$ and all inner horn inclusions $\Lambda^{T,t}_e \to \repo{T}$.
It follows from \cite[Proposition 5.5]{CisinskiMoerdijk:DSSIO} that the left Bousfield localization of $\dspace$ at $S'$ is equal to $\rezko$.
We calculated in \cref{boundary horn pushout} that the image under $f^*f_!$ of an inner horn inclusion is a pushout of a coproduct of inner horn inclusions, hence is an $S'$-local equivalence.
Meanwhile, $f^*f_!(\{0\} \to \freeiso)$ is isomorphic to a coproduct of two copies of $\{0\} \to \freeiso$, hence is an $S'$-local equivalence.
We may thus apply \cref{localization existence} and \cref{upsilon reedy v induced} to deduce that the right-induced model structure along $f^* \colon \cdspace \to \rezko$ is the left Bousfield localization $\mathscr{L}_{f_!S'} (\cdspace)$ at $f_!S'$.

In particular, $f_! \colon \rezko \to \mathscr{L}_{f_!S'} (\cdspace)$ and $f^* \colon \mathscr{L}_{f_!S'} (\cdspace) \to \rezko$ are left Quillen functors, hence $f^*f_!$ is a left Quillen endofunctor on $\rezko$.
Since the Segal core inclusions are acyclic cofibrations in $\rezko$, they are sent to $S$-local equivalences by $f^*f_!$.
We then apply \cref{localization existence} a second time, this time using the set $S$, to deduce that the right-induced model structure along $f^* \colon \cdspace \to \rezko$ is the left Bousfield localization $\mathscr{L}_{f_!S} (\cdspace)$ at $f_!S$. 
\end{proof}

\begin{remark}
Notice that in the proof of \cref{cyclic dendroidal rezk} we showed that $\rezku$ is the left Bousfield localization of $\cdspace$ at the set of inner horn inclusions, along with $\{0\} \to \freeiso$. 
One could also do all of this without $\{0\} \to \freeiso$ (which imposes completeness) to obtain a model structure for cyclic dendroidal Segal spaces (right-induced from \cite[Definition 5.4]{CisinskiMoerdijk:DSSIO}).
The underlying $\infty$-category of this model structure is the category of Segal $\cdcat^\op$-spaces from \cite[\S2]{ChuHaugseng:HCASC}.
\end{remark}

We next prove that cyclic dendroidal Rezk model structure is Quillen equivalent to the model structure for cyclic quasi-operads.

\begin{theorem}\label{cdset include cdspace}
The inclusion $\cdset \subseteq \rezku$ is a left Quillen equivalence.
\end{theorem}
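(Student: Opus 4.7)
The plan is to lift the non-cyclic version of this theorem using the same machinery we deployed in the proof of \cref{C-nerve-Quillen-equiv}. Specifically, I would consider the square
\[
\begin{tikzcd}
\cdset \ar[r, "i"] \ar[d, "f^*"'] & \cdspace \ar[d, "f^*"] \\
\dset \ar[r, "i"'] & \dspace
\end{tikzcd}
\]
in which the horizontal arrows are the constant-simplicial-set inclusions, the top row carries the cyclic quasi-operad model structure of \cref{cdset-right-induced} and the cyclic dendroidal Rezk model structure $\rezku$ of \cref{cyclic dendroidal rezk}, and the bottom row carries the model structure for quasi-operads of \cref{omega-model-structure} and the dendroidal Rezk model structure $\rezko$. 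By construction, both vertical Quillen adjunctions $f_! \dashv f^*$ right-induce the upper model structures from the lower ones, so we are squarely in the situation of \cref{dch56}.

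First I would observe that the square commutes strictly: both $i \circ f^*$ and $f^* \circ i$ send a cyclic dendroidal set $X$ to the simplicial dendroidal set assigning to $(T,t) \in \dendcat$ the constant simplicial set on $X_T$. Consequently the Beck--Chevalley mate $\beta \colon i f^* \Rightarrow f^* i$ is the identity, and so trivially an isomorphism. Next I would verify that $i \colon \cdset \to \cdspace$ is a Quillen adjunction for the relevant model structures. It preserves cofibrations because, by \cref{normal-boundary}, normal monomorphisms are generated by boundary inclusions, which $i$ sends to boundary inclusions in $\cdspace$ (and hence to cofibrations in $\rezku$). By \cref{peusdo-gen-Quillen} it remains to check that the pseudo-generating acyclic cofibrations of \cref{cdset-right-induced}---the inner horns $\Lambda^T_e \hookrightarrow \repu{T}$ and the double $\freeiso$-inclusion---are sent to acyclic cofibrations in $\rezku$. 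Since uniqueness of left adjoints gives $i \circ f_! \cong f_! \circ i$, using \cref{rmk horns colimits} and \cref{def double J} one sees that each such image lies in $f_!S'$ in the notation of the proof of \cref{cyclic dendroidal rezk}, which consists of acyclic cofibrations of $\rezku$.

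The remaining input is that the bottom leg $i \colon \dset \to \rezko$ is a left Quillen equivalence, which is the non-cyclic counterpart of our theorem and is due to Cisinski and Moerdijk. With all hypotheses assembled, \cref{dch56} yields the desired conclusion exactly as in the proof of \cref{C-nerve-Quillen-equiv}. I do not anticipate any substantive obstacle: the Beck--Chevalley condition is essentially free from the strict commutativity of $f^*$ and $i$, the Quillen structure on the cyclic side is a routine application of \cref{peusdo-gen-Quillen}, and the genuinely non-formal analytic content (the non-cyclic Quillen equivalence) has been imported from the existing dendroidal literature.
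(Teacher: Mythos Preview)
Your approach is essentially the same as the paper's: both apply \cref{dch56} to the square relating $(\cdset,\cdspace)$ to $(\dset,\dspace)$ via $f^*$, importing the Cisinski--Moerdijk equivalence for the bottom row. Two small remarks: your separate verification that $i \colon \cdset \to \rezku$ is Quillen is redundant, since \cref{dch56} already delivers this from the bottom adjunction being Quillen; and the paper is slightly more careful in that it checks both squares (left and right adjoints) commute strictly together with the counit compatibility $f^*\tilde\varepsilon = \varepsilon f^*$, whereas you only note the left-adjoint square commutes---you should also observe that the right adjoints (evaluation at $[0]$) satisfy $f^*\tilde B = Bf^*$, so that the mate $\lambda$ is likewise the identity.
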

\begin{proof}
The right adjoint $\tilde B$ to the inclusion $\tilde A \colon \cdset \hookrightarrow \cdspace$ is evaluation at zero: $(\tilde B X)_T = (X_T)_0$.
The counit $\tilde \varepsilon_X \colon \tilde A \tilde B X \to X$ is levelwise given by the inclusion of the constant simplicial set $(X_T)_0$ into $X_T$.
Of course similar considerations hold in the dendroidal setting for $A : \dset \rightleftarrows \dspace : B$ (see proof of Proposition 4.8 of \cite{CisinskiMoerdijk:DSSIO}).
We thus have a map of adjunctions 
\[ \begin{tikzcd}
\cdset \rar[shift left,"\tilde A"] \dar[swap]{f^*} & \cdspace \lar[shift left,"\tilde B"] \dar{f^*} \\
\dset \rar[shift left, "A"] & \dspace \lar[shift left,"B"]
\end{tikzcd} \]
since $f^* \tilde A = A f^*$, $f^* \tilde B = B f^*$, and $f^*\tilde \varepsilon_X = \varepsilon_{f^*X}$.
The result follows by applying \cite[Theorem 5.6]{DrummondColeHackney:CERIMS} (or \cref{dch56}) to the Quillen equivalence established in \cite[Corollary 6.7]{CisinskiMoerdijk:DSSIO}.
\end{proof}

\begin{remark}
There is another Quillen equivalence $\rezko \rightleftarrows \dset$ from \cite[Proposition 6.11]{CisinskiMoerdijk:DSSIO}, where the left Quillen functor goes in the reverse direction (the dendroidal analogue of \cite[Theorem 4.12]{JoyalTierney:QCSS}).
However, this adjunction uses the tensor product of dendroidal sets, which in turn relies on the Boardman--Vogt tensor product of operads.
We do not know a general cyclic analogue of these constructions, so do not attempt to lift this second Quillen equivalence.
It would be interesting to know if this can be done, which would only require one to define the tensor product of an anti-involutive simplicial set with $\repu{T}$.
\end{remark}

\section{Planar (cyclic) \texorpdfstring{$\infty$}{∞}-operads}\label{sec planar}

We now turn to planar operads and planar cyclic operads.
A \emph{planar operad} is just an operad but without the symmetric group actions.
These are the same thing as (small) multicategories.
A \emph{planar cyclic operad} is defined by modifying the definition of cyclic operad so that the $\Sigma_n^+ \cong \Sigma_{n+1}$ actions are replaced with an action by a cyclic subgroup of order $n+1$.
See \cite[\S4.1]{DrummondColeHackney:DKHTCO} for details.
These are the same thing as the (small) cyclic multicategories of \cite[Definition 3.3]{ChengGurskiRiehl:CMMAM} (see \cite[\S4.3]{BataninBerger:LPOHC} for the monochrome case).

There are forgetful functors as in the commutative diagram below, where $\plcyc$ (resp.\ $\plop$) denotes the category of planar cyclic operads (resp.\ planar operads).
\begin{equation}\label{eq diag left adjoints}
\begin{tikzcd}
\cyc \rar \dar{F} & \plcyc \dar \\
\operad \rar & \plop
\end{tikzcd} \end{equation}
All of these functors have left adjoints.
The left adjoint of the right vertical map is a variation of \cref{def cycop adjoint string}.
The left adjoint $\sym \colon \plop \to \operad$ is described explicitly in Sections 20.1 and 20.2 of \cite{Yau:CO}\footnote{More precisely, $\operad \to \plop$ is a fibered right adjoint over $\set$, and \cite{Yau:CO} describes the left adjoints in the fibers.}
and also appears in \cite[\S1.1]{Weiss:DS}.
The left adjoint $\sym \colon \plcyc \to \cyc$ has appeared for example in \cite{Markl:MEOSFT} in the monochrome case, and in the general case is used in \cite[\S5]{Walde:2SSIIO}.

The symmetrization functor induces an equivalence $\plop \simeq \plop_{/\ast} \to \operad_{/\ass}$ where $\ass$ is the symmetrization of the terminal planar operad.
Similarly, the cyclic associative operad $\ass$ (whose underlying operad is the associative operad) is the symmetrization of the terminal planar cyclic operad, and symmetrization induces the equivalence $\plcyc \simeq \cyc_{/\ass}$.

\begin{definition} We define the following two presheaves on $\cdcat$:
\begin{enumerate}[left=0pt]
\item The \mydef{presheaf of roots} $\rooting \in \cdset$ is the image of the terminal operad $\comm$ under $NL \colon \operad \to \cyc \to \cdset$. 
The value of $\rooting$ at a tree $T$ is $\rooting_T = \perim(T)$.
\item The \mydef{presheaf of planar structures} $\planing \in \cdset$ is the image of the terminal planar cyclic operad under $\plcyc \to \cyc \to \cdset$, i.e.\ $\planing = N(\ass)$.
An element of $\planing_T$ is a choice of cyclic order on the set $\perim(\medstar_v)$ for each vertex $v$ or $T$.
\end{enumerate}
\end{definition}

The discrete fibration associated to $\rooting$ is root-elision $f\colon \dendcat \to \cdcat$. 
We let $p \colon \pltree \to \cdcat$ be the discrete fibration associated to $\planing$.
We also the have the product presheaf $\planing \times \rooting$ with associated discrete fibration $q \colon \plrtree \to \cdcat$.
The projections $\planing \times \rooting \to \rooting$ and $\planing \times \rooting \to \planing$ give rise to maps of discrete fibrations $q\to f$ and $q \to p$, exhibiting $q$ as the product in the category of discrete fibrations over $\cdcat$.
In other words, we have a pullback square of discrete fibrations.
\begin{equation}\label{eq diag disc fib square} \begin{tikzcd}
\plrtree \rar{p} \dar[swap]{f} \drar[phantom, "\lrcorner" very near start] & \dendcat \dar{f} \\
\pltree \rar[swap]{p} & \cdcat
\end{tikzcd} \end{equation}
The objects of $\pltree$ are \emph{planar trees} and objects of $\plrtree$ are \emph{planar rooted trees}.
Notice another description of a planar rooted tree: it is a rooted tree together with a total order of $\inp(v)$ for each vertex $v$.
(The dendroidal set corresponding to $p\colon \plrtree \to \dendcat$ is the presheaf of planar structures appearing for instance in Example 3.20(d) of \cite{HeutsMoerdijk:SDHT}, which we also call $\planing$.)
The category of planar trees $\pltree$ was called $\dendcat_{\textup{cyc}}$ in \cite{Walde:2SSIIO}.

These shape categories organize into the following cube, where the front face is the square of left adjoints, the back face is \eqref{eq diag disc fib square}, and the diagonal maps are fully faithful inclusions.
\[ \begin{tikzcd}[sep=tiny]
\plrtree \ar[rr] \ar[dd] \ar[dr] & & \dendcat \ar[dd] \ar[dr] \\
& \plop \ar[rr, crossing over] & & \operad \ar[dd]
\\
\pltree \ar[rr] \ar[dr] & & \cdcat \ar[dr] \\
& \plcyc \ar[from = uu, crossing over] \ar[rr] & & \cyc
\end{tikzcd} \]

Consider the following cube, whose vertical maps arise from restriction along $f$, diagonal maps are given by left Kan extension along $p$, and horizontal maps are levelwise the inclusion of discrete objects.
\begin{equation}\label{cube p presheaf}
\begin{tikzcd}[sep=tiny]
\cpdset \ar[rr] \ar[dd] \ar[dr] & & \cpdspace \ar[dd] \ar[dr] \\
& \cdset \ar[rr, crossing over] & & \cdspace \ar[dd]
\\
\pdset \ar[rr] \ar[dr] & & \pdspace \ar[dr] \\
& \dset \ar[from = uu, crossing over] \ar[rr] & & \dspace
\end{tikzcd} \end{equation}

The categories on the front face of the cube all have model structures we've discussed -- the model structure for (cyclic) quasi-operads on the left and the (cyclic) dendroidal Rezk model structure on the right.

If $\catM$ is a model category and $Z$ is an object, recall the \mydef{slice model structure} on the category $\catM_{/Z}$ of objects over $Z$ \cite[7.6.5]{Hirschhorn:MCL}, where each of the three classes of maps constituting a model structure is created in $\catM$ under the functor $\catM_{/Z} \to \catM$. 
This model structure exists without any additional hypotheses, but if $\catM$ has good properties then so too does $\catM_{/Z}$.
This includes the case when $\catM$ is (left or right) proper, cofibrantly generated, cellular, or simplicial (see \cite[Theorem 15.3.6]{MayPonto:MCAT}, \cite{Hirschhorn:OUCFGMC}, and \cite[Ch.\ II, \S 2, Proposition 6]{Quillen:HA}).

\begin{lemma}\label{lem planar models}
Each of the categories $\pdset$, $\pdspace$, $\cdset$, and $\cpdspace$ appearing on the back face of \eqref{cube p presheaf} admits a model structure whose cofibrations, fibrations, and weak equivalences are created by the diagonal maps.
\end{lemma}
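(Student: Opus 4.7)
The plan is to identify each of the four back-face categories with a slice over the appropriate presheaf of planar structures, and then invoke the slice model structure construction just recalled.

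The starting observation is the standard equivalence between presheaves on the total category of a discrete fibration and slices of presheaves on the base: if $p \colon D \to C$ is a discrete fibration with associated presheaf $P = p_!(\ast) \in \pre{C}$, then sending $X \in \pre{D}$ to the canonical map $p_! X \to P$ yields an equivalence $\pre{D} \simeq \pre{C}_{/P}$ that identifies the left Kan extension $p_! \colon \pre{D} \to \pre{C}$ with the forgetful functor out of the slice. Viewing $P$ as a levelwise-discrete simplicial presheaf yields the analogous equivalence $\spshf{D} \simeq \spshf{C}_{/P}$ for simplicial presheaves.

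Applying this to the two discrete fibrations $p \colon \plrtree \to \dendcat$ and $p \colon \pltree \to \cdcat$, we obtain equivalences
\[ \cpdset \simeq \cdset_{/\planing}, \quad \cpdspace \simeq \cdspace_{/\planing}, \quad \pdset \simeq \dset_{/\planing}, \quad \pdspace \simeq \dspace_{/\planing}, \]
where, following the abuse already in the text, $\planing$ also denotes its pullback along $f$ on the dendroidal side. Under these equivalences, each of the four diagonal maps in the cube \eqref{cube p presheaf} becomes the forgetful functor from the corresponding slice category.

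Each of $\dset$, $\cdset$, $\dspace$, $\cdspace$ carries the relevant model structure (the quasi-operad model structure from \cref{omega-model-structure}, the cyclic quasi-operad model structure $\cdset_r$ from \cref{cdset-right-induced}, the dendroidal Rezk model structure $\rezko$, and the cyclic dendroidal Rezk model structure $\rezku$ from \cref{cyclic dendroidal rezk}). The slice model structure over any object in a model category exists without further hypotheses, and by construction its cofibrations, fibrations, and weak equivalences are precisely those maps whose image under the forgetful functor to the ambient category lies in the corresponding class. Transporting these slice model structures across the equivalences above produces the desired model structures on the back-face categories. No serious obstacle arises; the main content is the bookkeeping that identifies $p_!$ with the slice-forgetful functor under the discrete fibration correspondence.
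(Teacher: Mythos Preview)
Your proposal is correct and matches the paper's own argument: both identify each back-face presheaf category with the slice over the planar-structures presheaf via the discrete-fibration correspondence, observe that $p_!$ becomes the slice forgetful functor, and then invoke the slice model structure. The paper simply does this more tersely, treating one case and declaring the others analogous.
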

\begin{proof}
We have $\pdset$ is equivalent to the slice category $\dset_{/\planing}$ over the planing presheaf, and $p_! \colon \pdset \to \dset$ is equivalent to the natural forgetful functor $\dset_{/\planing} \to \dset$. 
The indicated model structure is the slice model structure \cite[7.6.5]{Hirschhorn:MCL}.
The other three cases are analogous.
\end{proof}

We give another description of three of these model structures.
\begin{proposition}
The model structure on $\pdset$ is equal to the one from \cite[\S8.2]{Moerdijk:LDS}.
The model structure on $\cpdset$ is right induced along $f^* \colon \cpdset \to \pdset$, and the model structure on $\cpdspace$ is right-induced along $f^* \colon \cpdspace \to \pdspace$.
\end{proposition}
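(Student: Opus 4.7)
The plan is to dispatch the second and third claims via a single general principle about slicing and right-induced model structures, and to handle the first by a direct comparison of cofibrations and fibrant objects.

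The general principle is: if $F \colon \catN \to \catM$ creates the fibrations and weak equivalences of a right-induced model structure on $\catN$, then for any $Z \in \catN$ the slice model structure on $\catN_{/Z}$ is itself right-induced along the slice functor $F_{/Z} \colon \catN_{/Z} \to \catM_{/FZ}$ from the slice model structure on $\catM_{/FZ}$. The proof is a routine diagram chase using that in the commutative square
\[ \begin{tikzcd}
\catN_{/Z} \rar{F_{/Z}} \dar & \catM_{/FZ} \dar \\
\catN \rar{F} & \catM
\end{tikzcd} \]
both vertical functors create fibrations and weak equivalences by the definition of the slice model structures, while $F$ does so by hypothesis, forcing $F_{/Z}$ to do the same. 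A left adjoint to $F_{/Z}$ is provided by $L_{/Z}(Y \to FZ) = (LY \to Z)$, where the structure map is the transpose under $L \dashv F$ of $Y \to FZ$. I would then apply this with $F = f^* \colon \cdset \to \dset$, which carries the right-induced model structure by \cref{cdset-right-induced}, and with $Z = \planing \in \cdset$. Since $f^* \planing$ agrees with the planing presheaf in $\dset$ (both encode cyclic orderings of vertex neighborhoods, compatibly via the forgetful $F \colon \cyc \to \operad$ and the isomorphism $L \Omega(T,t) \cong \cdcatrm(T)$), this yields the second claim. The third is formally identical, with \cref{cyclic dendroidal rezk} replacing \cref{cdset-right-induced}.

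For the first claim, I would match the slice model structure on $\pdset \simeq \dset_{/\planing}$ with Moerdijk's by identifying classes of maps. A monomorphism in $\pdset$ is normal if and only if its image in $\dset$ under $p_!$ is a normal monomorphism over $\planing$, so normal cofibrations in Moerdijk's structure coincide with slice cofibrations inherited from $\dset$. Since $\planing$ is itself a quasi-operad (being the nerve of an operad, \cref{ss segal condition}), a map $X \to \planing$ has the right lifting property against inner horn inclusions in $\dset$ if and only if $X$ admits planar inner horn fillers, so the slice fibrant objects coincide with Moerdijk's planar quasi-operads. The main obstacle is this first claim: one must carefully reconcile the literature's combinatorial treatment of normality and planar horn fillers in $\pdset$ with the slice-theoretic description inherited from $\dset$, which may involve detailed bookkeeping not illuminating to rehearse in a proposal.
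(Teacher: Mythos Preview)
Your treatment of the second and third claims is essentially the paper's argument, repackaged. The paper observes that in the left (resp.\ right) face of the cube \eqref{cube p presheaf}, the two diagonal $p_!$'s create fibrations and weak equivalences by the definition of the slice model structures, and the front $f^*$ does so by \cref{cdset-right-induced} (resp.\ \cref{cyclic dendroidal rezk}); hence the back $f^*$ does as well. Your ``general principle'' is exactly this three-out-of-four diagram chase, stated at the level of abstract slice categories rather than the specific cube.

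For the first claim the paper and you diverge. The paper simply cites Gagna's thesis, where the identification of Moerdijk's model structure with the slice $\dset_{/\planing}$ is already established. Your direct comparison of cofibrations and fibrant objects is more self-contained, and it does work, but you have not quite closed it. A fibrant object of the slice is a \emph{fibration} $X \to \planing$ in $\dset$, and since $\planing$ is fibrant the pseudo-generating acyclic cofibrations apply: you must lift not only inner horns but also $\{0\} \hookrightarrow \freeiso$. The missing observation is that the latter lift is automatic: the underlying category of $\ass$ is trivial, so the simplicial part of $\planing$ is a point, and any square can be filled via $\freeiso \to \Delta^0 \xrightarrow{x_0} X$. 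On the cofibration side, the point you need (and gesture at) is that $\planing \in \dset$ is normal, so every monomorphism over it is normal; combined with the fact that planar rooted trees have no nontrivial automorphisms, both model structures have all monomorphisms as cofibrations. With these two remarks your sketch is complete, via \cref{joyal prop}.
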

\begin{proof}
The agreement of the slice model structure on $\dset_{/\planing} \simeq \pdset$ with the model structure of Moerdijk was established by Gagna; see Section 4.2 and especially Remark 4.2.10 of \cite{Gagna:Planar}.
To see that $\cpdset \to \pdset$ (resp.\ $\cpdspace \to \pdspace$) create fibrations and weak equivalences, just note that the other three maps in the left (resp.\ right) square of \eqref{cube p presheaf} do so.
\end{proof}

We now provide an alternative description of the model structures on simplicial presheaves.
This is strictly optional: the reader may immediately skip to \cref{planar quasi rezk}.
Let $S$ be the set of maps of $\cdset$ consisting of the Segal core inclusions and the double $\freeiso$-inclusion from \cref{def double J}, and similarly for $S_\planing$ in $\cpdset$.
Similarly, let $S_\rooting$ (resp.\ $S_{\planing \times \rooting}$) be the set of maps of $\dset$ (resp.\ $\pdset$) consisting of the Segal core inclusions and the inclusion $\{0\} \to \freeiso$.
We already know that by forming the appropriate left Bousfield localization of the Reedy model structures we have $\rezko = \mathscr{L}_{S_\rooting} (\dspace)$ and $\rezku = \mathscr{L}_S (\cdspace)$. 
The \mydef{planar (cyclic) dendroidal Rezk model structure} is the left Bousfield localization of the Reedy model structure $\prezko = \mathscr{L}_{S_{\planing \times \rooting}} ( \pdspace)$
(or $\prezku = \mathscr{L}_{S_{\planing}} ( \cpdspace)$ in the cyclic case).
The model structure from \cite[Construction 4.4.5]{Walde:2SSIIO} agrees with $\prezko$.

The following is likely true in greater generality.
However, we are unaware of a reference, and have chosen hypotheses that allow the proof to proceed expeditiously.

\begin{theorem}\label{thm localization vs slice}
Let $\catM$ be a left proper, combinatorial, simplicial model category, and let $S$ be a set of cofibrations between cofibrant objects. 
If $Z$ is an $S$-local object, then there is an equality of model structures
\[
  \mathscr{L}_{S_Z} (\catM_{/Z}) = (\mathscr{L}_S \catM)_{/Z}
\]
on the slice category $\catM_{/Z}$, where $S_Z$ is preimage of $S$ under $\catM_{/Z} \to \catM$.
\end{theorem}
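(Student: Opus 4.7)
The plan is to show that the two model structures on $\catM_{/Z}$ share both cofibrations and weak equivalences, after which equality is automatic. Both exist: the slice $(\mathscr{L}_S\catM)_{/Z}$ by the standard slice construction, and $\mathscr{L}_{S_Z}(\catM_{/Z})$ by left Bousfield localization---noting that $\catM_{/Z}$ inherits left properness, combinatoriality, and simplicial enrichment from $\catM$, and that $S_Z$ consists of cofibrations between cofibrant objects in $\catM_{/Z}$ since the forgetful functor $\catM_{/Z} \to \catM$ creates both. The cofibrations in both model structures are those of $\catM_{/Z}$, so they agree.

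The heart of the argument is the following lemma: for any $\catM$-fibration $p\colon W \to Z$, the object $p$ is $S_Z$-local in $\catM_{/Z}$ if and only if $W$ is $S$-local in $\catM$. To prove it, given $(A \to B) \in S$ and a map $B \to Z$, axiom SM7 together with cofibrancy of $A$ and $B$ produces a commuting square
\[
\begin{tikzcd}
\map(B, W) \rar \dar & \map(A, W) \dar \\
\map(B, Z) \rar & \map(A, Z)
\end{tikzcd}
\]
whose vertical maps are Kan fibrations and whose fibers over the basepoints determined by $B \to Z$ and its composite $A \to B \to Z$ compute the corresponding mapping spaces in $\catM_{/Z}$. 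Since $Z$ is $S$-local, the bottom horizontal is a weak equivalence. If $W$ is $S$-local, then so is the top, whence the map on fibers over any basepoint is a weak equivalence, and $p$ is $S_Z$-local. Conversely, if $p$ is $S_Z$-local then the long exact sequence of homotopy groups, applied at every basepoint of $\map(B, Z)$ (collectively covering every component of $\map(B, W)$), forces the top horizontal to be a weak equivalence, so $W$ is $S$-local.

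With the lemma in hand, weak equivalences are compared as follows. A map $f\colon (X \to Z) \to (Y \to Z)$ is an $S$-local equivalence in $\catM$ exactly when $\map(f, W)$ is a weak equivalence for every fibrant $S$-local $W \in \catM$; it is an $S_Z$-local equivalence when $\map_{\catM_{/Z}}(f, p)$ is a weak equivalence for every fibrant $S_Z$-local $p \in \catM_{/Z}$. Forward: if $f$ is an $S$-local equivalence and $p\colon W' \to Z$ is fibrant $S_Z$-local, the lemma gives that $W'$ is $S$-local, so the SM7 square associated to $f$ and $W'$ has weak-equivalence horizontals and fibration verticals, forcing the induced map on fibers (which is $\map_{\catM_{/Z}}(f, p)$) to be a weak equivalence. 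Converse: given fibrant $S$-local $W \in \catM$, the projection $W \times Z \to Z$ is a $\catM$-fibration with $S$-local domain, hence $S_Z$-local by the lemma; its mapping space in $\catM_{/Z}$ is canonically $\map(-, W)$, so $S_Z$-local equivalence of $f$ implies that $\map(f, W)$ is a weak equivalence, and $f$ is an $S$-local equivalence.

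The main obstacle is the converse direction of the lemma, which requires transferring a fiberwise statement to a total-space statement via the long exact sequence of a Kan fibration, together with the observation that ranging over all basepoints of $\map(B, Z)$ accounts for every component of $\map(B, W)$. Once this is established, two model structures on $\catM_{/Z}$ sharing cofibrations and weak equivalences must coincide, completing the proof.
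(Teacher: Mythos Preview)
Your proof is correct, and it takes a route that is related to but genuinely different from the paper's.

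Both arguments begin the same way: the two model structures share cofibrations, and one sets up the SM7 square
\[
\begin{tikzcd}
\map(B, W) \rar \dar & \map(A, W) \dar \\
\map(B, Z) \rar & \map(A, Z)
\end{tikzcd}
\]
to relate $S_Z$-locality of a fibration $p\colon W \to Z$ to $S$-locality of $W$. The paper proves only the direction you call the ``main obstacle'' (if $p$ is $S_Z$-local then $W$ is $S$-local), using that the maps on fibers are trivial fibrations and the bottom row is a trivial fibration. From this it concludes that fibrant objects of $\mathscr{L}_{S_Z}(\catM_{/Z})$ are fibrant in $(\mathscr{L}_S\catM)_{/Z}$. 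For the reverse containment of fibrant objects it does \emph{not} prove the other direction of your lemma; instead it observes that the forgetful functor $(\mathscr{L}_S\catM)_{/Z} \to \mathscr{L}_S\catM$ sends $S_Z$ to weak equivalences, and then invokes a general criterion \cite[Proposition 11.24]{HeutsMoerdijk:SDHT} to conclude that every fibrant object of $(\mathscr{L}_S\catM)_{/Z}$ is $S_Z$-local. The proof finishes by appealing to the principle that a model structure is determined by its cofibrations and fibrant objects.

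Your argument instead proves the full biconditional lemma and then compares weak equivalences directly. The device $W \times Z \to Z$, exploiting the simplicial adjunction between the forgetful functor $\catM_{/Z} \to \catM$ and its right adjoint, is what lets you run the converse direction without any external input. This makes your proof entirely self-contained, at the cost of a slightly longer argument; the paper's version is shorter but leans on an outside reference. One small remark: your long-exact-sequence justification for the converse of the lemma is a bit terse on the $\pi_0$ bookkeeping, but it is a standard fact that a map of Kan fibrations over a common base which is a fiberwise weak equivalence is itself a weak equivalence, so there is no genuine gap.
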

\begin{proof}
Write $\catN \coloneq \catM_{/Z}$ and $F : \catN \rightleftarrows \catM : U$ for the associated adjunction.
The slice category $\catN$ is again a left proper, combinatorial, and simplicial model category. 
For brevity, let $\catN_1 = \mathscr{L}_{S_Z} (\catM_{/Z})$ and $\catN_2 = (\mathscr{L}_S \catM)_{/Z}$; both of these exist by \cite[A.3.7.3]{Lurie:HTT}.
They have the same cofibrations.

Suppose $r\colon W \to Z$ is $S_Z$-local; we aim to show that $r$ is fibrant in $\catN_2$.
As the map $r$ is, in particular, a fibrant object in the slice model structure, it is a fibration in $\catM$.
Let $i\colon A \to B$ be an element of $S$.
We have a commutative square of Kan fibrations between Kan complexes:
\begin{equation}\label{eq mapM square} \begin{tikzcd}
\map_\catM(B,W) \rar{i^*} \dar{r_*}  & \map_\catM(A,W) \dar{r_*} \\
\map_\catM(B,Z) \rar{i^*} & \map_\catM(A,Z).
\end{tikzcd} \end{equation}
For each vertex $u\colon B\to Z$ on the lower left, the corresponding map on fibers is $\map_\catN(u,r) \to \map_\catN(ui,r)$, which is a trivial fibration since $r$ is $S_Z$-local.
Since $Z$ was assumed $S$-local, the bottom $i^*$ of \eqref{eq mapM square} is a trivial fibration, so we conclude the same is true of $i^* \colon \map_\catM(B,W) \to \map_\catM(A,W)$.
Thus $W$ is $S$-local, and $r \colon W\to Z$ is a fibration in $\mathscr{L}_S \catM$ by \cite[3.3.16]{Hirschhorn:MCL}, hence  fibrant in $\catN_2$.

Since $\catN$ is left proper, the fibrant objects of $\catN_1$ are precisely the $S_Z$-local objects (\cite[3.4.1]{Hirschhorn:MCL} or \cref{fibrant object determination}), which we just saw are among the fibrant objects of $\catN_2$. 
The composite $\catN \to \catM \to \mathscr{L}_S \catM$ sends everything in $S_Z$ to a weak equivalence, and $F \colon \catN_2 \to \mathscr{L}_S \catM$ creates weak equivalences, so $\id \colon \catN \to \catN_2$ sends everything in $S_Z$ to a weak equivalence.
By \cite[Proposition 11.24]{HeutsMoerdijk:SDHT}, each fibrant object of $\catN_2$ is fibrant in $\mathscr{L}_{S_Z}(\catN) = \catN_1$.
As the two model structures have the same cofibrations and same fibrant objects, they are equal.
\end{proof}

\begin{corollary}
Each functor in the the following square
\[ 
\begin{tikzcd}
\prezko \rar{p_!} \dar[swap]{f_!} & \rezko \dar{f_!} \\
\prezku \rar[swap]{p_!} & \rezku
\end{tikzcd}
\]
creates weak equivalences, fibrations, and cofibrations.
Consequently, the model structures on the left agree with those in \cref{lem planar models}.
\end{corollary}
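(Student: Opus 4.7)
The plan is to identify each of the four categories appearing in the square with a slice over $\rezku$ by way of \cref{thm localization vs slice}, at which point every map in the square becomes a slice-forgetful functor, and the creation properties follow from the definition of the slice model structure.

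First I would verify that $\rooting$ and $\planing$ (viewed as discrete objects of $\cdspace$) are $S$-local in the Reedy model structure on $\cdspace$, where $S$ is the set of Segal core inclusions together with the double $\freeiso$-inclusion used to define $\rezku = \mathscr{L}_S\cdspace$. This will then imply the same for $\planing \times \rooting$, since products of fibrant objects in a model category are fibrant. Both $\rooting = N(L\comm)$ and $\planing = N(\ass)$ are nerves of cyclic operads, so they satisfy the Segal condition strictly; Reedy fibrancy should follow from the fact that matching maps of nerves of cyclic operads are bijective (a consistent labeling of the proper faces of a tree corresponds exactly to a consistent labeling of its vertices). Completeness should hold because the underlying anti-involutive categories of $L\comm$ and $\ass$ contain only identity morphisms on their sets of colors, so their underlying anti-involutive simplicial sets have no nontrivial isomorphisms.

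With this in hand, the discrete-fibration equivalences $\dspace \simeq \cdspace_{/\rooting}$, $\cpdspace \simeq \cdspace_{/\planing}$, and $\pdspace \simeq \cdspace_{/\planing \times \rooting}$ from \cref{sec tree cats} identify the distinguished sets $S_\rooting$, $S_\planing$, and $S_{\planing \times \rooting}$ with the respective preimages of $S$ under the slice projections (for example, a map $\repu{T} \to \rooting$ is a choice of root, so the preimages of the Segal core inclusion $\segcore{T} \to \repu{T}$ recover precisely the rooted Segal core inclusions $\segcore{T,t} \to \repo{T,t}$). Applying \cref{thm localization vs slice} then yields equalities of model structures
\[
\rezko = \rezku_{/\rooting}, \qquad \prezku = \rezku_{/\planing}, \qquad \prezko = \rezku_{/\planing \times \rooting}.
\]

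Under these identifications, $p_! \colon \prezku \to \rezku$ is the slice-forgetful functor $\rezku_{/\planing} \to \rezku$; $p_! \colon \prezko \to \rezko$ is the functor $\rezku_{/\planing \times \rooting} \to \rezku_{/\rooting}$ induced by the projection $\planing \times \rooting \to \rooting$; and the two $f_!$ functors are induced in the analogous way by projection onto the complementary factor. Each is a slice-forgetful functor for a slice model structure, and so creates cofibrations, fibrations, and weak equivalences by definition. The agreement of the resulting model structures on $\prezku$ and $\prezko$ with those of \cref{lem planar models} is then immediate, since both are determined by the relevant $p_!$ creating all three distinguished classes. The main obstacle will be the $S$-locality verification of the first step: while Segal and completeness are reasonably transparent, Reedy fibrancy of the nerve requires a direct analysis of matching objects in the Berger--Moerdijk generalized Reedy structure on $\cdcat$.
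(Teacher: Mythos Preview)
Your approach is essentially the same as the paper's: verify that $\rooting$ and $\planing$ are $S$-local, apply \cref{thm localization vs slice} to identify each Rezk model structure with a slice over $\rezku$, and conclude via the definition of the slice model structure.

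Two small points. First, to invoke \cref{thm localization vs slice} you need the model structure on $\catM_{/Z}$ on the left to be the \emph{slice} model structure, so you must first know that the Reedy model structures on $\dspace$, $\cpdspace$, $\pdspace$ coincide with the slice model structures over Reedy $\cdspace$. This is not just the discrete-fibration category equivalence; it is precisely what the paper extracts from \cref{thm reedy dopfib}. Second, the preimage of $S$ in $\cdspace_{/\rooting}$ is not literally $S_\rooting$: there are \emph{two} maps $f_!(\freeiso)\to\rooting$, so the preimage of the double $\freeiso$-inclusion has two elements rather than one. The paper notes that these two objects of the slice are isomorphic, so the resulting localizations agree; you should flag this. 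Finally, your worry about Reedy fibrancy is unfounded: $\rooting$ and $\planing$ are discrete cyclic dendroidal spaces, hence every matching map is a map of discrete simplicial sets, automatically a Kan fibration.
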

\begin{proof}
The corresponding result for the square of Reedy model structures holds by \cref{thm reedy dopfib}.
The result then follows from \cref{thm localization vs slice}, making use of the slice/presheaf equivalence $\spshf{\cdcat_{/Z}} \simeq \cdspace_{/Z}$, noting that $\planing$ and $\rooting$ are both $S$-local.
The only thing to watch out for is that there are \emph{two} maps $f_!(\freeiso) \to \rooting$, not one, but they are isomorphic in $\cdspace_{/\rooting}$ so this does not affect the localization.
\end{proof}

\begin{theorem}\label{planar quasi rezk}
The inclusions  $\pdset \to \prezko$ and $\cpdset \to \prezku$ are left Quillen equivalences.
\end{theorem}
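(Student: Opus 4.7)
The plan is to realize both inclusions as slice analogues of the Quillen equivalences $\dset \hookrightarrow \rezko$ from \cite[Corollary 6.7]{CisinskiMoerdijk:DSSIO} and $\cdset \hookrightarrow \rezku$ from \cref{cdset include cdspace}. First, I will use the equivalences $\pdset \simeq \dset_{/\planing}$, $\pdspace \simeq \dspace_{/\planing}$, $\cpdset \simeq \cdset_{/\planing}$, and $\cpdspace \simeq \cdspace_{/\planing}$, together with the preceding corollary and \cref{thm localization vs slice}, to identify all four model categories as slice model structures. Under this identification, each inclusion becomes the slicing of $\tilde A \colon \cdset \hookrightarrow \cdspace$ (and its dendroidal analogue) over $\planing$, viewed as a discrete simplicial presheaf.

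Next, I will verify that $\planing$ is fibrant in the relevant Rezk model structure. Segality is immediate from the nerve theorem (\cref{ss segal condition}), since $\planing$ is the strict nerve of the (cyclic) associative operad. For completeness, I note that every linear tree has only bivalent vertices, so there is a unique cyclic (resp.\ linear) ordering at each vertex; hence $\planing$ evaluated on any linear tree is a singleton, and the restriction of $\planing$ to $\cscat \subseteq \cdcat$ (resp.\ $\simpcat \subseteq \dendcat$) is the terminal (anti-involutive) simplicial set, which is trivially a complete (anti-involutive) quasi-category.

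Finally, I will apply a slicing principle for Quillen equivalences. Because $\tilde A$ is fully faithful, the unit $\planing \to \tilde B \tilde A \planing$ is the identity, and the slice right adjoint coincides with $\tilde B$ applied fiberwise; and since $\tilde A \planing$ is fibrant, any fibration with codomain $\tilde A \planing$ has fibrant total space, so the derived-equivalence criterion in the slice reduces to that in the ambient Quillen equivalence. The main obstacle I expect is making this slicing step fully rigorous. If a clean reference is unavailable, one can argue directly: the slice adjunction is Quillen because cofibrations and weak equivalences in the slice are created by the forgetful functor, and for cofibrant $X \to \planing$ with fibrant $Y \to \tilde A \planing$, a map $\tilde A X \to Y$ is a weak equivalence in the slice if and only if $X \to \tilde B Y$ is, which follows from the known ambient Quillen equivalence combined with the fact that $Y$ is itself fibrant in $\cdspace$.
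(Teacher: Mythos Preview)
Your proposal is correct and follows essentially the same route as the paper: identify the planar model structures as slices over $\planing$, verify that $\planing$ is fibrant in the (cyclic) dendroidal Rezk model structure, and then deduce the slice Quillen equivalence from the ambient one. The paper streamlines your final step by invoking Proposition~3.1(b)(iii) of \cite{Li:NMSC} as a black-box slicing principle for Quillen equivalences, whereas you sketch the direct argument; both are valid, and your completeness argument (restriction to linear trees is terminal) is equivalent to the paper's observation that the underlying category of $\ass$ is trivial.
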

\begin{proof}
Notice that the unit of the $\dset \rightleftarrows \dspace$ adjunction is the identity. 
We write $\planing \in \dset \subseteq \dspace$. 
Since $\planing$ is a discrete dendroidal space, it is Reedy fibrant.
It is local with respect to Segal core inclusions since it is the nerve of the operad $\ass$.
Since the underlying category of $\ass$ is the trivial category, $\planing$ is also local with respect to $\{0\} \to \freeiso$.
Hence $\planing$ is fibrant in $\rezko$.
Since $\dset \to \rezko$ is a left Quillen equivalence, Proposition 3.1(b)(iii) of \cite{Li:NMSC} implies that the induced left adjoint $\dset_{/\planing} \to (\rezko)_{/\planing}$ is a left Quillen equivalence.
This is equivalent to the functor in question being a left Quillen equivalence.
The second statement is analogous, using \cref{cdset include cdspace}.
\end{proof}

Now we turn to planar analogues of the homotopy coherent nerve adjunction between (cyclic) dendroidal sets and simplicial (cyclic) operads.
Notice that the right adjoint $\hcn \colon \soperad \to \dset$ induces a right adjoint functor 
\begin{equation}\label{eq sliced hcn adjunction}
\soperad_{/\ass} \to \dset_{/\hcn(\ass)}
\end{equation}
whose left adjoint applies $\rigc$ and then composes with the counit of the adjunction.
Since $\ass$ is a discrete operad, $\hcn(\ass)$ is $N(\ass)$, i.e.\ $\hcn(\ass) = \planing$.
Since $\operad_{/\ass} \simeq \plop$, the left hand side above is equivalent to the category of simplicial planar operads.
We thus regard \eqref{eq sliced hcn adjunction} as a right adjoint
\begin{equation}\label{splop hcn pdset}
\hcn \colon \splop \to \pdset.
\end{equation}
Likewise, using the cyclic associative operad $\ass$, the cyclic $\hcn$ induces a right adjoint 
$\scyc_{/\ass} \to \cdset_{/\planing}$ which we regard as a functor $\hcn \colon \splcyc \to \cpdset$.

Notice that the definitions of Dwyer--Kan equivalence and isofibration from \cref{def dk and fibrations} can imitated in the evident way for $\splcyc$ and $\splop$.

\begin{lemma}
The slice model structure on $\soperad_{/\ass} \simeq \splop$ (resp.\ $\scyc_{/\ass} \simeq \splcyc$) has weak equivalences the Dwyer--Kan equivalences and fibrations the isofibrations.
\end{lemma}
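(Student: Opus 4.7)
The plan is to exploit the fact that in the slice model structure on $\soperad_{/\ass}$, both weak equivalences and fibrations are by definition created from $\soperad$ along the forgetful functor. Under the equivalence $\splop \simeq \soperad_{/\ass}$, this forgetful functor is identified with the symmetrization functor $\sym \colon \splop \to \soperad$. So the task reduces to showing that $\sym$ creates planar Dwyer--Kan equivalences and planar isofibrations, where the latter two classes are defined exactly as in \cref{def dk and fibrations} but with $\splop$ in place of $\soperad$.

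First I would spell out the explicit formula for $\sym$ on operation spaces: for a simplicial planar operad $P$ with colors $\colors(P)$, one has $\colors(\sym P) = \colors(P)$, and
\[
(\sym P)(c_1,\dots,c_n;c) \;\cong\; \coprod_{\sigma \in \Sigma_n} P(c_{\sigma(1)},\dots,c_{\sigma(n)};c),
\]
with $\Sigma_n$ permuting summands freely, and with composition induced from that of $P$ plus the symmetric group actions. Given a map $g \colon P \to Q$ in $\splop$, the map $\sym g$ then acts on the operation space $(\sym P)(\underline{c};c)$ as the coproduct of the maps $P(\underline{c}\sigma;c) \to Q(g\underline{c}\sigma;gc)$. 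Consequently, $\sym g$ is locally a weak homotopy equivalence (respectively Kan fibration) of simplicial sets if and only if each component $g \colon P(\underline{c}\sigma;c) \to Q(g\underline{c}\sigma;gc)$ is so, which is exactly the condition that $g$ itself be locally a weak equivalence (respectively Kan fibration) in the planar sense.

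Next I would observe that since $\Sigma_1$ is trivial, the unary operation spaces are unaffected: $(\sym P)(c_1;c_0) = P(c_1;c_0)$. Therefore the underlying simplicial category of $\sym P$ (taking only unary operations and then applying $\pi_0$) coincides with the underlying simplicial category of $P$ in the planar sense, and $\sym g$ induces an equivalence (respectively isofibration) of homotopy categories if and only if the same holds for $g$ interpreted in the planar setting. Combining these two observations, $\sym g$ is a Dwyer--Kan equivalence (respectively isofibration) of simplicial operads if and only if $g$ is a planar Dwyer--Kan equivalence (respectively planar isofibration).

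Finally, since the slice model structure on $\splop \simeq \soperad_{/\ass}$ declares precisely those maps $g$ with $\sym g$ a DK equivalence (respectively isofibration) to be its weak equivalences (respectively fibrations), the identification is complete. The cyclic case $\splcyc \simeq \scyc_{/\ass}$ proceeds identically, using the symmetrization functor $\sym \colon \plcyc \to \cyc$ and the analogous formula on operation spaces, together with the triviality of the cyclic group of order one on unary operations. I do not anticipate any serious obstacle: the argument is a direct unwinding of the definitions once the explicit form of $\sym$ is in hand, with the only mild subtlety being to verify the coproduct decomposition of operation spaces in the colored setting.
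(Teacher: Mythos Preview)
Your proposal is correct and follows essentially the same approach as the paper's own proof: both identify the forgetful functor $\soperad_{/\ass} \to \soperad$ with symmetrization, use the coproduct decomposition of operation spaces in $\sym P$ to transfer the local weak-equivalence/Kan-fibration condition, and observe that the underlying categories are unchanged. The only cosmetic difference is that the paper writes the coproduct indexed by $\sigma^{-1}$ rather than $\sigma$, and your remark about ``the cyclic group of order one'' in the cyclic case is slightly imprecise (the relevant profiles have length two, where $C_2 = \Sigma_2$), but the substance is identical.
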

\begin{proof}
The equivalence $\splop \to \soperad_{/\ass}$ takes $P$ to $O = \sym(P) \to \sym(\ast) =  \ass$. 
We have 
\[
O(a_1, \dots, a_n; a) = \sum_{\sigma \in \Sigma_n} P(a_{\sigma^{-1}(1)}, \dots, a_{\sigma^{-1}(n)}; a).
\]
Thus a map $P \to P'$ is locally a weak homotopy equivalence (resp.\ Kan fibration) if and only if $O \to O'$ is locally a weak homotopy equivalence (resp.\ Kan fibration).
Moreover, the underlying category of $P$ is equal to the underlying category of $O$.
We conclude that $P \to P'$ is a Dwyer--Kan equivalence (resp.\ isofibration) if and only if $O \to O'$ is such.
Since the weak equivalences and fibrations in the slice model structure are created in $\soperad$ the result follows.
The cyclic case is analogous.
\end{proof}

The model structure on $\splop$ was previously considered in Corollary 8.9 of \cite{CisinskiMoerdijk:DSSO}, where it was shown to be proper.
As $F \colon \splcyc \to \splop$ has both adjoints and creates fibrations and weak equivalences, properness of $\splop$ implies properness of $\splcyc$ by \cite[Proposition 2.4]{DrummondColeHackney:CERIMS}. 

\begin{theorem}
The homotopy coherent nerve functor $\hcn \colon \splop \to \pdset$ (resp.\ $\hcn \colon \splcyc \to \cpdset$) is a right Quillen equivalence. 
\end{theorem}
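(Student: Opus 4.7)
The plan is to obtain both claimed Quillen equivalences by slicing previously established ones, in direct parallel to the strategy used to prove \cref{planar quasi rezk}. Under the equivalences $\splop \simeq \soperad_{/\ass}$ and $\pdset \simeq \dset_{/\planing}$, together with the identification $\planing = \hcn(\ass)$ already noted in the paragraph preceding \eqref{splop hcn pdset}, the functor $\hcn \colon \splop \to \pdset$ corresponds to the slice functor $\soperad_{/\ass} \to \dset_{/\planing}$ sending $(O \to \ass)$ to $(\hcn O \to \hcn \ass = \planing)$. This is the right adjoint of the slice adjunction induced by $\rigc \dashv \hcn$. The cyclic version is completely analogous, with $\ass$ now denoting the cyclic associative operad and the underlying Quillen equivalence being \cref{C-nerve-Quillen-equiv} instead of the Cisinski--Moerdijk Quillen equivalence $\rigc \dashv \hcn$ between $\dset$ and $\soperad$.

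The key technical input is that $\ass$ is fibrant in both $\soperad$ and $\scyc$: in each case it is a discrete (cyclic) operad, so its operation spaces are Kan complexes (each is a point or empty), and its underlying category is terminal, so the isofibration condition on underlying categories in \cref{def dk and fibrations} is automatic. Granted this, we invoke the standard slicing principle for Quillen equivalences: given a Quillen equivalence $L : \catM \rightleftarrows \catN : R$ and a fibrant object $Y \in \catN$, the induced adjunction $\catM_{/RY} \rightleftarrows \catN_{/Y}$ on slice categories is again a Quillen equivalence. Applying this with $L = \rigc$, $R = \hcn$, and $Y = \ass$ in both the operadic and cyclic settings identifies the slice Quillen equivalence with the one stated in the theorem, completing the argument.

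The main point requiring care is articulating the right version of the slicing principle: the version quoted from \cite{Li:NMSC} in the proof of \cref{planar quasi rezk} was phrased for slicing in the source of the left adjoint, whereas here we are slicing in the target. Either Li's statement must be dualized (noting that a Quillen equivalence is symmetric in the sense that it yields equivalences of homotopy categories in both directions), or one gives a brief direct argument: the derived unit and counit of the slice adjunction at $Y$ are computed from those of the original adjunction, and fibrancy of $Y$ ensures that the cofibrant/fibrant replacements needed at the slice level may be constructed from those of $\catM$ and $\catN$ without disturbing the weak equivalence conditions. Beyond this, the proof is a routine bookkeeping exercise, and no new input from the theory of trees or operads is required.
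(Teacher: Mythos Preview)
Your approach is essentially the same as the paper's: verify that $\ass$ is fibrant in $\soperad$ (resp.\ $\scyc$), then slice the Quillen equivalence $\rigc \dashv \hcn$ (resp.\ its cyclic analogue from \cref{C-nerve-Quillen-equiv}) over $\ass$ using a general slicing principle. The only difference is in your final paragraph: where you worry that the version of Li's result cited in \cref{planar quasi rezk} (Proposition 3.1(b)(iii)) is for slicing over a fibrant object in the \emph{codomain} of the left adjoint and so must be dualized or reproved, the paper simply cites a different part of the same proposition, namely 3.1(b)(ii), which directly handles slicing over a fibrant object in the codomain of the left adjoint $\rigc$ and concludes that the induced right adjoint on slices is a right Quillen equivalence. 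No dualization or ad hoc argument is needed.
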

\begin{proof}
Considered as a discrete simplicial operad, $\ass \in \soperad$ is fibrant.
Indeed, $\ass \to \comm$ is locally a Kan fibration since it is a map of discrete simplicial sets, and $\ass(1) \to \comm(1)$ is the identity, hence an isofibration.
Since $\hcn \colon \soperad \to \dset$ is a right Quillen equivalence, Proposition 3.1(b)(ii) of \cite{Li:NMSC} implies that \[ \splop \simeq \soperad_{/\ass} \to \dset_{/\hcn(\ass)} \simeq \pdset\] is a right Quillen equivalence.
The cyclic case is similar, using \cref{C-nerve-Quillen-equiv}.
\end{proof}

\appendix

\section{Lifted model structures}\label{appendix lifted}
In this appendix we record several general facts about induced model structures, including their interaction with standard techniques such as Bousfield localization.
This material is used throughout the paper, but we collect it here as it is of independent interest.

\subsection{Adjoint strings and left-induced model structures}
\label{subsec left induced}

The next lemma appears as part of Theorem 8.2 of \cite{Shulman:UU}; for convenience we include a proof.

\begin{lemma}\label{triple-transfer}
Let $\catM$ and $\catN$ be locally presentable categories, with $\catM$ admitting a cofibrantly generated model structure, and let $F \colon \catN \to \catM$ be a functor having both a left adjoint $L$ and a right adjoint $R$.
Suppose that the composite adjunction $FL \dashv FR$ is Quillen. Then $\catN$ admits model structures right-induced and left-induced by $F$. Moreover, the adjunctions $L \dashv F$ and $F \dashv R$ are Quillen with respect to both of these induced model structures.
\end{lemma}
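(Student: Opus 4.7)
The existence of both the right-induced model structure $\catN_r$ and the left-induced model structure $\catN_l$ will follow immediately from Lemma~\ref{triple-induced-model-structures}: a locally presentable cofibrantly generated model category is combinatorial, and a locally presentable category is in particular bicomplete, so the hypotheses of the cited lemma are met for both halves. This reduces the statement to verifying four Quillen conditions, namely that $L \dashv F$ and $F \dashv R$ are each Quillen with respect to $\catN_r$ and with respect to $\catN_l$. My plan is to observe that two of these four conditions are tautological consequences of the definitions of induced model structures, while the remaining two unpack directly to the composite Quillen hypothesis on $FL \dashv FR$.

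For the tautological pair: by definition of right-induction, $F \colon \catN_r \to \catM$ creates fibrations and weak equivalences, so in particular it preserves fibrations and acyclic fibrations, making $L \dashv F$ a Quillen adjunction against $\catN_r$. Dually, $F \colon \catN_l \to \catM$ creates cofibrations and weak equivalences, so it preserves cofibrations and acyclic cofibrations, and $F \dashv R$ is Quillen against $\catN_l$. Neither of these uses the composite hypothesis.

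The remaining two conditions are precisely where the hypothesis bites. To see that $F \dashv R$ is Quillen with respect to $\catN_r$, I need to check that for each (acyclic) fibration $f$ in $\catM$, the map $R(f)$ is an (acyclic) fibration in $\catN_r$; by definition of $\catN_r$ this amounts to requiring that $FR(f)$ be an (acyclic) fibration in $\catM$, which is exactly the statement that $FR$ is right Quillen, i.e., the hypothesis. Symmetrically, $L \dashv F$ is Quillen with respect to $\catN_l$ iff $FL$ sends (acyclic) cofibrations in $\catM$ to (acyclic) cofibrations in $\catM$, which is the same hypothesis in its left-Quillen guise. The only obstacle here is bookkeeping the dualities; the genuine technical work is already packaged in Lemma~\ref{triple-induced-model-structures}, and the conceptual payoff of the present lemma is that the single Quillen hypothesis on the outer adjunction $FL \dashv FR$ simultaneously controls the Quillen behavior of both inner adjunctions against both induced structures.
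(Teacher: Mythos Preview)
Your argument is circular. The text immediately preceding \cref{triple-induced-model-structures} explains that the left-induced half of that lemma is justified \emph{by} \cref{triple-transfer} (``This version of the left-induced statement is included as \cref{triple-transfer} in \cref{appendix lifted}''). So when you invoke \cref{triple-induced-model-structures} to obtain the existence of $\catN_l$, you are assuming precisely what you are asked to prove. The right-induced half is fine to cite, since that part is attributed externally to \cite[Theorem 2.3]{DrummondColeHackney:CERIMS}, but the existence of the left-induced structure is the substantive content of the present lemma and requires an actual argument.

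The paper supplies this by appealing to the Acyclicity Theorem of \cite{HKRS:NSCIMS,GKR:LAMS}: one verifies the ``good cylinder'' conditions of \cite[Theorem~2.2.1]{HKRS:NSCIMS} using the functorial cofibrant replacement furnished by the already-established right-induced model structure on $\catN$ (which is combinatorial, hence has functorial factorizations). The point is that cofibrant objects of $\catN_r$ are sent by $F$ to cofibrant objects of $\catM$ since $F \dashv R$ is Quillen for $\catN_r$, and this is what makes the HKRS criterion go through. Your verification of the four Quillen conditions is correct and cleanly argued---in fact it spells out the $\catN_l$ case more explicitly than the paper does---but it only becomes relevant once $\catN_l$ is known to exist.
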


\begin{proof}
The existence of the right-induced model structure, and the fact that $L \dashv F$ and $F \dashv R$ are Quillen with respect to this model structure, are immediate from \cite[Theorem 2.3]{DrummondColeHackney:CERIMS}. Note that, since $\catN$ is locally presentable by assumption and the right-induced model structure is cofibrantly generated, we may assume that its associated factorizations are functorial.

The existence of the left-induced model structure relies on the Acyclicity Theorem of \cite{HKRS:NSCIMS,GKR:LAMS}.
We verify the assumptions of Theorem~2.2.1 from \cite{HKRS:NSCIMS}, namely:

\begin{enumerate}
    \item for every $X \in \catN$ there exists an object $QX \in \catN$ and a morphism $\varepsilon_X \colon QX \to X$ such that $FQX$ is cofibrant and $F \varepsilon_X$ is a weak equivalence in $\catM$;
    \item for every morphism $f \colon X \to Y$ in $\catN$ there exists a morphism $Qf \colon QX \to QY$ and a commuting square
    \[
    \begin{tikzcd}
        QX \ar[r,"Qf"] \ar[d,swap,"\varepsilon_X"] & QY \ar[d,"\varepsilon_Y"] \\
        X \ar[r,"f"] & Y 
    \end{tikzcd}
    \]
    \item for every $X \in \catN$ there exists a factorization of the co-diagonal map
    \[
    QX + QX \xrightarrow{j} \mathrm{Cyl}(QX) \xrightarrow{p} QX
    \]
    such that $Fj$ is a cofibration and $Fp$ is a weak equivalence in $\catM$.
\end{enumerate}

The functorial factorization in the right-induced model structure induces a cofibrant replacement functor on $\catN$, i.e.\ a functor $Q \colon \catN \to \catN$ equipped with a natural transformation $\varepsilon \colon Q \to \id_{\catN}$ such that for all $X \in \catN$, $QX$ is cofibrant, and $\varepsilon_X \colon QX \to X$ is a weak equivalence, in the right-induced model structure. 
Thus $FQX$ is cofibrant in $\catM$ because $F \dashv R$ is a Quillen adjunction, and $F\varepsilon_X$ is a weak equivalence in $\catM$ by the definition of the weak equivalences in the right-induced model structure. 
Thus we have verified condition (1); condition (2) follows by the functoriality of $Q$ and the naturality of $\varepsilon$. For condition (3), we apply the factorization in the right-induced model structure to factor the co-diagonal map on $QX$ as a composite of a cofibration with a weak equivalence, and proceed by a similar argument.
\end{proof}

\subsection{Lifting Quillen equivalences}\label{sec lifted QE}
The Quillen equivalence lifting theorem from \cite[Theorem 5.6]{DrummondColeHackney:CERIMS} was stated in terms of maps of adjunctions, but holds more generally if written in terms of the Beck--Chevalley condition.
One version of this is the following (see also \cref{rmk more general dch56}).

\begin{theorem}\label{dch56}
Suppose $P \dashv U$ and $\tilde P \dashv \tilde U$ are adjunctions and $\mu, \lambda$ below are natural isomorphisms and are mates of one another (as in \cite[\S2]{KellyStreet:RE2C}).
\[
  \begin{tikzcd}
    \catN \rar["\tilde P"] \dar["F"']  & 
    \catN' \dar["F'"] 
    &
    \catN  \dar["F"'] \ar[dr, phantom, "{\lambda \scriptstyle\Searrow} "] & 
    \catN' \dar["F'"] \lar["\tilde U"']
    \\
    \catM \rar["P"'] \ar[ur, phantom, "{\scriptstyle\Nearrow} \mu"] & 
    \catM'
    &
    \catM  & 
    \catM' \lar["U"]
\end{tikzcd}
\]
Further suppose that $\catM$, $\catM'$ are model categories, the functors $F$ and $F'$ each have both adjoints, and the hypotheses of the first part of \cref{triple-induced-model-structures} hold for each side. 
Give $\catN$, $\catN'$ the right-induced model structures.
If $P \dashv U$ is a Quillen adjunction (resp.\ Quillen equivalence), so is $\tilde P \dashv \tilde U$. 
\end{theorem}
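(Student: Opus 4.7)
The plan is to adapt the proof of \cite[Theorem 5.6]{DrummondColeHackney:CERIMS}, replacing the strict commutativity of the square used there with the Beck--Chevalley relation encoded by the mate pair $(\mu,\lambda)$. The extra ingredient needed beyond \cite[Theorem 5.6]{DrummondColeHackney:CERIMS} is simply that mate-ship between natural isomorphisms lets one translate adjoint transposes across the square with no homotopical loss.

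The first step is to verify that $\tilde P \dashv \tilde U$ is a Quillen adjunction. By \cref{triple-transfer}, the right-induced model structure on $\catN$ makes both $L \dashv F$ and $F \dashv R$ Quillen, so $F$ preserves and reflects weak equivalences and fibrations (by right-induced) and also preserves cofibrations, acyclic cofibrations, and the initial object (from $F \dashv R$ Quillen, since $F$ is then a left adjoint); the same holds for $F'$. For any (acyclic) fibration $g$ in $\catN'$, the image $F' g$ is an (acyclic) fibration in $\catM'$ since $F'$ is right Quillen, hence $U F' g$ is an (acyclic) fibration in $\catM$ since $U$ is, hence $F \tilde U g \cong U F' g$ is via $\lambda$, and finally $\tilde U g$ is an (acyclic) fibration in $\catN$ because $F$ creates fibrations and weak equivalences.

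For the Quillen equivalence part, assume $P \dashv U$ is one and apply the transpose criterion: given a cofibrant $X \in \catN$, a fibrant $Y \in \catN'$, and $\alpha \colon \tilde P X \to Y$ with transpose $\tilde \alpha \colon X \to \tilde U Y$, one must show $\alpha$ is a weak equivalence iff $\tilde \alpha$ is. Because $F$ is left Quillen via $F \dashv R$ and preserves the initial object, $FX$ is cofibrant in $\catM$; dually $F' Y$ is fibrant in $\catM'$. A standard diagram chase with the triangle identities shows that the $P \dashv U$-transpose of the composite
\[
P F X \xrightarrow{\mu_X} F' \tilde P X \xrightarrow{F' \alpha} F' Y
\]
equals
\[
F X \xrightarrow{F \tilde \alpha} F \tilde U Y \xrightarrow{\lambda_Y} U F' Y.
\]
Since $\mu_X$ and $\lambda_Y$ are isomorphisms and $F, F'$ create weak equivalences, one concludes: $\alpha$ is a weak equivalence iff $F'\alpha$ is, iff $F'\alpha \circ \mu_X$ is; by the Quillen equivalence $P \dashv U$ applied to this transpose pair of maps between a cofibrant object and a fibrant object, this is equivalent to $\lambda_Y \circ F \tilde \alpha$ being a weak equivalence, which in turn is equivalent to $\tilde \alpha$ being one.

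The main obstacle is purely notational: verifying the mate identity that identifies the two transposes requires carefully tracking several units and counits. But no new homotopical content is needed once the dual Quillen behavior of $F$ and $F'$ is in hand, so everything else reduces to a translation of the Quillen equivalence hypothesis for $P \dashv U$ via the isomorphisms $\mu$ and $\lambda$.
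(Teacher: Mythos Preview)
Your proof is correct and follows essentially the same approach as the paper's: both establish that $\tilde U$ is right Quillen via $F\tilde U \cong UF'$ and then run the adjoint-transpose criterion for Quillen equivalence by pushing a map $X \to \tilde U Y$ (or its transpose) through $F$, $F'$, $\lambda$, and $\mu$, using the mate relation to identify the resulting $(P\dashv U)$-transpose pair. The only cosmetic difference is that the paper writes the mate identity in the counit form $(F'\tilde\varepsilon)\circ(\mu\tilde U)=(\varepsilon F')\circ(P\lambda)$ and displays the relevant commuting square explicitly, whereas you invoke the equivalent unit-side identity as a ``standard diagram chase.''
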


\begin{proof}
If $P \dashv U$ is a Quillen adjunction, then the functor $\tilde U$ is right Quillen because $F\tilde U \cong UF'$ preserves (acyclic) fibrations and $F$ reflects them.

Assume $P\dashv U$ is a Quillen equivalence.
Suppose that $a \in \catN$ is cofibrant, $z\in \catN'$ is fibrant, and $h: a \to \tilde U z$ is adjunct to $h' = \tilde \varepsilon_z \circ \tilde P(h) \colon \tilde P a \to z$.
Our aim is to show that $h$ is a weak equivalence in $\catN$ if and only if $h'$ is a weak equivalence in $\catN'$.
Since $F$ creates weak equivalences, $h$ is a weak equivalence if and only if $Fh \colon Fa \to F \tilde U z$ is. 

Since $\lambda_z$ is an isomorphism, $Fh$ is a weak equivalence if and only if $g = \lambda_z \circ Fh \colon Fa \to U F' z$ is a weak equivalence.
Since $Fa$ is cofibrant, $F'z$ is fibrant, and $U$ is a right Quillen equivalence, the map $g$ is a weak equivalence of $\catM$ if and only if its adjunct $g' = \varepsilon_{F'z} \circ P(g) \colon PFa \to F'z$ of $g$ is a weak equivalence of $\catM'$.

Since $\mu$ and $\lambda$ are mates, we have $(F' \tilde \varepsilon)\circ(\mu \tilde U) = (\varepsilon F')\circ(P \lambda)$, so the right square in the following diagram commutes.
\[ \begin{tikzcd}
PFa \rar{PFh} \dar{\mu_a} & PF \tilde U z \rar{P\lambda_z} \dar["\mu_{\tilde U z}"] & PUF' \dar{\varepsilon_{F'z}} \\
F' \tilde P a \rar{F' \tilde P h} & F' \tilde P \tilde U z \rar{F'\tilde \varepsilon_z} & F'z 
\end{tikzcd} \]
Commutativity of this diagram tells us that $g' = F'(h') \circ \mu_a$; since $\mu_a$ is an isomorphism, we conclude that $g'$ is a weak equivalence if and only if $F'h'$ is a weak equivalence.
Because $F'$ creates weak equivalences, $F'h'$ is a weak equivalence if and only if $h'$ is. 
We conclude that $\tilde P \dashv \tilde U$ is a Quillen equivalence.
\end{proof}

The above proof gives a dual version for left-induced model structures when the hypotheses of the second part of \cref{triple-induced-model-structures} hold on both sides.
This uses that $F'$ is automatically right Quillen (dual to the proof in \cite[Theorem 2.3]{DrummondColeHackney:CERIMS}), so preserves fibrant objects.

\begin{remark}\label{rmk more general dch56}
Several hypotheses in the statement of \cref{dch56} are stronger than what is needed for the proof. 
We never used that $\mu$ is an isomorphism, just that it is a weak equivalence at cofibrant objects.
Assuming $\tilde P \dashv \tilde U$ is a Quillen adjunction, the proof of the second part does not need that $\lambda$ is an isomorphism, just that it is a weak equivalence at fibrant objects.
We also did not use the full hypotheses of \cref{triple-induced-model-structures} in the proof, just that $F,F'$ satisfy the conditions listed in \cite[Theorem 5.6]{DrummondColeHackney:CERIMS}.
\end{remark}

\subsection{Induced model structures and Bousfield localization}

Recall the following basic definitions from \cite{Hirschhorn:MCL}, where $\map(X,Y) \in \sset$ denotes a homotopy function complex for $\catM$.

\begin{definition}\label{def lbl}
Suppose $\cC$ is a class of maps in a model category $\catM$.
\begin{itemize}[left=0pt]
\item 
An object $W\in \catM$ is called \emph{$\cC$-local} if 
\begin{itemize}
  \item it is fibrant in $\catM$, and
  \item $\map(f, W) \colon \map(B, W) \to \map(A,W)$ is a weak equivalence of simplicial sets for every $f\colon A \to B$ in $\cC$.
\end{itemize}
\item A map $f\colon X \to Y$ is a \emph{$\cC$-local equivalence} if $\map(f,W) \colon \map(Y,W) \to \map(X,W)$ is a weak equivalence for every $\cC$-local object $W$.
\end{itemize}
Another model structure $\catM'$ on the same underlying category is called the \emph{left Bousfield localization of $\catM$ at $\cC$} if it has the same cofibrations as $\catM$ and the following condition holds:
\begin{enumerate}[label=(\alph*), ref=\alph*]
  \item The weak equivalences in $\catM'$ are the $\cC$-local equivalences of $\catM$.\label{item lbl le}
\end{enumerate}
If $\catM'$ is the left Bousfield localization of $\catM$ at $\cC$, we denote it by $\mathscr{L}_{\cC}\catM$.
\end{definition}

\begin{lemma}\label{fibrant object determination}
Suppose $\catM$ and $\catM'$ are two model structures on the same underlying category, which have the same class of cofibrations, and let $\cC$ be a class of maps in $\catM$.
Consider the following condition 
\begin{enumerate}[label=(\alph*), ref=\alph*, start=2]
  \item The fibrant objects in $\catM'$ are the $\cC$-local objects of $\catM$.\label{item lbl lo}
\end{enumerate}
Condition \eqref{item lbl lo} implies condition \eqref{item lbl le}. 
If $\catM$ is left proper, then \eqref{item lbl le} implies \eqref{item lbl lo}.
\end{lemma}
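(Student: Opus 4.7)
The plan is to prove the two implications separately; only the second requires left properness of $\catM$.

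For (b) $\Rightarrow$ (a), I would first observe that since $\catM$ and $\catM'$ share the class of cofibrations, they also share the class of acyclic fibrations (both being characterized by the right lifting property against cofibrations), so cofibrant replacements coincide. The key tool is the standard characterization that, in any model category, a map is a weak equivalence if and only if it induces weak equivalences on homotopy function complexes into every fibrant object. By (b), the fibrant objects of $\catM'$ are precisely the $\cC$-local objects of $\catM$. The central technical step is to identify $\map_{\catM}(X, W) \simeq \map_{\catM'}(X, W)$ whenever $W$ is fibrant in both structures and $X$ is cofibrant: since Reedy cofibrant cosimplicial frames depend only on the (shared) class of cofibrations, and the resolution property of such a frame can be verified via lifting against objects fibrant in both structures, a common frame on $X$ computes both mapping spaces. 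Granted this identification, the equivalence between weak equivalences in $\catM'$ and $\cC$-local equivalences in $\catM$ is immediate from the definitions.

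For (a) $\Rightarrow$ (b) under left properness, I would argue from (a) that the acyclic cofibrations in $\catM'$ are exactly the cofibrations of $\catM$ that are $\cC$-local equivalences, so $W$ is fibrant in $\catM'$ precisely when it has the right lifting property against this class. Every such $W$ is fibrant in $\catM$, because every acyclic cofibration in $\catM$ is a $\cC$-local equivalence (being a weak equivalence in $\catM$). To show such a $W$ is $\cC$-local, for each $f \in \cC$ I would factor $f$ in $\catM$ as a cofibration $i$ followed by an acyclic fibration $p$; then $f$ is a $\cC$-local equivalence (any map in $\cC$ is one), and so is $p$, so by 2-out-of-3 for $\cC$-local equivalences $i$ is a $\cC$-local equivalence cofibration. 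Hence $W$ lifts against $i$, and extending these lifts to a cosimplicial frame of $i$ shows that $\map_{\catM}(f, W)$ is a weak equivalence. Left properness of $\catM$ is used to ensure that cofibrant replacement interacts well with the mapping space construction; the overall argument follows the pattern of Hirschhorn's \cite[Proposition 3.4.1]{Hirschhorn:MCL}.

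The main obstacle throughout is controlling the homotopy function complexes: two model structures on the same category with the same cofibrations can a priori produce different mapping spaces, and both directions hinge on their agreement whenever the codomain is fibrant in both structures. In the (a) $\Rightarrow$ (b) direction, left properness provides the extra control over cofibrant replacement needed to upgrade the lifting properties of $W$ into weak equivalences of mapping spaces, whereas in the (b) $\Rightarrow$ (a) direction the agreement of fibrant objects in the two model structures suffices.
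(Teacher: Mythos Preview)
Your approach is essentially the same as the paper's: for (a) $\Rightarrow$ (b) you both defer to Hirschhorn's \cite[3.4.1]{Hirschhorn:MCL}, and for (b) $\Rightarrow$ (a) you both identify the homotopy function complexes in the two model structures and then invoke the detection of weak equivalences via mapping into fibrant objects.

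The one place your argument is looser than the paper's is the justification for why the homotopy function complexes agree. You write that ``Reedy cofibrant cosimplicial frames depend only on the (shared) class of cofibrations, and the resolution property of such a frame can be verified via lifting against objects fibrant in both structures.'' The first clause is not literally true: a cosimplicial frame requires both Reedy cofibrancy \emph{and} a levelwise weak equivalence condition, and the latter depends on which model structure you are in. Your second clause is vague and not a standard way to check that something is a frame. The paper makes this step clean by first invoking Joyal's \cite[Prop.~E.1.10]{quadern45}: since the $\catM'$-fibrant objects (the $\cC$-local objects) are in particular $\catM$-fibrant, every $\catM$-weak equivalence is an $\catM'$-weak equivalence, and therefore any cosimplicial resolution in $\catM^\Delta$ is automatically one in $(\catM')^\Delta$. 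An equivalent repair, closer to your own observation about shared acyclic fibrations, is to note that Reedy acyclic fibrations in $\catM^\Delta$ and $(\catM')^\Delta$ coincide (their matching maps are acyclic fibrations, and those agree), so Reedy cofibrant replacements coincide. Either way, the fix is small, and once it is in place your argument goes through exactly as the paper's does.
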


We use \cite[Proposition E.1.10]{quadern45}, reproduced below, in our proof.

\begin{proposition}\label{joyal prop}
Suppose $\catM$ and $\catM'$ are two model structures on a category which have the same cofibrations.
Each weak equivalence of $\catM$ is a weak equivalence of $\catM'$ if and only if each fibrant object of $\catM'$ is a fibrant object of $\catM$.
Thus a model structure on a category is determined by its cofibrations and fibrant objects. \qed
\end{proposition}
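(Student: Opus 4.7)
The plan is to combine \cref{joyal prop} with the observation that derived mapping spaces can be computed uniformly in $\catM$ and $\catM'$ once the two share cofibrations and one contains the other's weak equivalences. Since $\catM$-cofibrations equal $\catM'$-cofibrations, Reedy cofibrancy in the simplicial diagram category $\catM^\Delta$ is shared; and as soon as we have $\catM$-weak equivalences $\subseteq \catM'$-weak equivalences, every cosimplicial resolution in $\catM$ is also a cosimplicial resolution in $\catM'$. Consequently, for any $Z$ fibrant in both model structures, $\map(-,Z)$ agrees (up to weak equivalence of simplicial sets) whether computed in $\catM$ or in $\catM'$. This observation is the workhorse of both implications.

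For $(\ref{item lbl lo}) \Rightarrow (\ref{item lbl le})$: by definition every $\cC$-local object is $\catM$-fibrant, so \eqref{item lbl lo} gives $\catM'$-fibrant $\subseteq \catM$-fibrant, whence \cref{joyal prop} yields $\catM$-w.e.\ $\subseteq \catM'$-w.e. Combining the workhorse observation with the general characterization ``$f$ is a weak equivalence if and only if $\map(f,Z)$ is a weak equivalence of simplicial sets for every fibrant $Z$'' (valid in any model category via the Yoneda lemma in the homotopy category), we identify the $\catM'$-weak equivalences with those $f$ for which $\map(f,Z)$ is a weak equivalence for every $\catM'$-fibrant $Z$. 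By \eqref{item lbl lo}, this is precisely the defining condition for a $\cC$-local equivalence.

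For $(\ref{item lbl le}) \Rightarrow (\ref{item lbl lo})$ with $\catM$ left proper: every $\catM$-w.e.\ is trivially a $\cC$-local equivalence, hence by \eqref{item lbl le} a $\catM'$-w.e., so \cref{joyal prop} again yields $\catM'$-fibrant $\subseteq \catM$-fibrant. The forward inclusion $\catM'$-fibrant $\subseteq$ $\cC$-local then follows from the same mapping-space analysis: each $f \in \cC$ is itself a $\cC$-local equivalence, hence by \eqref{item lbl le} a $\catM'$-w.e., so $\map(f,Z)$ is a weak equivalence whenever $Z$ is $\catM'$-fibrant. The main obstacle is the reverse inclusion: given a $\cC$-local $W$, one must show $W$ is $\catM'$-fibrant. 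The strategy is to factor $W \to \ast$ in $\catM'$ as an acyclic cofibration $W \hookrightarrow W'$ followed by a $\catM'$-fibration; then $W'$ is $\catM'$-fibrant and hence $\cC$-local by the forward direction, so $W \hookrightarrow W'$ is a $\cC$-local equivalence between $\cC$-local objects. Invoking the classical fact, requiring left properness, that such a map is a $\catM$-weak equivalence (cf.~\cite[\S3]{Hirschhorn:MCL}), it is an acyclic cofibration in $\catM$ with $\catM$-fibrant domain and therefore admits a retraction, exhibiting $W$ as a retract of the $\catM'$-fibrant object $W'$.
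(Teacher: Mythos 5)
Your proposal does not prove the stated proposition: what you have written is an argument for \cref{fibrant object determination} (the equivalence of conditions \eqref{item lbl le} and \eqref{item lbl lo} for a left Bousfield localization), and it invokes \cref{joyal prop} itself as the ``workhorse'' in both directions. Read as a proof of \cref{joyal prop}, it is therefore circular; read as a proof of \cref{fibrant object determination}, it is answering a different question (and, incidentally, reproves the implication \eqref{item lbl le}$\Rightarrow$\eqref{item lbl lo} that the paper simply cites from \cite[3.4.1]{Hirschhorn:MCL}, via the same factor-and-retract argument Hirschhorn uses). Note that the paper offers no proof of \cref{joyal prop} at all --- it is quoted from \cite[Proposition E.1.10]{quadern45} --- so the one thing a blind proof attempt needed to supply here is precisely the content your proposal takes for granted.

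For the record, here is what is actually at stake in \cref{joyal prop}. Since $\catM$ and $\catM'$ share their cofibrations, they share their trivial fibrations (right lifting against cofibrations). One direction is formal: if every weak equivalence of $\catM$ is one of $\catM'$, then every acyclic cofibration of $\catM$ is an acyclic cofibration of $\catM'$, so an object with the right lifting property against the latter class (i.e.\ a fibrant object of $\catM'$) also lifts against the former and is fibrant in $\catM$. The converse is the substantive direction and cannot be obtained by lifting-property bookkeeping alone: one reduces, via cofibrant replacement by trivial fibrations (which agree in the two structures) and two-out-of-three, to a map $u\colon A \to B$ between cofibrant objects; one then observes that cylinder objects $A \amalg A \rightarrowtail \mathrm{Cyl}(A) \twoheadrightarrow A$ whose second map is a trivial fibration are cylinders for both structures, so the homotopy relation on maps out of cofibrant objects into objects fibrant for both structures coincides; finally one applies, in each structure, the characterization that a map of cofibrant objects is a weak equivalence if and only if it induces bijections on homotopy classes of maps into every fibrant object. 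The hypothesis that every $\catM'$-fibrant object is $\catM$-fibrant is exactly what lets one transport this detection criterion from $\catM$ to $\catM'$. None of these steps appears in your write-up, so the proposal has a genuine gap: the target statement is never established.
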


\begin{proof}[Proof of \cref{fibrant object determination}]
If \eqref{item lbl le} holds, then $\catM'$ is the left Bousfield localization of $\catM$ at $\cC$.
If $\catM$ is left proper, then \eqref{item lbl lo} holds by \cite[3.4.1]{Hirschhorn:MCL}.

Assume \eqref{item lbl lo} holds.
Since $\cC$-local objects are in particular fibrant in $\catM$, we know that the weak equivalences of $\catM$ are also weak equivalences of $\catM'$ by \cref{joyal prop}.
It follows that Reedy weak equivalences in $\catM^\Delta$ are also Reedy weak equivalences in $(\catM')^\Delta$.
Of course Reedy cofibrations coincide in both of these categories.
If $X$ is an arbitrary object, then a Reedy cofibrant approximation 
\[ \begin{tikzcd}
\varnothing \rar[tail] & \mathbf{\widetilde X} \rar["\simeq"] & \text{const} X
\end{tikzcd} \]
in $\catM^\Delta$ is also a Reedy cofibrant approximation in $(\catM')^\Delta$.
Now if $W\in \catM'$ is fibrant (and $X$ is arbitrary) we can use the left homotopy function complex \cite[17.1.1]{Hirschhorn:MCL}
\[
  \map(X,W) = \hom(\mathbf{\widetilde X}, W)
\]
as our mapping space, without worrying about which of the two model structures we are using.
A map $g \colon X \to Y$ is a weak equivalence in $\catM'$ just when $\map(Y,W) \to \map(X,W)$ is a weak equivalence for every fibrant $W\in \catM'$ by \cite[17.7.7]{Hirschhorn:MCL}, but this is precisely the condition that $g$ is a $\cC$-local equivalence.
Thus \eqref{item lbl le} holds.
\end{proof}

\begin{theorem}\label{main localization and lifting theorem}
Suppose that $\catM$ is a left proper model category, $\catN$ is a bicomplete category, and $F \colon \catN \to \catM$ is a functor with left adjoint $L$.
Let $\cC$ be a class of maps in $\catM$, and assume that the left Bousfield localization $\mathscr{L}_{\cC}\catM$ exists, and that the following two model structures on $\catN$ exist:
\begin{itemize}
\item The right-induced model structure $\catN_1$ along $F \colon \catN \to \catM$.
\item The right-induced model structure $\catN_2$ along $F\colon \catN \to \mathscr{L}_{\cC} \catM$.
\end{itemize}
Then $\catN_2$ is the left Bousfield localization of $\catN_1$ at the class of maps $L\cC$.
\end{theorem}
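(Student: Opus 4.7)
The plan is to verify that $\catN_2$ satisfies the defining properties of the left Bousfield localization $\mathscr{L}_{L\cC}\catN_1$ in the sense of \cref{def lbl}: namely, that $\catN_1$ and $\catN_2$ share the same cofibrations, and that the weak equivalences of $\catN_2$ are precisely the $L\cC$-local equivalences of $\catN_1$.

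First, I would check the cofibrations agree. A map $g$ in $\catN$ is an acyclic fibration in $\catN_i$ iff $Fg$ is an acyclic fibration in the underlying base. Since $\catM$ and $\mathscr{L}_\cC\catM$ share the same cofibrations, they share the same acyclic fibrations; hence $\catN_1$ and $\catN_2$ share the same acyclic fibrations and consequently the same cofibrations.

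Next, I would identify the fibrant objects of $\catN_2$ with the $L\cC$-local objects of $\catN_1$. By construction, $W \in \catN$ is fibrant in $\catN_2$ iff $FW$ is fibrant in $\mathscr{L}_\cC\catM$. Since $\catM$ is left proper, \cref{fibrant object determination} applied to $\catM$ and $\mathscr{L}_\cC\catM$ (using the implication \eqref{item lbl le} $\Rightarrow$ \eqref{item lbl lo}) shows this is equivalent to $FW$ being $\cC$-local in $\catM$, i.e., $FW$ is fibrant in $\catM$ and $\map_\catM(f, FW)$ is a weak equivalence for each $f \in \cC$. The first condition is equivalent to $W$ being fibrant in $\catN_1$. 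For the second, since $L \dashv F \colon \catM \rightleftarrows \catN_1$ is a Quillen adjunction (part of the hypotheses inherited from the right-induction setup), the derived adjunction yields a natural weak equivalence $\map_{\catN_1}(Lf, W) \simeq \map_\catM(f, FW)$ whenever $W$ is fibrant in $\catN_1$. Thus $W$ is fibrant in $\catN_2$ if and only if $W$ is $L\cC$-local in $\catN_1$.

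Finally, applying the implication \eqref{item lbl lo} $\Rightarrow$ \eqref{item lbl le} of \cref{fibrant object determination} (which requires no left properness) with $\catN_1$ and $\catN_2$ in place of $\catM$ and $\catM'$, we conclude that the weak equivalences of $\catN_2$ coincide with the $L\cC$-local equivalences of $\catN_1$. Together with the equality of cofibrations, this identifies $\catN_2$ as $\mathscr{L}_{L\cC}\catN_1$. The main technical subtlety is the derived mapping space identification in the middle step: since Hirschhorn's notion of locality uses homotopy function complexes, the comparison of $\map_{\catN_1}(Lf, W)$ with $\map_\catM(f, FW)$ for arbitrary $f \in \cC$ requires cofibrant approximation. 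This causes no genuine obstacle, because $L$ is left Quillen and hence preserves cofibrant approximations, and because left Bousfield localizations depend on the localizing class only up to cofibrant approximation.
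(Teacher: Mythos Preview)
Your proposal is correct and follows essentially the same route as the paper's proof: both first match cofibrations via acyclic fibrations, then identify the fibrant objects of $\catN_2$ with the $L\cC$-local objects of $\catN_1$ using the derived mapping-space comparison for the Quillen pair $L \dashv F$ (the paper cites \cite[17.4.16]{Hirschhorn:MCL} for this), and finally invoke \cref{fibrant object determination} to conclude. Your explicit bookkeeping of which direction of \cref{fibrant object determination} is used at each stage, and where left properness enters, is a nice touch.
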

\begin{proof}
We write $\catM_1$ for the original model structure $\catM$, and $\catM_2 = \mathscr{L}_{\cC}\catM$ for the localized model structure.
Since the cofibrations of $\catM_1$ and $\catM_2$ coincide, so too do the acyclic fibrations.
Since $\catN_1$ and $\catN_2$ are right-induced model structures, their acyclic fibrations are precisely the maps which are sent by $F$ to acyclic fibrations in the base model categories $\catM_1$ and $\catM_2$. 
Since those coincide, the acyclic fibrations in $\catN_1$ and $\catN_2$ coincide; so too must the cofibrations.

We now turn to fibrant objects.
An object $W$ of $\catN_2$ is fibrant if and only if $FW \in \catM_2$ is fibrant.
This occurs if and only if $FW$ is fibrant in $\catM_1$ and is $\cC$-local.
We next observe that $FW \in \catM_1$ is $\cC$-local if and only if $W\in \catN_1$ is $L\cC$-local: If $A \to B$ is in $\cC$, we have by \cite[17.4.16]{Hirschhorn:MCL} (see also 17.4.2) the following commutative square with vertical maps weak equivalences
\[ \begin{tikzcd}
\map_{\catM}(B,FW) \rar \dar{\simeq} & \map_{\catM}(A,FW) \dar{\simeq} \\ 
\map_{\catN_1}(LB,W) \rar & \map_{\catN_1}(LA,W)
\end{tikzcd} \]
so the top map is a weak equivalence just when the bottom map is a weak equivalence.
We conclude that $W\in \catN_2$ is fibrant if and only if it is fibrant as an object of $\catN_1$ and is $L\cC$-local.
The result follows from \cref{fibrant object determination}.
\end{proof}

\begin{corollary}\label{localization existence}
Let $\catM$ be a left proper and cofibrantly generated model category and $\catN$ a bicomplete category.
Let $F\colon \catN \to \catM$ be a functor having a left adjoint $L$ and a right adjoint $R$, such that $FL \colon \catM \to \catM$ is left Quillen.
Let $S$ be a set of maps such that the left Bousfield localization $\mathscr{L}_S\catM$ exists and is cofibrantly generated (e.g.\ if $\catM$ is combinatorial \cite[Theorem 4.7]{Barwick:OLRMCLRBL} or cellular \cite[4.1.1]{Hirschhorn:MCL}).
If 
\begin{enumerate}
\item $FL$ takes elements of $S$ to $S$-local equivalences, and \label{assumption S-local}
\item domains and codomains of elements of $S$ are cofibrant in $\catM$, \label{assumption cofibrant}
\end{enumerate}
then we have the following commutative square 
\[
\begin{tikzcd}
\catN_r \rar \dar & \mathscr{L}_{LS} \catN_r \dar \\
\catM \rar & \mathscr{L}_S \catM
\end{tikzcd}
\]
with the model structures in the top row right-induced from those in the bottom row and with horizontal maps left Bousfield localizations. 
\end{corollary}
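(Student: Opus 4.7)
The plan is to apply \cref{triple-induced-model-structures} twice and then \cref{main localization and lifting theorem}. Existence of $\catN_r$ is immediate: by hypothesis $FL \dashv FR$ is Quillen on the cofibrantly generated $\catM$ and $\catN$ is bicomplete, so \cref{triple-induced-model-structures} yields $\catN_r$.

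Next, I will produce a right-induced model structure on $\catN$ from $\mathscr{L}_S\catM$. Since $\mathscr{L}_S\catM$ is cofibrantly generated by hypothesis, it suffices to check that $FL \dashv FR$ remains Quillen after the localization, and then apply \cref{triple-induced-model-structures} a second time. Cofibrations coincide in $\catM$ and $\mathscr{L}_S\catM$, so $FL$ preserves them. Using hypothesis (2), one may replace $S$ by a set of cofibrations between cofibrant objects (via functorial factorization) without altering $\mathscr{L}_S\catM$; hypothesis (1) then says $FL$ sends every such element of $S$ to an acyclic cofibration in $\mathscr{L}_S\catM$. This is the standard criterion (cf.\ Hirschhorn \S 3.3) ensuring that $FL$ descends to a left Quillen endofunctor of $\mathscr{L}_S\catM$.

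Finally, I apply \cref{main localization and lifting theorem} with $\cC = S$. Both right-induced model structures on $\catN$ exist by the previous steps, $\catM$ is left proper, and $L$ is left adjoint to $F$, so the theorem identifies the model structure right-induced from $\mathscr{L}_S\catM$ as $\mathscr{L}_{LS}\catN_r$. This immediately produces the required square, with the horizontal arrows being left Bousfield localizations.

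The main obstacle is the second step, i.e., descending $FL \dashv FR$ to a Quillen adjunction on $\mathscr{L}_S\catM$: this is the only place where both hypotheses (1) and (2) are essential, while existence and identification of the right-induced structures in the remaining steps are formal consequences of the preceding general results.
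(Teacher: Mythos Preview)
Your proposal is correct and follows essentially the same route as the paper: apply \cref{triple-induced-model-structures} to obtain $\catN_r$, verify that $FL$ remains left Quillen on $\mathscr{L}_S\catM$ using hypotheses (1) and (2) together with Hirschhorn's criterion, apply \cref{triple-induced-model-structures} again, and conclude with \cref{main localization and lifting theorem}. The only cosmetic difference is that where you factor elements of $S$ into cofibrations before invoking the ``standard criterion'', the paper instead observes directly that (2) makes each $s\in S$ its own cofibrant approximation and then cites Hirschhorn 3.3.18 and 8.5.10 for the composite $\catM \xrightarrow{FL} \catM \xrightarrow{\id} \mathscr{L}_S\catM$; both devices serve the same purpose.
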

\begin{proof}
By \cref{triple-induced-model-structures} we obtain a model structure $\catN_1 = \catN_r$ right-induced from the model structure on $\catM$.
As we have \eqref{assumption cofibrant}, each $s\in S$ is a cofibrant approximation to itself. 
Then \eqref{assumption S-local} along with 3.3.18 and 8.5.10 of \cite{Hirschhorn:MCL}
applied to the left Quillen functor
\[ \begin{tikzcd}
\catM \rar{FL} & \catM  \rar{\id} & \mathscr{L}_S\catM.
\end{tikzcd} \]
give that $FL \colon \mathscr{L}_S\catM \to \mathscr{L}_S\catM$ is a left Quillen endo-functor.
Hence we have the model structure $\catN_2$ which is right-induced from the model structure on $\mathscr{L}_S\catM$, again by \cref{triple-induced-model-structures}.
\Cref{main localization and lifting theorem} implies that $\catN_2$ is the left Bousfield localization $\catN_2 = \mathscr{L}_{LS}\catN_1$.
\end{proof}

\subsection{Right-induced and Reedy model structures}

Let $\reedyR$ be a generalized Reedy category in the sense of \cite{BergerMoerdijk:OENRC}.
We write $\reedyR^+$ and $\reedyR^-$ for the positive and negative subcategories. 
We let $\reedyR^+((n))$ for the full subcategory of arrow category of $\reedyR^+$ whose objects are the non-invertible (positive) morphisms whose codomain has degree $n$, and dually for $\reedyR^-((n))$. 
We let $\reedyG_n(\reedyR) \subset \iso(\reedyR)$ be the groupoid of objects of degree $n$. 

For each $n$, we have functors
\[ \begin{tikzcd}[row sep=0]
\reedyR & \reedyR^+((n)) \lar["\dom"'] \rar["\cod"] & \reedyG_n(\reedyR) 
\\
\reedyR & \reedyR^-((n)) \lar["\cod"'] \rar["\dom"] & \reedyG_n(\reedyR) 
\end{tikzcd} \]
as well as the inclusion $j_n \colon \reedyG_n(\reedyR) \to \reedyR$.
For $X\in \catM^{\reedyR}$, we write $X_n$ for the object $j_n^*X\in \catM^{\reedyG_n(\reedyR)}$.
Given a diagram $X$, we also have the latching object $L_n X = \cod_! \dom^* X$ in $\catM^{\reedyG_n(\reedyR)}$, as well as the matching object $M_n X = \dom_* \cod^* X$. 
Recall the natural latching and matching maps $L_n X \to X_n \to M_n X$ (see \cite[\S4]{BergerMoerdijk:OENRC}).
If $f \colon X \to Y$ is a morphism of $\catM^{\reedyR}$, we have the (relative) latching and matching maps
\begin{align*}
    \ell_n(f) &\colon X_n \amalg_{L_n X} L_n Y \to Y_n \\
    m_n (f) &\colon X_n \to M_n X \times_{M_n Y} Y_n
\end{align*}
in $\catM^{\reedyG_n(\reedyR)}$.
By definition, $f$ is a cofibration if and only if $\ell_n(f)$ is a cofibration in the projective model structure on $\catM^{\reedyG_n(\reedyR)}$ for each $n$.
The map $f$ is a fibration if and only if $m_n(f)$ is a fibration in the projective model structure on $\catM^{\reedyG_n(\reedyR)}$ for each $n$.

\begin{lemma}\label{lem proj discfib}
Let $\catM$ be a cofibrantly generated model category.
If $\phi \colon C \to D$ is a discrete fibration, then $\phi^* \colon \catM^D \to \catM^C$ is a left Quillen functor between the projective model structures.
\end{lemma}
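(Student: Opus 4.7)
The plan is to establish the result by showing that the right adjoint $\phi_* \colon \catM^C \to \catM^D$ preserves pointwise fibrations and pointwise acyclic fibrations. Since fibrations and acyclic fibrations in the projective model structures are, by definition, detected pointwise, this will show that the adjunction $\phi^* \dashv \phi_*$ is Quillen with $\phi^*$ on the left.

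The key step is to produce an explicit formula for $\phi_*$. Writing $(\phi_* F)(d) = \lim_{(d \downarrow \phi)} F \circ \pi$ for the canonical right Kan extension, where $\pi \colon (d \downarrow \phi) \to C$ is the projection, I will show that the inclusion $\iota \colon \phi^{-1}(d) \hookrightarrow (d \downarrow \phi)$ of the discrete fiber, sending $\tilde d \mapsto (\tilde d, \id_d)$, is an initial functor. For each object $(c, f \colon d \to \phi(c))$ of $(d \downarrow \phi)$, the unique lifting property of the discrete fibration $\phi$ yields a unique pair $(\tilde d, \tilde f)$ with $\phi(\tilde d) = d$ and $\phi(\tilde f) = f$; this provides the unique object of the comma category $\iota \downarrow (c, f)$, whose only morphisms live in the discrete fiber. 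Hence the slice is nonempty and connected, and the limit collapses to $(\phi_* F)(d) \cong \prod_{\tilde d \in \phi^{-1}(d)} F(\tilde d)$.

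Given this product formula, the conclusion is immediate. In a cofibrantly generated model category, both fibrations and acyclic fibrations are characterized by the right lifting property against a set of maps, and any class defined by a right lifting property is closed under small products in the arrow category. Therefore a product of (acyclic) fibrations remains an (acyclic) fibration pointwise, and $\phi_*$ preserves pointwise (acyclic) fibrations. I do not foresee a real obstacle here; the entire argument hinges on identifying the simple product formula for $\phi_*$, which is a direct consequence of the unique lifting property built into the definition of a discrete fibration.
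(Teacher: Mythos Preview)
Your proposal is correct and follows essentially the same approach as the paper: both show that $\phi_*$ is right Quillen by establishing the product formula $(\phi_*F)(d) \cong \prod_{\tilde d \in \phi^{-1}(d)} F(\tilde d)$ and then invoking closure of (acyclic) fibrations under products. The paper simply cites the general fact that right Kan extension along a fibration is computed by the limit over the fiber, whereas you spell out the underlying initiality argument; the content is the same.
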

(Notice that $\phi^*$ is automatically right Quillen from the definition of the projective model structure.)
\begin{proof}
We show that the right Kan extension functor $\phi_* \colon \catM^C \to \catM^D$ is right Quillen.
As right Kan along a fibration is given by the limit over the fibers (see e.g.\ \cite[\S3]{BergerMoerdijk:OENRC}), and the fibers of $\phi$ are discrete, we have $(\phi_*X)(d) = \prod_{c \in \phi^{-1}(d)} X(c)$.
Fibrations and acyclic fibrations in $\catM$ are closed under products, so if $X \to Y$ is an (acyclic) fibration in $\catM^C$, then $\phi_*(X\to Y)$ is an (acylic) fibration in $\catM^D$.
\end{proof}

Since fibrations and weak equivalences in the projective model structure are levelwise, we have the following:

\begin{lemma}\label{lem proj esssurj}
If $\phi \colon C \to D$ is essentially surjective, then the map $\phi^* \colon \catM^D \to \catM^C$ between projective model structures reflects fibrations and weak equivalences. \qed
\end{lemma}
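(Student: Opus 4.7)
The plan is to rely on the levelwise characterization of the projective model structure, after which essential surjectivity handles the remainder. Recall that on $\catM^D$ (and likewise on $\catM^C$), a morphism $f \colon X \to Y$ is a projective fibration (resp.\ weak equivalence) precisely when each component $f_d \colon X(d) \to Y(d)$ is a fibration (resp.\ weak equivalence) in $\catM$. All the content of the statement will be forced out of this definition.

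So I would assume $\phi^* f$ is a projective fibration or weak equivalence in $\catM^C$; unwinding the definition, this says that $f_{\phi(c)} = (\phi^* f)_c$ lies in the relevant class of $\catM$ for every object $c \in C$. To check the corresponding property for $f$, fix an arbitrary object $d \in D$. Essential surjectivity of $\phi$ supplies some $c \in C$ together with an isomorphism $\alpha \colon \phi(c) \xrightarrow{\cong} d$. Applying the functors $X$ and $Y$ to $\alpha$ and using naturality of $f$ yields a commutative square whose vertical edges $X(\alpha)$ and $Y(\alpha)$ are isomorphisms, realizing $f_d$ as conjugate to $f_{\phi(c)}$ via these isomorphisms.

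Since the classes of fibrations and weak equivalences in any model category are closed under isomorphism, it follows that $f_d$ lies in the same class as $f_{\phi(c)}$. As $d$ was arbitrary, $f$ is a projective fibration or weak equivalence in $\catM^D$, as required. There is no genuine obstacle in the argument; the essential step is simply transport along isomorphisms, which is why the authors signal its immediacy by attaching \verb|\qed| to the statement itself.
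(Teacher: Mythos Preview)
Your argument is correct and matches the paper's approach exactly: the sentence preceding the lemma already indicates that the result is immediate from the levelwise description of projective fibrations and weak equivalences, and your write-up simply spells out that transport-along-isomorphisms step in detail.
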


\begin{lemma}\label{lem latching iso}
Let $\phi \colon \reedyS \to \reedyR$ be a Reedy functor with $\reedyS^+ \to \reedyR^+$ a discrete fibration (resp.\ $\reedyS^- \to \reedyR^-$ a discrete opfibration). 
Then the following square is a pullback.
\[
\begin{tikzcd}
\reedyS^+((n)) \rar{\cod}  \dar{\phi_n^+} & \reedyG_n(\reedyS) \dar{\phi_n} \\
\reedyR^+((n)) \rar{\cod} & \reedyG_n(\reedyR) 
\end{tikzcd}
\qquad \left(\text{resp.\ } 
\begin{tikzcd}
\reedyS^-((n)) \rar{\dom}  \dar{\phi_n^-} & \reedyG_n(\reedyS) \dar{\phi_n} \\
\reedyR^-((n)) \rar{\dom} & \reedyG_n(\reedyR) 
\end{tikzcd}
\right)
\]
In particular, $L_n \phi^* \Rightarrow \phi_n^* L_n$ (resp.\ $\phi_n^* M_n \Rightarrow M_n \phi^*$) is an isomorphism.
\end{lemma}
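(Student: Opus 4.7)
The plan is first to verify the pullback claim for the positive case (the negative case being dual). An object of the putative pullback amounts to a choice of $s' \in \reedyG_n(\reedyS)$ together with a non-invertible positive morphism $\psi \colon r \to \phi(s')$ in $\reedyR^+$. Since $\phi^+ \colon \reedyS^+ \to \reedyR^+$ is a discrete fibration, $\psi$ lifts uniquely to a positive morphism $\tilde\psi \colon \tilde s \to s'$ in $\reedyS^+$ with the prescribed codomain. The key observation is that this lift is automatically non-invertible: if $\tilde\psi$ were an isomorphism in $\reedyS^+$, then $\phi^+(\tilde\psi) = \psi$ would be an isomorphism too, contradicting $\psi \in \reedyR^+((n))$. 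Morphisms of the pullback correspond to commuting squares in the arrow category of $\reedyR^+$ whose codomain component is compatible with a chosen map in $\reedyG_n(\reedyS)$; applying the discrete fibration property to each horizontal arrow of the square yields a unique lift to a morphism of $\reedyS^+((n))$.

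For the ``in particular'' statement, I would use the identities $\phi \circ \dom = \dom \circ \phi_n^+$ and $\phi_n \circ \cod = \cod \circ \phi_n^+$ to rewrite
\[
L_n \phi^* = \cod_! \dom^* \phi^* = \cod_! (\phi_n^+)^* \dom^*, \qquad \phi_n^* L_n = \phi_n^* \cod_! \dom^*,
\]
so that the canonical transformation in question is induced by the Beck--Chevalley map $\cod_! (\phi_n^+)^* \Rightarrow \phi_n^* \cod_!$ attached to the pullback square. To see this is an isomorphism, I would evaluate pointwise at $s' \in \reedyG_n(\reedyS)$: the left Kan extension $\cod_!$ can be written as a colimit over the slice $\cod \downarrow s'$, and the pullback property just established identifies this slice with $\cod \downarrow \phi(s')$ on the $\reedyR$-side. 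The matching case is handled dually via $M_n = \dom_* \cod^*$ and the discrete opfibration hypothesis.

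The principal obstacle is the Beck--Chevalley step. However, since both $\phi_n^+$ and $\phi_n$ are discrete fibrations (inherited by restriction from $\phi^+$), the relevant slices reduce to strict fibers with no residual automorphism issues, so the bijection of indexing categories supplied by the pullback translates directly into the required isomorphism of colimits. The genuine content of the lemma thus lies entirely in the first step, with the ``in particular'' being a formal consequence.
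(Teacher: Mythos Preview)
Your proposal is correct and follows essentially the same approach as the paper, simply spelled out in more detail: the paper calls the pullback verification ``a straightforward check'' and delegates the latching/matching isomorphism to \cite[Lemmas 4.4 and 4.7]{BergerMoerdijk:OENRC}, whereas you unpack both steps explicitly via the discrete fibration lifting property and the Beck--Chevalley argument.
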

\begin{proof}
A straightforward check shows that the square in question is a pullback of categories, and the isomorphism $L_n \phi^* \cong \phi_n^* L_n$ is \cite[Lemma 4.4]{BergerMoerdijk:OENRC} (resp.\ \cite[Lemma 4.7]{BergerMoerdijk:OENRC} for $\phi_n^* M_n \cong M_n \phi^*$).
\end{proof}

\begin{lemma}\label{lem lmatching phi}
Let $\phi \colon \reedyS \to \reedyR$ be a Reedy functor, and $f \colon X \to Y$ a map in $\catM^{\reedyS}$.
\begin{enumerate}[left=0pt]
    \item If $\reedyS^+ \to \reedyR^+$ is a discrete fibration, then $\ell_n(\phi^*(f)) \cong \phi_n^* \ell_n(f)$.
    \item If $\reedyS^- \to \reedyR^-$ is a discrete opfibration, then $m_n(\phi^*(f)) \cong \phi_n^* m_n(f)$.
\end{enumerate}
\end{lemma}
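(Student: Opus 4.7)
The plan is to derive both statements directly from \cref{lem latching iso} by observing that the restriction functor $\phi_n^* \colon \catM^{\reedyG_n(\reedyR)} \to \catM^{\reedyG_n(\reedyS)}$ preserves all limits and colimits, since (co)limits in the functor category $\catM^{\reedyG_n(\reedyR)}$ are computed pointwise in $\catM$. Thus the main work has been done in \cref{lem latching iso}, and what remains is essentially bookkeeping: assemble the natural isomorphisms into the defining (co)limits of the relative (co)latching maps.

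For part (1), unwind the definition
\[
\ell_n(\phi^*(f)) \colon (\phi^* X)_n \amalg_{L_n \phi^*X} L_n \phi^* Y \longrightarrow (\phi^* Y)_n.
\]
The identifications $(\phi^* Z)_n = \phi_n^* Z_n$ (definitional) and $L_n \phi^* Z \cong \phi_n^* L_n Z$ (from \cref{lem latching iso}) are natural in $Z$, and they are compatible with the latching maps $L_n Z \to Z_n$ since both are obtained from the Beck--Chevalley comparison on the pullback square appearing in \cref{lem latching iso}. Because $\phi_n^*$ preserves pushouts, we obtain a canonical isomorphism
\[
(\phi^* X)_n \amalg_{L_n \phi^*X} L_n \phi^* Y \cong \phi_n^*\!\bigl(X_n \amalg_{L_n X} L_n Y\bigr),
\]
under which $\ell_n(\phi^*(f))$ corresponds to $\phi_n^* \ell_n(f)$.

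Part (2) is formally dual. The isomorphism $M_n \phi^* \cong \phi_n^* M_n$ from the second half of \cref{lem latching iso} is natural and compatible with the matching maps $Z_n \to M_n Z$, and $\phi_n^*$ preserves pullbacks, so
\[
(\phi^* X)_n \longrightarrow M_n \phi^* X \times_{M_n \phi^* Y} (\phi^* Y)_n \;\cong\; \phi_n^*\!\bigl(M_n X \times_{M_n Y} Y_n\bigr)
\]
is identified with $\phi_n^* m_n(f)$. There is no real obstacle here; the only point requiring care is the compatibility of the isomorphisms of \cref{lem latching iso} with the (co)latching maps, but this is automatic since those isomorphisms are the Beck--Chevalley comparisons $\phi_n^* \cod_! \dom^* \cong \cod_! \dom^* \phi^*$ (resp.\ its dual), which are natural transformations between functors built from restrictions and Kan extensions.
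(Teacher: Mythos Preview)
Your proof is correct and follows essentially the same approach as the paper: both use \cref{lem latching iso} for $L_n \phi^* \cong \phi_n^* L_n$, the definitional identification $(\phi^* Z)_n = \phi_n^* Z_n$, and the fact that $\phi_n^*$ preserves pushouts to assemble the isomorphism of relative latching maps (with part (2) handled dually). Your treatment is slightly more explicit about the compatibility with the latching maps via Beck--Chevalley, whereas the paper simply writes down the resulting commutative diagram.
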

\begin{proof}
Let us prove (1).
We have the following commutative diagram, using \cref{lem latching iso} for the isomorphism in the upper left.
\[
\begin{tikzcd}
j_n^* \phi^* X \amalg_{L_n \phi^* X} L_n \phi^*Y \rar \dar{\cong} & j_n^* \phi^* Y \dar{=}
\\
\phi_n^* j_n^* X \amalg_{\phi_n^* L_n X} \phi_n^* L_n Y \dar{\cong} & \phi_n^* j_n^* Y \dar{=}
\\
\phi_n^*(j_n^* X \amalg_{L_n  X} L_n Y ) \rar & \phi_n^* (Y_n)
\end{tikzcd}
\]
The top map is $\ell_n(\phi^*(f))$, and the bottom is $\phi_n^*(\ell_n(f))$, hence these maps are isomorphic.
Case (2) is similar. 
\end{proof}

\begin{proposition}\label{nice reedy functor}
Suppose $\phi \colon \reedyS \to \reedyR$ is a Reedy functor and $\catM$ is a cofibrantly generated model category.
Assume
\begin{enumerate}
    \item $\reedyS^+ \to \reedyR^+$ is a discrete fibration, and\label{pos fib}
    \item $\reedyS^- \to \reedyR^-$ is a discrete opfibration.\label{neg opfib}
\end{enumerate}
Then $\phi^* \colon \catM^{\reedyR} \to \catM^{\reedyS}$ is both left and right Quillen.
Further, if $\phi$ is essentially surjective then the Reedy model structure on $\catM^{\reedyR}$ is right-induced along $\phi^*$ from the Reedy model structure on $\catM^{\reedyS}$.
\end{proposition}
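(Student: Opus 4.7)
The plan is to reduce everything to the behavior of the induced restriction functors $\phi_n^* \colon \catM^{\reedyG_n(\reedyR)} \to \catM^{\reedyG_n(\reedyS)}$ between projective model structures, using the latching/matching characterization of Reedy (co)fibrations one degree at a time.

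First I would check that each $\phi_n \colon \reedyG_n(\reedyS) \to \reedyG_n(\reedyR)$ is simultaneously a discrete fibration and a discrete opfibration. Given an isomorphism $r' \xrightarrow{\cong} r$ in $\reedyG_n(\reedyR)$ together with $s \in \reedyG_n(\reedyS)$ satisfying $\phi(s) = r$, view $r' \to r$ as a morphism of $\reedyR^+$ -- this is legitimate because in a generalized Reedy category every isomorphism lies in both $\reedyR^+$ and $\reedyR^-$. Lifting uniquely via the discrete fibration $\phi^+$ yields some $s' \to s$ in $\reedyS^+$; any discrete fibration reflects isomorphisms, so $s' \to s$ is invertible and gives the required lift in $\reedyG_n(\reedyS)$. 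The dual argument using $\phi^-$ gives that $\phi_n$ is a discrete opfibration.

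Combined with \cref{lem proj discfib} and its dual (and the parenthetical observation that restrictions are automatically right Quillen between projective structures), this makes every $\phi_n^*$ both left and right Quillen. For a morphism $f \colon X \to Y$ in $\catM^{\reedyR}$, the isomorphisms $\ell_n(\phi^* f) \cong \phi_n^* \ell_n(f)$ and $m_n(\phi^* f) \cong \phi_n^* m_n(f)$ from \cref{lem lmatching phi} then show that $\phi^*$ preserves Reedy cofibrations and Reedy fibrations. Weak equivalences in both structures are detected objectwise, so they are trivially preserved.

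For the right-induction claim, assume $\phi$ is essentially surjective. Reedy functors preserve degree and isomorphisms preserve degree, so each $\phi_n$ is also essentially surjective; by \cref{lem proj esssurj}, $\phi_n^*$ reflects projective fibrations and weak equivalences. The matching-map identification transfers this to the Reedy structure, so $\phi^*$ reflects Reedy fibrations, and essential surjectivity makes $\phi^*$ reflect levelwise weak equivalences as well. Together with the left adjoint $\phi_!$ furnished by Kan extension, this yields that the Reedy model structure on $\catM^{\reedyR}$ is right-induced along $\phi^*$ from that on $\catM^{\reedyS}$. The only nontrivial point is the initial verification that $\phi_n$ inherits both the discrete fibration and discrete opfibration properties; once that is in place the rest is a straightforward assembly of the preparatory lemmas.
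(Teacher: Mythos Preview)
Your proof is correct and follows essentially the same route as the paper: reduce to the degreewise restriction functors $\phi_n^*$ via \cref{lem lmatching phi}, use \cref{lem proj discfib} for preservation of cofibrations, and use \cref{lem proj esssurj} for reflection of fibrations under essential surjectivity. The paper is slightly terser---it simply asserts that $\reedyG_n(\reedyS) \to \reedyG_n(\reedyR)$ is a discrete fibration without the lifting argument you give---and it does not bother to check that $\phi_n$ is a discrete \emph{op}fibration, since restriction is automatically right Quillen between projective model structures (so preservation of fibrations needs no hypothesis on $\phi_n$ at all).
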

\begin{proof}
Since weak equivalences are levelwise, $\phi^*$ preserves weak equivalences (with no hypothesis on $\phi$).
If $\phi$ is essentially surjective, it also reflects them.
It thus suffices to show that $\phi^*$ preserves fibrations and cofibrations, and, if we assume $\phi$ is essentially surjective, that it reflects fibrations.
Let $f\colon X \to Y$ be a map in $\catM^{\reedyS}$.

Suppose $f\colon X \to Y$ is a fibration in $\catM^{\reedyS}$, i.e.\ $m_n(f)$ is a fibration in $\catM^{\reedyG_n(\reedyS)}$ for all $n$. 
As $\phi_n^* \colon \catM^{\reedyG_n(\reedyS)} \to\catM^{\reedyG_n(\reedyR)}$ preserves fibrations, $\phi_n^* m_n(f)$ is a fibration.
By \cref{lem lmatching phi}, we know $m_n(\phi^*f) \cong \phi_n^* m_n(f)$.
Thus $m_n(\phi_* f)$ is a fibration for all $n$, and we conclude that $\phi_*f$ is a fibration.

Suppose $f\colon X \to Y$ is a cofibration in $\catM^{\reedyS}$, i.e.\ $\ell_n(f)$ is a cofibration in $\catM^{\reedyG_n(\reedyS)}$ for all $n$. 
Since $\reedyS^+ \to \reedyR^+$ is a discrete fibration, so too is $\reedyG_n(\reedyS) \to \reedyG_n(\reedyR)$.
By \cref{lem proj discfib}, $\phi_n^*\ell_n(f)$ is a cofibration. 
Then \cref{lem lmatching phi} tells us $\ell_n(\phi^*(f)) \cong \phi_n^* \ell_n(f)$, so that $\ell_n(\phi^*(f))$ is a cofibration as well. Hence $\phi^*(f)$ is a cofibration.

Finally, assume that $\phi$ is essentially surjective and $f\colon X\to Y$ has the property that $\phi^*(f)$ is a fibration.
We wish to show that $f$ is a fibration.
Again by \cref{lem lmatching phi}, we have $m_n(\phi^*f) \cong \phi_n^* m_n(f)$.
Further, $\phi_n^* \colon \catM^{\reedyG_n(\reedyS)} \to\catM^{\reedyG_n(\reedyR)}$ reflects fibrations 
by \cref{lem proj discfib} (since $\phi_n \colon \reedyG_n(\reedyS) \to \reedyG_n(\reedyR)$ is essentially surjective), so we conclude that $m_n(f)$ is a fibration for all $n$.
\end{proof}

\begin{theorem}\label{thm reedy dopfib}
Suppose $\phi \colon \reedyS \to \reedyR$ is a Reedy functor which is a discrete opfibration and reflects positive (resp.\ negative) maps.
Then $\phi_! \colon \sset^{\reedyS} \to \sset^{\reedyR}$ creates fibrations, cofibrations, and weak equivalences.
\end{theorem}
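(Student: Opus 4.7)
The plan is to identify $\sset^{\reedyS}$ with a slice of $\sset^{\reedyR}$ in such a way that $\phi_!$ becomes the forgetful functor. Concretely, since $\phi$ is a discrete opfibration, it arises as the Grothendieck construction of the functor $F \colon \reedyR \to \set$ with $F(r) = \phi^{-1}(r)$ and $F(\alpha)$ the pushforward along the unique $\phi$-lift of $\alpha$. Viewing $P \coloneqq F$ as a discrete object of $\sset^{\reedyR}$, the standard Grothendieck construction equivalence yields $\sset^{\reedyS} \simeq \sset^{\reedyR}_{/P}$, sending $X$ to the canonical collapse map $\phi_! X \to P$ obtained from the fiberwise formula $(\phi_! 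X)(r) = \coprod_{s \in \phi^{-1}(r)} X(s)$ for left Kan extension along a discrete opfibration. Under this equivalence $\phi_!$ corresponds to the forgetful functor $\sset^{\reedyR}_{/P} \to \sset^{\reedyR}$, and the slice Reedy model structure on $\sset^{\reedyR}_{/P}$ by construction has fibrations, cofibrations, and weak equivalences all created by this forgetful functor. The theorem therefore reduces to showing that this slice model structure coincides, under the equivalence, with the Reedy model structure on $\sset^{\reedyS}$. Since a model structure is determined by its cofibrations and weak equivalences, I need only verify agreement for these two classes.

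Agreement on weak equivalences is essentially for free: both classes consist of levelwise weak equivalences of simplicial sets, and the coproduct formula above, together with the fact that coproducts in $\sset$ preserve and reflect weak homotopy equivalences, gives that $\phi_! f$ is levelwise a weak equivalence if and only if $f$ is. For cofibrations, the key step is the Beck--Chevalley identity $\ell_n(\phi_! f) \cong (\phi_n)_! \ell_n(f)$ relating relative latching maps. This follows from natural isomorphisms $(\phi_! X)_n \cong (\phi_n)_! X_n$ and $L_n \phi_! X \cong (\phi_n)_! L_n X$, obtained by Beck--Chevalley applied to the pullback square
\[
\begin{tikzcd}
\reedyS^+((n)) \rar["\phi_n^+"] \dar["\dom"'] & \reedyR^+((n)) \dar["\dom"] \\
\reedyS \rar["\phi"'] & \reedyR
\end{tikzcd}
\]
(and its analog with $j_n$ in place of $\dom$), in which the left vertical arrow is itself a discrete opfibration; this last fact uses that $\phi$ reflects positive maps, which is automatic from the discrete opfibration property together with the generalized Reedy axiom $\reedyR^+ \cap \reedyR^- = \iso(\reedyR)$. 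Finally, I would check that $(\phi_n)_!$ between groupoid-indexed diagrams creates projective cofibrations: orbitwise it decomposes into induction functors $\mathrm{Ind}_{\aut(s)}^{\aut(r)}$ along stabilizer inclusions, each of which creates projective cofibrations via the characterization of projective cofibrations in $\sset^G$ as monomorphisms on whose complement $G$ acts freely.

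The step I expect to require the most care is the Beck--Chevalley identity for latching objects, where the discrete opfibration property must be tracked carefully through the $(-)^+((n))$ construction. Notably, the corresponding identity for matching objects, $M_n \phi_! \cong (\phi_n)_! M_n$, fails in general, since $\phi_!$ does not preserve the limits involved in $M_n$. This asymmetry is precisely why the creation of fibrations must be deduced from the uniqueness of a model structure having a prescribed class of cofibrations and weak equivalences, rather than by a direct comparison of matching maps.
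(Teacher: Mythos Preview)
Your proposal is correct and follows essentially the same architecture as the paper's proof: the slice identification $\sset^{\reedyS} \simeq (\sset^{\reedyR})_{/P}$, reduction to creation of weak equivalences and cofibrations, the coproduct argument for weak equivalences, and the Beck--Chevalley identity $\ell_n(\phi_! f) \cong (\phi_n)_! \ell_n(f)$ for cofibrations. The only divergence is in the groupoid sub-step showing $(\phi_n)_!$ creates projective cofibrations: where you argue directly via induction functors and the free-action characterization, the paper instead reapplies the slice trick at the groupoid level, observing that $(\phi_n)_!$ creates levelwise fibrations and weak equivalences (since coproducts in $\sset$ detect both), and hence creates cofibrations as well---a shorter route worth noting.
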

\begin{proof}
Write $Z \colon \reedyS \to \set$ for the functor associated to the discrete opfibration $\phi$.
Then $\phi_!$ is equivalent to $\sset^{\reedyS} \simeq (\sset^{\reedyR})_{/Z} \to \sset^{\reedyR}$.
The category $(\sset^{\reedyR})_{/Z}$ admits the slice model structure \cite[7.6.5]{Hirschhorn:MCL}, where fibrations, cofibrations, and weak equivalences are created in $\sset^{\reedyR}$.
It thus suffices to show that $\phi_!$ creates two of the three classes of maps.

It is not hard to see that $\phi_!$ creates levelwise weak equivalences.
Indeed, we have \[ (\phi_!X)(r) = \sum_{s \in \phi^{-1}(r)} X(s)\]
since $\phi$ is a discrete opfibration, 
and a coproduct of maps in $\sset$ is a weak equivalence if and only if each of its components is a weak equivalence.

Next consider the special case where $\phi$ is a map of groupoids, all of whose objects have some fixed degree.
Then the Reedy model structure is just the projective model structure, and $\phi_!$ creates levelwise fibrations.
This is because a coproduct of maps in $\sset$ is a fibration if and only if each of its components is a fibration.
Since $\phi_!$ creates weak equivalences and fibrations, it also creates cofibrations as mentioned in the first paragraph.

We now return to the general case.
We will show that $\phi_!$ creates cofibrations.
Since $\phi$ is a discrete opfibration, we have pullback squares 
\[
\begin{tikzcd}
\reedyS^+((n)) \rar{\dom} \dar["\phi"'] \drar[phantom, "\lrcorner" very near start] & \reedyS\dar["\phi"]
& \reedyG_n(\reedyS) \rar{j_n} \dar["\phi_n"']\drar[phantom, "\lrcorner" very near start] & \reedyS\dar["\phi"]
\\
\reedyR^+((n)) \rar[swap]{\dom} & \reedyR
& \reedyG_n(\reedyR) \rar[swap]{j_n} & \reedyR.
\end{tikzcd}
\]
Then by the Beck--Chevalley condition (see \cite[\S3]{BergerMoerdijk:OENRC}) we have $\dom^*\phi_! \cong \phi_!\dom^*$ and $(\phi_n)_! j_n^* \cong j_n^* \phi_!$.
The first of these gives an isomorphism \[ L_n \phi_! = \cod_! \dom^* \phi_! \cong \cod_! \phi_! \dom^* \cong (\phi_n)_! \cod_! \dom^* = (\phi_n)_! L_n \] 
We can use this to deduce that for any map $g \colon X \to Y$ in $\sset^{\reedyS}$ we have $(\phi_n)_! \ell_n(g)$ is isomorphic to $\ell_n(\phi_! g)$, as in the below commutative diagram:
\[
\begin{tikzcd}
(\phi_n)_! \left( X_n \amalg_{L_n X} L_n Y \right)  \rar["({\phi_n)_!(\ell_n(g))}"] \dar{\cong} & (\phi_n)_! Y_n \dar{=}
\\
(\phi_n)_! j_n^* X \amalg_{(\phi_n)_! L_n X} (\phi_n)_! L_n Y \dar{\cong} & (\phi_n)_! j_n^* Y \dar{\cong}
\\
j_n^* \phi_! X \amalg_{ L_n \phi_! X}  L_n \phi_! Y \rar["\ell_n (\phi_! g)"] &  j_n^* \phi_! Y.
\end{tikzcd}
\]
By the second paragraph, $(\phi_n)_!$ creates cofibrations, hence $\ell_n(g)$ is a cofibration in $\sset^{\reedyG_n(\reedyS)}$ if and only if $\ell_n(\phi_!g)$ is a cofibration in $\sset^{\reedyG_n(\reedyR)}$.
Thus $g$ is a Reedy cofibration in $\sset^{\reedyS}$ if and only if $\phi_! (g)$ is a Reedy cofibration in $\sset^{\reedyR}$.
\end{proof}

\bibliographystyle{amsalpha}
\bibliography{cyc_refs}
\end{document}